\documentclass[11pt]{article}
\pdfoutput=1

\usepackage{pstricks}

\usepackage{eucal}
\usepackage{color}

\usepackage{amsmath,bbm}
\usepackage{amssymb}
%\usepackage{bbm}
%\numberwithin{equation}{section}
\usepackage{epic}

\usepackage{graphicx}
\usepackage{pict2e}

\title{Double Aztec Diamonds and the Tacnode Process}

\author{Mark Adler\thanks{2000
{\em Mathematics Subject Classification}. Primary:
60G60, 60G65, 35Q53; secondary: 60G10, 35Q58. {\em Key
words and Phrases}: Domino tilings, Aztec diamonds, Dyson's Brownian motion, Airy and tacnode
processes, extended kernels. \newline
 Department of Mathematics, Brandeis University,
Waltham, Mass 02453, USA. E-mail: adler@brandeis.edu.
The support of a National Science Foundation grant \#
DMS-07-00782 is gratefully acknowledged.}~~~~~~Kurt Johansson\thanks{Department of Mathematics,
KTH Royal Institute of Technology, Stockholm, Sweden. E-mail: kurtj@kth.se. The support of the Swedish Research Council (VR) and grant KAW 2010.0063 of the Knut and Alice Wallenberg Foundation are gratefully acknowledged.} ~~~~~ Pierre
van Moerbeke\thanks{ Department of Mathematics,
Universit\'e de Louvain, 1348 Louvain-la-Neuve, Belgium
and Brandeis University, Waltham, Mass 02453, USA. E-mail: pierre.vanmoerbeke@uclouvain.be and
vanmoerbeke@brandeis.edu. The support of a National Science
Foundation grant \# DMS-07-00782, FNRS, PAI grants is
gratefully acknowledged.\newline
The authors thank the Mathematical Sciences Research Institute (MSRI, Berkeley) for an inspiring semester on ``Random Matrix Theory, Interacting Particle Systems and Integrable Systems", where this work was initiated.}
}

\date{}

%\catcode`\@=11
%\let\c@equation=\relax
%\newcounter{equation}[subsection]
%\def\theequation{\thesubsection.\arabic{equation}}
%\catcode`\@=12

%\catcode`\@=11
%\def\ps@X{\let\@mkboth\@gobbletwo
 %       \def\@oddhead{\tt %
  %    M. \!Adler \!\&\! P. \!van\! Moerbeke~~~ PDE's for Dyson and Airy processes\hfil %
%Aug 24, 1998
%\ \ %\hfil\S\thesection, p.\thepage
%        }
%        \def\@oddfoot{\rm\hfil\thepage\hfil}
%        \let\@evenhead\@oddhead
%        \let\@evenfoot\@oddfoot}
%\catcode`@=12 \pagestyle{X}

\newcommand{\MAT}[1]{\left(\begin{array}{*#1c}}
\newcommand{\mat}{\end{array}\right)}
\newcommand{\qed}{\leavevmode\unskip\nobreak\penalty200\hskip2pt\null
\nobreak\hfill\rule{1.1ex}{1.1ex}%\parfillskip=0pt
\medbreak }

\newcommand{\rg}{\rightarrow}
\newcommand{\Ai}{\mathrm{Ai}}

\newcommand{\I}{{\rm i}}
\newcommand{\AR}{{\cal A}}
\newcommand{\CR}{{\cal C}}

\newcommand{\LR}{{\cal L}}

\newcommand{\BC}{{\mathbb C}}

\newcommand{\BP}{{\mathbb P}}

\newcommand{\BZ}{{\mathbb Z}}

\newcommand{\iy}{\infty}
\newcommand{\pl}{\partial}
\newcommand{\al}{\alpha}

\newcommand{\Id}{\mathbbm{1}}

\newcommand{\un}{\mbox{1\hspace{-5.6pt}I}}

	\newcommand{\tc}{\mbox{\tiny$\circ$} }

\newenvironment{proof}{\medskip\noindent{\it Proof:\/} }{\qed}

\newcommand{\vp}{\varphi}
\newcommand{\la}{\langle}
\newcommand{\ra}{\rangle}
\newcommand{\ga}{\gamma}

\newcommand{\dt}{\delta}
\newcommand{\Dt}{\Delta}
 
\newcommand{\sg}{\sigma}
\newcommand{\BR}{{\mathbb R}}
\newcommand{\lb}{\lambda}

\newcommand{\dis}{\displaystyle}

\newcommand{\BK}{{\mathbb K}}

% some macros are not liked by amsmath package:

% \newcommand{\be}{\begin{equation}}
% \newcommand{\ee}{\end{equation}}
% \newcommand{\bea}{\begin{eqnarray}}
% \newcommand{\eea}{\end{eqnarray}}

% so replace:

\def\be#1\ee{\begin{equation}#1\end{equation}}
\def\bea#1\eea{\begin{eqnarray}#1\end{eqnarray}}
\def\bean#1\eean{\begin{eqnarray*}#1\end{eqnarray*}}

 \newtheorem{definition}{Definition}[section]
 \newtheorem{theorem}[definition]{Theorem}
 \newtheorem{lemma}[definition]{Lemma}
 
 \newtheorem{proposition}[definition]{Proposition}

%\makeatletter
%\def\ps@X{\let\@mkboth\@gobbletwo
%        \def\@oddhead{\tt A-K-vM:%
%        Pfaff \& Jack\hfil \today\hfil\S\thesection, p.\thepage}
%       \def\@oddfoot{\rm\hfil\thepage\hfil}
%       \let\@evenhead\@oddhead
%       \let\@evenfoot\@oddfoot}
%\makeatother

%beginning of macros

%Make ! a letter so we can use it in the names of control sequences
\catcode `!=11

\newdimen\squaresize
\newdimen\thickness
\newdimen\Thickness
\newdimen\ll! \newdimen \uu! \newdimen\dd! \newdimen \rr! \newdimen
\temp!

%parameters are left up, down, right
\def\sq!#1#2#3#4#5{%
\ll!=#1 \uu!=#2 \dd!=#3 \rr!=#4
\setbox0=\hbox{%
%left edge
 \temp!=\squaresize\advance\temp! by .5\uu!
 \rlap{\kern -.5\ll!
 \vbox{\hrule height \temp! width#1 depth .5\dd!}}%
%
%up edge
 \temp!=\squaresize\advance\temp! by -.5\uu!
 \rlap{\raise\temp!
 \vbox{\hrule height #2 width \squaresize}}%
%
%down edge
 \rlap{\raise -.5\dd!
 \vbox{\hrule height #3 width \squaresize}}%
%
%right edge
 \temp!=\squaresize\advance\temp! by .5\uu!
 \rlap{\kern \squaresize \kern-.5\rr!
 \vbox{\hrule height \temp! width#4 depth .5\dd!}}%
%
%contents
 \rlap{\kern .5\squaresize\raise .5\squaresize
 \vbox to 0pt{\vss\hbox to 0pt{\hss $#5$\hss}\vss}}%
}%end of \hbox
 \ht0=0pt \dp0=0pt \box0
}%end of \sq!

\def\vsq!#1#2#3#4#5\endvsq!{\vbox to \squaresize{\hrule
width\squaresize height 0pt%
\vss\sq!{#1}{#2}{#3}{#4}{#5}}}

\newdimen \LL! \newdimen \UU! \newdimen \DD! \newdimen \RR!

\def\vvsq!{\futurelet\next\vvvsq!}
\def\vvvsq!{\relax
  \ifx     \next l\LL!=\Thickness \let\continue=\skipnexttoken!
  \else\ifx\next u\UU!=\Thickness \let\continue=\skipnexttoken!
  \else\ifx\next d\DD!=\Thickness \let\continue=\skipnexttoken!
  \else\ifx\next r\RR!=\Thickness \let\continue=\skipnexttoken!
  \else\def\continue{\vsq!\LL!\UU!\DD!\RR!}%
  \fi\fi\fi\fi
  \continue}

\def\skipnexttoken!#1{\vvsq!}

\def\place#1#2#3{\vbox to 0pt{\vss
\rlap{\kern#1\squaresize
  \raise#2\squaresize\hbox{$#3$}}
\vss}}

\catcode `!=12

\squaresize = 35pt \thickness = 1pt \Thickness = 3pt
%end of macro's

%\pagestyle{X}

\begin{document}

\sloppy
\maketitle

\begin{abstract}
 Discrete and continuous non-intersecting random processes have given rise to critical ``{\em infinite dimensional diffusions}", like the Airy process, the Pearcey process and variations thereof. It has been known that domino tilings of very large Aztec diamonds lead macroscopically to a disordered region within an inscribed ellipse (arctic circle in the homogeneous case), and a regular brick-like region outside the ellipse. The fluctuations near the ellipse, appropriately magnified and away from the boundary of the Aztec diamond, form an Airy process, run with time tangential to the boundary. 
 
 This paper investigates the domino tiling of two overlapping Aztec diamonds; this situation also leads to non-intersecting random walks and an induced point process; this process is shown to be determinantal. In the large size limit,  when the overlap is such that the two arctic ellipses for the single Aztec diamonds merely touch, a new critical process will appear near the point of osculation (tacnode), which is run with a time in the direction of the common tangent to the ellipses: this is the {\em tacnode process}. It is also shown here that this tacnode process is universal: it coincides with the one found in the context of two groups of non-intersecting random walks or also Brownian motions, meeting momentarily.    \end{abstract}

\tableofcontents

%\begin{picture}(0,0)
%\qbezier(0,0)(200,200),(400,0)
%\qbezier(0,-150)(200,-350),(400,-150)
%\qbezier(0,-150)(200,-350),(400,-150)
%\qbezier(0,-150)(-150,0),(0,150)
%\end{picture}

\vspace*{1cm}

 %\newpage

 Discrete and continuous non-intersecting random processes have given rise to critical ``{\em infinite-dimensional diffusions}", like the Airy process and the Pearcey process and variations thereof. These problems have been widely studied during these last ten years.%The Airy process describes the statistical fluctuations near the generic points of the boundary of the support of  

One of these problems has been  the study of the behavior of $n$ non-intersecting Brownian motions on $\BR$, when $n\to \infty$. When the starting and end points of these particles are pinned down (say, for $t=0$ and $t=1$), the cloud of particles, when $n\to \infty$, sweeps out in space-time a certain region. The Brownian motions have been shown to fluctuate like the Airy process near the generic points of the boundary of that region; see \cite{PS02,Jo03b,Johansson3,TW03,AvM05}. The Airy process is governed by an Airy kernel, a double integral of a ratio of the exponential of cubic polynomials. Similar results have been obtained for the boundary of the frozen region of domino tiling problems \cite{Johansson3} and random 3D partitions \cite{OR07}.%This has been shown in various specific situations. 

The boundary of that space-time region or boundary of the frozen region may have some singularities. A singularity can be a cusp, which corresponds to a bifurcation of particles or two sets of particles merging in the Brownian case; or a cusp in the boundary of the frozen region due to some non-convexity of the boundary of the model. The fluctuations of the particles near the cusp or the statistical behavior of the tilings are described by the Pearcey kernel, which is a double integral, with integrand given by the ratio of the exponential of quartic polynomials; see  \cite{TW06,OR07,AOvM10,ACvM10}, among others.

Another situation is the one where two sets of particles meet momentarily and then separate again. Locally, the boundary of the two sets of particles has a singularity, which looks like two circles touching; such a singularity is a tacnode. It was an intriguing open problem to understand the local fluctuations of the particles in the neighborhood of this tacnode. Adler-Ferrari-van Moerbeke made an attempt in \cite{AFvM08} by considering two sets of Brownian particles leaving from and forced to two points, and by letting first the number of one set go to $\infty$, while leaving the second finite. This led to a rational perturbation of the Airy kernel at the point of encounter of the two sets of particles. Then, letting the number of particles in the other set of particles go to $\infty$ as well, one would expect to find the tacnode statistics. This approach remains an open problem! 

In \cite{AFvM12}, Adler-Ferrari-van Moerbeke resolved the tacnode problem for two groups of non-intersecting random walks ({\em discrete space and continuous time}); an explicit kernel was found, which is representable as a double integral, for which the limit could be taken, thus leading to a kernel expressible -roughly speaking- as the sum of four Airy-like double integrals. 

Delvaux-Kuijlaars-Zhang \cite{DKZ10} then found a kernel defined in terms of the solution of a $4\times 4$ Riemann-Hilbert problem for two groups of non-intersecting Brownian motions ({\em continuous space and continuous time}). K. Johansson \cite{Joh10} then found an explicit kernel for non-intersecting Brownian motions, which seemed quite different from the one in \cite{AFvM12}.

The present paper deals with the random domino tilings of two overlapping Aztec diamonds: it investigates the fluctuations of the domino tilings near the region of overlap. The problem of a single Aztec diamond has been widely investigated by the combinatorics and probability community; the highlight was the existence of an inscribed arctic circle: inside the circle the domino's display a disordered pattern and outside a regular brick wall pattern; see \cite{EKLP,EKLP2,JPS,FS03,Johansson3,Jo02b}. Johansson \cite{Johansson3} then showed that the domino tilings near the arctic circle fluctuate like the Airy process, upon observing the boundary with an appropriate magnifying glass; the process is run with a time which is tangential to the boundary. This was done by showing that domino tilings of Aztec diamonds can be translated into non-intersecting random walks and a point process, which turn out to be a determinantal process. 

The problem of two overlapping Aztec diamonds is new. % and seemed like -in analogy with the two groups 
It translates into two distinct groups of non-intersecting random walks, which are also determinantal ({\em discrete space and discrete time}). In the limit, when the squares of the diamonds get smaller, we give three equivalent kernels for the fluctuations of the domino tilings near the overlap: the two first ones are closely related; the third one coincides with the one obtained for the two groups of non-intersecting random walks above, as in  \cite{AFvM12}; similar methods will be used to obtain the result. The latter is very different from the first two ones. Surprisingly so, they differ due to the use of the Christoffel-Darboux formula in the proof for one representation and not for the other. The first one is shown to coincide with the one found in \cite{Joh10} for the two groups of non-colliding Brownian motions.

%  the difference of proof between these two equivalent kernels lies in the use of the Christoffel-Darboux formula. To obtain the first representation of the kernel, one does not use the Christoffel-Darboux formula and to obtain the second, one does. The two equivalent kernels are very different.

%It is then shown that, in an appropriate scaling limit, when the squares of the diamonds get smaller, % and the size of the diamonds larger, 
% these two representations (evidently) lead to equivalent kernels, but having very different forms. The limiting kernel, using Christoffel-Darboux, is the same as the one found in  for the two groups of non-intersecting random walks. The limiting kernel, obtained by not using Christoffel-Darboux, 

This paper is a first step towards a universality result for the statistical fluctuations near a tacnode, let it be a tacnode in the boundary of a frozen region or a tacnode for two groups of random processes meeting momentarily, including all variations on that theme, discrete and continuous time and discrete and continuous space.  Finally, A. Borodin and M. Duits \cite{BD10} investigate a Markov process with interlacing particles, which leads to a tacnode, but which is a different situation from our paper.%Thus calling this critical infinite-dimensional diffusion the {\em ``tacnode process"} is perfectly justified.

% \newpage

%$\diagdown$  $\diagup$ $\_\!\_\!\_$

\section{Double Aztec Diamonds and main results}
 %\vspace*{1cm}

   \vspace*{-4cm}

    \hspace*{-1cm}  \includegraphics[width=131mm,height=154mm]{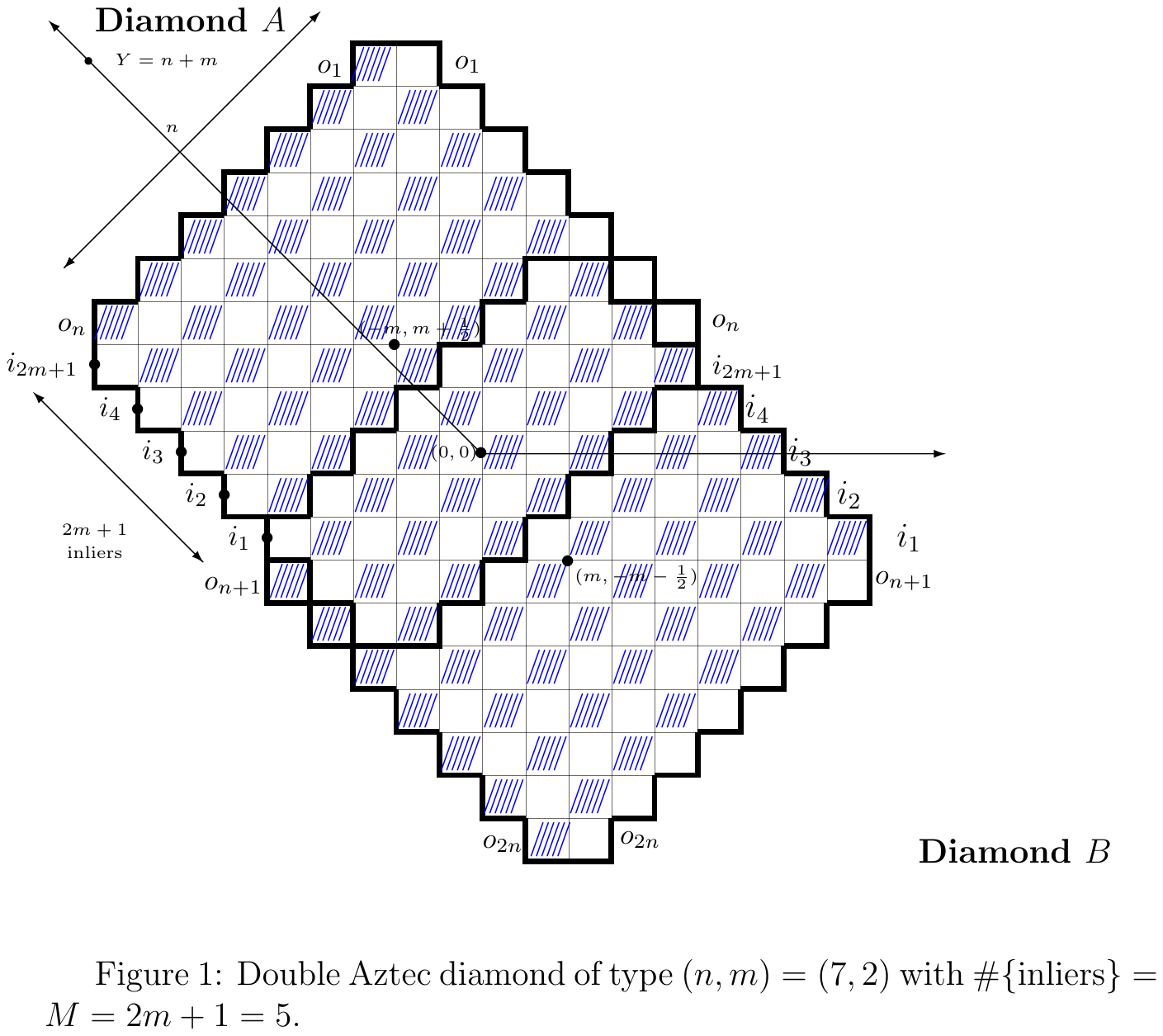}
   
   \vspace*{-4cm}
 
 \noindent  {\bf Domino tilings of double Aztec diamonds.} Consider two Aztec diamonds (checker boards) $A$ and $B$ of equal sizes $n$ ($=$ number of squares on the upper-left side) with opposite orientations; i.e., the upper-left square for diamond $A$ is black and is white for diamond $B$. The diamonds $A$ and $B$ overlap a certain amount, according to the pattern indicated in Figure 1. Let $M =:2m+1$ (taken to be odd) be the number of white squares, belonging to the left-lower boundary of the double diamond $A\cup B$; they correspond to the labels $i_1,\ldots,i_{2m+1}$ in Figure 1. $M$ also equals the number of black squares on the upper-right edge of $A\cup B$. The amount of overlap is given by $n-(2m+1)$. Cover this double diamond randomly by domino's, horizontal and vertical ones, as in Figure 2. The position of a domino on the Aztec diamond corresponds to four different patterns, given in Figure 4 below: North, South, East and West.
 
   Together with this arbitrary domino-tiling of the double Aztec diamond $A\cup B$, one defines a height function $h$ specified by the heights, prescribed on the single domino's according to figure 4 below. Let the upper-most edge of the double diamond $A\cup B$ have height $h=0$. Then, regardless of the covering by domino's, the height function along the boundary of the double diamond will always be as indicated in Figure 2, with height $h=2n$ along the lower-most edge of the double diamond. Away from the boundary the height function will, of course, depend on the tiling; the associated heights are given in Figure 2.

  The height function $h$ obtained in this way defines the domino tiling in a {\em unique} way, because a white square together with its height specifies in a unique way to which domino it belongs to: North, South, East and West; the same holds for black squares. For example, a white square with a constant height along the four edges can only be covered by a North domino. 
   
   The dual height function is obtained by polar reflection through the center of the four domino's, leading to the interchanges North$\leftrightarrow$South and East$\leftrightarrow$West; compare Figures 4 and 5. The dual height function leads to different boundary conditions on the double Aztec diamond, as shown in Figure 3.

 % \newpage
 
     \vspace*{-2cm}

      ~$$  \hspace*{-2cm}
    \mbox{\includegraphics[width=131mm,height=154mm]{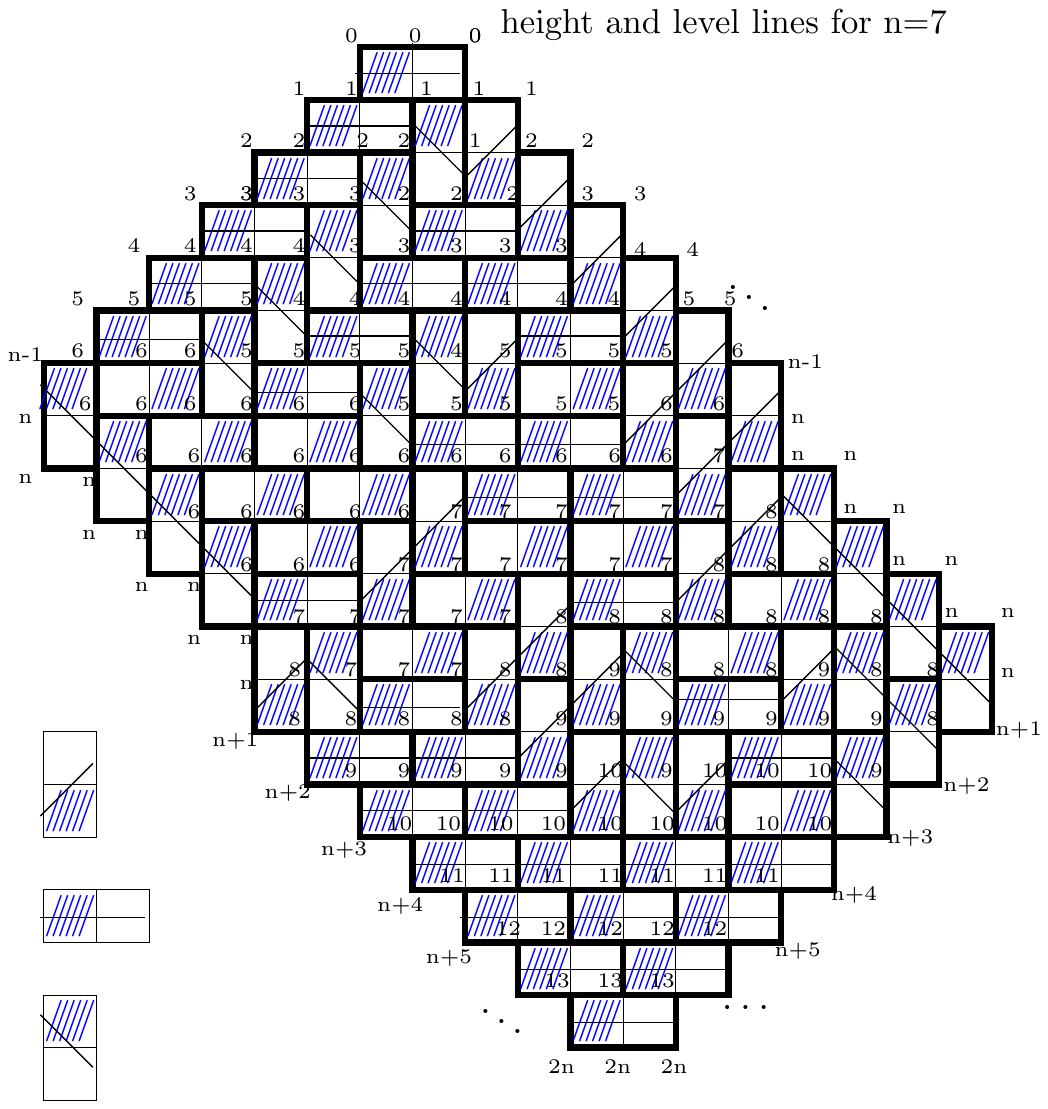}    \hspace{-7cm}\includegraphics[width=131mm,height=154mm]{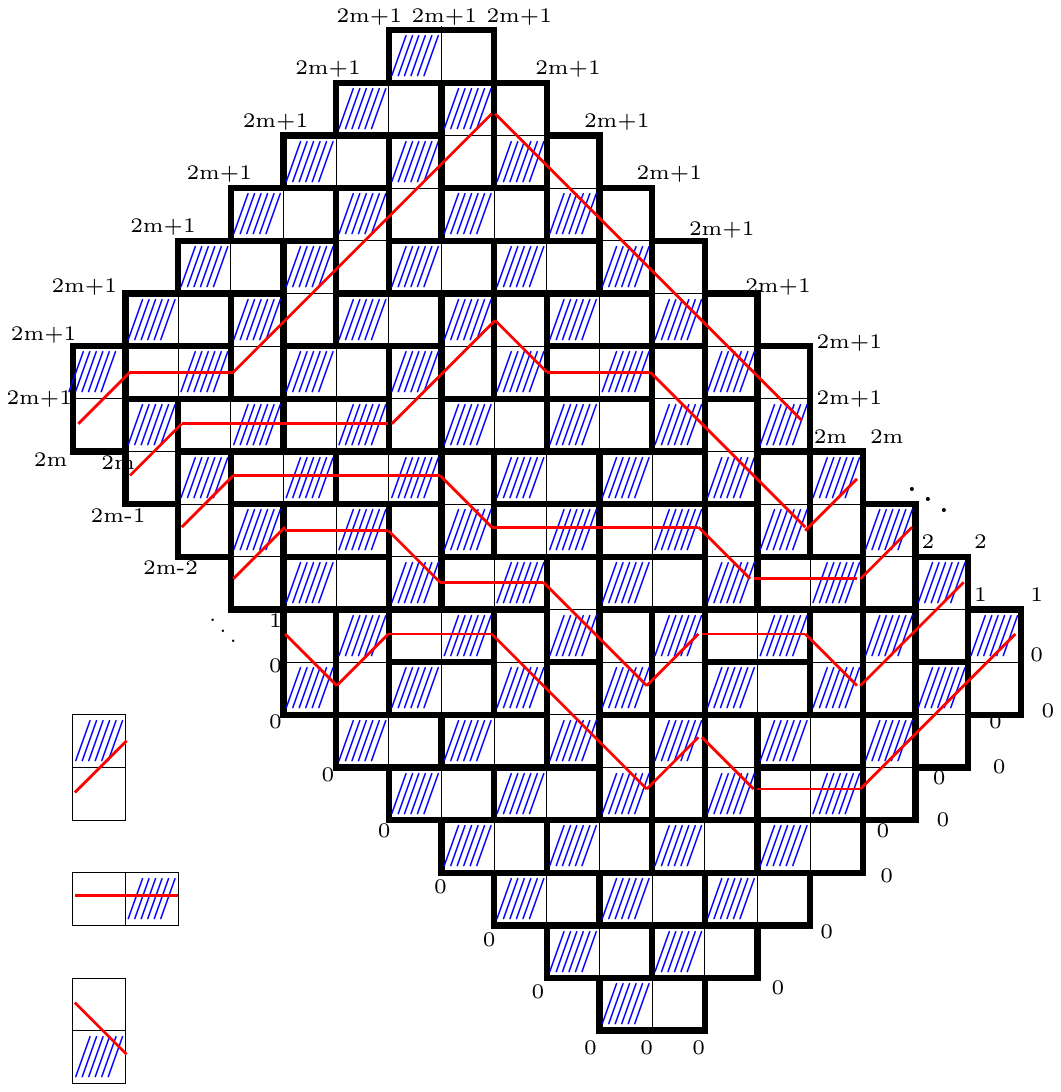}
    }$$

   \vspace*{-6cm}
   
   Fig 2.  Domino-tiling of a double Aztec-diamond, the associated height function $h$ and its level lines.

   Fig 3.  Domino-tiling of a double Aztec-diamond, the dual height $\tilde h$ and its level lines.% function
   
   \bigbreak
   
   \noindent   {\bf Extension of the tiling region}. It is convenient to extend the upper-left side of the Aztec diamond $A$ by means of horizontal South dominoes and doing the same to the right of the diamond $B$, as depicted in figure 8. To be precise, at the upper level, one adds $n$ South domino's, at the next level $n-1$, etc... So, the extension has a fixed tiling. Then, extending the height function to this new region, one has that the whole upper edge of the double Aztec diamond has height $=0$ and the whole lower edge height $=2n$.

     \newpage

    \vspace*{-4cm}

  $$\mbox{ \hspace*{-1cm}  \includegraphics[width=131mm,height=154mm]{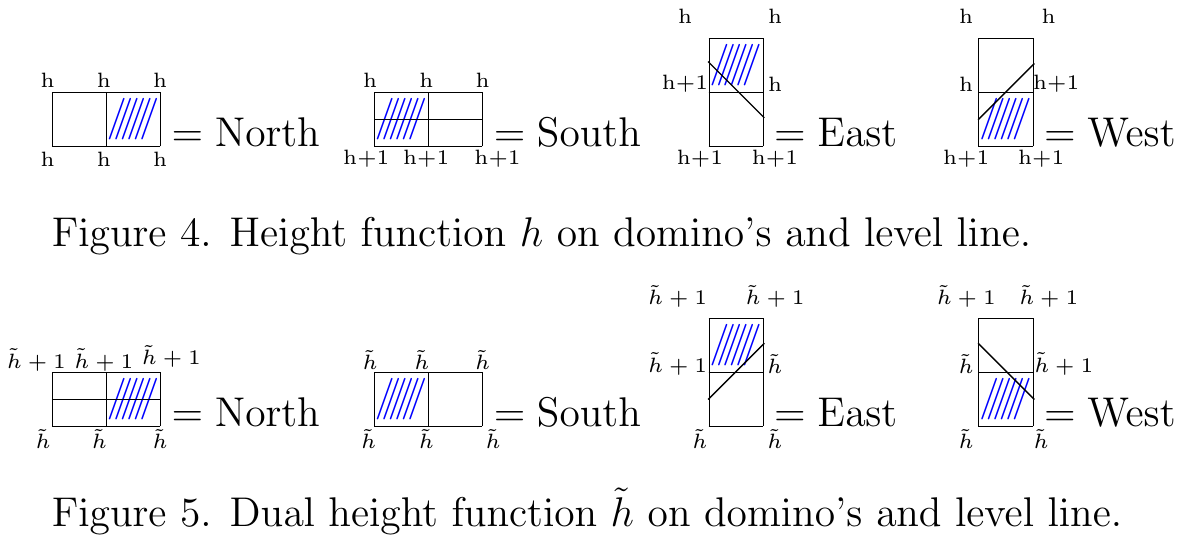}
  }$$
   
    \vspace*{-10cm}

    % \medbreak
   
% \noindent   {\bf Extension of the tiling region}. It is convenient to extend the upper-left side of the Aztec diamond $A$ by means of horizontal South dominoes and doing the same to the right of the diamond $B$, as depicted in figure 8. To be precise, at the upper level, one adds $n$ South domino's, at the next level $n-1$, etc... So, the extension has a fixed tiling. Then, extending the height function to this new region, one has that the whole upper edge of the double Aztec diamond has height $=0$ and the whole lower edge height $=2n$.
   
    %\medbreak

\noindent     {\bf The $2n$ outlier paths}: denote by $o_k$ the $2n$ level lines for this height function at levels $h= k-1/2 $ for $1\leq k\leq 2n $. These paths will be called the ``{\em outliers}". Since these paths correspond to distinct levels, they do not intersect each other. They consist of two groups, an upper-group of $n$ paths $o_1,\ldots ,o_{ n}$, starting from the middle points of the left-edges of the black squares on the the upper-left side of domino $A$ and ending up at the corresponding points to the right of the same diamond $A$, as in Figure 2. Then there is a lower-group of paths $o_{n+1},\ldots ,o_{ 2n}$ starting from the middle points of the left-edges of the black squares on the the lower-left side of domino $B$ and ending up at the corresponding points to the right of the same diamond $B$, also as indicated in Figure 2.

 Given the extension of the tiling region, the outlier paths $o_1,\ldots,o_n$ are now extended by horizontal lines on the left and the paths $o_{n+1},\ldots ,o_{ 2n}$ by horizontal lines to the right; they remain level lines for the height function. 
 
 These outlier paths will now define two kind of particles, blue ``{\em dot-particles}" and blue ``{\em circle-particles}", according to the recipe indicated on the left hand side of Figure 6: put a dot-particle on the left-hand side of the segment of path traversing West and South domino's and a circle-particle on the right hand side of that segment, possibly with superposition of dots- and circle-particles, when edges of dominoes are in common; see Figure 8. 
     Notice that a North domino will never be visited. Both the upper-outliers and  lower-outliers thus have
    $$ \#\{\mbox{circles}\}=\#\{\mbox{dots}\}=n+1 .$$
  
  \noindent {\bf The $2m+1$ inlier paths} $i_{-m},\ldots, i_{m}$ are the non-intersecting level lines for the dual height function $\tilde h$ corresponding to the levels $\tilde h=k-1/2$, for $1\leq k\leq 2m+1$. This height function $\tilde h$ is prescribed for single domino's in Figure 5, and is applied to an arbitrary domino tiling of the double diamond $A\cup B$. Setting the height $\tilde h=0$ at the lower edge of $A\cup B$, the heights of the boundary edge is also completely determined, whatever be the tiling;
 
  \newpage

  % \newpage

    \vspace*{-7cm}

 $$\mbox{  \hspace*{-1cm}  \includegraphics[width=131mm,height=154mm]{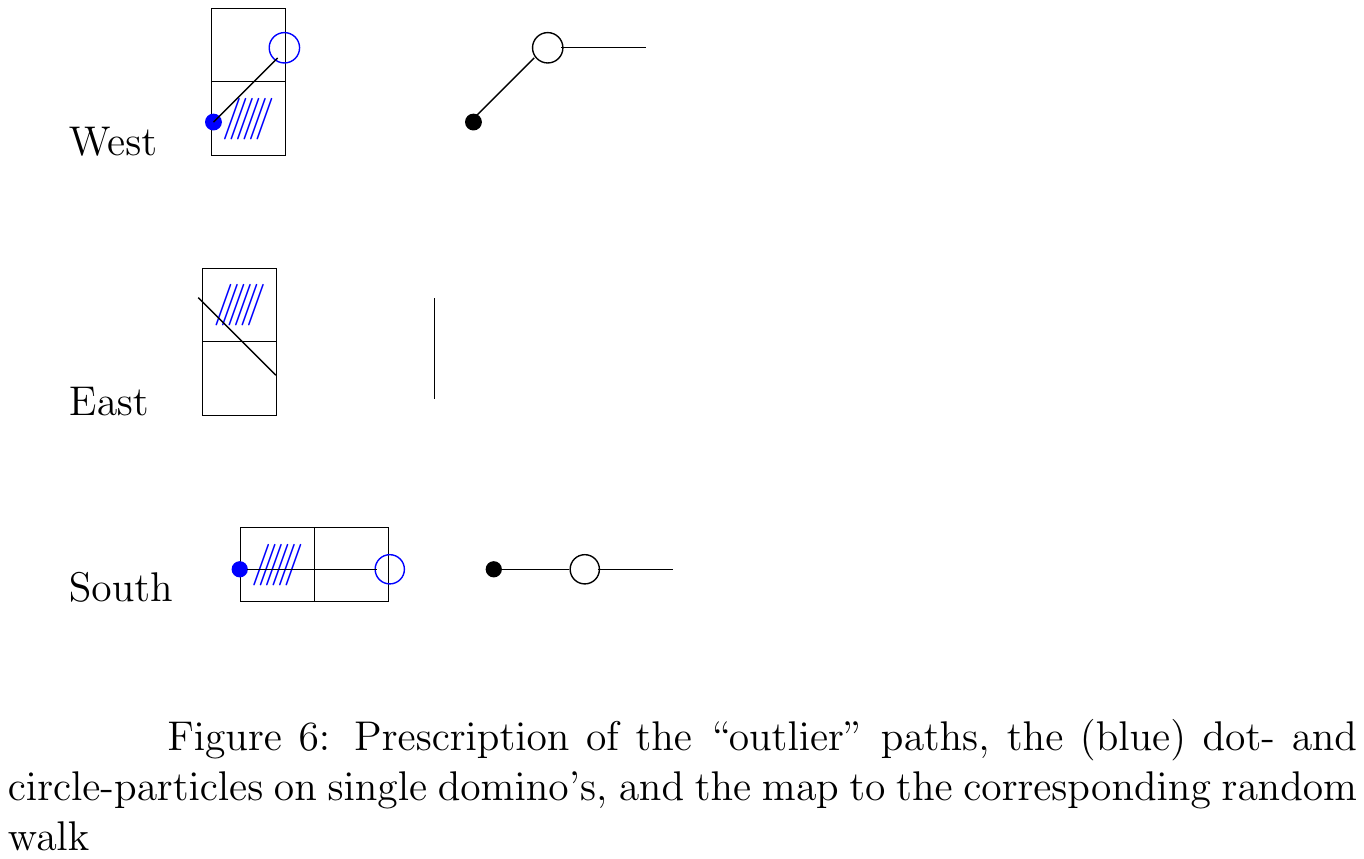}
 }$$
   
    \vspace*{-9cm}

   \hspace*{-1cm}  \includegraphics[width=131mm,height=154mm]{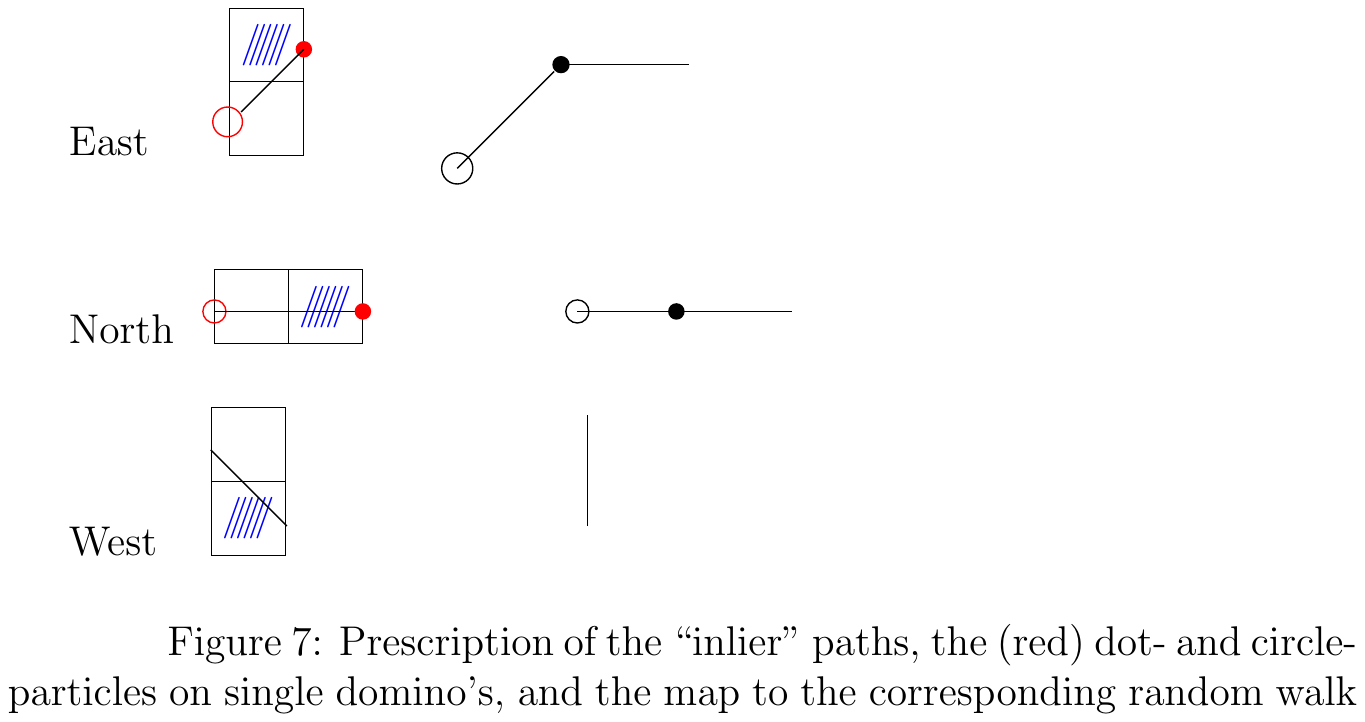}
   
    \vspace*{-7cm}

     \medbreak

   % {\bf The $2m+1$ inlier paths}: 
\noindent  the height of the upper-edge of $A\cup B$ is then $\tilde h=2m+1$, as indicated in Figure 3.
The inlier paths depart from the middle points $i_1,\ldots,i_{2m+1}$ of the edges of the $2m+1$ white squares on the left boundary of the diamonds $A$ and end up at the corresponding locations on the diamond $B$. 

Here also, one defines two kind of  particles, red ``{\em dot-particles}" and red ``{\em circle-particles}", according to the recipe indicated on the left of Figure 7: put a circle-particle on the left-hand side of the segment of path traversing East and North domino's and a dot-particle on the right hand side of that segment. The paths belonging to a vertical domino of type West contain neither circles nor dots. Here South domino's will never be visited.

 %The $2m+1$ paths evolve according to the pattern described in the figure below, going from the white to the black square and putting a circle to the left of vertical and horizontal dominoes and a dot to the right. Circles and dots can coexist at the the same site. As initial condition, one always puts a dot at the initial point and a circle at the end point. So, each path has therefore  $$ \#\{\mbox{circles}\}= \#\{\mbox{dots}\}=n+1 $$

%For the inliers, one has the following map from the domino-walks to the proc ess:

%\newpage

Circle- and dot-particles can then be superimposed when edges of dominoes are in common. As boundary condition, one always puts a dot at the initial points of the paths $i_1,\ldots, i_{2m+1}$ and a circle at the end points. So, as depicted in Figure 9, each path has therefore
    $$
    \#\{\mbox{circles}\}= \#\{\mbox{dots}\}=n+1.
    $$
  The inlier paths will not be used in the statement of the main Theorems, but they will play a crucial role in the arguments, which will first be made for the inlier paths and then, by duality, for the outlier paths.  
  
  \bigbreak
 
{\bf Remark} The height functions $h$ and $\widetilde h$ are not the standard ones, as exhibited in \cite{EKLP}. They can be obtained from one another by an affine transformation.  

\bigbreak
      
    \noindent  {\bf The system of coordinates $(\ell,Y)$, the dot and circle particles} are defined as follows: consider a horizontal ``time-axis" $\ell$ and oblique axes $Y_{\ell}$'s through the values $0\leq \ell \leq 2n+1$; namely, {\em (i)} axes $Y_{2r-1}=(B_r,D_r)$, for $1\leq r\leq n$, traversing white squares and {\em (ii)} axes $Y_{2r}=(A_{r+1},C_{r+1})$ for $1\leq r\leq n$ traversing black squares. 
   The axes $Y_{2r} $ (resp. $Y_{2r-1} $) take on the values $-n-m\leq k\leq n+m$ given by the location of the dots (resp. circles), obtained by moving the dots (resp. circles) belonging to the line (halfway in between the lines $Y_{2r-1}$ and $Y_{2r}$) horizontally towards the axis $Y_{2r}$ (resp. towards the axis $Y_{2r-1}$). The value of the dots and circles is specified by the equidistant horizontal lines beginning with the first one passing through the middle of the bottom most domino (level $Y=-n-m$) and the highest one passing through the middle of the top most domino (level $Y=m+n$).

 %    Consider two horizontal ``time-axes", one with odd times $2r-1$ for $1\leq r\leq n$, traversing the white squares %corresponding to ``circle-particles"  and one with even times $2r$  for $1\leq r\leq n$, traversing the black squares. %corresponding to ``dot-particles", as indicated in Figure 8. 
 %   The different values are given by the equidistant horizontal lines beginning with the first one passing through the middle of the lowest most domino (level $1$) and the highest one passing through the middle of the highest most domino (level $2(m+n)+1$); thus these values range from $1$ to $2(m+n)+1$. 
    
    In other terms, an integer corresponds to a gap on the $Y_{2r-1}$-line, when the $Y_{2r-1}$-axis traverses a domino, not containing a circle-particle along that axis; similarly, an integer corresponds to a gap on the $Y_{2r}$-axis, when the $Y_{2r}$-axis traverses a domino, not containing a dot-particle along that axis. See Figure 8 for an example, involving the domino tiling given in Figure 2. So, the locations of the gaps on the axis $Y_{2r-1}$, say,  indicate a change in pattern precisely at those locations.%say, give a measure of stochasticity of the domino tiling in that area;  the pattern changes! 
    
    Another way of describing the dot particles on $Y_{2r}$ and the circles on $Y_{2r-1}$ is as follows: put a dot on $Y_{2r}$, in the middle of the black square, whenever the axis $Y_{2r}$ intersects an outlier path in that black square. Put a circle on $Y_{2r-1}$, in the middle of the white square, whenever the axis $Y_{2r-1}$  intersects an outlier path in that white square.
    
\bigbreak

\noindent  {\bf A weight on domino's, a probability on domino tilings and on non-intersecting random walks}: put the weight $0<a<1$ on vertical dominoes and the weight $1$ on horizontal dominoes, so that the probability of a tiling configuration $T$ can be expressed as 
\be\BP(\mbox{domino tiling}~T)=\frac{\dis a^{\#\mbox{vertical domino's in $T$} }}{\dis\sum_{\mbox{\tiny all possible tilings $T$}}{a^{\#\mbox{vertical domino's in}~T}}}
\label{Ptiling}\ee
     
The definition (\ref{Ptiling}) of the probability measure on domino tilings is not affected by the extension of the tiling region, since the extension has a fixed

\newpage

        \newpage

   \vspace*{-5cm}

%\hspace*{-4cm}  \includegraphics[width=220mm,height=260mm]{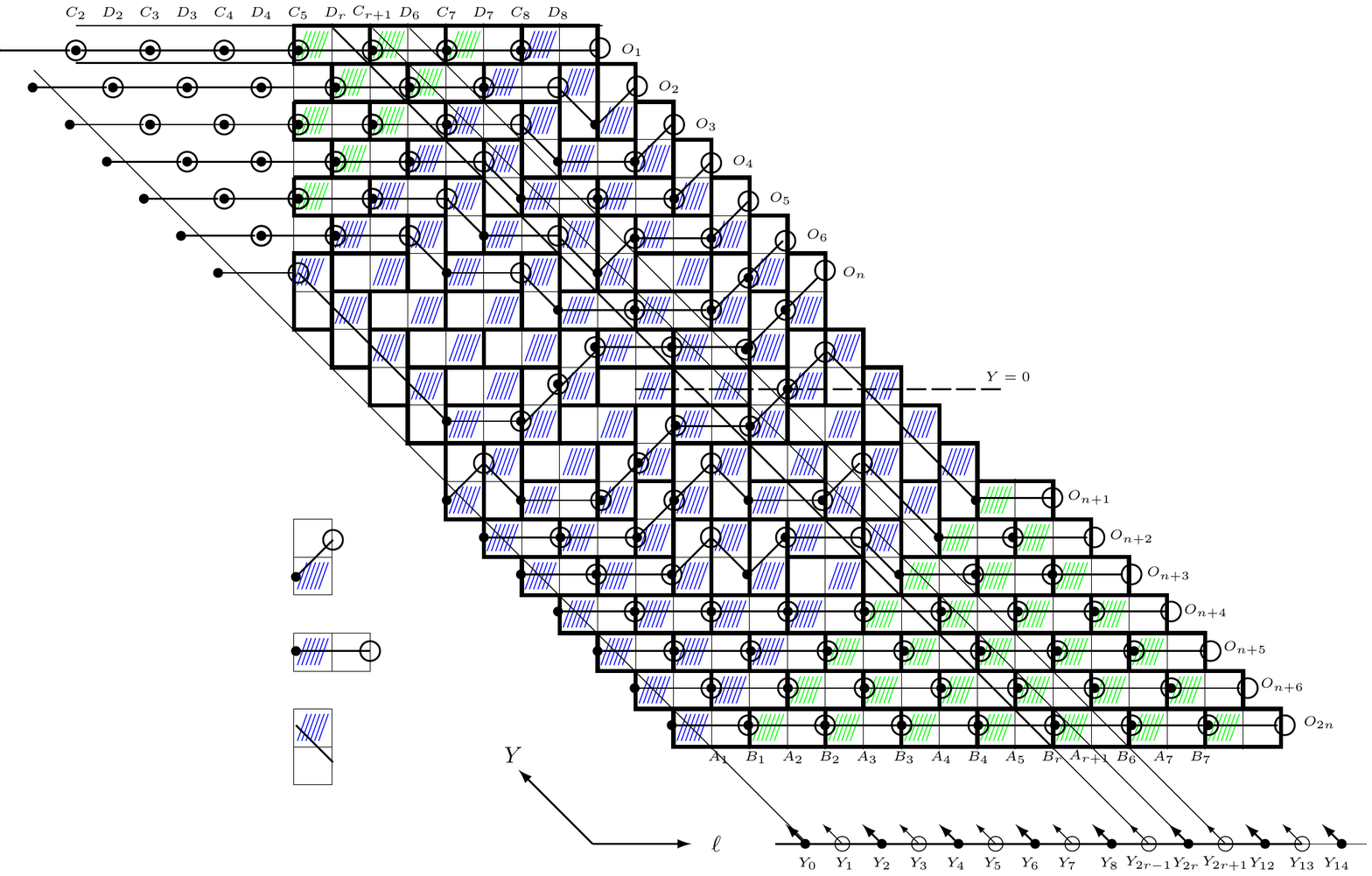}

\hspace*{-1cm}  \includegraphics[width=131mm,height=154mm]{Fig8_Domino_Outliers.pdf}

   \vspace*{-5.5cm}

 {\footnotesize  Figure 8. Outlier non-intersecting domino paths, circles and dots. The coordinate $Y_{2r} $ (resp. $Y_{2r-1} $) records the location of the dots (resp. circles), obtained by moving the dots (resp. circles) in between $Y_{2r} $ and $Y_{2r-1} $ to the axis $Y_{2r} $ (resp. $Y_{2r-1} $). The values thus obtained correspond exactly to the location of the dots (resp. circles) on $Y_{2r}$ (resp. $Y_{2r-1} $) in the lattice paths description of Figure 16. E.g. for $r=5$, $Y_{2r}$ takes on the values $-9=-n-m\leq k\leq n+m=9$, except for gaps at $-4,~-2,~0,~4,~6$.}

   \vspace*{-2.5cm}

   \hspace*{-1cm}  \includegraphics[width=131mm,height=154mm]{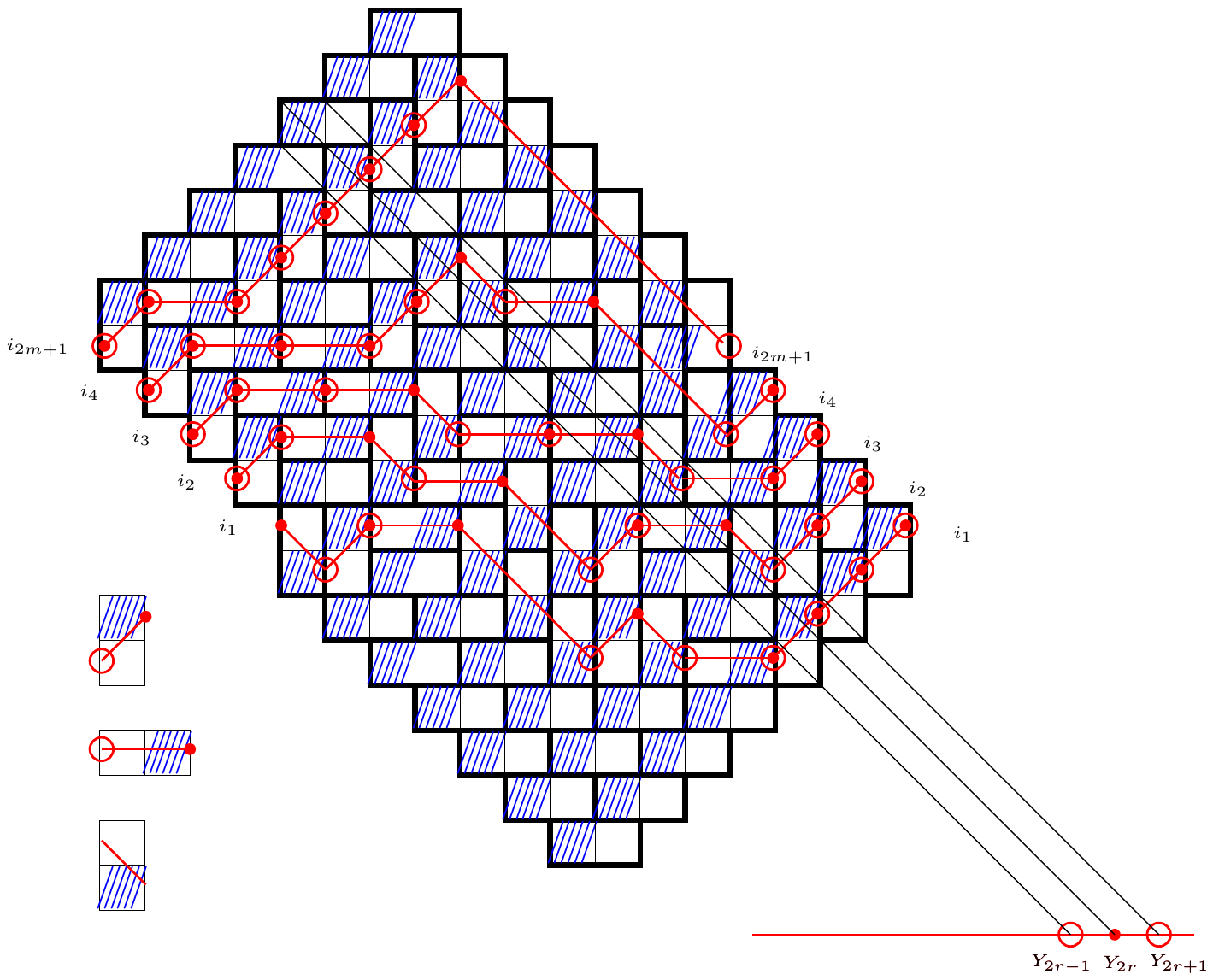}
   
   \vspace*{-6.5cm}

  {\footnotesize Figure 9. Inlier non-intersecting domino paths $i_1,\ldots , i_{2m+1}$. Here the coordinate $Y_{2r} $ (resp. $Y_{2r-1} $) records the location of the red dots (resp. red circles), obtained by moving horizontally the red dots (resp. red circles) in between $Y_{2r} $ and $Y_{2r+1} $ to the axis $Y_{2r} $ (resp. $Y_{2r+1} $). The values thus obtained correspond exactly to the location of the dots (resp. circles) on $Y_{2r}$ (resp. $Y_{2r+1} $) in the lattice paths description of Figure 15. E.g. $Y_{2r}$ takes on the values $-4,~-2,~0,~4,~6$.}

\vspace*{1cm}

 \newpage
 
 %\vspace*{-6.5cm}
%\hspace*{-3cm} \rotatebox{180}{{\includegraphics[width=171mm,height=203mm]{DoubleAztecInliers.pdf}}}

%\vspace*{-5.0cm} 

%Figure 4: 

       %, one half square towards the line $Y_{2r}$ (resp. $Y_{2r-1} $).
    %
%    To each time, one associates a line ($Y$-coordinate) in the direction of the arrows, recording to which ``{\em level}" of dominoes the circle-particles belong to for the odd axes $Y_{2r-1}$, and dot-particles for the even axes $Y_{2r}$;  Of course, the axes $Y_{2r-1}$ and $Y_{2r}$ are meant to coincide. 
%
%E.g., in Figure 5, $Y_1$ takes on the values  $Y_1=1,~2,~3,~4,~5,~6,~8,~13,~14,~15,~16,~,17,~18,~19$, with gaps at $7,~9,~10,~11,~12$; $Y_2$ takes on the values $Y_2=1,~2,~3,~4,~5,~6,~7,~9,~14,~15,~16,~17,~18,~19$, with gaps at $8,~ 10,~11,~12,~13$. 
        
     %\bigbreak   
     
    \noindent tiling. This domino measure induces a probability on the outlier configurations. This point process is a determinantal process, whose kernel will be given in Theorem \ref{main1'}. This measure also maps to a probability measure on non-intersecting random walk paths. The duality between the ``inliers" and ``outliers", to be shown in Section \ref{sect2}, will enable us to obtain the formula of the correlation kernel for the outlier point process.

\bigbreak

\noindent {\bf The Frozen Region for the Double Aztec Diamond, when $n\to \infty$}. It is known from Jockush, Propp and Shor \cite{JPS} %and Johansson \cite{Johansson3} 
 that in the limit for a single diamond the stochastic region lies inside an ``arctic" ellipse and the frozen region outside that region; if, say, diamonds $A$ or $B$ would be by themselves, the ellipses would be given by 
$$  \frac{(x \pm\tfrac r2)^2}{p}+\frac{(y \pm\tfrac r2)^2}{q}=1,~~ \mbox{with} ~q=\frac{a }{a+a^{-1}}\mbox{  and  }p=1-q=\frac{a^{-1} }{a+a^{-1}},
$$
in terms of the weight $a$ and in terms of the coordinates $(x,y)$, centered in the middle of the double diamond $A\cup B$ rescaled by $1/n$. Thus the frozen region for each individual diamond would lie outside these ellipses.
%$$  \frac{x_1^2}{p}+\frac{y_1^2}{q}=1,~~ \mbox{with} ~q=\frac{a }{a+a^{-1}}\mbox{  and  }p=1-q=\frac{a^{-1} }{a+a^{-1}}.
%$$
If there would be not much overlapping of the double Aztec diamonds $A\cup B$, so that the two ellipses have no point in common, it seems reasonable to assume that in the limit the frozen region will be outside these two same ellipses. Augmenting the amount of overlap will bring these two individual ellipses to a point where they merely touch, as indicated in Figure 10. Geometrically, this would be the case when 
 the amount of overlap $ n-2m-1 $ of the two diamonds, rescaled by $1/n$ for $n$ large, equals
$$
\frac{n-2m-1}n=1-\frac{2m+1}n\simeq 1-\frac 2 {a+a^{-1}}=1-2\sqrt{pq}=:1-r.
$$
Indeed, the centers $(\mp m,\pm (2m+1))$ of the diamond $A$ and $B$, rescaled by $1/n$, are then given by 
$$\left(\mp \frac {m}n,\pm \frac {m+\tfrac 12}n\right) \simeq \Bigl(\mp \frac 1 {a+a^{-1}},\pm \frac 1 {a+a^{-1}}\Bigr)=(\mp \tfrac r2,\pm \tfrac r2),
    $$
%so that the other ellipse is given by 
%$$  \frac{(x_1-r)^2}{p}+\frac{(y_1+r)^2}{q}=1,%~~ \mbox{with} ~q=\frac{a^2}{1+a^2}\mbox{  and  }p=1-q.
%$$
and thus these two ellipses are tangent to each other at $(0,0)$. %the point half-way between the centers $(\mp\tfrac r2,\pm\tfrac r2)$. 
So, one expects the frozen region for a domino covering of the double Aztec diamond $A\cup B$ to be given by the region outside the two ellipses within $A\cup B$; this is not shown in this paper. Still this argument suggest the scaling 
$$\frac mn \simeq  \frac 1 {a+a^{-1}}.$$
One might conjecture that the two regions, in between the ellipses and the boundary of the double diamond (having the origin in common), are frozen with North domino's.

   \vspace*{-3cm}

    \hspace*{-1cm}  \includegraphics[width=131mm,height=154mm]{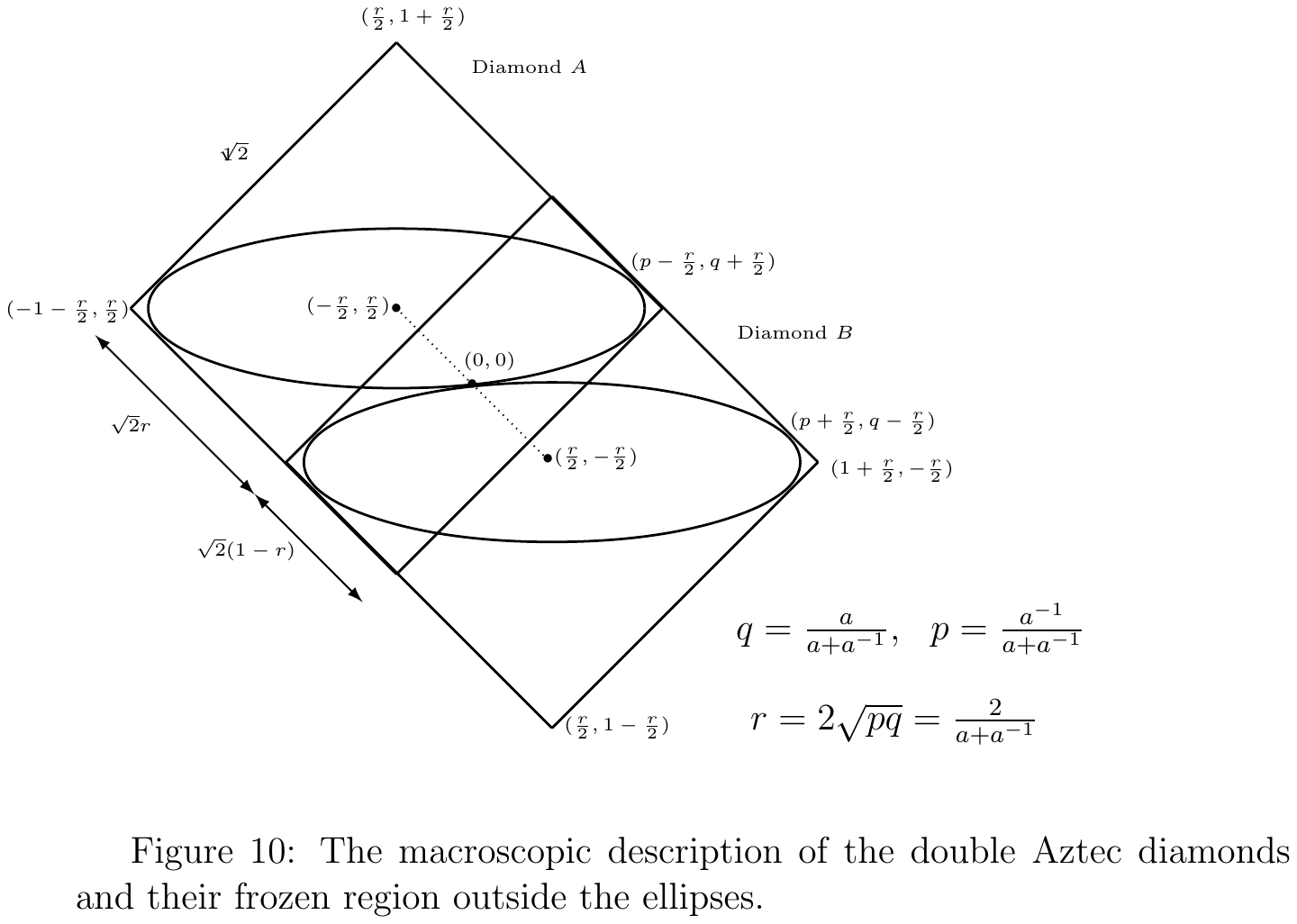}
   
   \vspace*{-6cm}

% \newpage

%\vspace*{-14cm}
% \hspace*{-1cm}
 %{\includegraphics[width=80mm,height=125mm]{DominoWalk_inliers.pdf}}
%\hspace{.1cm}
%  {\includegraphics[width=80mm,height=125mm]{DominoWalk_outliers.pdf}}

%

       % \vspace*{1.0cm}

          \bigbreak 
          
       %   \newpage   
         
%       $0<r_3<r_2$ and $a<r_2<1/a$    
% $$\begin{aligned}
% \lefteqn{{\mathbb K}_{n }^{\mbox{\tiny OneAztec}} (2r,x;2s,y)}\\ &\!=\!\frac {(-1)^{x\!-\!y}}{(2\pi \I)^2} \oint_{\gamma_{r_3}}du 
%  \oint_{\gamma_{r_2}}\frac{dv}{v\!-\!u} 
% \frac{v^{-x}}{u^{1-y}} \frac{(1+au)^{n -s}(1-\tfrac au)^{s}}
% {(1+av)^{n -r}(1-\tfrac av)^{r}}
%
%   -\Id_{s>r}\psi_{2(s-r)}(x,y)% \int_{\Gamma_{0,a}}\frac {dz}{2\pi \I z} 
  % z^{x-y}\left(\frac{1+az}{1-\frac az}\right)^{2(s-r)}
 % \end{aligned} $$

%  $$\begin{aligned}
% \lefteqn{\hspace*{-1.5cm}
%  {\mathbb K}_{n+1}^{\mbox{\tiny OneAztec}}
% (2(n-r+1 ),m-x+1;2(n-s+1 ),m-y+1) }\\
% &=S(2r,x; ~2s, y)  -  \Id_{s<r}(-1)^{x-y}\psi_{2(s-r)}(x,y) \end{aligned}$$ 
        
\noindent{\bf Main results}: In view of Theorem \ref{main1'}, define the kernel for the one-Aztec diamond obtained from \cite{Johansson3} by setting $u\mapsto -u,~v\mapsto -v$,
\be\begin{aligned}
 \lefteqn{{\mathbb K}_{n }^{\mbox{\tiny OneAztec}}
 (2r,x;2s,y)}\\
 &\!=\!\frac {(-1)^{x\!-\!y}}{(2\pi \I)^2} \oint_{\gamma_{r_3}}du 
  \oint_{\gamma_{r_2}}\frac{dv}{v\!-\!u} 
 \frac{v^{-x}}{u^{1-y}} \frac{(1+au)^{n -s}(1-\tfrac au)^{s}}
 {(1+av)^{n -r}(1-\tfrac av)^{r}}
   -\Id_{s>r}\psi_{2(s-r)}(x,y)% \int_{\Gamma_{0,a}}\frac {dz}{2\pi \I z} 
  % z^{x-y}\left(\frac{1+az}{1-\frac az}\right)^{2(s-r)}
 , \end{aligned} \label{OneAztec}\ee 
with the integral $\psi_{2(s-r)} $, defined below.
$\gamma_{r}$ refers to circles of radius $r$ around $0$. In (\ref{OneAztec}) and throughout the paper many different radii will be considered, subjected to the following inequalities:
  \be 
0<a<r_3<r_2<s_2<s_1<r_1<s_3<a^{-1}~~\mbox{  and  }~~ 0<a<\rho<1.\label{rad'}\ee
The contours 
$\Gamma_0$ or $\Gamma_{0,a}$  refers to a contour containing $0$ or $0,a$, but no other pole of the integrand; setting $n$ even, we now define the following functions, used throughout the paper:    
        $$
\begin{aligned}
S(2r,x; ~2s, y)  %:={\oint_{ \gamma_{r_3}}\frac{ dz}{z}  \oint_{ \gamma_{r_2}} \frac{dw}w \frac{F_{-y,s}(z)G _{-x,r}(w)w}{w-z}  } \\
& : =\frac{(-1)^{x-y}} {(2\pi \I)^2}\!\oint_{\gamma_{r_3}}du
 \oint_{\gamma_{r_2}}\frac{dv}{v\!-\!u}\frac{v ^{x-m-1 } }{u ^{y-m } } 
 \frac{(1+au)^s(1-\tfrac au)^{n-s+1}}
 {(1+av)^r(1-\tfrac av)^{n-r+1}}
 \\
 a_{x,s}(k) %&:=\oint_{\gamma_{r_1}}\frac{dz}{z} F_{x,s}(z) h^{(1)}_k(\tfrac 1z)\\
 &:=\frac{(-1)^{k-x}}{(2\pi \I)^2}\oint_{\gamma_{s_1}}du
 \oint_{\gamma_{r_1}}\frac{dv}{u-v}~
 \frac{v^{x+m}}{u^{k+1}}\frac{(1+av)^s(1-\frac av)^{n-s+1}}
 {\vp_a(2n;u)% (1+av)^n(1-\frac av)^{n+1}
 }\\
  b _{y,r}(\ell) % &:=\oint_{\gamma_{r_2}} \frac{dw}{w} G _{y,r}(w) h^{(2)}_\ell(w).\\
   &:=\frac{(-1)^{\ell-y}}{(2\pi \I)^2}\oint_{\gamma_{r_2}}du
 \oint_{\gamma_{s_2}}\frac{dv}{u-v}~
 \frac{v^{\ell}}{u^{y+m+1}}\frac{\vp_a(2n;v)%(1+au)^n(1-\frac au)^{n+1}
 }{(1+au)^r(1-\frac au)^{n-r+1}}
 \\
  %\end{aligned} \label{ab}\ee
%\be \begin{aligned} 
 %
 %\frac{\vp_a(n;z)}{\vp_a(n;w)}
   %\left(\tfrac{1+az}{1-\frac{a}{z}}\right)^{s-\tfrac n2}\!\!\left(\tfrac{1+aw}{1-\frac{a}{w}}\right)^{ \tfrac n 2 -r}
    K (k,\ell)  &:=
 \frac{(-1)^{k+\ell}}{(2\pi \I)^2}  \oint_{\gamma_\rho }du\oint_{ { \gamma_{\rho^{-1}} }}\frac{dv}{v-u}\frac{u^{\ell}}{v^{k+1}}
    \frac
 {\vp_a(2n,u)%(1+au)^n(1-\frac au)^{n+1}
 }
   {\vp_a(2n,v)%(1+av)^n(1-\frac av)^{n+1}
   }=(K ^{(1)} (0)  )_{k,\ell} %\frac{z-u}{z-v}
  \end{aligned}$$
  $$\begin{aligned} %\\
    (K ^{(1)} (\tfrac 1z)  )_{k,\ell}  &:=
 \frac{(-1)^{k+\ell}}{(2\pi \I)^2}  \oint_{\gamma_\rho }du\oint_{ { \gamma_{\rho^{-1}} }}\frac{dv}{v-u}\frac{u^{\ell}}{v^{k+1}}
    \frac
 {\vp_a(2n,u)%(1+au)^n(1-\frac au)^{n+1}
 }
   {\vp_a(2n,v)%(1+av)^n(1-\frac av)^{n+1}
   }\frac{z-u}{z-v},\\
%\end{aligned}$$
%$$\begin{aligned}%\\
\psi_{ 2s}(x,y)&:= \oint _{\Gamma_{0,a}} \frac{dz}{2\pi iz} z^{x-y}
\left(\frac{1+az}{1-\frac az}\right)^{s},
~~~ \vp_a(2n;z):={(1+az)^n}{ (1-\tfrac az)^{  n+1 }}
\\   
%\end{aligned}
%\label{7}\ee
%
%\be\begin{aligned}
% F_{x,s}(z)&:= \frac{ (-1)^x  }{2\pi i } \frac{z^{x+m+1}\vp_a(n;z)}{\left(\frac{1+az}{1-\frac az}\right)^{ \tfrac n 2 -s}}, % \mbox{~~  and ~~ }
% a_x(k):=\oint_{\gamma_{r_1}}\frac{dz}{z} F_x(z) h^{(1)}_k(z^{-1})
%\\
 % 
%  ~~~G_{y,r}(w)  :=
% \frac{ (-1)^y }{2\pi i  } \frac{\left(\frac{1+aw}{1-\frac aw}\right)^{\tfrac  n2-r }}{w^{y+m+1}\vp_a(n;w)} %\mbox{ ~~ and ~~ }
%  b_y(k):=\oint_{\gamma_{r_2}} \frac{dw}{w} G _y(w) h^{(2)}_k(w).
 \end{aligned} $$
 \be \begin{aligned}%\label{FG}\ee
g^{(1)}_{\ell}(n)&= -\oint_{\Gamma_{0}} \frac{du}{2\pi \I}(-u)^\ell \vp_a(2n;u)%(1+au)^n\left(1-\frac{a}{u}\right)^{n+1}
,~~~
g^{(2)}_{\ell}(n) =  -\oint_{\Gamma_{0,a}} \frac{du}{2\pi \I}\frac{(-u)^{-\ell-2}}{
\vp_a(2n,u)}
\\
%\be \begin{aligned}
h^{(1)}_{k}(\tfrac 1z)&
:= \oint_{\Gamma_{0,a}} \frac {-dv}{2\pi \I(v\!-\!z)} \frac {(-v)^{-k-1}}{\vp_a(2n,v)}    
% \\&=-\sum_{\alpha=0}^\infty (-z)^\alpha g^{(2)}_{k+\alpha}(n)
%+\frac{1
%}{(-z)^{k+1}\vp_a(2n,z)}\\
 %\\
,~~~h^{(2)}_{\ell}(w)  :=  \oint_{\Gamma_{0}} \frac{-dv}{2\pi \I(v\!-\!\tfrac 1w)}\frac{\vp_a(2n;v^{-1})}{(-v)^{\ell+1}} 
 % \\ \\
%&  =  -\oint_{\Gamma_{0,w}} \frac {w~du}{2\pi i(u-w)}  {(-u)^{\ell}\vp_a(2n,u)}\\
% &=-\sum_{\alpha=0}^\infty (-w)^{-\alpha} g^{(1)}_{\ell+\alpha}(n)
%+(-w)^{\ell+1}\vp_a(2n,w)
%~.~~~
%\end{aligned}\label{25h}\ee
%\be 
%\begin{aligned}
 % & (K^{(2)\top}(0))=(K^{(1)}(0))
 \end{aligned}\label{Idef}\ee
 For future use, $g^{(i)}(n)$ is a vector whose $\ell$th component is $g^{(i)}_\ell (n)$; similarly, $h^{(i)} (.)$ is a vector whose $\ell$th component is $h_\ell^{(i)} (.)$.  Also set (sometimes $p$ will be inside and sometimes outside)\be \begin{aligned}
  H_p^{(1)}(z^{-1})  := \det(\Id-K^{(1)}(z^{-1}))_{\ell^2(p,p+1,...)}& = \det(I-K_p^{(1)}(z^{-1}))\\&=\det(\Id-K^{(1)}(z^{-1}))_p ,\\
   \mbox{  with } K_p^{(1)} =\chi_{[p,p+1,\ldots]} &(K^{(1)})_{k,\ell}\chi_{[p,p+1,\ldots]} .\end{aligned}\label{three}\ee
   The same notation will be used for kernels on the reals, like 
   $$
\det(\Id-K)_{\sigma}=\det (\Id -   \raisebox{1mm}{$\chi$}{}_{[ \sg,\infty)}K \raisebox{1mm}{$\chi$}{}_{[ \sg,\infty)} )  
\mbox{  and  }
(\Id -    K  )^{-1}_{\sg}=
(\Id- \raisebox{1mm}{$\chi$}{}_{[ \sg,\infty)}K \raisebox{1mm}{$\chi$}{}_{[ \sg,\infty)})^{-1}.
$$
  We now state one of the two main Theorems, where $\BP$ refers to the probability (\ref{Ptiling}), defined above. The forms of the same kernel obtained in Theorem  \ref{main1'} are quite different; it is the use of the {\em Christoffel-Darboux formula} at some point of the proof which is responsible for the seemingly entirely different forms (\ref{K1}) and (\ref{K2}) of that same kernel.
  %\vspace*{1cm}
  \noindent The main event for which the probability will be computed in this paper concerns the dot-particles, thus belonging the even lines $Y_{2r}$. The following events are all  equivalent: %(see Figure 11):
 $$\begin{aligned}
& \left\{\mbox{The line}~Y_{2r} ~\mbox{has a gap} \supset [k,\ell]\right\}
 \\&:=\left\{\mbox{The line $Y_{2r}$ has no dot-particles along the interval $[k,\ell] \subset Y_{2r}$}\right\}
\\&= \left\{\begin{aligned}&\mbox{The domino's covering black squares along the  }\\
& \mbox{interval $[k,\ell] \subset Y_{2r}$ are oriented left or down}\end{aligned}\right\}
 \\&=\left\{\mbox{The height function $h$ is flat along the interval $[k,\ell] \subset Y_{2r}$}\right\}
 %\end{aligned}
 %$$
 %$$\begin{aligned}  
\\ &=\left\{\begin{aligned}&\mbox{The line $Y_{2r} $ does not intersect any of the outlier}\\
  &\mbox{paths in the black squares along $[k,\ell]\subset Y_{2r}$ }\end{aligned}\right\}
 \end{aligned}
 $$

     % \newpage

         \vspace*{-3cm}

   \hspace*{-1cm}  \includegraphics[width=131mm,height=154mm]{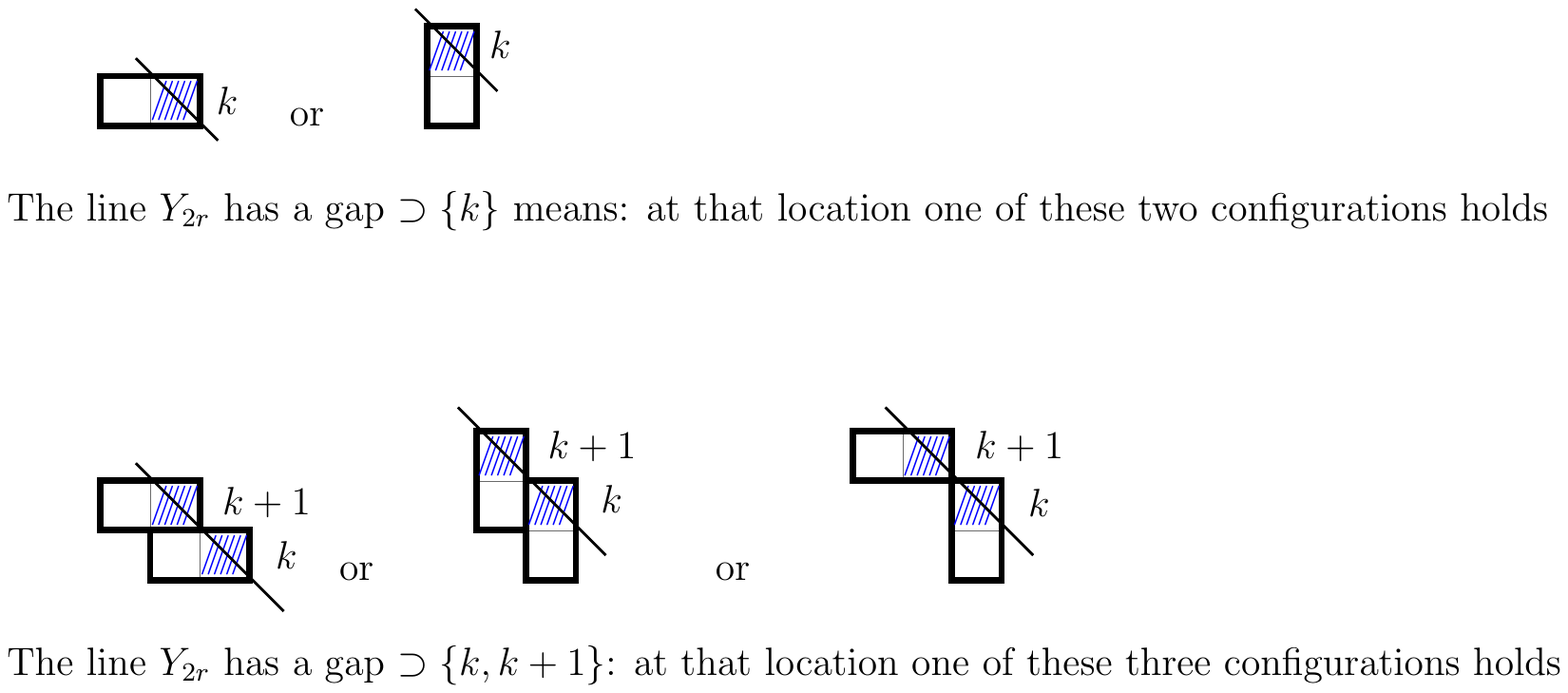}

   \vspace*{-8cm}
   
   Figure 11: The line has gaps $\supset \{k\}$ or $\{k,k+1\}$.
   
    \vspace*{1cm}

 In Theorem \ref{main1'}, one shows that the dot-particles along the lines $Y_{2r}$ with even indices form a determinantal process. The circle-particles and the combination of circle- and dot-particles form, at odd and even line $Y_\ell$ respectively, a determinantal process with a kernel which could be computed as well. 
      
 \bigbreak

 \begin{theorem} \label{main1'} The dot-particles for the outlier paths form a determinantal point process with correlation kernel  $\widetilde\BK^{\rm ext}_{n,m}$ given by (\ref{K1}) under the probability measure induced by (\ref{Ptiling}). In particular, given integers $-n-m<k\leq \ell< n+m $ ($n$ even), the following probability holds for the single line $Y_n$ ($Y$-axis in the middle) :
        \be
        {\mathbb P}\left(\mbox{The line}~Y_n ~\mbox{has a gap} \supset [k,\ell]\right)
        =\det\left(\Id-\chi_{[k,\ell]}\tilde{\mathbb K}_{n,m}\chi_{[k,\ell]}\right)
        \label{I1}\ee
 where $
 \widetilde \BK_{n,m}(x,y):=
 \widetilde\BK^{\rm ext}_{n,m}(n,x;n,y) 
,$ the extended kernel given below. For multiple lines $Y_{2r_i}$, $1\leq i\leq s$,
        \be\begin{aligned}
        \lefteqn{
        {\mathbb P}\left(\bigcap_{i=1}^{s}\left\{\mbox{the line}~Y_{2r_i} ~\mbox{has a gap} \supset [k_i,\ell_i]\right\}\right)
   }\\
  & \hspace*{2cm}     =\det\left(\Id-\left[\chi_{[k_i,\ell_i]}\tilde{\mathbb K}^{\rm ext}_{n,m}(2r_i,x_i;2r_j,x_j)\chi_{[k_j,\ell_j]}\right]_{1\leq i,j\leq s}\right).
    \end{aligned}  \label{I2}  \ee
$\tilde{\mathbb K}^{\rm ext}_{n,m}$ is a perturbation of the one-Aztec diamond extended kernel, with an inner-product 
 involving the resolvent of the kernel\footnote{$\la f(k), g(k)\ra_{\ell^2 (2m+1,\ldots)}=\sum_{2m+1}^{\infty} f(k)g(k).
$ Also remember the notations in (\ref{three}).} $K%_{2m+1}^{(1)}(0)
 $, defined in (\ref{Idef}):
\be
  \begin{aligned}
\lefteqn{ \hspace*{-1cm}{(-1)^{x-y}   \widetilde\BK^{\rm ext}_{n,m}(2r,x;2s,y) }}\\
&= {\mathbb K}_{n+1}^{\mbox{\tiny \rm OneAztec}}
  \bigl(2(n-r+1 ),m-x+1;2(n-s+1 ),m-y+1\bigr) %  
% -\Id_{s<r} (-1)^{x-y}\psi_{2(s-r)}(x,y)  +S(2r,x;2s,y)%
 \\  
 &~~~~+\left\la(\Id-K )_{2m+1}^{-1}a_{-y,s}(k),b_{-x,r}(k) \right\ra _{\ell^2 (2m+1,\ldots)}    ,\end{aligned}  %
 \label{K1}\ee
 %\be
%  \begin{aligned}
% {(-1)^{x-y}   \widetilde\BK^{\rm ext}_{n,m}(2r,x;2s,y) }&=%{\mathbb K}_{n+1}^{\mbox{\tiny \rm OneAztec}}
 %(2(n-r+1 ),m-x+1;2(n-s+1 ),m-y+1) %  
% -\Id_{s<r} (-1)^{x-y}\psi_{2(s-r)}(x,y)  +S(2r,x;2s,y)%
 %\\  
% &~~~~+\left\la(\Id-K )_{2m+1}^{-1}a_{-y,s}(k),b_{-x,r}(k) \right\ra _{\ell^2 (2m+1,\ldots)}    ,\end{aligned}  %
% \label{K1}\ee
 with
 \be
 \begin{aligned}
  {\mathbb K}_{n+1}^{\mbox{\tiny \rm OneAztec}}
 (2(n-r+1 )&,m-x+1 ;2(n-s+1 ),m-y+1)
 \\
 &=-\Id_{s<r}  (-1)^{x-y}\psi_{2(s-r)}(x,y)  +S(2r,x;2s,y)
\end{aligned} \label{OneAztec1}\ee
 An alternative expression is given by:  \be
\begin{aligned}
\lefteqn{(-1)^{x-y}\frac{H_{2m+2}(0)}{H_{2m+1}(0)}
\widetilde\BK^{\rm ext}_{n,m}(2r, x;2s, y)
=
-\Id_{s<r} (-1)^{x-y}\psi_{2(s-r)}( x, y) \frac{H_{2m+2}(0)}{H_{2m+1}(0)}
} 
%&=\frac{H_{2m+2}(0)}{H_{2m+1}(0)}\left(\delta_{x,y}-\BK_{n,m}(-x,-y)\right)
%
\\& + \frac{\dt_{x\neq y}}{2\pi i}
 \oint_{\Gamma_{0,a}}\frac{dz}{(-z)^{y-x+1}}
   (1-R^{(1)}_{ }(z^{-1}))(1-R^{(2)}_{}(z))
  \left( \tfrac{1+az}{1-\frac az}\right)^{s-r}\\
&+  \Bigl ( \frac{1}{2\pi i} \Bigr)^2
\oint_{\Gamma_{0,a}}dz 
\oint_{\Gamma_{0,a,z}}  \frac{dw}{z\!-\!w} ~(1-R^{(1)}_{ }(z^{-1}))(1-R^{(2)}_{ }(w))%\left(\frac zw\right)^m
\frac{\Bigl(\frac{1+az}{1-\tfrac aw}\Bigr)^s}
 {\Bigl(\frac{1+ aw}{1-\frac az}\Bigr)^r}
\\ \\
&~~~~~\times
 \left(
 \frac{(-w)^{-y-m-1}}{(-z)^{-x-m}}
 \left(\frac{1+az}{1+aw}\right)^{n-s-r}+
   \frac{(-z)^{-y+m}}{(-w)^{-x+m+1} }
   % \frac{(-w)^{y-1}}{(-z)^{x}}
   \left(\frac{1-\tfrac az}{1-\tfrac aw}\right)^{n-s-r+1}
  \right)%e^{\frac n2 (\Phi(z)-\Phi(w))}%\left(  \frac{(1+az)(1-\frac az)}{(1+aw)(1-\frac aw)}\right)^{n/2}
 % \\ %&~~~~~~~~~~~~~~~~~~~~~~~~~~~~~~~~~~~~~~~~~~~~%\times 
 % \\  
  \end{aligned}
  \label{K2}\ee
 with%\footnote{Remember the notations in (\ref{three}).}
 \be
  \begin{aligned}
R^{(1)}(z^{-1})&=
 \left\la(\Id-K^\top)_{2m+1}^{-1}g^{(1)}(n),h^{(1)}(z^{-1})\right\ra _{\ell^2 (2m+1,\ldots)}
 \\ R^{(2)}(w)&=
 \left\la(\Id-K)_{2m+1}^{-1}g^{(2)}(n),h^{(2)}(w)\right\ra _{\ell^2 (2m+1,\ldots)}.
  \end{aligned}\label{IR}\ee

\end{theorem}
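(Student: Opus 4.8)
The plan is to derive the outlier kernel by first establishing the corresponding statement for the \emph{inlier} paths and then transferring it to the outliers via the duality announced in Section~\ref{sect2}. First I would set up the inlier paths $i_{-m},\dots,i_m$ as a family of non-intersecting lattice paths in discrete space and discrete time, and identify the associated point process (the red dot- and circle-particles on the axes $Y_\ell$) using the Lindstr\"om--Gessel--Viennot lemma: the transition weights between consecutive $Y$-axes factor through the single-domino weights, and the starting/ending data are fixed, so the measure on path families is a product of determinants. This makes the point process determinantal, and the Eynard--Mehta theorem (or the Borodin--Rains/Johansson formalism for such non-intersecting ensembles) yields an extended correlation kernel of the standard form $-\phi_{(r,s)}\Id_{s<r}+\sum_{i,j}\Psi^{r}_i (A^{-1})_{ij}\Phi^{s}_j$, where $A$ is the (Gram) matrix pairing the entrance and exit functions through the full transition operator. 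The contour-integral representations in the statement --- the functions $a_{x,s}$, $b_{y,r}$, $K$, $S$, $\psi_{2s}$, $\varphi_a$ --- are precisely what one gets by writing those transition weights as generating functions $(1+az)$, $(1-a/z)$ and performing the matrix inversion; the kernel $K$ in \eqref{Idef} is the Gram-type kernel whose resolvent $(\Id-K)^{-1}_{2m+1}$ appears in \eqref{K1}.

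The second step is the reduction of the $A^{-1}$ block to a finite-rank perturbation of the one-Aztec-diamond kernel. Here the key algebraic fact is that the double-Aztec transition operator over the full width $2n+1$ equals the one-Aztec operator plus a rank-$(2m+1)$ correction supported on the overlap coordinates $k\in\{2m+1,2m+2,\dots\}$; inverting by the Sherman--Morrison--Woodbury identity produces exactly the extra inner-product term $\langle(\Id-K)^{-1}_{2m+1}a_{-y,s},\,b_{-x,r}\rangle_{\ell^2(2m+1,\dots)}$ added to ${\mathbb K}^{\text{OneAztec}}_{n+1}$ evaluated at the reflected arguments $(2(n-r+1),m-x+1;\dots)$. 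The reflection $r\mapsto n-r+1$, $x\mapsto m-x+1$ is the combinatorial manifestation of duality between inliers and outliers (polar reflection through the center, North$\leftrightarrow$South, East$\leftrightarrow$West), so once the inlier kernel is in hand, the outlier kernel \eqref{K1}--\eqref{OneAztec1} follows by substituting the dual boundary data and checking that the single-domino weights transform as claimed. The gap probabilities \eqref{I1}--\eqref{I2} are then the usual Fredholm-determinant formula for a determinantal process, applied to the complementary event ``no dot-particle in $[k,\ell]$'' on the requested $Y_{2r_i}$ lines.

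The third step is to obtain the alternative form \eqref{K2}. The idea is to apply the Christoffel--Darboux summation to the doubly-infinite sum defining the resolvent term: writing $K=(K^{(1)}(0))_{k,\ell}$ and using the one-step recursion relating $K^{(1)}(1/z)$ to $K^{(1)}(0)$, one telescopes the series $\sum_k$ into a residue in an auxiliary variable $z$ (and a second variable $w$), at the cost of introducing the scalar quantities $R^{(1)}(z^{-1})$, $R^{(2)}(w)$ of \eqref{IR}, which are themselves resolvent-weighted pairings of the vectors $g^{(i)}(n)$ with $h^{(i)}(\cdot)$, and the Fredholm-determinant ratio $H_{2m+2}(0)/H_{2m+1}(0)$ coming from the change of the projection index $2m+1\mapsto 2m+2$. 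Matching the two expressions for the \emph{same} kernel is then a contour-deformation and bookkeeping exercise: one checks that the double integral in \eqref{K2}, with the two symmetric terms $(1+az)/(1+aw)$ and $(1-a/z)/(1-a/w)$ raised to powers $n-s-r$ and $n-s-r+1$, reproduces ${\mathbb K}^{\text{OneAztec}}_{n+1}$ plus the perturbation after the Christoffel--Darboux step.

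The main obstacle I expect is \emph{controlling the finite-rank decomposition precisely}: identifying the correct rank-$(2m+1)$ correction, pinning down the exact index ranges ($\ell^2(2m+1,\dots)$ versus $\ell^2(2m+2,\dots)$, and the reflected levels $2(n-r+1)$), and making the Sherman--Morrison--Woodbury inversion commute with the contour-integral representations without sign or shift errors; equivalently, the duality dictionary between inlier and outlier coordinates must be nailed down exactly, since a single off-by-one there corrupts every subsequent formula. Getting \eqref{K2} from \eqref{K1} via Christoffel--Darboux is delicate but essentially mechanical once the recursion for $K^{(1)}(1/z)$ and the definitions of $g^{(i)}, h^{(i)}$ are in place; the genuinely hard part is the first, structural identification.
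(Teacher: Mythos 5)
Your first step (inlier lattice paths, Lindstr\"om--Gessel--Viennot, Eynard--Mehta, and the particle--hole passage to the outliers) matches the paper's Proposition \ref{prop}, with one correction: the inlier/outlier duality used there is simply the complementation $\widetilde{\mathbbm K}_{n,m}=\Id-{\mathbbm K}_{n,m}$ of \cite{BOO00} (formula (\ref{dualK})), not a geometric reflection of the arguments; the reflection $r\mapsto n-r+1$, $x\mapsto m-x+1$ only enters at the very end, as an a posteriori identification of the explicit double integral $S(2r,x;2s,y)$ with the one-Aztec kernel after the substitution $z\mapsto -z^{-1}$.

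The genuine gap is in your second and third steps. The resolvent $(\Id-K)^{-1}_{2m+1}$ in (\ref{K1}) is the resolvent of an \emph{infinite-dimensional} operator on $\ell^2(2m+1,2m+2,\dots)$, namely the Borodin--Okounkov kernel attached to the symbol $\varphi_a(2n;\cdot)$. A Sherman--Morrison--Woodbury inversion of a rank-$(2m+1)$ correction to a transition operator can only ever produce the inverse of a $(2m+1)\times(2m+1)$ matrix, and in any case the double-Aztec transition structure is not a finite-rank perturbation of the one-Aztec one (the two models have different numbers of paths). What actually produces $K$ and its resolvent is the chain: biorthogonalization of the Gram matrix $A$ into polynomials $P_k,\hat P_k$ on the circle (formula (\ref{14})); the Borodin--Okounkov identity expressing the associated Toeplitz/2-Toda $\tau$-functions, hence the polynomials and their Darboux sum, as Fredholm determinants of rank-one and rank-two perturbations of $K^{(1)}(0)$ (Lemma \ref{L31}); and the Fredholm-determinant identities of Lemmas \ref{LA1}--\ref{LA2}, which turn the ratio $\det(\Id-K-a\otimes b)/\det(\Id-K)$ into $1-\langle(\Id-K)^{-1}a,b\rangle$. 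None of this is reachable from your proposed mechanism. Similarly, for (\ref{K2}) the Christoffel--Darboux formula (\ref{15}) is applied to the \emph{finite} sum $\sum_{k=0}^{2m}\hat P_k(z^{-1})P_k(w)$ of biorthogonal polynomials \emph{before} invoking any Fredholm representation, reducing it to the two extremal polynomials $P_{2m+1},\hat P_{2m+1}$, whose individual Fredholm representations (the first two lines of (\ref{31})) then yield the factors $1-R^{(i)}$ and the ratio $H_{2m+2}(0)/H_{2m+1}(0)$; there is no telescoping of the resolvent series via a recursion for $K^{(1)}(z^{-1})$ --- that object is a rank-one perturbation of $K^{(1)}(0)$ (formula (\ref{25})), which is a different statement. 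Without the Borodin--Okounkov step your argument cannot produce either representation of the kernel.
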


%\newpage

\bigbreak\vspace*{1cm}

Given the weight $0<a<1$ on vertical dominoes, the following quantities $v_0<0$, and $A,~\rho ,~\theta >0$ will come into play:
\be\begin{aligned}
v_0&:= -\frac{1-a}{1+a} <0,~~A^3:=\frac{a(1+a)^5}{(1-a)(1+a^2)}, 
%\end{aligned}\label{4}\ee
%and
%\\%\be
~\rho  :=-Av_0=\frac{(1-a^2)^{2/3}}{(a+a^{-1})^{1/3}}>0
\\
 \theta &:=\sqrt{\rho(a+a^{-1})}=\Bigl( \frac{1-a^4}{a}\Bigr)^{1/3}>0,~\mbox{and so }~A^2=\theta \frac{a(1+a)^3}{(1-a)(1+a^2)}.
\end{aligned}\label{4}\ee
%With this notation $A^2=\theta \frac{a(1+a)^3}{(1-a)(1+a^2)}$.
Consider the scaling (\ref{2}) of the following quantites: the type $(n,m)$ of the double Aztec diamond, the space and time variables $x$ and $s$ in terms of the size $n=2t$ of each diamond, where $\sigma$ is a parameter, which measures the pressure between the two diamonds: (for future use set $\tilde \sigma:=2^{2/3} \sigma$)
\be
\begin{aligned}
n&=2t , ~~~\hspace*{3cm}~~m %=\frac{2t}{a+a^{-1}}+\frac 12 \tilde \sigma  \rho%(-Av_0)
 % (2t)^{1/3}
 =\frac{2t}{a+a^{-1}}+  \sigma  \rho
  t^{1/3}%\\
%n&=2t
\\
 x&=2a^2\theta \tau_1 t^{2/3}+\xi_1 \rho t^{1/3},~~~~~y=2a^2\theta \tau_2 t^{2/3}+\xi_2 \rho t^{1/3}
\\
r& =t+(1+a^2)\theta\tau_1 t^{2/3},~~~~~~~s =t+(1+a^2)\theta\tau_2 t^{2/3}
,\\
\end{aligned}
\label{2}\ee

In the second main Theorem, three formulas will be given for the limiting kernel. To state the first two ones,  the following functions are needed. 
Given the parameter $s$, define an extension $\Ai^{(s)}(x)$ of the usual Airy function $\Ai (x)$; further an Airy-type kernel, defined by means of $\Ai^{(s)}$ and finally a new function ${\cal A}_\xi^\tau$:
\be \begin{aligned}
\Ai^{(s)}(x)&:=\frac{1}{2\pi i}\int _{\nearrow \atop \nwarrow }  dz~e^{ z^3/3+z^2s- zx }=e^{ sx+\tfrac 2 3s^3 }\mbox{Ai}(x+s^2),
\\
K_{ \Ai }^{(\alpha,-\beta)}(x,y)&:=\!\int_0^{\infty}\!\!
\Ai^{(\alpha)}(x\!+\!u)  \Ai^{(-\beta)}(y\!+\!u)du,~~ \mbox{with}~K_{ \Ai }=K_{ \Ai }^{(\alpha,-\beta)}\Bigr|_{\alpha=\beta=0}
\\&=\tfrac{e^{\alpha x+(2/3) \alpha^3}}{e^{\beta y+(2/3) \beta^3}}\int_0^\infty du~ e^{-(\beta-\alpha)u}\Ai(x+\alpha^2+u)\Ai(y+\beta^2+u) ,
\\
{\cal A}^{\tau}_\xi(\kappa) 
&:= \Ai^{(\tau)} ( \xi+2^{1/3} \kappa)- \int_0^\infty \Ai^{(\tau)} (-\xi+2^{1/3} \beta )
\Ai (\kappa+ \beta )d\beta  ,  \label{E6}
%\\
%p(\tau;\xi_1,\xi_2)&:=\frac{e^{-\frac{(\xi_1-\xi_2)^2}{4 \tau_1 }}}{\sqrt{4\pi  \tau }}  
 \end{aligned}
\ee
and
$$
p(\tau;\xi_1,\xi_2) :=\frac{e^{-\frac{(\xi_1-\xi_2)^2}{4 \tau }}}{\sqrt{4\pi  \tau }} .
$$ 
Also define the extended kernel for the Airy process (see \cite{Johansson3,Joh10})
\be
\begin{aligned}
{\mathbb K}&^{\mbox{\tiny AiryProcess}}(\tau_1,\xi_1;\tau_2,\xi_2) =\int_0^{\infty}e^{\lambda (\tau_2-\tau_1)}
\Ai (\xi_1+\lambda)  \Ai (\xi_2+\lambda)d\lambda\\
&-
\tfrac{\Id_{\tau_2>\tau_1}}{\sqrt{4\pi (\tau_2\!-\!\tau_1)}}\mbox{exp} \left(-\tfrac{(\xi_1-\xi_2)^2}{4(\tau_2-\tau_1)}+\tau_1(\xi_1\!+\!\sigma)-\tau_2(\xi_2\!+\!\sigma)+\tfrac 23 (\tau_1^3-\tau_2^3)\right)%\\
%&+
.\end{aligned}\label{AiryP}
\ee
The following expressions will be used in the second formula for the limiting kernel:
 \be
\begin{aligned} 
 {\cal Q}(\kappa)&:=
 \left[(\Id - \chi_{\tilde \sigma}K_{\Ai}\chi_{\tilde \sigma})^{-1}
\chi_{\tilde \sigma}  \Ai\right]
(\kappa)
\\
 \hat  {\cal Q}(\zeta)&:=
 \int_{\tilde \sigma}^{\infty}{\cal Q}(\kappa) e^{\kappa 2^{1/3}\zeta} d\kappa
 \\
\hat {\cal P}(\zeta)&:=- \int_{\tilde \sigma}^{\infty}{\cal Q}(\kappa)%e^{-\kappa 2^{1/3}\zeta} 
 d\kappa
\int_0^{\infty} e^{-2^{1/3}\zeta \beta} \Ai(\kappa+\beta)d\beta,
\end{aligned}\label{57}\ee
together with the following function\footnote{$(\xi\leftrightarrow -\xi)$means: replace $\xi\to -\xi$ in the prior expression.}:
\begin{multline}\label{eqSpaceTimeC}
{\cal C}(s,\xi):=2^{-1/3}\int_{\tilde\sigma}^\infty d\kappa \,{\cal Q}(\kappa) \bigg[ \Ai^{(2^{-2/3}s)}(\kappa+2^{-1/3}\xi)\\
+ \int_{\tilde\sigma}^\infty d\lambda \,{\cal Q}(\lambda) \int_0^\infty d\alpha\, \Ai(\alpha+\lambda) \Ai^{(2^{-2/3}s)}(\alpha+\kappa+2^{-1/3}\xi)\bigg]+(\xi\leftrightarrow -\xi).
\end{multline}

The tacnode process measures the probability that the height function $h$ is flat along small intervals of length ${\cal O}(n^{-2/3})$ in the $Y$-direction (see Figure 8), which themselves are ${\cal O}(n^{-1/3})$-close to the center of Figure 8; the tacnode process is run with a time scale of the order ${\cal O}(n^{-1/3})$, near the halfway point $\ell=n$ (see Figure 8); i.e., near the tangency point in Figure 10. 

\begin{theorem} \label{th:main} Given the scaling (\ref{2}), depending on the arbitrary parameter $\sigma$, with $\tilde \sigma:=2^{2/3} \sigma >0$,
the scaling limit of the dot-particles, along the lines $Y_{2r}$, near the tangency point of the two ellipses is a determinantal process with kernel:
\be
\lim_{t\to \infty} 
%e^{ (M^{-\tau_2}_{ \xi_2}(\tfrac{t}{2})- M^{-\tau_1}_{ \xi_1} (\tfrac{t}{2})) } 
		 (-v_0)^{y-x+r-s}(-1)^{y-x}\widetilde \BK^{\rm ext}_{n,m}(2r,x ;2s,y)\rho t^{1/3}  ={\mathbb K}^{\rm tac} (\tau_1, \xi_1;\tau_2,\xi_2 )
		 \ee
where the tacnode kernel ${\mathbb K}^{\rm tac} (\tau_1, \xi_1;\tau_2,\xi_2 )$ has three equivalent forms. \newline {\bf (i)} A first form is a perturbation of the Airy process kernel (\ref{AiryP}):
\begin{equation}\begin{aligned}
  {\mathbb K}^{\rm tac} (\tau_1, \xi_1;\tau_2,\xi_2 )  
%=\int_0^{\iy}\Ai^{(\tau_1)}(\sg -\xi_1+\beta)\Ai^{(-\tau_2)}(\sg -\xi_2+\beta)d\beta =:  
&=-\Id_{\tau_1>\tau_2}
%\frac{e^{-\frac{(\xi_1-\xi_2)^2}{4(\tau_1-\tau_2)}}}{\sqrt{4\pi (\tau_1-\tau_2)}}
p(\tau_1-\tau_2;\xi_1,\xi_2)+ K_{\Ai}^{(\tau_1,-\tau_2)}(\sg -\xi_1,\sg-\xi_2)
\\
&~~~+
 2^{1/3} \int_{\tilde\sg}^{\iy}\left((\Id- K_{\Ai}%\raisebox{1mm}{$\chi$}{}_{\tilde\sg}
 )_{\tilde\sg}^{-1}
 \AR_{\xi_1-\sg}^{ \tau_1}\right)(\lb)
\AR_{\xi_2-\sg}^{ -\tau_2 }(\lb)d\lb.
\end{aligned}
\label{E127}\end{equation}
%
%
%
%
%For $2r=2s=n$, the limit above simplifies to
%\be\begin{aligned}\lefteqn{\lim_{t\to \infty}(v_0)^{y-x} 
%dy%\rho t^{1/3}  ~\widetilde\BK_{n,m}( x, y)\rho t^{1/3}   
%}\\
%&=  K_\Ai (\sigma-\xi_1,\sigma-\xi_2)+2 ^{1/3} ~\int_{\tilde \sigma}^{\infty} 
 %d\lambda {\cal A}^{(0)}_{ \xi_1-\sigma}(\lambda) 
% ( (\Id - \chi_{\tilde \sigma}K_{\Ai}\chi_{\tilde \sigma}%(\kappa,\kappa'))^{-1}
%  {\cal A}^{(0)}_{ \xi_2-\sigma} )(\lambda )  
%\end{aligned}\label{main2}\ee
%where ${\cal A}^{(0)}_{ \xi} $ is the expression in (\ref{E6}) evaluated at $\tau=0$. 
%\be {\cal A}^{(0)}_{ \xi}(\lambda)  =   
%\Ai ( \xi+2^{1/3} \lambda)
%- \int_0^\infty \Ai ( -\xi+2^{1/3} \beta )
%\Ai (\lambda+ \beta )d\beta .\ee
%
where~\footnote{with $q_\sigma(\tau,\xi):=e^{\tau (\sigma-\xi )+(2/3)\tau ^3}$. 
}
\be\begin{aligned}
-\Id_{\tau_1>\tau_2}
%\frac{e^{-\frac{(\xi_1-\xi_2)^2}{4(\tau_1-\tau_2)}}}{\sqrt{4\pi (\tau_1-\tau_2)}}
p(\tau_1-\tau_2;&\xi_1,\xi_2)+ K_{\Ai}^{(\tau_1,-\tau_2)}(\sg -\xi_1,\sg-\xi_2)
\\
&=\frac{q_\sigma (\tau_1,\xi_1)}{q_\sigma (\tau_2,\xi_2)} {\mathbb K}^{\mbox{\tiny \rm AiryProcess}}(\tau_2,\sigma-\xi_2+\tau_2^2;\tau_1,\sigma-\xi_1+\tau_1^2) 
\label{AiryP1}\end{aligned}\ee
 {\bf (ii)} It can also be expressed as
 \begin{equation}\begin{aligned}
 \lefteqn{ {\mathbb K}^{\rm tac} (\tau_1, \xi_1;\tau_2,\xi_2 )  }
%=\int_0^{\iy}\Ai^{(\tau_1)}(\sg -\xi_1+\beta)\Ai^{(-\tau_2)}(\sg -\xi_2+\beta)d\beta =:  
\\&=-\Id_{\tau_1>\tau_2}
%\frac{e^{-\frac{(\xi_1-\xi_2)^2}{4(\tau_1-\tau_2)}}}{\sqrt{4\pi (\tau_1-\tau_2)}}
p(\tau_1-\tau_2;\xi_1,\xi_2)%+ K_{\Ai}^{(\tau_1,-\tau_2)}(\sg -\xi_1,\sg-\xi_2)
\\
&~~~+
 2^{\frac 13} \!\!\!\int_{\tilde\sg}^{\iy}\!\left[\left((\Id\!-\! K_{\Ai}%\raisebox{1mm}{$\chi$}{}_{\tilde\sg}
 )_{\tilde\sg}^{-1}
 \AR_{\xi_1-\sg}^{ \tau_1}\right)(\lb)
 \Ai^{(-\tau_2)} ( \xi_2-\sigma+2^{1/3} \lb)+\{\xi_i\leftrightarrow -\xi_i\}\right]d\lb.
\end{aligned}
\label{E127K}\end{equation}
{\bf (iii)} Another equivalent limiting kernel is given by the sum of the four double integrals, where $\delta>0$ is arbitrary:
\begin{equation}\label{main1}
\begin{aligned}
\lefteqn{{\mathbb K}^{\rm tac}(\tau_1,\xi_2;\tau_2,\xi_2) } 
\\=&- {\Id_{[\tau_2<\tau_1]}}  p(\tau_1-\tau_2;\xi_1,\xi_2)
+{\cal C}(\tau_1-\tau_2,\xi_1-\xi_2)\\
&+\frac{1}{(2\pi \I)^2}\int_{\delta +\I\BR}\hspace{-1em}du\int_{-\delta +\I\BR}\hspace{-1em}dv\,
\frac{e^{\frac{u^3}3-\sigma u}}{e^{\frac{v^3}3-\sigma v}}\frac{e^{\tau_1 u^2}}{e^{\tau_2 v^2}}
\left(\frac{e^{\xi_1 u}}{e^{\xi_2 v}}+\frac{e^{-\xi_1 u}}{e^{-\xi_2 v}}\right)
\frac{(1-\hat{\cal P}(u))(1-\hat{\cal P}(-v))}{u-v}\\
&-\frac{1}{(2\pi \I)^2}\int_{2\delta +\I\BR} \hspace{-1em}du \int_{\delta +\I\BR}\hspace{-1em}dv\,
\frac{e^{\frac{u^3}3-\sigma u}}{e^{-\frac{v^3}3-\sigma v}}\frac{e^{\tau_1 u^2}}{e^{\tau_2 v^2}}
\left(\frac{e^{\xi_1 u}}{e^{\xi_2 v}}+\frac{e^{-\xi_1 u}}{e^{-\xi_2 v}}\right)
\frac{(1-\hat{\cal P}(u)) \hat{\cal Q}(-v)}{u-v}\\
&-\frac{1}{(2\pi \I)^2}\int_{-\delta +\I\BR}\hspace{-1em}du\int_{-2\delta +\I\BR}\hspace{-1em}dv\,
\frac{e^{-\frac{u^3}3-\sigma u}}{e^{\frac{v^3}3-\sigma v}} \frac{e^{\tau_1 u^2}}{e^{\tau_2 v^2}}
\left(\frac{e^{\xi_1 u}}{e^{\xi_2 v}}+\frac{e^{-\xi_1 u}}{e^{-\xi_2 v}}\right)
\frac{(1-\hat{\cal P}(-v)) \hat{\cal Q}(u)}{u-v}\\
&+\frac{1 }{(2\pi \I)^2}\int_{-\delta +\I\BR}\hspace{-1em}du \int_{\delta +\I\BR}\hspace{-1em}dv\,
\frac{e^{-\frac{u^3}3-\sigma u}}{e^{-\frac{v^3}3-\sigma v}}
\frac{e^{\tau_1 u^2}}{e^{\tau_2 v^2}}
\left(\frac{e^{\xi_1 u}}{e^{\xi_2 v}}+\frac{e^{-\xi_1 u}}{e^{-\xi_2 v}}\right)
\frac{\hat{\cal Q}(u) \hat{\cal Q}(-v)}{u-v}.
\end{aligned}
\end{equation}

\end{theorem}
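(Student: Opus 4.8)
The strategy is a standard but delicate asymptotic analysis of the finite-$n$ kernel $\widetilde\BK^{\rm ext}_{n,m}$ provided by Theorem \ref{main1'}, pushed through the double-integral representations under the scaling (\ref{2}). I would organize the argument around the two representations (\ref{K1}) and (\ref{K2}), using each to produce a different limiting form; equivalence of the three limiting kernels in (i)--(iii) then follows from the finite-$n$ equivalence already established. First I would treat the ``Airy-process perturbation'' form. Starting from (\ref{K1}), the leading term ${\mathbb K}_{n+1}^{\mbox{\tiny OneAztec}}$ under the scaling is precisely the Aztec-diamond-to-Airy asymptotics of Johansson \cite{Johansson3}: a steepest-descent analysis of the double contour integral $S(2r,x;2s,y)$, where the saddle point of the phase $(1+au)^{n-s}(1-a/u)^s / u^{\cdots}$ sits at $u=v_0=-(1-a)/(1+a)$, a double saddle because of the tacnode tuning $m/n \simeq 1/(a+a^{-1})$. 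Rescaling $u = v_0(1 + \cdots A^{-1} z t^{-1/3})$ turns the cubic-in-$(u-v_0)$ expansion of the phase into $e^{z^3/3 + z^2 s - zx}$, i.e. into $\Ai^{(s)}$, with the shifts $\tau_i,\xi_i,\sigma$ coming out of (\ref{2}); the conjugation factor $(-v_0)^{y-x+r-s}(-1)^{y-x}$ in the statement is exactly what is needed to absorb the non-universal prefactors. This yields $K_{\Ai}^{(\tau_1,-\tau_2)}(\sigma-\xi_1,\sigma-\xi_2) - \Id_{\tau_1>\tau_2}\,p(\tau_1-\tau_2;\xi_1,\xi_2)$, and the identity (\ref{AiryP1}) relating it to the extended Airy kernel is a direct change of variables in the integrals.

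\textbf{The perturbation term.} The second, and more substantial, part is the limit of the inner product $\langle(\Id-K)_{2m+1}^{-1}a_{-y,s}(k),\,b_{-x,r}(k)\rangle_{\ell^2(2m+1,\ldots)}$ in (\ref{K1}). Here I would show that, under (\ref{2}), the discrete kernel $K(k,\ell)$ on $\ell^2(2m+1,\ldots)$ converges, after the rescaling $k = 2m+1 + (\text{const})\,\kappa\,t^{1/3}$ (with $2m+1$ the lower cutoff, matching $\tilde\sigma = 2^{2/3}\sigma$), to the Airy kernel $K_{\Ai}$ restricted to $[\tilde\sigma,\infty)$; the function $\vp_a(2n;z) = (1+az)^n(1-a/z)^{n+1}$ again has its relevant saddle at $v_0$, so the same cubic scaling applies and $K \to \chi_{\tilde\sigma}K_{\Ai}\chi_{\tilde\sigma}$ in trace norm. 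Simultaneously the vectors $a_{-y,s}(k)$ and $b_{-x,r}(k)$ — each a double contour integral mixing the $\vp_a$ phase with the $(1\pm a/u)$ and $(1\pm a/v)$ factors carrying the $r,s,x,y$ dependence — rescale to the functions $\AR^{\tau_i}_{\xi_i-\sigma}$ of (\ref{E6}), the interplay of the two saddles in each of them (one from $\vp_a$ at the cutoff, one from the $r,s$-phase) producing exactly the $\Ai^{(\tau)}$ term minus the convolution integral against $\Ai$ that defines $\AR^{\tau}_\xi$. Passing the resolvent through the limit (legitimate once one has trace-norm convergence of $K_p$ plus $L^2$-convergence of the vectors, together with uniform decay estimates on the contours to control tails) gives the third term of (\ref{E127}). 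Form (ii), equation (\ref{E127K}), is then obtained either by expanding $(\Id-K_{\Ai})^{-1}_{\tilde\sigma}$ once more, or — more naturally — by carrying out the same limit directly on the finite-$n$ expression \emph{before} the resolvent is fully resummed, i.e. the distinction between (\ref{E127}) and (\ref{E127K}) mirrors the finite-$n$ distinction noted after Theorem \ref{main1'}.

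\textbf{The four-integral form (iii).} For (\ref{main1}) I would start instead from representation (\ref{K2}), where the Christoffel-Darboux step has already been used and the kernel is written through the scalar quantities $R^{(1)}(z^{-1})$, $R^{(2)}(w)$ and the functions $g^{(i)}$, $h^{(i)}$. Under (\ref{2}) each of $g^{(1)},g^{(2)},h^{(1)},h^{(2)}$, being a single contour integral of a power against $\vp_a(2n;\cdot)^{\pm 1}$, rescales to an Airy-type integral; the resolvent $(\Id-K^\top)_{2m+1}^{-1}$ converges as above, so $R^{(1)}(z^{-1}) \to \hat{\cal P}$ (or $\hat{\cal Q}$, depending on which contour variable) and $R^{(2)}(w) \to \hat{\cal P}$ likewise, while the ratio of products $(1+az)^s(1-a/w)^{\cdots}/\cdots$ in the double integral becomes the Gaussian-times-cubic-exponential kernel $e^{u^3/3-\sigma u}e^{\tau_1 u^2}e^{\xi_1 u}/(\cdots)$ after the $z = v_0(1+\cdots)$, $w = v_0(1+\cdots)$ substitutions. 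The two bracketed terms in (\ref{K2}) — the one with exponents $n-s-r$ and the one with $n-s-r+1$ — scale to the two terms $e^{\pm\xi_1 u}e^{\mp\xi_2 v}$, because the two ``branches'' $(1+az)/(1+aw)$ and $(1-a/z)/(1-a/w)$ have saddles that differ by $\xi \leftrightarrow -\xi$; and the single/double structure of $(1-R^{(1)})(1-R^{(2)})$ versus $(1-R^{(1)})\hat{\cal Q}$ etc. sorts the result into the four double integrals with the indicated nested contours $\delta, 2\delta$. The function ${\cal C}(s,\xi)$ of (\ref{eqSpaceTimeC}) arises as the limit of the single-integral (``$\delta_{x\neq y}$'') term in (\ref{K2}) together with the $\psi$-term's finite part. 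The constant ratio $H_{2m+2}(0)/H_{2m+1}(0)$ on the left of (\ref{K2}) tends to $1$ in the limit (it is a ratio of Fredholm determinants differing by one extra row/column of a kernel that itself shrinks), so it drops out.

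\textbf{Main obstacle.} The genuinely hard part is not identifying the formal limits — those follow from routine saddle-point bookkeeping once the scaling (\ref{2}) is in hand — but \emph{justifying the exchange of the limit with the Fredholm/resolvent operations uniformly}. Concretely: one must prove trace-norm convergence $K_{2m+1} \to \chi_{\tilde\sigma}K_{\Ai}\chi_{\tilde\sigma}$ (so that $(\Id-K)^{-1}$ converges and, crucially, so that the determinant $\det(\Id - \chi K^{\rm ext}\chi)$ converges and stays bounded away from $0$), together with $L^2$-convergence with uniform Gaussian tail bounds of the vectors $a,b,g^{(i)},h^{(i)}$ along their contours, and exponential-decay estimates on the tails of the double integrals so that the contour deformations used in the steepest-descent step are legitimate for all large $t$. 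This is exactly the type of estimate carried out in \cite{AFvM12} and \cite{Joh10}, and the paper says ``similar methods will be used''; the delicacy here is that the relevant saddle $v_0$ is a \emph{double} saddle and the cutoff $2m+1$ sits right at it, so the decay in the transverse direction is only Airy-type (cubic exponential), not Gaussian, and the uniform control of the resolvent near the cutoff requires the full Tracy--Widom-type machinery. I would isolate these analytic estimates into a separate lemma and then let the three formulas follow by the formal substitutions above.
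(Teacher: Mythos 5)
Your plan follows essentially the same route as the paper: form (i) is obtained by passing to the limit in representation (\ref{K1}) via a steepest-descent analysis at the double saddle $v_0$ (the paper's Lemmas \ref{L6.1}--\ref{L6.5} and Theorem \ref{Th7.1}, with the Borodin--Ferrari Lemma \ref{BF} supplying exactly the uniform Airy-type tail bounds you flag as the main obstacle), form (iii) by the same limit applied to representation (\ref{43a})/(\ref{K2}) with $R^{(i)}\to\hat{\cal P},\hat{\cal Q}$ and the single-integral term giving ${\cal C}$, and form (ii) by your first suggested route, namely re-expanding the resolvent (the paper does this in Section 8 via $K_{\Ai}=T^2$ and ${\cal A}^\tau_{\xi-\sigma}=S^\tau_\xi-TS^\tau_{-\xi}$). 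The only inaccuracy is your remark that the (i)/(ii) distinction mirrors the Christoffel--Darboux dichotomy of Theorem \ref{main1'} --- that dichotomy separates (i)--(ii) from (iii), while (i) and (ii) differ only by the purely algebraic resummation just mentioned.
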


 Consider two groups of Brownian motions, leaving from and forced to distinct points at times $t=0$ and $1$, tuned in such a way that they meet momentarily at $t=1/2$. Then, when the number of particles gets very large, so as to preserve this tuning, the statistical behavior of the particles near the point of encounter are governed by the following kernel, as is shown by Johansson in \cite{Joh10} \footnote{The kernel below differs from Johansson's kernel in \cite{Joh10} by a transpose and a conjugation by an exponential; i.e., multiplication of the kernel by $\frac{e^{\tau_2(\xi_2+\sigma)+ (2/3) \tau_2^3}}{e^{\tau_1(\xi_1+\sigma)+  (2/3) \tau_1^3}}$.}:
\be\begin{aligned}
\lefteqn{{\mathbb K}_{br}^{\rm tac}(\tau_1,\xi_2;\tau_2,\xi_2) }\\
=&- {\Id_{[\tau_2<\tau_1]}}  p(\tau_1-\tau_2;\xi_1,\xi_2)\\
&+K_{\Ai}^{(\tau_1,-\tau_2)}(\sg -\xi_1,\sg-\xi_2)
+K_{\Ai}^{(\tau_1,-\tau_2)}(\sg +\xi_1,\sg+\xi_2)
\\
&+2^{\frac 13}
\int_{\tilde\sg}^{\iy}\left((\Id\!-\! K_{\Ai}%\raisebox{1mm}{$\chi$}{}_{\tilde\sg}
 )_{\tilde\sg}^{-1}
 \AR_{\xi_2-\sg}^{- \tau_2}\right)(\kappa )
\left(\AR_{\xi_1-\sg}^{  \tau_1 }(\kappa)-\Ai^{(\tau_1)}(\xi_1-\sg+2^{\frac 13}\kappa)\right)d\kappa
\\
&+2^{\frac 13}
\int_{\tilde\sg}^{\iy}\left((\Id\!-\! K_{\Ai}%\raisebox{1mm}{$\chi$}{}_{\tilde\sg}
 )_{\tilde\sg}^{-1}
 \AR_{-\xi_2-\sg}^{- \tau_2}\right)(\kappa)
\left(\AR_{-\xi_1-\sg}^{  \tau_1 }(\kappa)-\Ai^{(\tau_1)}(-\xi_1-\sg+2^{\frac 13}\kappa\right)d\kappa
\end{aligned}
\label{E127L}\ee

\begin{theorem}\label{th:main3}
The Brownian motion tacnode kernel (\ref{E127L}) is equivalent to anyone of the kernels in Theorem \ref{th:main}.
\end{theorem}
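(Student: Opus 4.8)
The plan is to prove Theorem~\ref{th:main3} by matching Johansson's kernel (\ref{E127L}) against form \textbf{(ii)}, namely (\ref{E127K}), of the tacnode kernel, since the two expressions are the closest in structure; Theorem~\ref{th:main} already guarantees (\ref{E127K}) is equivalent to the other two forms, so this single comparison suffices. Both kernels share the term $-\Id_{\tau_1>\tau_2}\,p(\tau_1-\tau_2;\xi_1,\xi_2)$ verbatim, so the task reduces to showing that the remaining pieces agree. The natural strategy is to expand the resolvent-and-$\AR$ term in (\ref{E127K}), using the definition (\ref{E6}) of $\AR_\xi^\tau$, and to recognize inside it exactly the two Airy-type kernels $K_{\Ai}^{(\tau_1,-\tau_2)}(\sg-\xi_1,\sg-\xi_2)$ and $K_{\Ai}^{(\tau_1,-\tau_2)}(\sg+\xi_1,\sg+\xi_2)$ plus the two resolvent integrals appearing in (\ref{E127L}).

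The key computation is to substitute $\AR_{\xi_2-\sg}^{-\tau_2}(\lb)=\Ai^{(-\tau_2)}(\xi_2-\sg+2^{1/3}\lb)-\int_0^\infty \Ai^{(-\tau_2)}(-\xi_2+\sg+2^{1/3}\beta)\Ai(\lb+\beta)\,d\beta$ into the last two integrals of Johansson's kernel (\ref{E127L}). The first term of this decomposition, when paired against $\AR_{\xi_1-\sg}^{\tau_1}(\kappa)-\Ai^{(\tau_1)}(\xi_1-\sg+2^{1/3}\kappa)$, reproduces (after using the reproducing/semigroup property of the $\Ai^{(s)}$ functions, i.e.\ $\int_0^\infty \Ai^{(\alpha)}(x+u)\Ai^{(-\beta)}(y+u)\,du = K_{\Ai}^{(\alpha,-\beta)}(x,y)$, and the identity $\int_{\mathbb R}\Ai(a+u)\Ai(b+u)\,du=\delta(a-b)$ together with the action of $(\Id-K_{\Ai})^{-1}$) precisely the $2^{1/3}$-resolvent term of (\ref{E127K}); meanwhile the $-\Ai^{(\tau_1)}$ piece combines with the second term of the decomposition of $\AR$ to produce, again via the reproducing kernel identity on $[0,\infty)$, exactly $K_{\Ai}^{(\tau_1,-\tau_2)}(\sg-\xi_1,\sg-\xi_2)$ — and, from the $\{\xi_i\leftrightarrow-\xi_i\}$ companion, $K_{\Ai}^{(\tau_1,-\tau_2)}(\sg+\xi_1,\sg+\xi_2)$. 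Thus one should be able to collapse the four resolvent/Airy integrals of (\ref{E127L}) into the two bilinear terms plus the single resolvent term of (\ref{E127K}). Symmetrically, the term $K_{\Ai}^{(\tau_1,-\tau_2)}(\sg-\xi_1,\sg-\xi_2)$ in (\ref{E127K}) can be re-expanded via $K_{\Ai}=K_{\Ai}^{(\alpha,-\beta)}|_{\alpha=\beta=0}$ acting through the resolvent to close the loop.

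The main obstacle I anticipate is bookkeeping with the $(\Id-K_{\Ai})_{\tilde\sg}^{-1}$ resolvent: one must justify that the Fredholm resolvent on $L^2(\tilde\sg,\infty)$ can be freely moved across the half-line convolution integrals, and that the manipulations of the (non-$L^2$ in general, only conditionally convergent) Airy integrals are legitimate — this is exactly the kind of contour/growth control that already appears in the proof of Theorem~\ref{th:main}, so I would invoke the same a priori bounds on $\AR_\xi^\tau$ (exponential decay for $\xi>0$, controlled growth otherwise) established there. A secondary care point is tracking the conjugating factor $q_\sigma(\tau_1,\xi_1)/q_\sigma(\tau_2,\xi_2)$ and the transpose noted in the footnote to (\ref{E127L}): since Johansson's kernel in \cite{Joh10} is stated up to exactly this gauge, one must verify that form \textbf{(i)}'s relation (\ref{AiryP1}) to ${\mathbb K}^{\mbox{\tiny AiryProcess}}$ absorbs it consistently; this is a routine but error-prone check. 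Once the resolvent manipulations are pinned down, the identification is essentially forced, and Theorem~\ref{th:main3} follows.
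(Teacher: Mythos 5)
Your overall plan---expand $\AR^{\tau}_{\xi-\sg}$ via (\ref{E6}) and recombine the pieces of (\ref{E127L}) until they match one of the forms of Theorem \ref{th:main}---is the same algebraic identification the paper performs (the paper matches against form {\bf (i)} and derives {\bf (ii)} en route, so targeting {\bf (ii)} instead is immaterial). But the mechanism you give for the recombination contains a genuine gap. Every integral in play lives on the half-line $L^2(\tilde\sg,\infty)$ (equivalently $[0,\infty)$ after the $2^{1/3}$ change of variable), so the full-line completeness relation $\int_{\BR}\Ai(a+u)\Ai(b+u)\,du=\delta(a-b)$ that you invoke is simply not available and is not what makes the terms collapse. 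The identity that does the work is its half-line analogue: letting $T$ be the symmetric operator on $L^2(\tilde\sg,\infty)$ with kernel $T(\kappa,\beta)=\Ai(\kappa+\beta-\tilde\sg)$, one has $\int_{\tilde\sg}^{\infty}T(\kappa,\lb)T(\lb,\beta)\,d\lb=K_{\Ai}(\kappa,\beta)$, i.e.\ $K_{\Ai}=T^2$. Writing $S^{\tau}_{\xi}(\kappa)=\Ai^{(\tau)}(\xi-\sg+2^{1/3}\kappa)$, the definition (\ref{E6}) becomes $\AR^{\tau}_{\xi-\sg}=S^{\tau}_{\xi}-TS^{\tau}_{-\xi}$, and the bare Airy-kernel terms of (\ref{E127L}) are $K_{\Ai}^{(\tau_1,-\tau_2)}(\sg\mp\xi_1,\sg\mp\xi_2)=2^{1/3}\la S^{\tau_1}_{\mp\xi_1},S^{-\tau_2}_{\mp\xi_2}\ra$. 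The factorization $K_{\Ai}=T^2$ is precisely what lets you (a) expand $(\Id-K_{\Ai})^{-1}_{\tilde\sg}=(\Id-T^2)^{-1}=\sum_{r\ge 0}T^{2r}$ and commute $T$ with this resolvent so as to shuttle the symmetric operator $T$ between the two arguments of the inner product, and (b) absorb the bare terms via $(\Id-K_{\Ai})^{-1}=\Id+T^2(\Id-K_{\Ai})^{-1}$. Both (\ref{E127L}) and (\ref{E127K}) then reduce to one and the same canonical expression, namely
$$2^{1/3}\Bigl\la \sum_{r\ge0} T^{2r}S^{\tau_1}_{\xi_1},S^{-\tau_2}_{\xi_2}\Bigr\ra+2^{1/3}\Bigl\la \sum_{r\ge0} T^{2r}S^{\tau_1}_{-\xi_1},S^{-\tau_2}_{-\xi_2}\Bigr\ra-2^{1/3}\Bigl\la \sum_{r\ge0} T^{2r+1}S^{\tau_1}_{-\xi_1},S^{-\tau_2}_{\xi_2}\Bigr\ra-2^{1/3}\Bigl\la \sum_{r\ge0} T^{2r+1}S^{\tau_1}_{\xi_1},S^{-\tau_2}_{-\xi_2}\Bigr\ra,$$
minus the heat-kernel term. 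Without identifying $K_{\Ai}=T^2$, the ``collapse'' of the four integrals you describe does not go through.

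Two smaller points. The analytic justification you worry about is lighter than you suggest: since $\|\raisebox{1mm}{$\chi$}{}_{\tilde\sg}K_{\Ai}\raisebox{1mm}{$\chi$}{}_{\tilde\sg}\|<1$ the Neumann series converges in operator norm, and the superexponential decay of $\Ai$ on $[\tilde\sg,\infty)$ makes all the interchanges of summation and integration routine; no contour estimates are needed here. And your concern about the conjugation factor $q_\sigma(\tau_1,\xi_1)/q_\sigma(\tau_2,\xi_2)$ is moot: Theorem \ref{th:main3} is stated for (\ref{E127L}) exactly as written in the paper, which has already absorbed the transpose and gauge recorded in the footnote, so there is nothing further to track.
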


  It follows from the above result that the correlation kernels obtained by Adler, Ferrari and van Moerbeke in \cite{AFvM12}, by Delvaux, Kuijlaars and Zhang in \cite{DKZ10} and by Johansson in \cite{Joh10} are all equivalent, i.e., {\em they all describe the same tacnode process!}

%\newpage
  
  \section{From Aztec Diamond paths to Lattice paths, via Zig-zag paths}\label{sect2}

    {\bf Duality and Zig-zag paths}.  The two sets of lattice paths are dual in the following precise sense. The circle-particles (dot-particles) for one set of lattice paths fill the circle-gaps (dot-gaps) for the other set; see Figure 17. To see this, we define the {\em zig-zag paths}, initiated in \cite{EKLP, EKLP2,Jo03b},. There are two types, zig-zag paths around black squares and zig-zag paths around white squares. 
     
     Remember the axis $Y_{2r}$, connecting $C_{r+1}$ and $A_{r+1}$, as in Figure 8, traverses $2n+2m+1$ {\em black squares} of the double Aztec diamond; it does so in two different ways, which can be described as follows: the black square can belong to a South or West domino, in which case the height $h$ goes up by $1$ from top to bottom, as in the left hand side of figure 12 below; put a blue dot-particle to the left of the oblique line. Or the black square belongs to a North or East domino, in which case the height of the oblique line remains unchanged, as in the right hand side of  figure 12 below; put a red dot-particle to the right of the oblique line.

   %\newpage

     \vspace*{-1.5cm}

   \hspace*{-1cm}  \includegraphics[width=131mm,height=154mm]{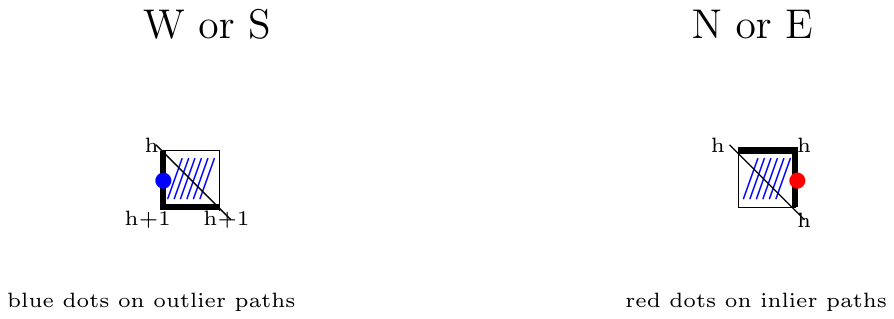}
   
   \vspace*{-11cm}
   
   Figure  12.   Zigzag paths around black squares
   
  % \newpage

  % Figure  12.   Zigzag paths around black squares
   
   \vspace*{1cm}

     %  \vspace{-9cm}
       
   \noindent So, in the left configuration, the line, running from top to bottom goes up by $h=1$  and for the second, it stays flat. Since the line must go up from $h=0$ to $h=2n$, the line must traverse $2n$ black squares of the first type, to reach height $h=2n$ and must stay flat along the $2m+1$ remaining squares, since the total number of black squares the oblique line traverses equals $2n+2m+1$. Therefore the red dots appear there, where the blue dots are absent. Moreover the blue dots must belong to W or S domino's and the red dots to E and N domino's.

   \vspace*{-1.5cm}

   \hspace*{-1cm}  \includegraphics[width=131mm,height=154mm]{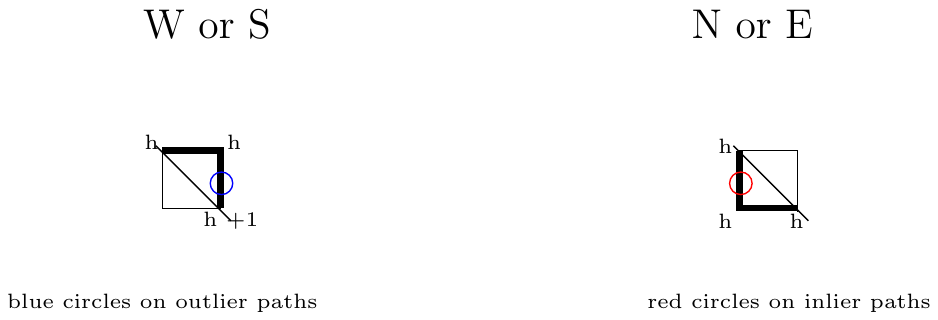}
   
    \vspace*{-11cm}

Figure 13. Zigzag paths around white squares

     \vspace{1cm}

   Similarly, the  axis $Y_{2r-1}$ connecting the points $D_r$ and $B_r$ (as in Figure 8), traverses $2n+2m+1$ {\em white squares}, also starting at height $h=0$ until it hits height $h=2n$. In order to reach that height it must traverse $2n$ white squares of the left type of figure 13, for which the oblique line goes up by $h=1$ and stay flat for the others (white squares of the right type in Figure 13); so, these two situations are dual to each other. Then put a blue circle-particle to the right side of the left square of Figure 13 and a red circle-particle to the left side of the right square of Figure 13.  The white squares, carrying the blue circle-particle (left of Figure 13) must belong to W or S domino's, and those carrying a red circle 
(right of figure 13) must belong to N or E domino's.

So the axis $Y_{2r}$, as in Figure 8, records the location of the blue dot-particles of the line $[C_{r+1},A_{r+1}]$ and the axis $Y_{2r-1}$  records the location of the blue circle-particles of the line $[D_{r},B_{r}]$. To show that the blue dot- and circle-particles belong to the level paths for the height function $h$, as constructed in section 1, it suffices to notice that the blue circle-particles belong to S and W domino's and the blue dot-particles as well, from the previous considerations. Upon connecting them, they are part of level paths for the height function $h$. This shows  that the two constructions, the one using the zig-zag paths and the one using the arguments of section 1 are the same. One does the same for the red dot- and circle-particles, which have been shown to be dual to the blue particles.

Consider now the red dots and red circles on the line halfway in between $Y_{2r}$ and $Y_{2r+1}$, as in Figure 9. Move the red dots horizontally to the left towards the axis $Y_{2r}$ and the red circles to the right towards the axis $Y_{2r+1}$. Then in comparison with Figure 8, the axes  $Y_{2r}$ and $Y_{2r+1}$ will have red dots and red circles precisely at the sites which have no blue dots and circles, showing the duality. 

 %\newpage

   \vspace*{-2cm}

 %  \hspace*{-4cm}   \includegraphics[width=198mm,height=234mm]{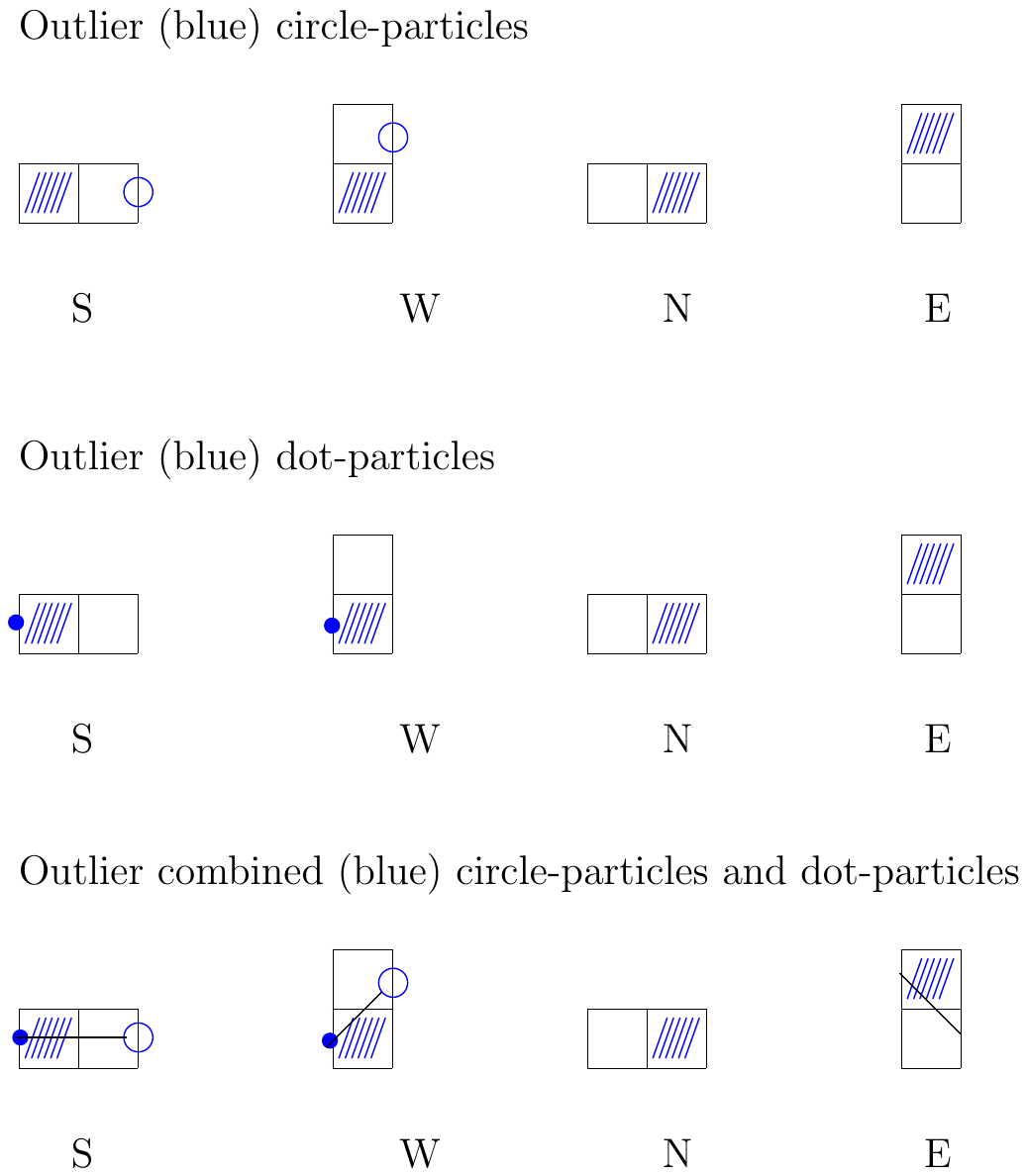}
 
 % \hspace*{-2cm} 
   \includegraphics[width=121mm,height=140mm]{Fig13_blue_particles.pdf}
   
     \vspace*{-5cm}
    
   Figure 14a. The three paths above are level paths for the height function $h$, as in Figure 4.
   
     \vspace*{1cm}

    % \newpage
     
      \vspace*{-3cm}

   %\hspace*{-4cm}   
   \includegraphics[width=121mm,height=140mm]{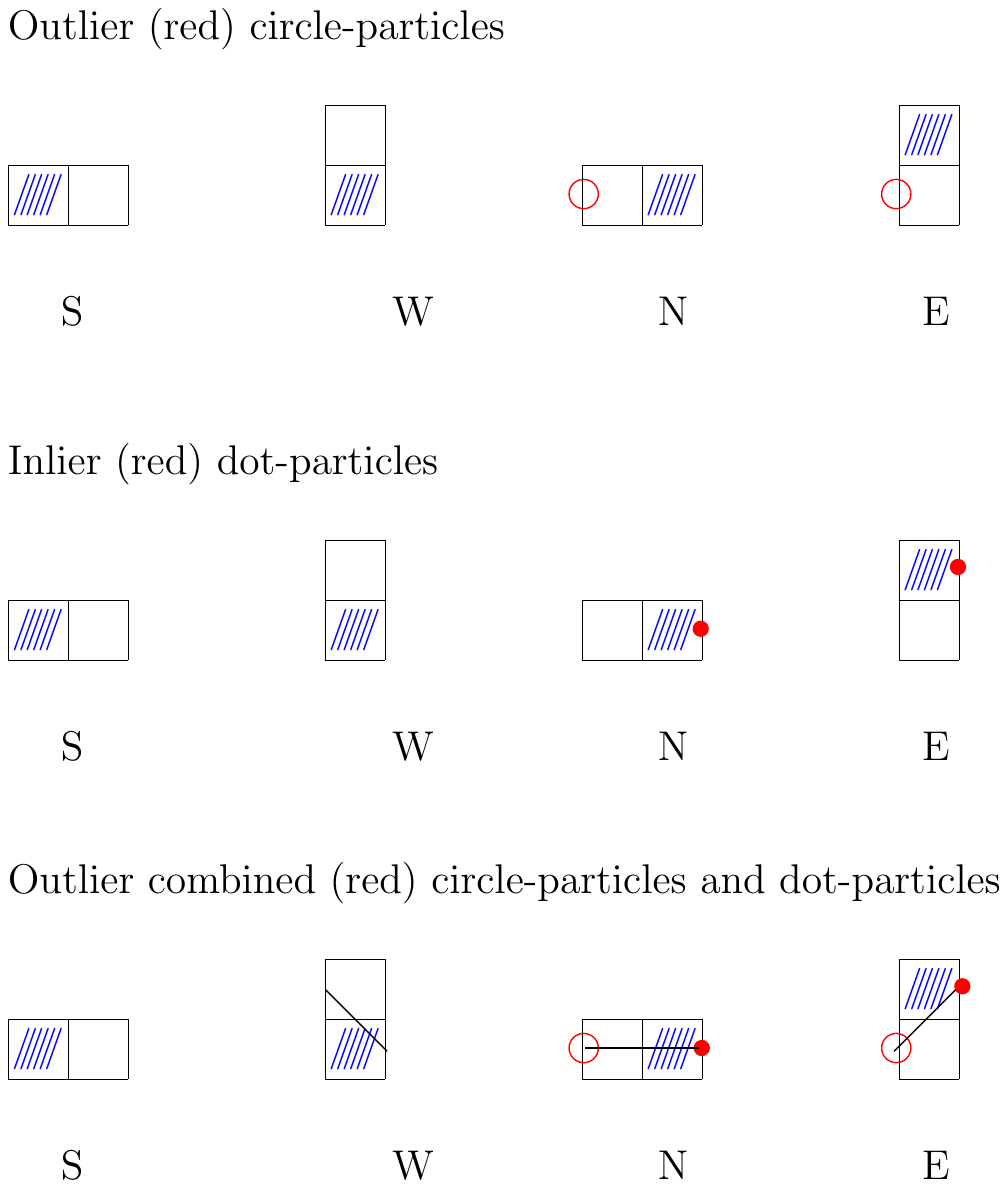}
   
    \vspace*{-5cm}
    
   Figure 14b. The three paths above are level paths for the dual height function $\widetilde h$, as in Figure 5.
   
     \vspace*{1cm}

  {\bf The extended outlier domino-paths are transformed to non-intersecting  paths on a lattice}, by means of the correspondence of Figure 6. Here each path on the double Aztec diamond is made of segments covering 
   West, South and East domino's. The path on the lattice is obtained by sewing together the corresponding segments on the right hand side of Figure 6, with the accompanying dots and circles. %The initial boundary condition for the upper-outlier domino paths $o_1,\ldots,o_n$ was given by adding a circle, which translates into a dot, followed by a segment, a circle and a segment, i.e., all that appears to the left of the (left) dotted line. 
  % Then one ignores the horizontal segments to the right of the last circle of the upper-outlier lattice paths. The final boundary condition for the lower-outlier domino paths $o_{n+1},\ldots,o_{2n}$ was given by adding a dot, which maps into adding to the right of the lattice-path a dot, a segment and a circle, i.e., everything to the right of the dotted line. 
  To summarize, the outlier lattice-paths will thus contain $2n+1$ steps starting with a dots and ending with a with a circle, so that each path has   $
  \#\{\mbox{dots}\}=\#\{\mbox{circles}\}=n+1
  .$ 
  
  Notice that the location of the circles on the axis $Y_{2r-1}$ and the dots on $Y_{2r}$ in the double Aztec diamond picture (Figure 8) corresponds exactly to the circles and dots on the axes $Y_{2r-1}$ and $Y_{2r}$ of Figure 16. The map of Figure 6 from domino paths to lattice paths was designed to achieve this precise correspondence between the $Y_k$'s in the domino description and the lattice description. %As an example, the domino-paths of Figure 8 are transformed in a precise way into the lattice-paths in Figure 13; also the axes $Y_k$ of Figure 8 are transformed into the axes $Y_k$ of Figure 13.
  
  \bigbreak
 
 {\bf The inlier domino-paths are transformed to non-intersecting lattice paths}, by means of the recipe in Figure 7. Indeed, each path $i_1,\ldots,i_{2m+1}$ on $A\cup B$ consists of segments covering East, North and West domino's, together with their appropriate dots and circles; the recipe is to map  the paths on the domino to a path on a lattice by sewing together the
  
   \bigbreak

  \newpage

\vspace*{0cm}
\setlength{\unitlength}{.020cm}
\textcolor[rgb]{0.00,0.00,1.00}{$ \begin{picture}(0,0)
 % 
 %INLIERS
 %
  \multiput(0,140)(20,00){17}{\circle*{1}} 
  \multiput(0,120)(20,00){17}{\circle*{1}} 
  \multiput(0,100)(20,00){17}{\circle*{1}}
   \multiput(0,80)(20,00){17}{\circle*{1}}
  \multiput(0,60)(20,00){17}{\circle*{1}}
 \multiput(0,40)(20,00){17}{\circle*{1}}
 \multiput(0,20)(20,00){17}{\circle*{1}}
\multiput(0,0)(20,00){17}{\circle*{1}}
\multiput(0,-20)(20,00){17}{\circle*{1}}
\multiput(0,-40)(20,00){17}{\circle*{1}}
\multiput(0,-60)(20,00){17}{\circle*{1}}
\multiput(0,-80)(20,00){17}{\circle*{1}}
\multiput(0,-100)(20,00){17}{\circle*{1}}
 \multiput(0,-120)(20,00){17}{\circle*{1}}
 \multiput(0,-140)(20,00){17}{\circle*{1}}
 \multiput(0,-160)(20,00){17}{\circle*{1}}
 \multiput(0,-180)(20,00){17}{\circle*{1}}
 \multiput(0,-200)(20,00){17}{\circle*{1}}
 \multiput(0,-220)(20,00){17}{\circle*{1}}
   \multiput(0,0)(00,-20){5}{\circle*{5}}  %1
   \put(0,-80){\line(1,0){20}} 
   \multiput(0,-80)(00,20){5}{\line(1,0){20}} 
   \multiput(20,-60)(00,20){4}{\circle{5}} 
   \multiput(20,-60)(00,20){4}{\line(1,1){20}} 
    \multiput(40,-40)(00,20){4}{\circle*{5}}
     \multiput(60,-40)(00,20){4}{\line(-1,0){20}} 
    \multiput(60,-40)(00,20){4}{\line(1,0){20}} 
    \multiput(60,-40)(00,20){4}{\circle{5}}  
     \multiput(80,-40)(00,20){4}{\circle*{5}}  
    \multiput(80,-40)(00,20){4}{\line(1,0){20}} 
      \multiput(100,-20)(00,20){3}{\circle {5}}  
    \put(20,-80){\circle*{1}}   %2
   \put(20,-80){\line(0,-1){20}} 
      \put(40,-80){\circle*{5}}   %3
       \put(40,-80){\line(-1,-1){20}} 
       \put(40,-80){\line(1,0){20}}
        \put(60,-80){\circle{5}}  %4
        \put(60,-80){\line(1,0){20}}
          \put(80,-80){\circle*{5}}  %5
           \put(80,-80){\line(1,0){20}}
            \put(100,-80){\circle*{1}} %6
            \put(100,-80){\line(0,-1){60}}
            \put(20,-100){\circle{5}}  %%%%%%%%%%%
            \put(100,-140){\circle {5}} 
            \put(100,-140){\line(1,1){20}}
            \put(120,-120){\circle* {5}}
             \put(120,-120){\line( 1, 0){20}}
               \put(140,-120){\line( 0, -1){20}}
                \put(140,-140){\circle  {5}}
                \put(140,-140){\line( 1, 0){20}}  
                 \put(160,-140){\circle* {5}}
                  \put(160,-140){\line( 1, 0){20}}  
                  \put(180,-140){\circle {5}}
                   \put(180,-140){\line( 1, 1){20}}  
                 \put(200,-120){\circle* {5}}
                  \put(200,-120){\line( 1, 0){20}}  
                 \put(220,-120){\circle  {5}}
                  \put(220,-100){\circle  {5}}
         \put(220,-60){\line( 1, 0){20}}  
                  \put(220,-60){\circle  {5}}
                   \put(220,20){\circle  {5}}
                    \put(220,80){\circle  {5}}
                     \put(220,80){\line( 1, 1){20}}
                     \put(240,100){\circle* {5}}
                 \put(220,-120){\line( 1, 1){20}}  
                 \put(240,-100){\circle* {5}}
                  \put(100,20){\circle{5}}
                   \put(100,20){\line( 1, 1){20}}  
                    \put(120,40){\circle*{5}}
                   \put(120,40){\line( 1, 0){20}}  
                     \put(140,40){\circle{5}}
                   \put(140,40){\line( 1, 1){20}} 
                    \put(160,60){\circle*{5}}
                   \put(160,60){\line( 1, 0){20}} 
                    \put(180,60){\circle {5}}
                   \put(180,60){\line( 1, 1){20}}   
                     \put(200,80){\circle*{5}}
                   \put(200,80){\line( 1, 0){20}} 
                     \put(100, 0){\circle{5}}
                   \put(100,0){\line( 1, 0){20}} 
                    \put(120, 0){\circle*{5}}
                   \put(120,0){\line( 1, 0){20}}
                    \put(140, 0){\circle{5}}
                   \put(140,0){\line( 1, 1){20}} 
                    \put(160, 20){\circle*{5}}
                   \put(160,20){\line( 1, 0){20}}
                   \put(180, 20){\circle{5}}
                    \put(180,20){\line( 1, 1){20}}
                     \put(200, 40){\circle*{5}}
                    \put(200,40){\line( 1, 0){20}} 
                     \put(220,40){\line( 0,-1){20}} 
                     \put(220, 20){\circle{5}}
                    \put(220,20){\line( 1, 0){20}}
                     \put(240,20){\line( 1, 0){20}} 
                     \put(240,20){\circle*{5}}
                     \put(240,20){\line( 1, 0){20}} 
                      \put(260,20){\line( 0, -1){60}} 
                       \put(100,-40){\line( 0,-1){20}}
                        \put(100,-60){\circle{5}}
                          \put(100,-60){\line( 1,0){20}}
                           \put(120,-60){\circle*{5}}
                           \put(120,-60){\line( 1,0){20}}
                      \put(100,-20){\line( 1,0){20}}
                        \put(120,-20){\circle*{5}}
                          \put(120,-20){\line( 1,0){20}}
                          % \put(120,-60){\circle*{5}}
                           \put(140,-20){\line( 0,-1){20}}
                            \put(140,-40){\circle{5}}
                              \put(140,-40){\line( 1, 0){20}} 
                     \put(160,-40){\circle*{5}}
                     \put(160,-40){\line( 1, 0){20}} 
                      \put(180,-40){\circle {5}}
                      \put(180,-40){\line( 1,0){20}}
                       \put(200,-40){\circle*{5}}
                     \put(200,-40){\line( 1, 0){20}} 
                       \put(220,-40){\line( 0,-1){20}}
              \put(120,-80){\circle*{1}}%7
               \put(140,-80){\circle*{1}} %8
               \put(140,-80){\line(0,-1){20}}
                \put(140,-80){\line(0,1){20}}
                \put(160,-80){\circle*{5}} %9
                 \put(160,-80){\line(-1,-1){20}}
                  \put(160,-80){\line(1,0){20}}
                  \put(180,-80){\circle{5}} %10
                  \put(180,-80){\line(1,0){20}}
                    \put(200,-80){\circle*{5}}%11
                    \put(200,- 80){\line(1,0){20}}
                      \put(220,-80){\circle*{1}} %12
                       \put(220,-80){\line(0,-1){20}}
                         \multiput(240,-60)(0,-20){3}{\circle*{5}} %13
                         \put(240,-80){\line(-1,-1){20}}
                         \put(240,-100){\line(1,0){20}}
   \multiput(260,-100)( 0,20){4}{\circle{4}} %14
                            \multiput(260,-100)( 0,20){4}{\line( 1,1){20}}
                              \multiput(280,-80)( 0,20){4}{\line( 1,0){20}} %%%
                              \multiput(280,-80)( 0,20){4}{\circle*{5}}
                            \multiput(260,-100)( 0,20){3}{\line(-1,0){20}}
                             \multiput(300,-80)( 0,20){5}{\circle{5}}
  %  
   % \put(20,-80){\circle {5}}%2
  %     
               \put(140,-100){\circle{5}}%8
 %                
              %         \put(220,-100){\circle{5}} %12
 %                       
     \put(300,0){\line(0,1){120}} 
        \put(280,120){\line(1,0){20}}
          \put(280,120){\circle*{5}} 
           \put(280,120){\line(-1,-1){20}}
            \put(260,100){\circle{5}} 
             \put(260,100){\line(-1,0){20}}
        \multiput(10,-220)(40,0){8}{\makebox(0,0){\tiny ${B}$}}
         \multiput(30,-220)(40,0){7}{\makebox(0,0){\tiny ${A}$}}
\put(0,-200){\vector(1,0){20}}
\put(0,-200){\vector(0,1){20}}
\put(-5,-180) {\makebox(0,0){\tiny ${x}$}}
 \end{picture} 
 $} 
%
%\vspace*{8cm}

%BBBBBB

\vspace*{1.5cm}
 
 \hspace*{7cm} \setlength{\unitlength}{.020cm}
 \begin{picture}(0,0)

  \multiput(0,240)(20,00){17}{\circle*{1}} 
  \multiput(0,220)(20,00){17}{\circle*{1}} 
  \multiput(0,200)(20,00){17}{\circle*{1}}
   \multiput(0,180)(20,00){17}{\circle*{1}}
  \multiput(0,160)(20,00){17}{\circle*{1}}
    
  \multiput(0,140)(20,00){17}{\circle*{1}} 
  \multiput(0,120)(20,00){17}{\circle*{1}} 
  \multiput(0,100)(20,00){17}{\circle*{1}}
   \multiput(0,80)(20,00){17}{\circle*{1}}
  \multiput(0,60)(20,00){17}{\circle*{1}}
 \multiput(0,40)(20,00){17}{\circle*{1}}
 \multiput(0,20)(20,00){17}{\circle*{1}}
\multiput(0,0)(20,00){17}{\circle*{1}}
\multiput(0,-20)(20,00){17}{\circle*{1}}
\multiput(0,-40)(20,00){17}{\circle*{1}}
\multiput(0,-60)(20,00){17}{\circle*{1}}
\multiput(0,-80)(20,00){17}{\circle*{1}}
\multiput(0,-100)(20,00){17}{\circle*{1}}
 \multiput(0,-120)(20,00){17}{\circle*{1}}

 \multiput(300, 120)(00,20){7}{\circle {5}}

 \multiput(0,120)( 0,20){7}{\circle*{5}}
  \multiput(0,120)(  0,20){7}{\line( 1,0){20}}
  \multiput(20,120)( 0,20){7}{\circle {5}}
 \multiput(20,120)(  0,20){7}{\line( 1,0){20}}
  \multiput(40,140)( 0,20){6}{\circle* {5}}
 \multiput(40,140)(  0,20){6}{\line( 1,0){20}}
  \multiput(60,140)( 0,20){6}{\circle {5}}
\multiput(60,140)(  0,20){6}{\line( 1,0){20}}
  \multiput(80,140)( 0,20){6}{\circle*{5}}
   \multiput(80,140)(  0,20){6}{\line( 1,0){20}}
  \multiput(100,140)( 0,20){6}{\circle {5}}
   \multiput(100,140)(  0,20){6}{\line( 1,0){20}}
    \multiput(120,160)( 0,20){5}{\circle* {5}}
     \multiput(120,160)(  0,20){5}{\line( 1,0){20}}
       \multiput(140,160)( 0,20){5}{\circle {5}}
     \multiput(140,160)(  0,20){5}{\line( 1,0){20}}

 \put(40, 120){\line(0,-1){80}}
 \put(40,0) {\circle*{5}}

 \multiput(0,0)( 0,-20){7}{\circle*{5}}
  \multiput(0,-20)( 0,-20){6}{\line( 1,0){20}}
  
  \multiput(20,-20)( 0,-20){6}{\circle {5}}
  \multiput(20,-20)( 0,-20){6}{\line( 1,0){20}}
  
  \multiput(40,0)( 0,-20){7}{\circle* {5}}
  \multiput(40, 0)( 0,-20){7}{\line( 1,0){20}}
  
  \multiput(60,0)( 0,-20){7}{\circle {5}}
  \multiput(60, 0)( 0,-20){7}{\line( 1,0){20}}
  
  \multiput(80,0)( 0,-20){7}{\circle* {5}}
  \multiput(80, 0)( 0,-20){3}{\line( 1,1){20}}
  
   \multiput(80, -60)( 0,-20){4}{\line( 1,0){20}}
    \multiput(80,-60)( 0,-20){4}{\circle* {5}}
   
    \multiput(80, -60)( 0,-20){4}{\line( 1,0){20}}
    \multiput(100,-60)( 0,-20){4}{\circle  {5}}
    
      \multiput(100, -60)( 0,-20){4}{\line( 1,0){20}}
    \multiput(120,-60)( 0,-20){4}{\circle* {5}}
   
    \multiput(120, -60)( 0,-20){4}{\line( 1,0){20}}
    \multiput(140,-60)( 0,-20){4}{\circle  {5}}
 
   \multiput(140, -60)( 0,-20){4}{\line( 1,0){20}}
    \multiput(160,-60)( 0,-20){4}{\circle* {5}}
   
    \multiput(160, -60)( 0,-20){4}{\line( 1,0){20}}
    \multiput(180,-60)( 0,-20){4}{\circle  {5}}
  
    \multiput(180, -60)( 0,-20){4}{\line( 1,0){20}}
    \multiput(200,-40)( 0,-20){5}{\circle * {5}}
 
   \multiput(200, -40)( 0,-20){5}{\line( 1,0){20}}%
    \multiput(220,-40)( 0,-20){5}{\circle  {5}}
   
    \multiput(220, -40)( 0,-20){5}{\line( 1,0){20}}
    \multiput(240,-20)( 0,-20){6}{\circle* {5}}
 
  \multiput(240, -20)( 0,-20){6}{\line( 1,0){20}}
    \multiput(260,-20)( 0,-20){6}{\circle  {5}}
   
    \multiput(260, -20)( 0,-20){6}{\line( 1,0){20}}%%%
    \multiput(280, 0)( 0,-20){7}{\circle* {5}}
 
\multiput(280, 0)( 0,-20){7}{\line( 1,0){20}}
    \multiput(300, 0)( 0,-20){7}{\circle  {5}}
 
 \put(280, 0)  {\line( 0,1){80}}
 
    \put(0, 0){\line(1,1){20}}
     \put(20, 20){\circle{5}}
       \put(20, 20){\line(1,0){20}}
        \put(40, 20){\line(0,-1){20}}

\multiput(38,242)(0,-2){63}{\line(0,-1){.2}}
\multiput(278,0)(0,-2){63}{\line(0,-1){.2}}

\multiput(0,240)(40,0){8}{\circle*{5}}
\multiput(20,240)(40,0){8}{\circle {5}}
 \put(0, 240){\line(1,0){320}}
 
 \multiput(0,220)(40,0){7}{\circle*{5}}
\multiput(20,220)(40,0){8}{\circle {5}}
 \put(0, 220){\line(1,0){280}}
  \put(300, 220){\line(1,0){20}}
  
   \multiput(0,200)(40,0){6}{\circle*{5}}
\multiput(20,200)(40,0){6}{\circle {5}}
 \put(0, 200){\line(1,0){240}}
  \put(300, 200){\line(1,0){20}}
  \put(300, 180){\line(1,0){20}}
 \put(300, 160){\line(1,0){20}}
 \put(300, 140){\line(1,0){20}}
 \put(300, 120){\line(1,0){20}}
 
  \put(300, 220){\line(-1,-1){20}}
   \put(300, 200){\line(-1,-1){20}}
  \put(300, 180){\line(-1,-1){20}}
 \put(300, 160){\line(-1,-1){20}}
 \put(300, 140){\line(-1,-1){20}}
 \put(300, 120){\line(-1,-1){20}}
 
  \multiput(280,200)(0,-20){6}{\circle*{5}}
  \multiput(280,180)(0,-20){6}{\line(-1,0){20}}
 \multiput(260,180)(0,-20){6}{\circle {5}}
 
 \put(40, 40){\circle*{5}}
 \put(40, 40){\line( 1,0){40}}
 \put(60, 40){\circle{5}}
 \put(80, 40){\circle*{5}}
 \put(80, 40){\line( 1,1){20}}
 \put(100, 60){\circle{5}}
  \put(100, 60){\line( 1,0){20}}
  \put(120, 60){\circle*{5}}
 \put(120, 60){\line( 1,1){20}}
 \multiput(140, 80)(40,0){3}{\circle{5}}
 \multiput(160, 80)(40,0){3}{\circle*{5}}
  \put(140, 80){\line( 1,0){100}}
  \put(240, 80){\line( 1,1){20}}
  
  \put(120, 140){\line( 0,-1){20}} 
   \put(120, 120){\circle*{5}}
   \put(120, 120){\line( 1,0){40}}
   \put(140, 120){\circle{5}}
    \put(160, 120){\line( 0,-1){20}} 
     \put(160, 100){\line( 1,0){80}} 
     \multiput(160, 100)(40,0){3}{\circle*{5}}
     \multiput(180, 100)(40,0){3}{\circle{5}}  
     \put(240, 100){\line( 1,1){20}}
 
 \multiput(100, 20)(0,-20){3}{\circle{5}}
 \multiput(100, 20)(0,-20){3}{\line( 1,0){20}}
 \multiput(120, 20)(0,-20){2}{\circle*{5}}
  \multiput(120, 20)(0,-20){2}{\line( 1,1){20}}
  
  \put(280, 220){\line( 0,-1){20}}
  \put(240, 200){\line( 0,-1){20}}
  \put(240, 180){\circle*{5}}
  \put(240, 180){\line( 1,0){20}}
  
   \put(160, 180){\circle*{5}}
    \put(160, 180){\line( 1,0){40}}
    \put(180, 180){\circle{5}}
    \put(200, 180){\line( 0,-1){20}}
    \multiput(200, 160)(40,0){2}{\circle*{5}}
     \put(200, 160){\line( 1,0){60}}
      \put(220, 160){\circle{5}}

    \put(160, 160){\line( 0,-1){20}}
    \put(160, 140){\circle*{5}}
     \put(160, 140){\line( 1,0){40}}
      \put(200, 140){\line( 0,-1){20}}
       \put(200, 120){\circle*{5}}
       \put(200, 120){\line( 1,1){20}}
       \put(220, 140){\line( 1,0){40}}
       \put(240, 140){\circle*{5}}
       
    \multiput(180, 140)(40,0){3}{\circle{5}}
    
    \multiput(140, 40)(0,-20){2}{\circle{5}}
    \multiput(160, 40)(40,0){2}{\circle*{5}}
    \put(180, 40) {\circle{5}}
     \put(200, 40){\line( 1,1){20}}
      \put(220, 60) {\circle{5}}
       \put(220, 60){\line( 1,0){20}}
      \put(240, 60) {\circle*{5}}
       \put(240, 60){\line( 1,1){20}}
    \put(140, 40){\line( 1,0){60}}
    
     \put(140, 20){\line( 1,0){20}}
      \put(160, 20){\line( 0,-1){20}}
       \put(160, 0) {\circle*{5}}
      % \put(160, 0){\line( 1,1){20}}
       \put(180, 0) {\circle{5}}
       \put(160, 0){\line( 1,0){40}}
      %\put(180, 20){\line( 1,0){20}}
       \put(200, 0) {\circle*{5}}
      % \put(200, 20){\line( 0,-1){20}}
      %\put(240, 20){\circle*{5}}
      \put(240, -20){\line( 0,1){40}}
      \put(200, 0){\line( 1,1){20}}
      \put(220, 20){\circle{5}}
      \put(220, 20){\line( 1,0){20}}
      
      \put(120, -20){\line( 0,-1){20}}
      \put(120, -40){\circle*{5}}
       \multiput(140, -20)(40,0){2}{\circle{5}}
      \put(120, -40){\line( 1,1){20}}
      \put(140, -20){\line( 1,0){60}}
       \put(160, -20){\circle*{5}}
        \put(200, -20){\line( 0,-1){20}}

  \put(20,-135) {\makebox(0,0){\tiny ${Y_1}$}}
   \put(40,-135) {\makebox(0,0){\tiny ${Y_2}$}}
    \put(60,-135) {\makebox(0,0){\tiny ${Y_3}$}}
     \put(80,-135) {\makebox(0,0){\tiny ${Y_4}$}}
     \put(100,-135) {\makebox(0,0){\tiny ${Y_5}$}}
     \put(120,-135) {\makebox(0,0){\tiny ${Y_6}$}}
     \put(140,-135) {\makebox(0,0){\tiny ${Y_7}$}}
      \put(160,-135) {\makebox(0,0){\tiny ${Y_8}$}}
     \put(180,-135) {\makebox(0,0){\tiny ${Y_{9}}$}}
     \put(200,-135) {\makebox(0,0){\tiny ${Y_{10}}$}}
       \put(220,-135) {\makebox(0,0){\tiny ${Y_{11}}$}}
         \put(240,-135) {\makebox(0,0){\tiny ${Y_{12}}$}}
           \put(260,-135) {\makebox(0,0){\tiny ${Y_{13}}$}}
             \put(280,-135) {\makebox(0,0){\tiny ${Y_{14}}$}}

              \put(-345,-135) {\makebox(0,0){\tiny ${Y_1}$}}
   \put(-325,-135) {\makebox(0,0){\tiny ${Y_2}$}}
    \put(-305,-135) {\makebox(0,0){\tiny ${Y_3}$}}
     \put(-285,-135) {\makebox(0,0){\tiny ${Y_4}$}}
     \put(-265,-135) {\makebox(0,0){\tiny ${Y_5}$}}
     \put(-245,-135) {\makebox(0,0){\tiny ${Y_6}$}}
     \put(-225,-135) {\makebox(0,0){\tiny ${Y_7}$}}
      \put(-205,-135) {\makebox(0,0){\tiny ${Y_8}$}}
     \put(-185,-135) {\makebox(0,0){\tiny ${Y_{9}}$}}
     \put(-165,-135) {\makebox(0,0){\tiny ${Y_{10}}$}}
       \put(-145,-135) {\makebox(0,0){\tiny ${Y_{11}}$}}
         \put(-125,-135) {\makebox(0,0){\tiny ${Y_{12}}$}}
           \put(-105,-135) {\makebox(0,0){\tiny ${Y_{13}}$}}
             \put(-85,-135) {\makebox(0,0){\tiny ${Y_{14}}$}}
 
 \end{picture} 

\vspace{3cm}

Figure 15: Lattice paths for Fig.4  \hspace{.3cm} Figure 16: Lattice paths for Fig.5 %($2n$ Outliers) 
\\\hspace*{2cm}($2m+1$ Inliers)  \hspace{4cm}  ($2n$ Outliers)

   \vspace*{7cm}
   
 \setlength{\unitlength}{.020cm}
 \begin{picture}(0,0)

  \multiput(0,240)(20,00){17}{\circle*{1}} 
  \multiput(0,220)(20,00){17}{\circle*{1}} 
  \multiput(0,200)(20,00){17}{\circle*{1}}
   \multiput(0,180)(20,00){17}{\circle*{1}}
  \multiput(0,160)(20,00){17}{\circle*{1}}
    
  \multiput(0,140)(20,00){17}{\circle*{1}} 
  \multiput(0,120)(20,00){17}{\circle*{1}} 
  \multiput(0,100)(20,00){17}{\circle*{1}}
   \multiput(0,80)(20,00){17}{\circle*{1}}
  \multiput(0,60)(20,00){17}{\circle*{1}}
 \multiput(0,40)(20,00){17}{\circle*{1}}
 \multiput(0,20)(20,00){17}{\circle*{1}}
\multiput(0,0)(20,00){17}{\circle*{1}}
\multiput(0,-20)(20,00){17}{\circle*{1}}
\multiput(0,-40)(20,00){17}{\circle*{1}}
\multiput(0,-60)(20,00){17}{\circle*{1}}
\multiput(0,-80)(20,00){17}{\circle*{1}}
\multiput(0,-100)(20,00){17}{\circle*{1}}
 \multiput(0,-120)(20,00){17}{\circle*{1}}

 \multiput(300, 120)(00,20){7}{\circle {5}}

 \multiput(0,120)( 0,20){7}{\circle*{5}}
  \multiput(0,120)(  0,20){7}{\line( 1,0){20}}
  \multiput(20,120)( 0,20){7}{\circle {5}}
 \multiput(20,120)(  0,20){7}{\line( 1,0){20}}
  \multiput(40,140)( 0,20){6}{\circle* {5}}
 \multiput(40,140)(  0,20){6}{\line( 1,0){20}}
  \multiput(60,140)( 0,20){6}{\circle {5}}
\multiput(60,140)(  0,20){6}{\line( 1,0){20}}
  \multiput(80,140)( 0,20){6}{\circle*{5}}
   \multiput(80,140)(  0,20){6}{\line( 1,0){20}}
  \multiput(100,140)( 0,20){6}{\circle {5}}
   \multiput(100,140)(  0,20){6}{\line( 1,0){20}}
    \multiput(120,160)( 0,20){5}{\circle* {5}}
     \multiput(120,160)(  0,20){5}{\line( 1,0){20}}
       \multiput(140,160)( 0,20){5}{\circle {5}}
     \multiput(140,160)(  0,20){5}{\line( 1,0){20}}

 \put(40, 120){\line(0,-1){80}}
 \put(40,0) {\circle*{5}}

 \multiput(0,0)( 0,-20){7}{\circle*{5}}
  \multiput(0,-20)( 0,-20){6}{\line( 1,0){20}}
  
  \multiput(20,-20)( 0,-20){6}{\circle {5}}
  \multiput(20,-20)( 0,-20){6}{\line( 1,0){20}}
  
  \multiput(40,0)( 0,-20){7}{\circle* {5}}
  \multiput(40, 0)( 0,-20){7}{\line( 1,0){20}}
  
  \multiput(60,0)( 0,-20){7}{\circle {5}}
  \multiput(60, 0)( 0,-20){7}{\line( 1,0){20}}
  
  \multiput(80,0)( 0,-20){7}{\circle* {5}}
  \multiput(80, 0)( 0,-20){3}{\line( 1,1){20}}
  
   \multiput(80, -60)( 0,-20){4}{\line( 1,0){20}}
    \multiput(80,-60)( 0,-20){4}{\circle* {5}}
   
    \multiput(80, -60)( 0,-20){4}{\line( 1,0){20}}
    \multiput(100,-60)( 0,-20){4}{\circle  {5}}
    
      \multiput(100, -60)( 0,-20){4}{\line( 1,0){20}}
    \multiput(120,-60)( 0,-20){4}{\circle* {5}}
   
    \multiput(120, -60)( 0,-20){4}{\line( 1,0){20}}
    \multiput(140,-60)( 0,-20){4}{\circle  {5}}
 
   \multiput(140, -60)( 0,-20){4}{\line( 1,0){20}}
    \multiput(160,-60)( 0,-20){4}{\circle* {5}}
   
    \multiput(160, -60)( 0,-20){4}{\line( 1,0){20}}
    \multiput(180,-60)( 0,-20){4}{\circle  {5}}
  
    \multiput(180, -60)( 0,-20){4}{\line( 1,0){20}}
    \multiput(200,-40)( 0,-20){5}{\circle * {5}}
 
   \multiput(200, -40)( 0,-20){5}{\line( 1,0){20}}%
    \multiput(220,-40)( 0,-20){5}{\circle  {5}}
   
    \multiput(220, -40)( 0,-20){5}{\line( 1,0){20}}
    \multiput(240,-20)( 0,-20){6}{\circle* {5}}
 
  \multiput(240, -20)( 0,-20){6}{\line( 1,0){20}}
    \multiput(260,-20)( 0,-20){6}{\circle  {5}}
   
    \multiput(260, -20)( 0,-20){6}{\line( 1,0){20}}%%%
    \multiput(280, 0)( 0,-20){7}{\circle* {5}}
 
\multiput(280, 0)( 0,-20){7}{\line( 1,0){20}}
    \multiput(300, 0)( 0,-20){7}{\circle  {5}}
 
 \put(280, 0)  {\line( 0,1){80}}
 
    \put(0, 0){\line(1,1){20}}
     \put(20, 20){\circle{5}}
       \put(20, 20){\line(1,0){20}}
        \put(40, 20){\line(0,-1){20}}

\multiput(38,242)(0,-2){63}{\line(0,-1){.2}}
\multiput(278,0)(0,-2){63}{\line(0,-1){.2}}

\multiput(0,240)(40,0){8}{\circle*{5}}
\multiput(20,240)(40,0){8}{\circle {5}}
 \put(0, 240){\line(1,0){320}}
 
 \multiput(0,220)(40,0){7}{\circle*{5}}
\multiput(20,220)(40,0){8}{\circle {5}}
 \put(0, 220){\line(1,0){280}}
  \put(300, 220){\line(1,0){20}}
  
   \multiput(0,200)(40,0){6}{\circle*{5}}
\multiput(20,200)(40,0){6}{\circle {5}}
 \put(0, 200){\line(1,0){240}}
  \put(300, 200){\line(1,0){20}}
  \put(300, 180){\line(1,0){20}}
 \put(300, 160){\line(1,0){20}}
 \put(300, 140){\line(1,0){20}}
 \put(300, 120){\line(1,0){20}}
 
  \put(300, 220){\line(-1,-1){20}}
   \put(300, 200){\line(-1,-1){20}}
  \put(300, 180){\line(-1,-1){20}}
 \put(300, 160){\line(-1,-1){20}}
 \put(300, 140){\line(-1,-1){20}}
 \put(300, 120){\line(-1,-1){20}}
 
  \multiput(280,200)(0,-20){6}{\circle*{5}}
  \multiput(280,180)(0,-20){6}{\line(-1,0){20}}
 \multiput(260,180)(0,-20){6}{\circle {5}}
 
 \put(40, 40){\circle*{5}}
 \put(40, 40){\line( 1,0){40}}
 \put(60, 40){\circle{5}}
 \put(80, 40){\circle*{5}}
 \put(80, 40){\line( 1,1){20}}
 \put(100, 60){\circle{5}}
  \put(100, 60){\line( 1,0){20}}
  \put(120, 60){\circle*{5}}
 \put(120, 60){\line( 1,1){20}}
 \multiput(140, 80)(40,0){3}{\circle{5}}
 \multiput(160, 80)(40,0){3}{\circle*{5}}
  \put(140, 80){\line( 1,0){100}}
  \put(240, 80){\line( 1,1){20}}
  
  \put(120, 140){\line( 0,-1){20}} 
   \put(120, 120){\circle*{5}}
   \put(120, 120){\line( 1,0){40}}
   \put(140, 120){\circle{5}}
    \put(160, 120){\line( 0,-1){20}} 
     \put(160, 100){\line( 1,0){80}} 
     \multiput(160, 100)(40,0){3}{\circle*{5}}
     \multiput(180, 100)(40,0){3}{\circle{5}}  
     \put(240, 100){\line( 1,1){20}}
 
 \multiput(100, 20)(0,-20){3}{\circle{5}}
 \multiput(100, 20)(0,-20){3}{\line( 1,0){20}}
 \multiput(120, 20)(0,-20){2}{\circle*{5}}
  \multiput(120, 20)(0,-20){2}{\line( 1,1){20}}
  
  \put(280, 220){\line( 0,-1){20}}
  \put(240, 200){\line( 0,-1){20}}
  \put(240, 180){\circle*{5}}
  \put(240, 180){\line( 1,0){20}}
  
   \put(160, 180){\circle*{5}}
    \put(160, 180){\line( 1,0){40}}
    \put(180, 180){\circle{5}}
    \put(200, 180){\line( 0,-1){20}}
    \multiput(200, 160)(40,0){2}{\circle*{5}}
     \put(200, 160){\line( 1,0){60}}
      \put(220, 160){\circle{5}}

    \put(160, 160){\line( 0,-1){20}}
    \put(160, 140){\circle*{5}}
     \put(160, 140){\line( 1,0){40}}
      \put(200, 140){\line( 0,-1){20}}
       \put(200, 120){\circle*{5}}
       \put(200, 120){\line( 1,1){20}}
       \put(220, 140){\line( 1,0){40}}
       \put(240, 140){\circle*{5}}
       
    \multiput(180, 140)(40,0){3}{\circle{5}}
    
    \multiput(140, 40)(0,-20){2}{\circle{5}}
    \multiput(160, 40)(40,0){2}{\circle*{5}}
    \put(180, 40) {\circle{5}}
     \put(200, 40){\line( 1,1){20}}
      \put(220, 60) {\circle{5}}
       \put(220, 60){\line( 1,0){20}}
      \put(240, 60) {\circle*{5}}
       \put(240, 60){\line( 1,1){20}}
    \put(140, 40){\line( 1,0){60}}
    
     \put(140, 20){\line( 1,0){20}}
      \put(160, 20){\line( 0,-1){20}}
       \put(160, 0) {\circle*{5}}
      % \put(160, 0){\line( 1,1){20}}
       \put(180, 0) {\circle{5}}
       \put(160, 0){\line( 1,0){40}}
      %\put(180, 20){\line( 1,0){20}}
       \put(200, 0) {\circle*{5}}
      % \put(200, 20){\line( 0,-1){20}}
      %\put(240, 20){\circle*{5}}
      \put(240, -20){\line( 0,1){40}}
      \put(200, 0){\line( 1,1){20}}
      \put(220, 20){\circle{5}}
      \put(220, 20){\line( 1,0){20}}
      
      \put(120, -20){\line( 0,-1){20}}
      \put(120, -40){\circle*{5}}
       \multiput(140, -20)(40,0){2}{\circle{5}}
      \put(120, -40){\line( 1,1){20}}
      \put(140, -20){\line( 1,0){60}}
       \put(160, -20){\circle*{5}}
        \put(200, -20){\line( 0,-1){20}}
\end{picture} 

\vspace*{-2.5cm}

\setlength{\unitlength}{.020cm}
\textcolor[rgb]{0.00,0.00,1.00}{$ \begin{picture}(0,0)
 % 
 %INLIERS
 %
  \multiput(0,140)(20,00){17}{\circle*{1}} 
  \multiput(0,120)(20,00){17}{\circle*{1}} 
  \multiput(0,100)(20,00){17}{\circle*{1}}
   \multiput(0,80)(20,00){17}{\circle*{1}}
  \multiput(0,60)(20,00){17}{\circle*{1}}
 \multiput(0,40)(20,00){17}{\circle*{1}}
 \multiput(0,20)(20,00){17}{\circle*{1}}
\multiput(0,0)(20,00){17}{\circle*{1}}
\multiput(0,-20)(20,00){17}{\circle*{1}}
\multiput(0,-40)(20,00){17}{\circle*{1}}
\multiput(0,-60)(20,00){17}{\circle*{1}}
\multiput(0,-80)(20,00){17}{\circle*{1}}
\multiput(0,-100)(20,00){17}{\circle*{1}}
 \multiput(0,-120)(20,00){17}{\circle*{1}}
 \multiput(0,-140)(20,00){17}{\circle*{1}}
 \multiput(0,-160)(20,00){17}{\circle*{1}}
 \multiput(0,-180)(20,00){17}{\circle*{1}}
 \multiput(0,-200)(20,00){17}{\circle*{1}}
 \multiput(0,-220)(20,00){17}{\circle*{1}}
   \multiput(0,0)(00,-20){5}{\circle*{5}}  %1
   \put(0,-80){\line(1,0){20}} 
   \multiput(0,-80)(00,20){5}{\line(1,0){20}} 
   \multiput(20,-60)(00,20){4}{\circle{5}} 
   \multiput(20,-60)(00,20){4}{\line(1,1){20}} 
    \multiput(40,-40)(00,20){4}{\circle*{5}}
     \multiput(60,-40)(00,20){4}{\line(-1,0){20}} 
    \multiput(60,-40)(00,20){4}{\line(1,0){20}} 
    \multiput(60,-40)(00,20){4}{\circle{5}}  
     \multiput(80,-40)(00,20){4}{\circle*{5}}  
    \multiput(80,-40)(00,20){4}{\line(1,0){20}} 
      \multiput(100,-20)(00,20){3}{\circle {5}}  
    \put(20,-80){\circle*{1}}   %2
   \put(20,-80){\line(0,-1){20}} 
      \put(40,-80){\circle*{5}}   %3
       \put(40,-80){\line(-1,-1){20}} 
       \put(40,-80){\line(1,0){20}}
        \put(60,-80){\circle{5}}  %4
        \put(60,-80){\line(1,0){20}}
          \put(80,-80){\circle*{5}}  %5
           \put(80,-80){\line(1,0){20}}
            \put(100,-80){\circle*{1}} %6
            \put(100,-80){\line(0,-1){60}}
            \put(20,-100){\circle{5}}  %%%%%%%%%%%
            \put(100,-140){\circle {5}} 
            \put(100,-140){\line(1,1){20}}
            \put(120,-120){\circle* {5}}
             \put(120,-120){\line( 1, 0){20}}
               \put(140,-120){\line( 0, -1){20}}
                \put(140,-140){\circle  {5}}
                \put(140,-140){\line( 1, 0){20}}  
                 \put(160,-140){\circle* {5}}
                  \put(160,-140){\line( 1, 0){20}}  
                  \put(180,-140){\circle {5}}
                   \put(180,-140){\line( 1, 1){20}}  
                 \put(200,-120){\circle* {5}}
                  \put(200,-120){\line( 1, 0){20}}  
                 \put(220,-120){\circle  {5}}
                  \put(220,-100){\circle  {5}}
         \put(220,-60){\line( 1, 0){20}}  
                  \put(220,-60){\circle  {5}}
                   \put(220,20){\circle  {5}}
                    \put(220,80){\circle  {5}}
                     \put(220,80){\line( 1, 1){20}}
                     \put(240,100){\circle* {5}}
                 \put(220,-120){\line( 1, 1){20}}  
                 \put(240,-100){\circle* {5}}
                  \put(100,20){\circle{5}}
                   \put(100,20){\line( 1, 1){20}}  
                    \put(120,40){\circle*{5}}
                   \put(120,40){\line( 1, 0){20}}  
                     \put(140,40){\circle{5}}
                   \put(140,40){\line( 1, 1){20}} 
                    \put(160,60){\circle*{5}}
                   \put(160,60){\line( 1, 0){20}} 
                    \put(180,60){\circle {5}}
                   \put(180,60){\line( 1, 1){20}}   
                     \put(200,80){\circle*{5}}
                   \put(200,80){\line( 1, 0){20}} 
                     \put(100, 0){\circle{5}}
                   \put(100,0){\line( 1, 0){20}} 
                    \put(120, 0){\circle*{5}}
                   \put(120,0){\line( 1, 0){20}}
                    \put(140, 0){\circle{5}}
                   \put(140,0){\line( 1, 1){20}} 
                    \put(160, 20){\circle*{5}}
                   \put(160,20){\line( 1, 0){20}}
                   \put(180, 20){\circle{5}}
                    \put(180,20){\line( 1, 1){20}}
                     \put(200, 40){\circle*{5}}
                    \put(200,40){\line( 1, 0){20}} 
                     \put(220,40){\line( 0,-1){20}} 
                     \put(220, 20){\circle{5}}
                    \put(220,20){\line( 1, 0){20}}
                     \put(240,20){\line( 1, 0){20}} 
                     \put(240,20){\circle*{5}}
                     \put(240,20){\line( 1, 0){20}} 
                      \put(260,20){\line( 0, -1){60}} 
                       \put(100,-40){\line( 0,-1){20}}
                        \put(100,-60){\circle{5}}
                          \put(100,-60){\line( 1,0){20}}
                           \put(120,-60){\circle*{5}}
                           \put(120,-60){\line( 1,0){20}}
                      \put(100,-20){\line( 1,0){20}}
                        \put(120,-20){\circle*{5}}
                          \put(120,-20){\line( 1,0){20}}
                          % \put(120,-60){\circle*{5}}
                           \put(140,-20){\line( 0,-1){20}}
                            \put(140,-40){\circle{5}}
                              \put(140,-40){\line( 1, 0){20}} 
                     \put(160,-40){\circle*{5}}
                     \put(160,-40){\line( 1, 0){20}} 
                      \put(180,-40){\circle {5}}
                      \put(180,-40){\line( 1,0){20}}
                       \put(200,-40){\circle*{5}}
                     \put(200,-40){\line( 1, 0){20}} 
                       \put(220,-40){\line( 0,-1){20}}
              \put(120,-80){\circle*{1}}%7
               \put(140,-80){\circle*{1}} %8
               \put(140,-80){\line(0,-1){20}}
                \put(140,-80){\line(0,1){20}}
                \put(160,-80){\circle*{5}} %9
                 \put(160,-80){\line(-1,-1){20}}
                  \put(160,-80){\line(1,0){20}}
                  \put(180,-80){\circle{5}} %10
                  \put(180,-80){\line(1,0){20}}
                    \put(200,-80){\circle*{5}}%11
                    \put(200,- 80){\line(1,0){20}}
                      \put(220,-80){\circle*{1}} %12
                       \put(220,-80){\line(0,-1){20}}
                         \multiput(240,-60)(0,-20){3}{\circle*{5}} %13
                         \put(240,-80){\line(-1,-1){20}}
                         \put(240,-100){\line(1,0){20}}
   \multiput(260,-100)( 0,20){4}{\circle{4}} %14
                            \multiput(260,-100)( 0,20){4}{\line( 1,1){20}}
                              \multiput(280,-80)( 0,20){4}{\line( 1,0){20}} %%%
                              \multiput(280,-80)( 0,20){4}{\circle*{5}}
                            \multiput(260,-100)( 0,20){3}{\line(-1,0){20}}
                             \multiput(300,-80)( 0,20){5}{\circle{5}}
  %  
   % \put(20,-80){\circle {5}}%2
  %     
               \put(140,-100){\circle{5}}%8
 %                
              %         \put(220,-100){\circle{5}} %12
 %                       
     \put(300,0){\line(0,1){120}} 
        \put(280,120){\line(1,0){20}}
          \put(280,120){\circle*{5}} 
           \put(280,120){\line(-1,-1){20}}
            \put(260,100){\circle{5}} 
             \put(260,100){\line(-1,0){20}}
  %                      
      %  \multiput(10,-160)(40,0){8}{\makebox(0,0){\tiny ${B}$}}
   %           
       %  \multiput(30,-160)(40,0){7}{\makebox(0,0){\tiny ${A}$}}
%
%\put(0,-200){\vector(1,0){20}}
%\put(0,-200){\vector(0,1){20}}
%
%\put(-5,-180) {\makebox(0,0){\tiny ${x}$}}
 \end{picture} 
 $}
 
 \vspace{5cm}

Figure 17: Lattice paths: superimposing in- and outliers.

% \newpage
 
% \newpage

 \bigbreak

\noindent paths on the right hand side of Figure 7; this leads to a path on the lattice with alternating circles and dots. As is suggested by Figure 7, a path on a West domino gets transformed in a vertical line (of unit length) on the lattice, possibly repeated as many times as the path traverses West dominoes. The dots as initial boundary condition for the inlier paths translate into a dot with an added horizontal segment on the lattice. The final boundary condition for the inlier path, namely the circle translates into simply putting a circle at the end of the path on the lattice. %Thus the map from domino-paths to lattice-paths amounts to replacing a dot-circle by a horizontal segment with a dot to the left and a segment to the right, with as final condition a circle on the right. The domino-paths of Figure 9 are transformed in a precise way into the lattice-paths in Figure 15.
 
 One thus distinguishes two kinds of steps for the inlier lattice paths:
 $$\begin{aligned}
 \mbox{$A$-steps}&=\{\mbox{steps from $\circ$ to $\bullet$}\}=
\{\mbox{steps from $x\rg x$ or $x\rg x+1$}\}\\
  \mbox{$B$-steps}&=\{\mbox{steps from  $\bullet$ to $\circ$ }\}
  =
\{\mbox{steps from $x\rg x-k$, with $k\geq 0$}\}.
 \end{aligned}$$

\section{Expressing the kernel in terms of orthogonal polynomials on the circle}

Consider the following weights on the unit circle in $\BC$,
\be
\rho^L_a(z)=\left(\frac{1+az}{1-\frac{a}{z}}\right)^{n/2}\mbox{~and~}\rho^R_a(z)=\frac{(1+az)^{n/2}}{\left(1-\frac{a}{z}\right)^{n/2+1}},
\label{rho}\ee
and
\be
\rho_a(z):=\rho^L_a(z)\rho^R_a(z)=\frac{(1+az)^n}{\left(1-\frac{a}{z}\right)^{n+1}}.
\label{13}\ee
The following operation $\ast=\ast_{_{\!\! x}}$ will be used throughout\footnote{The following formula involving the $\ast$-operation will often be used, e.g. in (\ref{ES}),
$$
\oint_{\Gamma_{0,a}}\frac{dz}{2\pi iz}g_1(z)z^{-x}~\ast_{_{\!\! x}} ~
 \oint_{\Gamma_{0,a}}\frac{dz}{2\pi iz}g_2(z)z^{ x}
= 
\oint_{\Gamma_{0,a}}\frac{dz}{2\pi iz}g_1(z)
g_2(z).
$$}:
\be
f(x)\ast g(x):=f(x)\ast_{_{\!\! x}} g(x):=\sum_{x\in \BZ}f(x)g(x).
\label{ast}\ee Also remember from (\ref{Idef}) the definition for $0\leq	 s\in \BZ$, and $x,y \in \BZ$,
\be
\begin{aligned}
\psi_{ 2s}(x,y)&= \oint _{\Gamma_{0,a}} \frac{dz}{2\pi iz} z^{x-y}
\left(\frac{1+az}{1-\frac az}\right)^{s},
\end{aligned}
\label{7}\ee
 which satisfies the semi-group property with respect to the operation $\ast$,
\be\psi_{2s}(x,\cdot)\ast\psi_{2r}(\cdot,y)=\psi_{2(s+r)}(x,y),\mbox{   for   }s,r\in\BZ.
\label{ES}\ee
We now state the Proposition below, which uses arguments similar to the ones in \cite{AFvM12}.
\begin{proposition}\label{prop} The point process on the even lines associated with the two groups of non-intersecting outlier lattice paths, corresponding to the double Aztec diamond of type $(n,m)$, is a determinantal point process with the following correlation kernel:
\begin{equation}\begin{aligned}
\lefteqn{ \widetilde {\mathbbm K}_{n,m}^{{\rm ext}}(2r,x;2s,y)} 
\\
&=-\Id_{s<r}\psi_{2(s-r)}(x,y)+\psi_{n-2r}(x,\cdot)\ast\widetilde  {\mathbbm K}_{n,m}^{{\rm ext}}(n,\cdot ~;n,\tc)\ast\psi_{ 2s-n }(\tc~,y),
\end{aligned}
\label{E1}\end{equation}
 expressed in terms of the kernel at the half-way axis $2r=2s=n$ :
$$
\widetilde {\mathbbm K}_{n,m}^{{\rm ext}}(n,x ~;n,y)=\widetilde\BK_{n,m}(x,y):=(\Id - \BK_{n,m})(x,y),
$$
where
\begin{equation}\begin{aligned}
%\lefteqn{\BK_{n,m}(x,y)}\\
\BK_{n,m}(x,y)& 
= \frac{1}{(2\pi i)^2}\oint_{\Gamma_{0,a}}\frac{dz}{z}\rho^R_a(z)\oint_{\Gamma_{0,a,z}}\frac{dw}{w}\rho^L_a(w)\frac{w^{-y-m}}{z^{-x-m}}\sum_{k=0}^{2m}\hat P_k(z^{-1})P_k(w)
\end{aligned}\label{14}
\end{equation}
with bi-orthonormal polynomials $P_k$ and $\hat P_k$ on the unit circle, for the weight $\rho_a(z)\frac{dz}{z}$.
\end{proposition}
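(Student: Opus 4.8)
The plan is to run the standard Lindström--Gessel--Viennot and Eynard--Mehta program on the $2m+1$ non-intersecting \emph{inlier} lattice paths, to recognize the matrix that must be inverted in the resulting kernel as the moment matrix of the weight $\rho_a(z)\,\tfrac{dz}{z}$ on the unit circle --- so that the bi-orthonormal polynomials $P_k,\hat P_k$ enter --- and finally to transfer the answer to the \emph{outlier} process by the inlier/outlier duality of Section~\ref{sect2}, which is exactly what produces the form $\widetilde\BK_{n,m}=\Id-\BK_{n,m}$.

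\emph{First, the solvable path ensemble.} By the constructions of Section~\ref{sect2}, the domino measure \eqref{Ptiling}, pushed forward to inlier lattice paths, is the measure on $2m+1$ non-intersecting paths of $2n+1$ steps, with $A$-steps ($x\to x$ or $x\to x+1$) and $B$-steps ($x\to x-k$, $k\ge0$), each vertical domino carrying the factor $a$. A direct computation of the one-step generating functions shows that the full $(2n+1)$-fold transfer convolution $\phi$ along a path, restricted to the $2m+1$ prescribed start-points $i_1,\dots,i_{2m+1}$ and the matching end-points, has matrix entries of the form $\oint_{\Gamma_{0,a}}\tfrac{dz}{2\pi iz}\,z^{\,q-p}\,\rho_a(z)$; that is, after the index shift that identifies path labels with the $Y$-coordinate (whose range $[-n-m,n+m]$ accounts for the $+m$ shift appearing in \eqref{14}), the $(2m+1)\times(2m+1)$ transition matrix $M$ is precisely the moment matrix of $\rho_a(z)\,\tfrac{dz}{z}$. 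The factorization $\rho_a=\rho^L_a\rho^R_a$ of \eqref{rho}--\eqref{13} records the splitting of a full inlier path at the half-way axis $\ell=n$, the asymmetry $n/2$ versus $n/2+1$ reflecting the slight asymmetry between the diamonds $A$ and $B$.

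\emph{Next, Eynard--Mehta and the orthogonal polynomials.} By the Eynard--Mehta theorem (the argument running parallel to \cite{AFvM12}), the point process on the even axes is determinantal, with an extended kernel of the schematic shape $-\Id_{s<r}\psi_{2(s-r)}(x,y)+\sum_{p,q}\phi^{\mathrm{in}}_{2r}(i_p,x)\,(M^{-1})_{pq}\,\phi^{\mathrm{out}}_{2s}(y,i_q)$; the first term is the convolution term of \eqref{E1}, while the semigroup property \eqref{ES} of the $\psi$'s collapses the whole extended kernel onto its value at $2r=2s=n$, which is the content of \eqref{E1}. What remains is the evaluation of $M^{-1}$, which is the classical inversion of a moment matrix through bi-orthonormal polynomials: if $P_k,\hat P_k$ are the degree-$k$ polynomials bi-orthonormal for $\rho_a(z)\,\tfrac{dz}{z}$, then $\sum_{p,q}z^{-p}(M^{-1})_{pq}\,w^{q}=\sum_{k=0}^{2m}\hat P_k(z^{-1})P_k(w)$. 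Substituting this, and reading off the explicit factors $\rho^R_a(z)$ and $\rho^L_a(w)$ from the two half-paths together with the powers $z^{x+m}$ and $w^{-y-m}$ from the index shift --- the $w$-contour having to enclose $z$, as dictated by the indicator term, hence the contour $\Gamma_{0,a,z}$ --- produces exactly \eqref{14} as the correlation kernel of the inlier dot-process at the middle axis.

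\emph{Finally, duality.} By the inlier/outlier duality of Section~\ref{sect2}, the outlier dot-particles on a given even line occupy precisely the sites of $\{-n-m,\dots,n+m\}$ left vacant by the inlier dot-particles on that same line; since taking the complement of a determinantal point process on a finite ground set replaces its correlation kernel $K$ by $\Id-K$, the outlier dot-process at the half-way axis is determinantal with kernel $\widetilde\BK_{n,m}=\Id-\BK_{n,m}$, and the $\psi$-propagation \eqref{E1} then recovers the full extended outlier kernel, which proves the Proposition. The moment-matrix inversion is routine; I expect the main obstacle to be the bookkeeping of the first two steps --- amalgamating the two groups of inlier paths, with their initial horizontal segments and their boundary dot/circle conventions, into a single Eynard--Mehta ensemble, tracking every index shift so that the $+m$ shift and the $\rho^L_a/\rho^R_a$ split come out precisely as in \eqref{14}, and verifying that the duality complementation is taken on exactly the right ground set, so that $\Id-\BK_{n,m}$, and not $\Id$ minus a conjugate of it, is the correct outlier kernel.
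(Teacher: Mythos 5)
Your proposal follows essentially the same route as the paper: Lindström--Gessel--Viennot plus Eynard--Mehta on the inlier lattice paths, inversion of the $(2m+1)\times(2m+1)$ transition (moment) matrix via the bi-orthonormal polynomials for $\rho_a(z)\tfrac{dz}{z}$ (the paper realizes this as row operations producing $\tilde f_k,\tilde g_k$ in \eqref{12}--\eqref{12'}), the semigroup property \eqref{ES} to propagate from the half-way axis, and the complementation principle of \cite{BOO00} to pass to the outlier kernel $\Id-\BK_{n,m}$. The bookkeeping issues you flag (index shifts, the $\rho^L_a/\rho^R_a$ split at $\ell=n$, and the flip of the indicator from $\Id_{r<s}$ to $\Id_{s<r}$ under dualization) are exactly the points the paper's Step 2 verifies explicitly.
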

\begin{proof}
{\it Step 1: The kernel at time $2r=2s=n$.} Remember the weight $0<a<1$ for vertical dominoes and the weight $1$ for horizontal dominoes. We first consider the {\it inlier lattice-paths}, as depicted in the right hand side of Figure 7 and Figure 15. At the level of the inlier lattice-paths, the weight will figure in only when $x\rg x+k$ for $k\neq 0$. Therefore, by \cite{Johansson3}, the following weighting holds for $j=0,\ldots, n$ for $B$-steps (from dots to circles) and for $j=0,\ldots, n-1$ for $A$-steps (from circles to dots) \footnote{$B$-steps for outlier paths correspond to $A$-steps for inlier paths.}:
\be
\begin{aligned}
B \mbox{-steps}:&~~~\psi_{2j,2j+1}(x,y): =\left\{ 
\begin{aligned}
 &a^{x-y}, \mbox{  if  } y-x\leq 0
\\
&0 \mbox{  otherwise  }
\end{aligned}
\right\} = \int_{\Gamma_{0,a}}  \frac{dz}{2\pi \I z} ~\frac{z^{x-y}}{1-\frac az} 
\\   \\
A \mbox{-steps}:&~~~\psi_{2j+1,2j+2}(x,y) :=\left\{ 
\begin{aligned}
&a, \mbox{  if  } y-x=1
\\
&1, \mbox{  if  } y-x=0
\\
&0, \mbox{  otherwise  }
\end{aligned}
\right\} =\int_{\Gamma_0}  \frac{dz}{2\pi \I z}  {z^{x-y}}{(1+az)} .
\end{aligned}
\label{6}\ee
 Then define
\be
\begin{aligned}
\psi_{r,s}&=\psi_{r,r+1}\ast \ldots \ast\psi_{s-1,s} , \mbox{  if $s>r$}\\
&=0, \mbox{if $s\leq r$}.
\end{aligned}
\label{sgrp}\ee
Then\footnote{The subscript $0$ will sometimes be omitted in $\psi_{0,k}$, which makes it compatible with formula (\ref{7}).}, for instance, for $k$ even, one needs $\tfrac{k}2$ $B$-steps and $\tfrac{k}2$ $A$-steps to go from $-m+i-1$ to $x$ in time $k$, where $-n-m\leq x\leq n+m$ and $1\leq i\leq 2m+1$,
\be
\begin{aligned}
\psi_{0,k}(-m+i-1, x)&= \oint _{\Gamma_{0,a}} \frac{dz}{2\pi iz} z^{-m+i-1-x}
\left(\frac{1+az}{1-\frac az}\right)^{k/2}
\end{aligned}
\label{7a}\ee
and $n+1-\tfrac k2$ $B$-steps and $n+1-\tfrac k2$ $A$-steps to go from $x$ to $-m+i-1$ in time $N-k$:
\be
\begin{aligned}
\psi_{k,2n+1}( x, -m+i-1)
&=\frac{1}{2\pi \I}\oint _{\Gamma_{0,a}} \frac{dz}{z} z^{m-i+1+x}
 \frac{(1+az)^{(2n-k)/2}}{~(1-\frac az)^{(2n-k+2)/2}} 
\end{aligned}
\label{8}\ee
Then from the equivalence between the double Aztec domino configurations and the inlier paths, from the fact that the weighting (\ref{6}) follows from the probability (\ref{Ptiling}) of a domino tiling and from the
Lindstr\"om-Gessel-Viennot formula for non-intersecting paths, it follows that the probability (inherited from the step weights (\ref{6})), of the paths being at $x_1,\ldots,x_{2m+1}$ at time $r$ is given by
$$
\begin{aligned}
\lefteqn{\BP(r;x_1,\ldots,x_{2m+1})}  \\
&=\frac{1}{Z_{n,m}}
\det (\psi_{0,r}(-m+i-1, x_j))_{1\leq i,j\leq 2m+1}
\det (\psi_{r,2n+1}( x_j, -m+i-1))_{1\leq i,j\leq 2m+1}
\end{aligned}
$$
Hence, by the Eynard-Mehta Theorem \cite{EM97,Jo03b}, the extended kernel is given by (Note $\Id_{r<s}$ can be omitted in view of definition (\ref{sgrp}) of $\psi_{r,s}$)
\be
\begin{aligned}
\lefteqn{\BK^{\rm ext}_{n,m}(r,x;s,y)}\\
&=\sum_{i,j=1}^{2m+1}\psi_{r,2n+1}(x,-m+i-1)([A^{-1}]_{ij})
\psi_{0,s}( -m+j-1,y)%\BK^{(0)}_{n,m}(r,x;s,y)
 -\Id_{r<s}\psi_{r,s}(x,y),
 \end{aligned}
\label{9}\ee
%with
%$$\BK^{(0)}_{n,m}(r,x;s,y):=\sum_{i,j=1}^{2m+1}\psi_{r,2n+1}(x,-m+i-1)([A^{-1}]_{ij})
%\psi_{0,s}( -m+j-1,y)
%$$
with the entries of the $(2m+1)\times (2m+1)$ matrix $A$,%$f_i$ and $g_j$ in (\ref{11}),
$$
A_{ij} =\psi_{0,2n+1}(-m+i-1,-m+j-1)
=f_i\ast g_j,$$
defined in terms of the functions 
\be\begin{aligned}
f_j(y)=\psi_{0,n}(-m+j-1,y)&=&%\frac{1}{2\pi i}
 \oint_{\Gamma_{0,a}}\frac{dz}{2\pi iz}z^{-m+j-1-y}\rho^L_a(z) 
\\
g_i(x)=\psi_{n,2n+1}(x,-m+i-1)&=&%\frac{1}{2\pi i}
 \oint_{\Gamma_{0,a}}\frac{dz}{2\pi iz}z^{m-i+1+x}\rho^R_a(z),
\end{aligned}\label{11}\ee
with $\rho^L_a(z)$ and $\rho^R_a(z)$ as in (\ref{rho}).
 Consider now the kernel at a single (half-way) time $n$, {\em which we already assumed to be even}:%Consider a time $n$ in the middle, 
\be\begin{aligned}
\BK_{n,m} (x,y):=\BK_{n,m}^{\rm ext}(r,x;s,y)\Bigr|_{r=s=n}%&=\BK^{(0)}_{n,m}(n,x;n,y)\\
&=\sum^{2m+1}_{i,j=1}g_i(x)([A^{-1}]_{ij})f_j(y).
\end{aligned}\label{10}\ee
Then one performs row operations in the determinants of the Lindstr\"om-Gessel-Viennot formula; this amounts to taking linear combination $\tilde f_k(x)$ and $\tilde g_k(x)$ of the $f_i(x)$ and $g_i(x)$ with $1\leq i\leq k$,
\begin{equation}\begin{aligned}
\tilde f_k(y)&= \frac{1}{2\pi i}\int_{\Gamma_{0,a}}\frac{dz}{z} \rho^L_a(z)\frac{P_{k-1}(z)}{z^{y+m}}\\
\\
\tilde g_k(x)&= \frac{1}{2\pi i}\int_{\Gamma_{0,a}}\frac{dz}{z}  \rho^R_a(z)z^{x+m}\hat P_{k-1}(z^{-1})    
\end{aligned}
\label{12}\end{equation}
such that the new matrix $A_{k,\ell}$ in (\ref{10}) becomes the identity; i.e.,
\be\begin{aligned}
\tilde f_k(x)\ast_{_{\!\! x}} \tilde g_{\ell}(x)=
\sum_{x\in\BZ}\tilde f_k(x)\tilde g_{\ell}(x)&=\frac{1}{2\pi i}\int_{\Gamma_{0,a}}P_{k-1}(z)\hat P_{\ell -1}(z^{-1})\rho_a(z)\frac{dz}{z}\\
&= :\ll P_{k-1},\hat P_{\ell -1}\gg =\dt_{k,\ell},
\end{aligned}\label{12'}\ee
thus leading to orthogonal polynomials on the circle for the inner product
 $\ll ~,~\gg$ with regard to the weight $\rho_a(z)\frac{dz}{z}$, as defined in (\ref{13}). % =\frac{1}{2\pi i}\int_{\Gamma_{0,a}}P_{k}(z)\hat P_{\ell}(z^{-1})\frac{(1+az)^n}{\left(1-\frac{a}{z}\right)^{n+1}}dz
With these new functions $\tilde f_k(y)$ and $\tilde g_k(x)$, the kernel (\ref{10}) has the simple form:
\begin{equation}\begin{aligned}
%\lefteqn{\BK_{n,m}(x,y)}\\
\BK_{n,m}(x,y)&= \sum^M_{k=1}\tilde g_k(x)\tilde f_k(y) \label{14'}
\\
&= \frac{1}{(2\pi i)^2}\oint_{\Gamma_{0,a}}\frac{dz}{z}\rho^R_a(z)\oint_{\Gamma_{0,a,z}}\frac{dw}{w}\rho^L_a(w)\frac{w^{-y-m}}{z^{-x-m}}\sum_{k=0}^{2m}\hat P_k(z^{-1})P_k(w),
\end{aligned}
\end{equation}
where it is legitimate to include $z$ in the contour of integration, since the $w$-part of the integrand is holomorphic in $w\neq 0,a$.
According to \cite{BOO00}, the statistics of the non-intersecting outlier lattice paths is then described by its dual kernel, 
\be
\begin{aligned}
\widetilde {\mathbbm K} _{n,m}(x,y)=\dt_{x,y}-{\mathbbm K} _{n,m}( x; y).
\end{aligned}
\label{dualK}\ee
This establishes formula (\ref{E1}) for $2r=2s=n$, with kernel $  {\mathbbm K} _{n,m}(x,y)$ as in (\ref{14}).  \vspace{.3cm}

%\subsection{Extended kernel}

%Remember $f\ast g=\sum_{x\in \BZ}f(x)g(x)$. 
{\it Step 2 : The extended kernel}. 
%As pointed out in (\ref{sgrp}), the integral (\ref{7}),
%\be\begin{aligned}\psi_{2s}(x,y)&:=\psi_{0,2s}(x,y)=\int_{\Gamma_{0,a}}\frac{dz}{2\pi iz}z^{x-y}\left(\frac{1+az}{1-\frac{a}{z}}\right)^s  ,\end{aligned}\label{E1'}\ee
%
 % Remember from (\ref{9}) the extended kernel
%
%\be {\mathbbm K}_{n,m} ^{{\rm ext}} (2r,x;2s,y)={\mathbbm K}_{n,m}^{(0)}(2r,x;2s,y)-\Id_{r<s}\psi_{2(s-r)}(x,y)\label{Ea}\ee
%where
%\be\begin{aligned} {\mathbbm K}^{(0)}_{n,m}(2r,x;2s,y)=\sum_{i,j=1}^{2m+1}\psi_{2r,2n+1}(x,-m+i-i)[A^{-1}]_{ij}\psi_{0,2s}(-m+j-1,y),\end{aligned}\label{Eb}\ee
Comparing the form of the extended kernel ${\mathbbm K}^{{\rm ext}}_{n,m}(2r,x;2s,y)$ in (\ref{9}) and the form of the kernel ${\mathbbm K}_{n,m}%^{(0)}
  (x,y) = {\mathbbm K}_{n,m} ^{{\rm ext}}
   (n,x;n,y)$ in (\ref{10}), 
    and using the semi-group relations (\ref{ES}) on the $\psi$-functions, namely
   \be\begin{aligned}
\psi_{n-2r}(x,\cdot)\ast \psi_{n,2n+1}(\cdot,-m+i-1)&=\psi_{2r,2n+1}(x,-m+i-1)\\
 \\
\psi_{0,n}(-m+j-1,\cdot)\ast\psi_{2s-n}(\cdot,y)&=\psi_{0,2s}(-m+j-1,y),
\end{aligned}\label{Ed}\ee
  one checks that (these arguments appear in Lemma 5.2 of \cite{AFvM12})
 $$  
\begin{aligned}
\lefteqn{{\mathbbm K}^{\rm ext}_{n,m}(2r,x;2s,y)}\\
&=
-\Id_{r<s}\psi_{2(s-r)}(x,y)
+\psi_{n-2r}(x,\cdot)\ast {\mathbbm K}^{\rm ext}%^{(0)}
_{n,m}(n,\cdot~;n,\tc)\ast\psi_{ 2s-n }(\tc,y).
\end{aligned}
$$
%
%so conclude from
%\be\begin{aligned}
%\psi_{2r,2n+1}(x,-m+i-1)&=\int_{\Gamma_{0,a}}\frac{dw}{2\pi iw}w^{m-i+x+1}\frac{(1+aw)^{n-r}}{(1-\frac{a}{w})^{n-r+1}} \\\nonumber\\
%\psi_{0,2s}(-m+j-1,y)&=\int_{\Gamma_{0,a}}\frac{dz}{2\pi iz}z^{-m+j-1-y}\left(\frac{1+az}{1-\frac{a}{z}}\right)^s 
%\end{aligned}\ee
The dual extended kernel is then given by
$$
\begin{aligned}
\lefteqn{\widetilde {\mathbbm K}_{n,m}^{{\rm ext}}(2r,x;2s,y)=\dt_{rs}\dt_{xy}- {\mathbbm K}_{n,m}^{{\rm ext}}(2r,x;2s,y)}  
\\
&=\dt_{rs}\dt_{xy}+\Id_{r<s}\psi_{(2s-2r)}(x,y)-\psi_{n-2r}(x,\cdot)\ast {\mathbbm K}^{{\rm ext}}_{n,m}(n,\cdot;n,\tc)\ast\psi_{ 2s-n }(\cdot,y) 
\\
&=(\dt_{rs}\dt_{xy}+\Id_{r<s}\psi_{(2s-2r)}(x,y)-\psi_{n-2r}(x,\cdot)\Id (\cdot,\tc)\ast \psi_{ 2s-n }(\tc,y)) 
\\
&\quad +\psi_{n-2r}(x,\cdot)\ast (\Id -{\mathbbm K}_{n,m} )(\cdot,\tc)\ast\psi_{ 2s-n }(\tc,y) 
\\
&=-\Id_{s<r}\psi_{(2s-2r)}(x,y)+\psi_{n-2r}(x,\cdot)\ast\widetilde {\mathbbm K} _{n,m}( \cdot~; \tc)\ast\psi_{(2s-n)}(\tc ,y),
\end{aligned}
$$
where one uses (\ref{dualK}) and the identity
$$
\begin{aligned}
\lefteqn{ \hspace*{-1cm}~ \dt_{rs}\dt_{xy}+\Id_{r<s}\psi_{ 2s-2r }(x,y)-\psi_{n-2r}(x,\cdot)\ast\Id(\cdot,\tc)\ast\psi_{ 2s-n }(\tc,y)~~~~~~~~~~~}\\
%\\
%&=\vp_{(2s-2r)}(x,y)-\Id_{r<s}\vp_{(2s-2r)}(x,y)-\dt_{rs}\vp_{(2s-2r)}(x,y)\\
%\\
&~~\hspace*{4cm}~~~~~~=-\Id_{s<r}\psi_{(2s-2r)}(x,y),
\end{aligned}
$$
concluding the proof of Proposition \ref{prop}.

\end{proof}

%\newpage

\section{Orthogonal polynomials on the circle %$P_k,~\hat P_k$ 
and Fredholm determinants}

In order to be able to use formula (\ref{14}) for the kernel ${\mathbb K}_{n,m}(x,y)$, one needs to express both, orthogonal polynomials and Darboux sums of orthogonal polynomials on the circle, in terms of Fredholm determinants of certain kernels. %, reminiscent of the kernels appearing in the problem of the distribution of the length of the longest increasing sequences of random permutations  (see \cite{BDJ99}). 
 In the last part, we shall state some Fredholm determinants identities from \cite{Joh10}.

 In order to do this, one first introduces the inner-product
\begin{equation}\label{in-prod3}
\la f,g\ra_{{\bf t},{\bf s}}:=\frac{1}{2\pi i}\oint_{S^1}\frac{du}{u}f(u)g(u^{-1})e^{\sum_{j=1}^{\infty}(t_ju^j-s_ju^{-j})},
\end{equation}
upon setting ${\bf t}:=(t_1,t_2,\ldots)\in \BC^{\infty}$ and ${\bf s}:=(s_1,s_2,\ldots)\in \BC^{\infty}$. The inner-product $\ll~,~\gg$, as in (\ref{12'}), is a special instance of $ \la ~,~\ra_{{\bf t},{\bf s}}$ upon picking the special locus 
\be
 \LR=\{({\bf t},{\bf s})\mbox{  with  } t_j=-n \frac{(-a)^j}j \mbox{  and  }  s_j=-(n+1)\frac{a^j }j
 \}.\label{23}\ee
 Indeed, 
 \be
\begin{aligned}
e^{\sum_1^{\infty} t_ju^j}\Bigr|_{\LR}&=(1+au)^n%(1-\frac uz)
 ,~~~~
e^{ \sum_1^{\infty} s_ju^{-j}}\Bigr|_{\LR} =\bigl(1-\frac au\bigr)^{  n+1 }.
%\mbox{with}~~ s_j=-(n+1)\frac{a^j }j.
%(1-\frac 1{uz})\\
\end{aligned}
\label{24}\ee
and thus
$$
e^{\sum_{j=1}^{\infty}(t_ju^j-s_ju^{-j})}\Bigr|_\LR=
\frac {(1+au)^n}{ (1-\tfrac au)^{  n+1 }}=  \rho_a(u)
.$$
 Also notice that (see definition (\ref{Idef}))
 \be
e^{\sum_{j=1}^{\infty}(t_ju^j+s_ju^{-j})}\Bigr|_\LR=
  {(1+au)^n}{ (1-\tfrac au)^{  n+1 }}=   \vp_a(2n;u)
.\label{psi}\ee

 \begin{lemma}\label{L31} Given the biorthonormal polynomials $P_{k }$ and $\widehat P_{\ell }$,
 $$
 \frac{1}{2\pi i}\int_{\Gamma_{0,a}}P_{k }(z)\widehat P_{\ell }(z^{-1})\rho_a(z)\frac{dz}{z}  =\dt_{k,\ell},
$$
the following identities hold:
\be\begin{aligned}
P_k(z)&=  \frac{z^k\left(1-\frac{a}{z}\right)^{n+1}H_k^{(1)}(z^{-1})}{\sqrt{H_k(0)H_{k+1}(0)}}
\\
\widehat P_k(w^{-1})&= \frac{w^{-k}(1+aw)^{-n}H_k^{(2)}(w)}{\sqrt{H_k(0)H_{k+1}(0)}} 
\\
\sum_{j=0}^k P_j(z)
 \widehat P_j(w^{-1})&=
 \left(\frac zw\right)^k\frac{ (1-\frac az)^{n+1}}{(1+aw)^{n}(1-\frac wz)}
\frac{H_k^{(3)}(z^{-1},w)}{H_{k+1} (0)},
\end{aligned}
\label{31}\ee
where
\be
\begin{aligned}
H_k^{(1)}(z^{-1})&:= \det(I-K^{(1)}(z^{-1}))_{\ell^2(k,k+1,...)} 
\\
H_k^{(2)}(z)&:= \det(I-K^{(2)}(z))_{\ell^2(k,k+1,...)}
\\
H_k^{(3)}(z^{-1},w)&:= \det(I-K^{(3)}(z^{-1},w))_{\ell^2(k,k+1,...)},
\end{aligned}
\label{29}\ee
with
\begin{equation}
\begin{aligned}
\lefteqn{K^{(1)}_{k,\ell}(z^{-1}) ~~~~~~~~\mbox{for $|z|>|u|,~|v|$}}
\\%
%&:=\mathbf{K}({\bf t}-[z^{-1}],{\bf s})_{k,\ell}\Bigr |_{\LR}
%,~~\mbox{with $|z|>|u|,~|v|$}\\
&:=\frac{(-1)^{k+\ell}}{(2\pi i)^2}  \oint_{|u|=\rho<1}du\oint_{ {|v|=\rho^{-1}>1}}dv\frac{u^{\ell}}{v^{k+1}}
 \frac{1}{v-u}
\frac{z-u}{z-v} \frac
 {\vp_a(2n,u)%(1+au)^n(1-\frac au)^{n+1}
 }
   {\vp_a(2n,v)%(1+av)^n(1-\frac av)^{n+1}
   }  ,
%\\
\\
 &= K^{(1)}_{k,\ell}(0)+h^{(1)}_{k}(z^{-1})g^{(1)}_{\ell}(n)
 \end{aligned}\label{25}\end{equation}
 \begin{equation}
\begin{aligned}
\lefteqn{{K^{(2)}_{k,\ell}(w)}~~~~~\mbox{for $|w^{-1}|>|u|,~|v|$}}\\
%&:=\mathbf{K}({\bf t},{\bf s}+[z])_{k,\ell}\Bigr |_{\LR} 
%,~~\mbox{with $|z^{-1}|>|u|,~|v|$}\\
&:=\frac{(-1)^{k+\ell}}{(2\pi i)^2}  \oint_{|u|=\rho<1}du\oint_{ {|v|=\rho^{-1}>1}}dv\frac{u^{\ell}}{v^{k+1}}
 \frac{1}{v-u}
\Bigl(\frac{w^{-1}-u}{w^{-1}-v}\Bigr)\frac{\vp_a(2n,v^{-1})%(1+\frac av)^{n} (1-av)^{n+1}
}{\vp_a(2n,u^{-1})%(1+\frac au)^{n} (1-au)^{n+1}
}
\\
 &= K^{(2)}_{k,\ell}(0)+h^{(2)}_{k}(w)g^{(2)}_{\ell}(n)
\\ 
 \end{aligned}\label{26}\end{equation}
 \begin{equation}\begin{aligned}
  \lefteqn{K^{(3)}_{k ,\ell }(z^{-1},w) 
   ~~~\mbox{for $a<r_2=|w|<s_2<s_1<r_1=|z|<a^{-1}$}}\\
%  \lefteqn{:=\mathbf{K}({\bf t}-[z^{-1}],{\bf s}+[w])_{k,\ell}\Bigr |_{\LR}}\\
 &:=\frac{(-1)^{k+\ell}}{(2\pi i)^2}  \oint_{\gamma_{s_2}}du\oint_{ {\gamma_{s_1}}}dv\frac{u^{\ell +1}}{v^{k+2}}
 \frac{1}{v-u}
 \frac{(u-z)(v-w)}{(v-z)(u-w)} 
%\frac{1-\frac{u}{z}}{1-\frac{v}{z}}
 \frac {\vp_a(2n,u)%(1+au)^n(1-\frac au)^{n+1}
 }
   {\vp_a(2n,v)%(1+av)^n(1-\frac av)^{n+1}
   } 
 \\
 &%=@  K^{(1)}_{k+1,\ell+1}(0)-(z-w)a_{k+1}(z)b_{\ell+1}(w)@ 
 = K^{(1)}_{k+1,\ell+1}(0)+(\frac zw-1)h^{(1)}_{k+1}(z^{-1}) h^{(2)}_{\ell+1}(w),%\\
 \end{aligned}\label{25'}\end{equation}
$K^{(1)}(z^{-1})$, $K^{(2)}(w)$ and $K^{(3)}(z^{-1},w)$, being rank $1$ perturbations of $K^{(1)}(0) $, as kernels in $k$ and $\ell$.  
In these formulas,
 \be \begin{aligned}
g^{(1)}_{\ell}(n)&= -\oint_{\Gamma_{0}} \frac{du}{2\pi i}(-u)^\ell \vp_a(2n;u)%(1+au)^n\left(1-\frac{a}{u}\right)^{n+1}
 =-\oint_{\Gamma_{0}} \frac{du}{2\pi i}(-u)^{-\ell-2} \vp_a(2n;u^{-1})%
%(1+\frac au)^n\left(1- {a}{u}\right)^{n+1}
\\
g^{(2)}_{\ell}(n)&= -\oint_{\Gamma_{0,-a}} \frac{du}{2\pi i}\frac{(-u)^{\ell}}{\vp_a(2n,u^{-1})%(1-au)^{n+1}\left(1+\frac{a}{u}\right)^n
}
= -\oint_{\Gamma_{0,a}} \frac{du}{2\pi i}\frac{(-u)^{-\ell-2}}{
\vp_a(2n,u)%(1-\frac au)^{n+1}\left(1+ {a}{u}\right)^n
},
\end{aligned}\label{34}\ee
and\footnote{$\Gamma_{0,a}$ in the first integral can (and will be) be realized as $\gamma_{s }$ with $~a<|v|=s <|z|$. Also $\Gamma_{0}$ in the second integral can be realized as $\gamma_{s }$, with $|v|=s <|w^{-1}|$. Finally $\Gamma_{0,w}$ in the third integral can be realized as $\gamma_{s }$, with $|w|<s =|u|$.  }
$$ \begin{aligned}
h^{(1)}_{k}(z^{-1})&
:=-\oint_{\Gamma_{0,a}} \frac {dv}{2\pi i(v-z)} \frac {1}{(-v)^{k+1}\vp_a(2n,v)}    
 \\&=-\sum_{\alpha=0}^\infty (-z)^\alpha g^{(2)}_{k+\alpha}(n)
+\frac{1%(1+az)^{-n}\left(1-\frac{a}{z}\right)^{-n-1}
}{(-z)^{k+1}\vp_a(2n,z)}
\end{aligned}
$$
\be \begin{aligned} 
h^{(2)}_{\ell}(w)&:= -\oint_{\Gamma_{0}} \frac{dv}{2\pi i(v-w^{-1})}\frac{\vp_a(2n;v^{-1})}{(-v)^{\ell+1}} 
%h^{(2)}_{k}(z)&= -\oint_{\Gamma_{0}}\frac{dv}{2\pi i(-v)^{k+1}}\frac{\psi_a(2n;v^{-1})}{v-z^{-1}} \\
 \\&  =  -\oint_{\Gamma_{0,w}} \frac {w~du}{2\pi i(u-w)}  {(-u)^{\ell}\vp_a(2n,u)}\\
 &=-\sum_{\alpha=0}^\infty (-w)^{-\alpha} g^{(1)}_{\ell+\alpha}(n)
+(-w)^{\ell+1}\vp_a(2n,w)
~.~~~%b_\ell(w)
%~~~~|v|=s_2^{-1}<|w^{-1}|
\end{aligned}\label{25h}\ee
The unperturbed kernel can then be expressed in terms of the $g^{(i)}$'s:
\be \begin{aligned}
K_{k,\ell}^{(1)}(0)=\sum_{\alpha=0}^{\infty}g^{(1)}_{\ell+\alpha}(n)g^{(2)}_{k+\alpha} (n)=K_{\ell,k}^{(2)}(0)
,\label{35a} \end{aligned}\ee
and therefore $K^{(1) \top}_{}(0)=K^{(2)}_{}(0)$.
%\be \begin{aligned}
%@@a_k(z)&:=-\oint_{\gamma_{s_1}} \frac {dv}{2\pi i(v-z)} \frac {1}{(-v)^{k+1}\psi(2n,v)}=h^{(1)}_{k}(z^{-1})~,~a<|v|=s_1<|z| 
%\\
%-\frac 1w h^{(2)}_{\ell}(w)&:=  \oint_{\gamma_{s_2}} \frac {du}{2\pi i(u-w)}  {(-u)^{\ell}\psi(2n,u)}~,~~~|w|<s_2=|u|%b_\ell(w)
%\\
%&= \frac{1}{w}\oint_{\gamma_{s_2^{-1}}} \frac{dv}{2\pi i(v-w^{-1})}\frac{\psi_a(2n;v^{-1})}{(-v)^{\ell+1}}=
%~~~~|v|=s_2^{-1}<|w^{-1}|
%.\label{35a}@@\end{aligned}\ee

% $$
%@@ b'_\ell(w):=h^{(2)}_{\ell}(w)=-wb_\ell(w)  @@
 %$$
 \end{lemma}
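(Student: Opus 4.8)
The plan is to reduce the three displays in (\ref{31}), together with the accompanying formulas for $g^{(i)}$, $h^{(i)}$, $K^{(i)}$ and the identity (\ref{35a}), to the Toeplitz/Fredholm-determinant machinery of \cite{Joh10} specialized to the symbol $\rho_a$. First I would observe that the inner product $\ll\cdot,\cdot\gg$ of (\ref{12'}) is the restriction to the locus $\LR$ of (\ref{23}) of the biorthogonal pairing $\langle\cdot,\cdot\rangle_{{\bf t},{\bf s}}$ of (\ref{in-prod3}), i.e.\ the pairing on $S^1$ with symbol $\rho_a(z)=\rho_+(z)\rho_-(z)$, where $\rho_+(z)=(1+az)^n$ and $\rho_-(z)=(1-a/z)^{-(n+1)}$; since $0<a<1$, both factors are holomorphic and non-vanishing in an annulus containing $S^1$, so this is a genuine Wiener--Hopf factorization, and by (\ref{psi}) one has $\rho_+/\rho_-=\vp_a(2n;z)$ and $\rho_-/\rho_+=\vp_a(2n;z)^{-1}$. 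The biorthonormal polynomials $P_k,\widehat P_k$ then have the standard bordered-Toeplitz (Heine) representations, with $P_k$ having leading coefficient $\sqrt{D_k(\rho_a)/D_{k+1}(\rho_a)}$, $D_k$ being the $k\times k$ Toeplitz determinant of $\rho_a$; this fixes where the normalization $\sqrt{H_k(0)H_{k+1}(0)}$ must come from, once the $D_k$ are written as Fredholm determinants.

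Next I would identify $K^{(1)}(0)$ with the Borodin--Okounkov operator for $\rho_a$. Expanding $1/(v-u)=\sum_{\alpha\ge0}u^\alpha v^{-\alpha-1}$, legitimate on the contours $|u|=\rho<1<\rho^{-1}=|v|$, in the double contour integral (\ref{25}) at $z=\infty$ and factoring the resulting sum gives $K^{(1)}_{k,\ell}(0)=\sum_{\alpha\ge0}g^{(1)}_{\ell+\alpha}(n)g^{(2)}_{k+\alpha}(n)$, which is (\ref{35a}); the same expansion in the integral defining $K^{(2)}(0)$ yields $K^{(2)}_{k,\ell}(0)=\sum_{\alpha\ge0}g^{(2)}_{\ell+\alpha}(n)g^{(1)}_{k+\alpha}(n)=K^{(1)}_{\ell,k}(0)$, hence $K^{(2)}(0)=K^{(1)\top}(0)$. (The various integral representations of $g^{(i)}$ in (\ref{34}) and of $h^{(i)}$ in (\ref{25h}) pass into one another under $u\mapsto u^{-1}$ and by residue calculus.) Thus $K^{(1)}(0)$ is the product of an upper- and a lower-triangular Hankel operator built from the Fourier coefficients of $\vp_a(2n;\cdot)^{\pm1}$, i.e.\ exactly the Borodin--Okounkov operator; hence $D_k(\rho_a)=E(\rho_a)\,H_k(0)$ with $E(\rho_a)$ the size-independent Szeg\H{o} constant, which matches the orthonormalization of the previous paragraph.

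Then I would invoke the bordered, finite-$z$ and finite-$w$ form of the Borodin--Okounkov / Geronimo--Case identity (the content of the relevant lemma of \cite{Joh10}) to obtain the three displays in (\ref{31}): converting the bordered Toeplitz determinant for $P_k(z)$, resp.\ $\widehat P_k(w^{-1})$, into a modified-symbol Toeplitz determinant multiplies $D_{k+1}(\rho_a)$ by an explicit scalar, which produces the powers $z^k$, $w^{-k}$ and the boundary factors $(1-a/z)^{n+1}=\rho_-(z)^{-1}$, $(1+aw)^{-n}=\rho_+(w)^{-1}$, and replaces $K^{(1)}(0)$ by the rank-one perturbation $K^{(1)}(z^{-1})$, resp.\ $K^{(2)}(w)$. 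The perturbation formulas in (\ref{25}) and (\ref{26}) then reduce to the elementary identities $\frac{z-u}{z-v}=1+\frac{v-u}{z-v}$ and its $w^{-1}$ analogue, which split the extra part of each double integral into a product $h^{(1)}_k(z^{-1})g^{(1)}_\ell(n)$, resp.\ $h^{(2)}_k(w)g^{(2)}_\ell(n)$; the single-contour and geometric-series forms of $h^{(1)},h^{(2)}$ in (\ref{25h}) follow by expanding $1/(v-z)$, resp.\ $1/(v-w^{-1})$, and by residues. For the Christoffel--Darboux sum (third display of (\ref{31})) I would apply the Christoffel--Darboux formula on the unit circle to $\sum_{j=0}^k P_j(z)\widehat P_j(w^{-1})$, rewrite its right-hand side through $P_{k+1},\widehat P_{k+1}$ and their reversed polynomials by the formulas just obtained, and collapse the resulting combination of $H^{(1)}_{k+1}$- and $H^{(2)}_{k+1}$-type Fredholm determinants into the single determinant $H^{(3)}_k(z^{-1},w)$ via the matrix-determinant lemma; the double factor in (\ref{25'}) is governed by the algebraic identity $\frac{(u-z)(v-w)}{(v-z)(u-w)}-1=\frac{(z-w)(u-v)}{(v-z)(u-w)}$, which yields the product $(z/w-1)h^{(1)}_{k+1}(z^{-1})h^{(2)}_{\ell+1}(w)$ and the index shift $k\mapsto k+1$.

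The step I expect to be the main obstacle is this bordered, finite-$z,w$ version of the Borodin--Okounkov identity: it must deliver the perturbed kernels $K^{(1)}(z^{-1})$, $K^{(2)}(w)$, $K^{(3)}(z^{-1},w)$ together with the correct scalar prefactors, and it requires keeping all the contour radii in (\ref{rad'}) mutually consistent so that every geometric expansion and every residue evaluation in (\ref{25})--(\ref{25h}) is simultaneously valid; here one follows \cite{Joh10} essentially verbatim after substituting the symbol. Everything else---the harmless diagonal conjugation responsible for the signs $(-1)^{k+\ell}$, the equivalences among the integral formulas for $g^{(i)}$ and $h^{(i)}$, and the identification of $K^{(1)}(0)$ with the Borodin--Okounkov operator---consists of routine residue computations of the kind indicated above.
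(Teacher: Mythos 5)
Your treatment of the first two identities in (\ref{31}) and of the auxiliary formulas (\ref{25}), (\ref{26}), (\ref{25h}), (\ref{34}), (\ref{35a}) is correct and is, in substance, the paper's argument in different clothing: the paper encodes ``bordered Toeplitz determinant $=$ Toeplitz determinant with symbol multiplied by $(1-u/z)$'' through the shifted $\tau$-functions $\tau_k({\bf t}-[z^{-1}],{\bf s})$, $\tau_k({\bf t},{\bf s}+[w])$ of \cite{AvM97} (formulas (\ref{tau1})), then applies the Borodin--Okounkov identity (\ref{19})--(\ref{20}) to the modified symbol and evaluates the prefactors $Z(\cdot)/Z$ as in (\ref{28}); your Wiener--Hopf factorization $\rho_a=\rho_+\rho_-$ with $\rho_+/\rho_-=\vp_a(2n;\cdot)$, the identification of $K^{(1)}(0)$ with the Borodin--Okounkov operator, and the partial-fraction splittings producing the rank-one terms $h^{(i)}g^{(i)}$ all match (\ref{21}), (\ref{22}), (\ref{32}), (\ref{36}), (\ref{37}).

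Where you genuinely diverge --- and where the one real gap sits --- is the third identity of (\ref{31}). The paper does \emph{not} use the Christoffel--Darboux formula here: it gets the Darboux sum in one stroke from the doubly shifted $\tau$-function identity (\ref{tau1'}), $\sum_{j\le k}p_j^{(1)}p_j^{(2)}=(z/w)^k\,\tau_k({\bf t}-[z^{-1}],{\bf s}+[w])/\tau_{k+1}({\bf t},{\bf s})$, quoted from \cite{AvM97}, and then reads off $H_k^{(3)}$ from the doubly modified symbol; the Christoffel--Darboux formula (\ref{15}) is deliberately reserved for Section 5.2, where it produces the structurally different representation (\ref{K2}) rather than (\ref{K1}). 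Your alternative --- apply (\ref{15}), substitute the already-proved Fredholm representations of $P_{k+1}$ and $\widehat P_{k+1}$ at the four arguments, and ``collapse'' the result into $\det(I-K^{(3)})_{\ell^2(k,\dots)}$ --- requires proving a \emph{bilinear} identity between products of rank-one-perturbed Fredholm determinants and a single rank-two-perturbed one, of the schematic form
\begin{equation*}
\frac{c_1\,\det(I-A+a\otimes b)\det(I-A+a'\otimes b')-c_2\,\det(I-A+a\otimes b')\det(I-A+a'\otimes b)}{\det(I-A)}
= c_3\,\det\bigl(I-A+a\otimes b+a'\otimes b'\bigr),
\end{equation*}
which is a Fay/Hirota-type (Jacobi--Sylvester) identity, not the matrix-determinant lemma; it is essentially the identity one must prove to establish (\ref{tau1'}) in the first place, so as written you have pushed the substantive content of the third formula into an unproved step. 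There is also a secondary issue of analytic continuation: (\ref{15}) needs $\widehat P_{k+1}(z^{-1})$ with $|z|$ large and $P_{k+1}(w)$ with $|w|$ small, i.e.\ outside the annuli in which the double-contour representations (\ref{25}), (\ref{26}) of $H^{(1)}_{k+1}(w^{-1})$ and $H^{(2)}_{k+1}(z)$ converge, so you must first continue via the residue-corrected forms in (\ref{25h}) and (\ref{35}). Either supply the bilinear determinant identity explicitly, or do what the paper does and take the Darboux sum directly from (\ref{tau1'}) with the doubly shifted kernel (\ref{21}); the index shift $k\mapsto k+1$ and the factor $(z/w-1)$ in (\ref{25'}) then come out of the second partial-fraction identity in (\ref{32}) exactly as you anticipate.
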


 \begin{proof}
It was shown in \cite{AvM97}
  (see also the lecture notes \cite{vM10}) %\cite{vM10}) 
that the functions\footnote{ For $\alpha\in \BC$, one defines $[\alpha]=\left(\alpha,\frac{\alpha^2}{2},\frac{\alpha^3}{3},\ldots\right)\in \BC^{\infty}$.}
\begin{equation}\label{tau1}
\begin{aligned}
p_k^{(1)}({\bf t},{\bf s};z)&:=z^k\frac{\tau_k({\bf t}-[z^{-1}],{\bf s})}{\sqrt{\tau_k({\bf t},{\bf s})\tau_{k+1}({\bf t},{\bf s})}}
\\
p_k^{(2)}({\bf t},{\bf s};w^{-1})&:=w^{-k}\frac{\tau_k({\bf t},{\bf s}+[w])}{\sqrt{\tau_k({\bf t},{\bf s})\tau_{k+1}({\bf t},{\bf s})}}
\end{aligned}
\end{equation}
are bi-orthonormal polynomials  with regard to the inner-product (\ref{in-prod3}), i.e., $\la p_k^{(1)},p_\ell^{(2)}\ra_{\bf t,{\bf s}}=\dt_{k,\ell}$ and that
\be
\sum_{j=0}^k 
p_j^{(1)}({\bf t},{\bf s};z)
~p_j^{(2)}({\bf t},{\bf s};w^{-1})
=\left(\frac zw\right)^k\frac{\tau_k({\bf t}-[z^{-1}],{\bf s}+[w])}{\tau_{k+1}({\bf t},{\bf s})}
\label{tau1'}\ee

In the formulae above, the $\tau_n({\bf t},{\bf s})$ are 2-Toda $\tau$-functions and are defined as a Toeplitz determinant, which is also expressible as a Fredholm determinant of the kernel (\ref{20}) below, using the Borodin-Okounkov identity~\cite{BO99}. We obtain two formulas, one obtained from the other by substitution $u\mapsto 1/u$:
%
%\newpage
 \begin{equation}
\begin{aligned}
\tau_p({\bf t},{\bf s})&:=\det\left[\frac{1}{2\pi i}\oint_{S^1}\frac{du}{u} u^{k-\ell}e^{\sum_{j=1}^\infty (t_ju^j-s_ju^{-j})}\right]_{1\leq k,\ell\leq p}\\
&=\det\left[\frac{1}{2\pi i}\oint_{S^1}\frac{du}{u} u^{k-\ell}e^{\sum_{j=1}^\infty (t_ju^{-j}-s_ju^{j})}\right]_{1\leq k,\ell\leq p}\\
&=Z({\bf t},{\bf s})\det\left(\Id-\mathbf{K}({\bf t},{\bf s})\right)_{\ell^2(\{p,p+1,\ldots\})},\quad Z({\bf t},{\bf s}):=e^{-\sum_{j=1}^{\infty}j\,t_js_j},
\end{aligned}
\label{19}\end{equation}
and the kernel $\mathbf{K}({\bf t},{\bf s})$ is given by two seemingly different expressions, upon using the two different expressions for $\tau_p$ above, related by $t\leftrightarrow -s$:
 \begin{equation}
\begin{aligned}
\label{20}
\mathbf{K}({\bf t},{\bf s})_{k,\ell}&=\frac{1}{(2\pi i)^2} \oint_{|u|=\rho<1}du\oint_{\Gamma_{|v|=\rho^{-1}>1}}dv\frac{u^{\ell}}{v^{k+1}}\frac{1}{v-u} ~~\frac{e^{\sum_{j=1}^{\infty}(t_jv^{-j}+s_jv^j)}}{e^{\sum_{j=1}^{\infty}(t_ju^{-j}+s_ju^j)}}.
%\end{equation} \begin{equation} \label{5}
\\
 %\mathbf{K}({\bf t},{\bf s})_{k,\ell}
  &=\frac{1}{(2\pi i)^2} \oint_{|u|=\rho<1}du\oint_{\Gamma_{|v|=\rho^{-1}>1}}dv\frac{u^{\ell}}{v^{k+1}}\frac{1}{v-u} 
  \frac
 {e^{\sum_{j=1}^{\infty}
 ( t_ju^{j}+s_ju^{-j})  
 }}
  {e^{\sum_{j=1}^{\infty}( t_jv^{j} +s_jv^{-j})}}. %\label{5}
\end{aligned}
\end{equation}
This is valid insofar $\sum_{j=1}^{\infty}
( t_ju^{j}+s_ju^{-j})$ is analytic in the annulus $(\rho,\rho^{-1})$. 
In view of the expressions (\ref{tau1}) and (\ref{tau1'}), involving shifts, we now compute, using $\sum_{j=1}^\infty (v/z)^j/j=-\ln(1-v/z)$ (for $|v/z|<1$) in the second formula (\ref{20}),
\begin{equation}
\begin{aligned}
\lefteqn{\mathbf{K}({\bf t}-[z^{-1}],{\bf s})_{k,\ell},~~\mbox{with $|z|>|u|,~|v|$}}\\
&=\frac{1}{(2\pi i)^2}  \oint_{|u|=\rho<1}du\oint_{ {|v|=\rho^{-1}>1}}dv\frac{u^{\ell}}{v^{k+1}}
 \frac{1}{v-u}
\frac{1-\frac{u}{z}}{1-\frac{v}{z}} \frac
 {e^{\sum_{j=1}^{\infty}(t_ju^{j}+s_ju^{-j})}}
   {e^{\sum_{j=1}^{\infty}(t_jv^{j}+s_jv^{-j})}}  
   \\
 \lefteqn{\mathbf{K}({\bf t}-[z^{-1}],{\bf s}+[w])_{k,\ell}~~~~~\mbox{with $|w|<|u|,~|v|$ and $|u|,~|v|<|z|$}}\\  
 &=\frac{1}{(2\pi i)^2}  \oint_{|u|=\rho<1}du\oint_{ {|v|=\rho^{-1}>1}}dv\frac{u^{\ell}}{v^{k+1}}
 \frac{1}{v-u}
\frac{(1-\frac{u}{z})(1-\frac wv)}{(1-\frac{v}{z})(1-\frac wu)} \frac
 {e^{\sum_{j=1}^{\infty}(t_ju^{j}+s_ju^{-j})}}
   {e^{\sum_{j=1}^{\infty}(t_jv^{j}+s_jv^{-j})}}
 \end{aligned}
 \label{21}\end{equation}
and from the first formula (\ref{20}),
\begin{equation}
\begin{aligned}
\lefteqn{\mathbf{K}({\bf t},{\bf s}+[w])_{k,\ell}
,~~\mbox{with $|w^{-1}|>|u|,~|v|$}}\\
&=\frac{1}{(2\pi i)^2}  \oint_{|u|=\rho<1}du\oint_{ {|v|=\rho^{-1}>1}}dv\frac{u^{\ell}}{v^{k+1}}
 \frac{1}{v-u}
\frac{1-\frac{u}{w^{-1}}}{1-\frac{v}{w^{-1}}}\frac{e^{\sum_{j=1}^{\infty}(t_jv^{-j}+s_jv^j)}}{e^{\sum_{j=1}^{\infty}(t_ju^{-j}+s_ju^j)}}.
 \end{aligned}\label{22}\end{equation}
 We now apply this to the special locus $\LR$. 
Then picking $\rho$ such that $a<\rho<1$ and $\rho^{-1}<|z|, ~|w|^{-1}$, one deduces from (\ref{21}) and (\ref{22}) the three kernels (\ref{25}), (\ref{25'}) and (\ref{26}):
\begin{equation}
\begin{aligned}
  \mathbf{K}({\bf t}-[z^{-1}],{\bf s})_{k,\ell}\Bigr |_{\LR}&=(-1)^{k+\ell}K^{(1)}_{k,\ell}(z^{-1}) \\
%,~~\mbox{with $|z|>|u|,~|v|$} 
% \end{aligned}\label{25}\end{equation}
 %\begin{equation}\begin{aligned}
  %\lefteqn{(-1)^{k+\ell}K^{(3)}_{k+1,\ell+1}(z^{-1},w)}\\
   \mathbf{K}({\bf t}-[z^{-1}],{\bf s}+[w])_{k,\ell}\Bigr |_{\LR} &=
   (-1)^{k+\ell}K^{(3)}_{k ,\ell }(z^{-1},w) 
 % \end{aligned}\label{25'}\end{equation}
 %
%and picking $\rho$ such that $a<\rho<1$ and $\rho^{-1}<|z|^{-1}$, one deduces from (\ref{22})
%\begin{equation}
%\begin{aligned}
\\
 \mathbf{K}({\bf t},{\bf s}+[w])_{k,\ell}\Bigr |_{\LR} 
&=(-1)^{k+\ell}K^{(2)}_{k,\ell}(w)%,~~\mbox{with $|z^{-1}|>|u|,~|v|$}\\
%& \frac{1}{(2\pi i)^2}  \oint_{|u|=\rho<1}du\oint_{ {|v|=\rho^{-1}>1}}dv\frac{u^{\ell}}{v^{k+1}}
% \frac{1}{v-u}\frac{1-\frac{u}{z^{-1}}}{1-\frac{v}{z^{-1}}}\frac{(1+\frac av)^{n} (1-av)^{n+1}}{(1+\frac au)^{n} (1-au)^{n+1}}
 \end{aligned}\label{26'}\end{equation}
%
%{\bf Mark: there is something peculiar here: if you set $z^{-1}=0$ and $z=0$ in the two expressions above, one finds that $K^{(2)}_{k,\ell}=K^{(2)}_{k,\ell}$. This is not true, since they are transpose} 
 In order to compute the polynomials (\ref{tau1}), for $\tau_k$ given by (\ref{19}), one needs to evaluate all the functions involved along the locus $\LR$; in particular, from (\ref{19}) and (\ref{23}),
\be
Z({\bf t},{\bf s})\Bigr|_{\LR}=e^{-\sum_{j=1}^{\infty}j\,t_js_j}\Bigr|_{\LR}=(1+a^2)^{n(n+1)}\neq 0
,\label{27}\ee
and from (\ref{24}), one finds for $a<|z|$ and $|w|<a^{-1}$,
\begin{equation}\begin{aligned}
\frac{Z({\bf t},{\bf s}+[z^{}])}{Z({\bf t},{\bf s})}\Bigr|_{\LR}&=\frac{1}{{Z({\bf t},{\bf s})}}e^{-\sum_{j=1}^{\infty}j\, t_j(s_j +z^{j}/j)}\Bigr|_{\LR}= e^{-\sum_{j=1}^{\infty}t_jz^{j}}\Bigr|_{\LR}= (1+az)^{-n}.
\\
\frac{Z({\bf t}-[z^{-1}],{\bf s})}{Z({\bf t},{\bf s})}\Bigr|_{\LR}&=\frac{1}{{Z({\bf t},{\bf s})}}e^{-\sum_{j=1}^{\infty}j\, (t_j-z^{-j}/j)s_j }\Bigr|_{\LR}=  e^{ \sum_{j=1}^{\infty}s_jz^{-j}}\Bigr|_{\LR}= (1-\frac az)^{n+1}
\\  
\frac{Z({\bf t}-[z^{-1}] ,{\bf s}+[w])}{Z({\bf t},{\bf s})}& \Bigr|_{\LR}=  e^{ \sum_{j=1}^{\infty}(s_jz^{-j}-t_jw^j+z^{-j}w^j/j)}\Bigr|_{\LR}=\frac{ (1-\frac az)^{n+1}}{(1+aw)^{n}(1-\frac wz)}.
\end{aligned}\label{28}\end{equation}
From the definitions of the Fredholm determinants (\ref{29}), we have
%
%\be
%\begin{aligned}H_k^{(1)}(z^{-1})&:= \det(I-K^{(1)}(z^{-1}))_{\ell^2(k,k+1,...)} \\ H_k^{(2)}(z)&:= \det(I-K^{(2)}(z))_{\ell^2(k,k+1,...)} \\
%H_k^{(3)}(z^{-1},w)&:= \det(I-K^{(3)}(z^{-1},w))_{\ell^2(k,k+1,...)},
%\end{aligned}
%\label{29}\ee
from the expressions (\ref{19}), (\ref{27}), (\ref{25}) and (\ref{26}) in terms $ \mathbf{K}({\bf t}-[z^{-1}],{\bf s})$ and $\mathbf{K}({\bf t},{\bf s}+[w])$,  that
\be
\tau_k({\bf t},{\bf s})\big\vert_{\LR}=Z(t,s)H_k^{(1)}(0)=Z(t,s)H_k^{(2)}(0)=Z(t,s)H_k^{(3)}(0,0).
\label{30}\ee
Then, setting $H_k(0):=H_k^{(1)}(0)=H_k^{(2)}(0)=H_k^{(3)}(0,0)$, one finds for the orthonormal polynomials (\ref{tau1}) with regard to the inner-product $\ll~,~\gg=\la ~,\ra_{{\bf t},{\bf s}}\Bigr|_{\LR}$, and for the Darboux sum (\ref{tau1'}):
(remember notation (\ref{three}))%
\be\begin{aligned}
\lefteqn{P_k(z)=p_k^{(1)}({\bf t},{\bf s};z)\big\vert_{\LR}}%\\&=p_k^{(1)}({\bf t},{\bf s};z)\big\vert_{\LR} \\
\\&= z^k\frac{Z({\bf t}-[z^{-1}],{\bf s})}{Z({\bf t},{\bf s})}\frac{\det(\Id -\mathbf{K}({\bf t}-[z^{-1}],{\bf s}))_k}{\sqrt{\det(\Id -\mathbf{K}({\bf t},{\bf s}))_k\det(\Id -\mathbf{K}({\bf t},{\bf s}))_{k+1}}}\Big\vert_{\LR}
%\frac{z^k\left(1-\frac{a}{z}\right)^{n+1}H_k^{(1)}(z^{-1})}{\sqrt{H_k(0)H_{k+1}(0)}}
\\
\lefteqn{\widehat P_k(w^{-1})=p^{(2)}_k({\bf t},{\bf s};w)\big\vert_{\LR} } \\&= w^{-k}\frac{Z({\bf t} ,{\bf s}+[w^{}])}{Z({\bf t},{\bf s})}\frac{\det(\Id -\mathbf{K}({\bf t},{\bf s}+[w^{}]))_k}{\sqrt{\det(\Id -\mathbf{K}({\bf t},{\bf s}))_k\det(\Id -\mathbf{K}({\bf t},{\bf s}))_{k+1}}}\Big\vert_{\LR}
\\
\lefteqn{\sum_{j=0}^k P_j(z)
 \widehat P_j(w^{-1})=
 \sum_{j=0}^k 
p_j^{(1)}({\bf t},{\bf s};z)
~p_j^{(2)}({\bf t},{\bf s};w^{-1})\big\vert_{\LR}}
 \\
&=
\left(\frac zw\right)^k 
\frac{Z({\bf t}-[z^{-1}],{\bf s}+[w^{}])}{Z({\bf t},{\bf s} )}
 \frac{ \det(\Id -\mathbf{K}({\bf t}-[z^{-1}],{\bf s}+[w^{}]))_k}{\det(\Id -\mathbf{K}({\bf t},{\bf s}))_{k+1}}\Big\vert_{\LR}
%@@@\frac{ (1-\frac az)^{n+1}}{(1+aw)^{n}(1-\frac wz)}
%\frac{H_k^{(3)}(z^{-1},w)}{H_k^{(3)}(0,0)}
%
\end{aligned}
\label{31'}\ee
establishing formulas (\ref{31}), via (\ref{29}), (\ref{26}), (\ref{28}) and (\ref{30}), with the double integral representation (\ref{25}), (\ref{26}) and (\ref{25'}) for $K^{(i)}_{k,\ell}$. %%These polynomials are %
%$$
%\dt_{k\ell}=\ll P_k,\hat P_{\ell}\gg =\frac{1}{2\pi i}\int_{\Gamma_{0,a}}P_{k}(z)\hat P_{\ell}(z^{-1})\frac{(1+az)^n}{\left(1-\frac{a}{z}\right)^{n+1}}dz
%$$
 Then using
\be 
\begin{aligned}
\frac{1}{v-u}\frac{1-\frac uz}{1-\frac vz}&=\frac{1}{v-u}-\frac{1}{v-z} \\
\frac{1}{v-u}{\frac { \left( u-z \right)  \left( v-w \right) }{ \left( v-z \right) 
 \left( u-w \right) }}&=\frac{1}{v-u}- {\frac {   \left(z -w 
 \right)  }{ \left( v-z \right)  \left( u-w \right) }},
%~~\mbox{and}~~\frac{1}{v-u}\frac{1-\frac u{z^{-1}}}{1-\frac v{z^{-1}}} = \frac{1}{v-u}-\frac{1}{v-z^{-1}},
%
\end{aligned}
\label{32}\ee
%
%$$\frac{1}{v-u}{\frac { \left( u-z \right)  \left( v-w \right) }{ \left( v-z \right) 
% \left( u-w \right) }} =\frac{1}{v-u}- {\frac {   \left(z -w 
% \right)  }{ \left( v-z \right)  \left( u-w \right) }}
 %$$
%
one notices that the kernels $K^{(1)}_{k,\ell}(z^{-1})$ and $K^{(2)}_{k,\ell}(w^{})$ are rank one perturbations of the kernels $K^{(1)}_{k,\ell}(0)$ and $K^{(2)}_{k,\ell}(0)$, whereas $K^{(3)}_{k,\ell}(z^{-1},w)$ is a rank two perturbation of $K^{(1)}_{k,\ell}(0)$, as given by the second equalities in (\ref{25}), (\ref{26}) and (\ref{25'}).
%,
%\be\begin{aligned}
%K^{(1)}_{k,\ell}(z^{-1})&= K^{(1)}_{k,\ell}(0)+h^{(1)}_{k}(z^{-1})g^{(1)}_{\ell}(n)\\
%\\
%K^{(2)}_{k,\ell}(w)&= K^{(2)}_{k,\ell}(0)+h^{(2)}_{k}(w)g^{(2)}_{\ell}(n)
%\\ \\
%K^{(3)}_{k-1,\ell-1}(z^{-1},w)&=K^{(1)}_{k ,\ell }(0)+(\frac zw-1)h^{(1)}_{k }(z^{-1}) h^{(2)}_{\ell }(w)
%\end{aligned}
%\label{33}\ee
 %
The functions $g^{(i)}_{\ell}$, appearing here, have each two different expressions, one obtained from the other by $u\to u^{-1}$.
%\be \begin{aligned}
%g^{(1)}_{\ell}(n)&= -\oint_{\Gamma_{0}} \frac{du}{2\pi i}(-u)^\ell \psi_a(2n;u)%(1+au)^n\left(1-\frac{a}{u}\right)^{n+1}
% =-\oint_{\Gamma_{0}} \frac{du}{2\pi i}(-u)^{-\ell-2} \psi_a(2n;u^{-1})%
%(1+\frac au)^n\left(1- {a}{u}\right)^{n+1}\\
%g^{(2)}_{\ell}(n)&= -\oint_{\Gamma_{0,-a}} \frac{du}{2\pi i}\frac{(-u)^{\ell}}{\psi_a(2n,u^{-1})%(1-au)^{n+1}\left(1+\frac{a}{u}\right)^n
%}
%= -\oint_{\Gamma_{0,a}} \frac{du}{2\pi i}\frac{(-u)^{-\ell-2}}{\psi_a(2n,u)%(1-\frac au)^{n+1}\left(1+ {a}{u}\right)^n
%}.
%\end{aligned}\label{34}\ee
The functions $h_k^{(i)}$ are contour integrals, for which it is convenient, for later use\footnote{it enables us to expand $1/(v-z)=v^{-1}/(1-\tfrac zv)=\tfrac 1v \sum_1^\infty (\tfrac zv)^j$.}, to include $z$ in the contour, at the expense of introducing a residue; so, one has from the definitions of (\ref{25h}),
\be \begin{aligned}
h^{(1)}_{k}(z^{-1})%&= -\oint_{\Gamma_{0,a}}\frac{dv}{2\pi i(v-z)}~\frac{1%(1+av)^{-n}\left(1-\frac{a}{v}\right)^{-n-1}}{(-v)^{k+1}\psi_a(2n,v)} 
%\\
&=  \oint_{\Gamma_{0,a,z}}\frac{-dv}{2\pi i(v-z)}\frac{1%(1+av)^{-n}\left(1-\frac{a}{v}\right)^{-n-1}
 }{(-v)^{k+1}\vp_a(2n;v)}+\frac{1%(1+az)^{-n}\left(1-\frac{a}{z}\right)^{-n-1}
  }{(-z)^{k+1}\vp_a(2n,z)} 
\\
&=: \bar h_k^{(1)}(z^{-1})+\frac{1%(1+az)^{-n}\left(1-\frac{a}{z}\right)^{-n-1}
}{(-z)^{k+1}\vp_a(2n,z)}\\
\\
h^{(2)}_{\ell}(w)%&= -\oint_{\Gamma_{0}}\frac{dv}{2\pi i(-v)^{k+1}}\frac{\psi_a(2n;v^{-1})}{v-z^{-1}} 
%\\
&=  \oint_{\Gamma_{0,w^{-1}}}\frac{-dv}{2\pi i(v-w^{-1})}\frac{\vp_a(2n;v^{-1})}{(-v)^{\ell+1}}+(-w)^{\ell+1}\vp_a(2n,w)
\\
&= \bar h_\ell^{(2)}(w)+(-w)^{\ell+1}\vp_a(2n,w)%(1+az)^n\left(1-\frac{a}{z}\right)^{n+1}
.\label{35}\end{aligned}\ee
%
%and
%\be \begin{aligned}
%a_k(z)&:=-\oint_{\Gamma_{0,a}} \frac {dv}{2\pi i(v-z)} \frac {1}{(-v)^{k+1}\psi(2n,v)}=h^{(1)}_{k}(z^{-1})
%\\
%b_\ell(w)&:=  \oint_{\Gamma_0} \frac {du}{2\pi i(u-w)}  {(-u)^{\ell}\psi(2n,u)}=
%-\frac 1w h^{(2)}_{\ell}(w)%b_\ell(w)
%.\label{35a}\end{aligned}\ee
%
Then the $\bar h_k^{(i)}$ have the following form in terms of the $g^{(i)}_{\ell}$, upon using the expansion $1/(v-z)=v^{-1}/(1-\tfrac zv)=\tfrac 1v \sum_0^\infty (\tfrac zv)^j$ :
\be \begin{aligned}
\bar h_k^{(1)}(z^{-1})&=- \oint_{\Gamma_{0,a,z}}\frac{ dv}{2\pi i(v-z)}\frac{1%(1+av)^{-n}\left(1-\frac{a}{v}\right)^{-n-1}
 }{(-v)^{k+1}\vp_a(2n;v)}%\oint_{\Gamma_{0,a,z}}\frac{dv}{2\pi i(-v)^{k+1}}\frac{(1+av)^{-n}\left(1-\frac{a}{v}\right)^{-n-1}}{v-z} 
 =-\sum_{\alpha=0}^\infty (-z)^\alpha g^{(2)}_{k+\alpha}(n)
\\
\bar h_\ell^{(2)}(w)&=-\oint_{\Gamma_{0,w^{-1}}}\frac{dv}{2\pi i(-v)^{\ell+1}}\frac{\vp_a(2n,v^{-1})%(1-av)^{n+1}\left(1+\frac{a}{v}\right)^{n}
}{v-w^{-1}}%\\&
=-\sum_{\alpha=0}^\infty (-w)^{-\alpha} g^{(1)}_{\ell+\alpha}(n),
\end{aligned}\label{36}\ee
establishing formulas (\ref{25h}). 
Upon using $1/(v-u)=v^{-1}\sum_0^{\infty} (u/v)^\alpha$, the kernels $K^{(1)}_{k,\ell}(0) $ and $K^{(2)}_{k,\ell}(0) $ can also be expressed in terms of the $g_\ell^{(i)}$'s, yielding:
\be \begin{aligned}
K^{(1)}_{k,\ell}(0)&=\sum_{\alpha=0} ^{\infty} g^{(1)}_{\ell+\alpha}g^{(2)}_{k+\alpha}=
K^{(2)}_{\ell,k}(0)
\end{aligned}\label{37}\ee
and therefore $K^{(1) \top}_{}(0)=K^{(2)}_{}(0)$. This ends the proof of Lemma \ref{L31}.
\end{proof}

\bigbreak

\noindent  It is convenient to introduce a new function $\Phi(z):=\Phi_{m/n}(z)$, depending on $m/n$ and $a$, namely 
 \be
 e^{\frac n2 \Phi(z)}:= 
(-z)^m \left ((1+az)\Bigl(1-\frac az\Bigr)\right)^{n/2}.
 \label{44}
 \ee
From the decomposition (\ref{25}) and (\ref{26}) of the kernels $K^{(i)}_{k,\ell}$, it follows from (\ref{K2}) and (\ref{29}) that, in particular, 
\be \begin{aligned}
H^{(1)}_{2m+1}(z^{-1})&= H^{ }_{2m+1}(0)\left(1-R^{(1)}(z^{-1})\right)\\
H^{(2)}_{2m+1}(z) &= H^{ }_{2m+1}(0)\left(1-R^{(2)} (z)\right),
\end{aligned}\label{38}\ee
%
% $$
%\begin{aligned}
%R_k^{(1)}(z^{-1})&=S_k^{(1)}(z^{-1})+\left((-z)^k (1+az)^n\Bigl(1-\frac az\Bigr)^{n+1} \right)^{-1}T_k^{(1)}(z^{-1})\\
%R_k^{(2)}(z^{})&=S_k^{(2)}(z^{})+ (-z)^k (1+az)^n\Bigl(1-\frac az\Bigr)^{n+1} T_k^{(2)}(z^{})
%\end{aligned}
% $$
where, by (\ref{35}),
\be
\begin{aligned}
R^{(1)}_{ }(z^{-1})&:= \sum_{\ell\geq {2m+1}}Q^{(1)}_{ \ell}h^{(1)}_{\ell}(z^{-1})  
\\
&= \sum_{\ell\geq {2m+1}}Q^{(1)}_{ \ell}\bar h^{(1)}_{\ell}(z^{-1})+%\frac{1}{(-z)^k}
 \frac{\sum^{\iy}_1\frac{Q^{(1)}_{2m+\ell }}{(-z)^{\ell}}}{(-z)^{2m+1}\vp_a(2n;z) %(1+az)^n\left(1-\frac{a}{z}\right)^{n+1}
}\\
&=:S ^{(1)}(z^{-1})+  T ^{(1)}(z^{-1})  
\frac{e^{-n\Phi(z)}}{a-z}%(1+az)^n\Bigl(1-\frac az\Bigr)^{n+1}
   \\
\\
R^{(2)}_{ }(z)&:= \sum_{\ell\geq {2m+1}}Q^{(2)}_{ \ell}h^{(2)}_{\ell}(z) 
\\
&= \sum_{\ell\geq {2m+1}}Q^{(2)}_{ \ell}\bar h^{(2)}_{\ell}(z)+(-z)^{2m+1} 
\vp_a(2n;z)%  (1+az)^n\left( 1-\frac{a}{z} \right)^{n+1}
 \sum^{\iy}_1 Q^{(2)}_{2m+\ell }(-z)^{\ell}\\
&=:S ^{(2)}(z^{})+T ^{(2)}(z^{})\frac{a-z}{e^{-n\Phi(z)}}%(1+az)^n\Bigl(1-\frac az\Bigr)^{n+1} 
,
\end{aligned}
 \label{39}\ee  
 with $Q^{(i)}_{ \ell}$ a function of $\ell$, with support $[2m+1,\infty)$, defined by
\be
Q^{(i)}_{ \ell}=[(\Id -\raisebox{1mm}{$\chi$}{}_{2m+1} K^{(i)}(0)^{\top}\raisebox{1mm}{$\chi$}{}_{2m+1} )^{-1}  \raisebox{1mm}{$\chi$}{}_{2m+1} g^{(i)}](\ell), %~~~\mbox{where}~~ \ell \geq k,
\label{Qell}\ee
and
\be
\begin{aligned} 
S ^{(1)}(z^{-1})&:=\sum_{\ell\geq {2m+1}}Q^{(1)}_{ \ell}\bar h^{(1)}_{\ell}(z^{-1})  ,~~ S ^{(2)}(z^{}):= \sum_{\ell\geq {2m+1}}Q^{(2)}_{ \ell}\bar h^{(2)}_{\ell}(z)
\\T ^{(1)}(z^{-1})&:=\sum_{\ell\geq 1} \frac {Q^{(1)}_{2m+\ell}}{(-z)^\ell}
%\\
,~~~ T ^{(2)}(z^{}) :=
 \sum_{\ell\geq 1}   {Q^{(2)}_{2m+\ell }}{(-z)^\ell}
,\end{aligned}
 \label{40}\ee 
and with (remembering from (\ref{36}))
\be \begin{aligned}
\bar h_k^{(1)}(z^{-1})&=-\sum_{\alpha=0}^\infty (-z)^\alpha g^{(2)}_{k+\alpha}(n)
,~~~
\bar h_k^{(2)}(z) =-\sum_{\alpha=0}^\infty (-z)^{-\alpha} g^{(1)}_{k+\alpha}(n).
\end{aligned}\label{36'}\ee

So we have the following expressions for the orthonormal polynomials $P_{2m+1}$ and $\hat P_{2m+1}$ on the circle, as in (\ref{31}) : 
\be\begin{aligned}
P_{2m+1}(z)&=  \sqrt{\frac{H_{2m+1}(0)}{H_{2m+2}(0)}}z^{2m+1}\left(1-\frac{a}{z}\right)^{n+1}\left(1-R^{(1)}(z^{-1})\right) 
\\
\hat P_{2m+1}(z^{-1})&=  \sqrt{\frac{H_{2m+1}(0)}{H_{{2m+2}}(0)}}z^{-2m-1} (1+az)^{-n}\left(1-R ^{(2)}(z)\right).
\end{aligned}\label{41}\ee 
In particular, the fact that the norm equals $1$ implies an identity, which will be used later; namely by (\ref{12'}) and (\ref{13}), find: 
\be \begin{aligned}
1&=\ll P_{2m+1} ,\hat P_{2m+1}\gg\\
&= \frac{H_{2m+1}(0)}{H_{{2m+2}}(0)}\oint_{\Gamma_{0,a}}\frac{dz}{2\pi i z}\frac{(1+az)^n}{\left(1-\frac{a}{z}\right)^{n+1}}\left(1-\frac{a}{z}\right)^{n+1}(1+az)^{-n}\\
&  \hspace*{3cm}\left(1-R ^{(1)}(z^{-1})\right)\left(1-R ^{(2)}(z)\right)\\
&= \frac{H_{2m+1}(0)}{H_{{2m+2}}(0)}\oint_{\Gamma_{0,a}}\frac{dz}{2\pi i z}\left(1-R ^{(1)}(z^{-1})\right)\left(1-R ^{(2)}(z)\right)
.\end{aligned}\label{42}\ee

 \noindent We end the section with some {\bf Fredholm determinant identities:}
    
\begin{lemma}\label{LA1} (\cite{Joh10}) 
Given a trace-class operator $A$ and a rank one operator $a\otimes b$, the following holds for an arbitrary constant $c$:%
\begin{multline}
\det (I-A+c~a\otimes b)=(1-c)\det(I-A)+c\det(I-A+a\otimes b).
 \end{multline}
%whenever $c$ is  a constant.

\end{lemma}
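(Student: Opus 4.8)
The plan is to prove the identity by noting that both sides are affine functions of the scalar parameter $c$, so it suffices to check agreement at two distinct values of $c$, together with matching the slope. Concretely, write $F(c):=\det(I-A+c\,a\otimes b)$. Since $a\otimes b$ is rank one, the map $c\mapsto F(c)$ is a polynomial of degree at most one in $c$: indeed, for a trace-class perturbation one has $\det(I-A+c\,a\otimes b)=\det(I-A)\det\bigl(I+c(I-A)^{-1}a\otimes b\bigr)$ whenever $I-A$ is invertible, and $\det(I+c\,v\otimes b)=1+c\,\langle b,v\rangle$ for a rank one operator; hence $F$ is affine in $c$. The right-hand side $G(c):=(1-c)\det(I-A)+c\det(I-A+a\otimes b)$ is visibly affine in $c$ as well.

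First I would dispose of the degenerate case where $I-A$ is not invertible (equivalently $\det(I-A)=0$): one can either invoke continuity, perturbing $A\rightsquigarrow A+\varepsilon$ and letting $\varepsilon\to 0$ since both sides are continuous in trace-class norm, or simply observe that the argument below only needs invertibility of $I-A$ at the two test points, which we may arrange. Assuming $\det(I-A)\neq 0$, I would evaluate $F$ and $G$ at $c=0$: $F(0)=\det(I-A)=G(0)$. Then I would evaluate at $c=1$: $F(1)=\det(I-A+a\otimes b)=G(1)$. Two affine functions of $c$ agreeing at $c=0$ and $c=1$ are identical, which yields the claim for all $c$.

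Alternatively, and perhaps more transparently, I would give the one-line computation directly: using $F(c)=\det(I-A)\bigl(1+c\,\langle b,(I-A)^{-1}a\rangle\bigr)$, set $\lambda:=\langle b,(I-A)^{-1}a\rangle$, so $F(c)=\det(I-A)(1+c\lambda)$. In particular $\det(I-A+a\otimes b)=F(1)=\det(I-A)(1+\lambda)$. Therefore
\[
(1-c)\det(I-A)+c\det(I-A+a\otimes b)=\det(I-A)\bigl[(1-c)+c(1+\lambda)\bigr]=\det(I-A)(1+c\lambda)=F(c),
\]
which is exactly the asserted identity. The main (and only) subtlety is the handling of the non-invertible case, but this is routine by the density/continuity argument above; everything else is the elementary fact that the Fredholm determinant of a rank one perturbation is affine in the strength of that perturbation, together with the multiplicativity of $\det$. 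No deeper input is needed.
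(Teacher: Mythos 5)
Your proof is correct. Note that the paper itself offers no proof of this lemma --- it is simply quoted from \cite{Joh10} --- so there is nothing internal to compare against; your argument (affineness of $c\mapsto\det(I-A+c\,a\otimes b)$, then evaluation at $c=0$ and $c=1$) is the standard and essentially the only reasonable route, and your explicit one-line computation with $\lambda=\langle b,(I-A)^{-1}a\rangle$ settles the invertible case cleanly. One small technical slip in your treatment of the degenerate case: the perturbation $A\rightsquigarrow A+\varepsilon$, if read as $A+\varepsilon I$, leaves the trace class in infinite dimensions, so the Fredholm determinant of the perturbed operator is not defined. The fix is routine and of the same spirit: either replace $A$ by $\lambda A$ and let $\lambda\to 1$ along a sequence avoiding the (isolated) zeros of the entire function $\lambda\mapsto\det(I-\lambda A)$, or first prove the identity for finite-rank $A$ (pure linear algebra, where affineness in $c$ follows from multilinearity of the determinant) and then pass to general trace-class $A$ by density of finite-rank operators and continuity of $T\mapsto\det(I+T)$ in trace norm. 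With that adjustment the proof is complete.
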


%\begin{proof}
%
% We can assume that $A$ is a finite rank operator
%
%$$ A=\sum^n_{j=1}\varphi_j\otimes\psi_j.$$
%We have that
%
%$$
%\begin{aligned} &\det(I-A+c~a\otimes b)=\det\left(\begin{array}{cc} \delta_{jh}-\la\varphi_j,\psi_h\ra&\la\varphi_j,b\ra \\
%\\
%\la c~a,\psi_h\ra &1+\la c~a,b\ra\end{array}\right)\\
%\\
%&  =
 %  \det\left(\begin{array}{cc}\delta_{jh}-\la\varphi_j,\psi_h\ra&0 \\ \\
%\la c~a,\psi_h\ra &1 \end{array}\right)+\det\left(\begin{array}{cc} \delta_{jh}-\la\varphi_j,\psi_h\ra&\la\varphi_j,b\ra \\
%\\
%\la c~a,\psi_h\ra &\la c~a,b\ra
%\end{array}\right)\\
%\\
%&=\det(I-A)+c\det\left(\begin{array}{cc}
%\delta_{jh}-\la\varphi_j,\psi_h\ra&\la\varphi_j,b\ra \\
%\\
%\la a,\psi_h\ra &\la a,b\ra
%\end{array}\right)\\
%\\&=\det(I-A)+c\left(\det\left(\begin{array}{cc}
%\delta_{jh}-\la\varphi_j,\psi_h\ra&\la\varphi_j,b\ra \\
%\\ \la a,\psi_h\ra &1+\la a,b\ra
%\end{array}\right)\right.\\
%\\
%&\qquad -\det\left.\left(\begin{array}{cc} \delta_{jh}-\la\varphi_j,\psi_h\ra&0 \\
%\\
%\la a,\psi_h\ra &1 \end{array}\right)\right)\\
%\\
%&=\det(I-A)+c\det(I-A+a\otimes b)-c\det(I-A). \end{aligned}
%$$
%The general case then follows by an approximation argument. 
%\end{proof}%\hfill$\blacksquare$

%Set
%$$
%\begin{aligned}
%F_x(z)&=\frac{\sqrt{d}(-1)^x}{2\pi i}e^{t(z-z^{-1})}z^{x+n+1}\no\\
%\\
%G_y(w)&=\frac{\sqrt{d}(-1)^y}{2\pi i}e^{t(w^{-1}-w)}\frac{1}{w^{y+n}},\no
% \end{aligned}
% $$
% where $d$ is a constant (a scaling parameter to be chosen).
%\end{lemma}

\begin{lemma} \label{LA2} (\cite{Joh10}) Given two continuous functions $F(z)$ and $G(w)$, vectors $a^z,~b^w$ depending continuously on $z$ and $w$, and a trace class kernel $K $, one has that
 \begin{multline}
\int_{\gamma_{r_1}}\frac{dz}{z}\int_{\gamma_{r_2}}\frac{dw}{w}F(z)G(w)\det (I-K +a^z\otimes b^w)\\
= 
 \det\left( I-K +\left( \int_{\gamma_{r_1}}F(z)a^z\frac{dz}{z}\right)\otimes\left(\int_{\gamma_{r_2}}G(w)b^w\frac{dw}w \right)  \right)  \\
 +\left[ \left(\int_{\gamma_{r_1}}F(z)\frac{dz}{z}\right)    \left(  \int_{\gamma_{r_2}} G(w)\frac{dw}{w}\right)-1\right]\det(I-K )
 \label{A1}\end{multline}

\end{lemma}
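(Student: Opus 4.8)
The plan is to reduce the statement to Lemma \ref{LA1} by treating the rank-one term $a^z \otimes b^w$ with a scalar parameter, and then integrate. First I would apply Lemma \ref{LA1} in the form
$$
\det(I - K + a^z \otimes b^w) = \det(I-K) + \det(I - K + a^z\otimes b^w) - \det(I-K),
$$
which is trivial, so instead the right move is to observe that the map $(a,b) \mapsto \det(I - K + a\otimes b)$ is linear in $a$ and linear in $b$ separately (a rank-one perturbation determinant is $\det(I-K)\bigl(1 + \langle (I-K)^{-1}a, b\rangle\bigr)$ when $I-K$ is invertible, and by continuity/density the identity we want will extend to the general trace-class case). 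Concretely, writing $D := \det(I-K)$ and assuming first that $I-K$ is invertible, we have $\det(I-K+a^z\otimes b^w) = D\bigl(1 + \langle (I-K)^{-1}a^z, b^w\rangle\bigr)$, and likewise $\det\bigl(I - K + (\int F a^z \tfrac{dz}{z})\otimes(\int G b^w \tfrac{dw}{w})\bigr) = D\bigl(1 + \int\!\!\int F(z)G(w)\langle (I-K)^{-1}a^z, b^w\rangle \tfrac{dz}{z}\tfrac{dw}{w}\bigr)$.

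Second, I would just compute the left-hand side of \eqref{A1} directly with this formula:
$$
\int_{\gamma_{r_1}}\!\frac{dz}{z}\int_{\gamma_{r_2}}\!\frac{dw}{w} F(z)G(w)\, D\Bigl(1 + \langle (I-K)^{-1}a^z, b^w\rangle\Bigr)
= D\Bigl(\int F \tfrac{dz}{z}\Bigr)\Bigl(\int G \tfrac{dw}{w}\Bigr) + D\!\int\!\!\int FG\,\langle (I-K)^{-1}a^z,b^w\rangle,
$$
using bilinearity of the inner product and Fubini (justified since $K$ is trace class, $a^z, b^w$ depend continuously on the compact contours, and $F,G$ are continuous, so everything is bounded and the double integral converges absolutely). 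Comparing with the right-hand side of \eqref{A1}: the first determinant there equals $D + D\int\!\!\int FG \langle (I-K)^{-1}a^z,b^w\rangle$, and the bracketed term is $\bigl[(\int F\tfrac{dz}{z})(\int G\tfrac{dw}{w}) - 1\bigr]D$; adding these gives exactly $D(\int F\tfrac{dz}{z})(\int G\tfrac{dw}{w}) + D\int\!\!\int FG\langle(I-K)^{-1}a^z,b^w\rangle$, which matches. So the identity holds whenever $I-K$ is invertible.

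Third, to remove the invertibility hypothesis I would invoke continuity: both sides of \eqref{A1} are continuous (indeed entire) functions of $K$ in trace-class norm — Fredholm determinants and their rank-one perturbations are continuous in $\|\cdot\|_1$, and the contour integrals preserve this — and the set of $K$ with $I-K$ invertible is dense (replace $K$ by $\lambda K$, $\lambda \to 1$). Hence the identity extends to all trace-class $K$. The main obstacle is really just bookkeeping: making sure the continuity/density argument is clean and that Fubini applies, i.e. that $\langle (I-K)^{-1}a^z, b^w\rangle$ is jointly continuous in $(z,w)$ on $\gamma_{r_1}\times\gamma_{r_2}$ — which follows from the continuous dependence of $a^z, b^w$ on the (compact) contours together with boundedness of $(I-K)^{-1}$ — and hence the double integral is legitimate. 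No deeper input is needed; this is essentially a reorganization of the rank-one determinant formula, exactly as in \cite{Joh10}.
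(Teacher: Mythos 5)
Your proposal is correct. Note that the paper itself offers no proof of Lemma \ref{LA2} --- it is quoted from \cite{Joh10} --- so there is nothing to compare line by line; but your verification is sound: with $D=\det(I-K)$ and $I-K$ invertible, both sides of (\ref{A1}) reduce to $D\bigl(\int F\tfrac{dz}{z}\bigr)\bigl(\int G\tfrac{dw}{w}\bigr)+D\int\!\!\int FG\,\langle(I-K)^{-1}a^z,b^w\rangle\tfrac{dz}{z}\tfrac{dw}{w}$, and the Fubini and continuity issues you flag are exactly the right ones and are handled correctly (compact contours, continuous dependence, boundedness of $(I-K)^{-1}$). The only real content of the lemma is that $(a,b)\mapsto \det(I-K+a\otimes b)-\det(I-K)$ is bilinear and continuous, so that it commutes with the vector-valued integrals; you obtain this via the resolvent identity $\det(I-K+a\otimes b)=D\bigl(1+\langle(I-K)^{-1}a,b\rangle\bigr)$ and then remove the invertibility hypothesis by the $\lambda K$, $\lambda\to 1$ density argument, which works since $I-\lambda K$ fails to be invertible only for $\lambda$ in a discrete set. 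A slightly cleaner route, which avoids the density step entirely, is to read off the bilinearity directly from the Fredholm series: since $a\otimes b$ has rank one, it can occupy at most one row and column of each determinant in the expansion of $\det(I-K+a\otimes b)$, so the difference with $\det(I-K)$ is manifestly a bounded bilinear form in $(a,b)$ for every trace-class $K$; integration against $F(z)G(w)\tfrac{dz}{z}\tfrac{dw}{w}$ then passes inside immediately. Either way the identity holds, and your write-up is acceptable as is.
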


%\begin{proof} 
%
%First assume $K$ is a finite rank operator such that $(1-K)^{-1}$ exists. Then using in (\ref{A1}) the identity 
%$$ \det(1-K+a^z\otimes b^w)=\det (1-K)(1+\la (1-K)^{-1}a^z,b^w\ra)$$
% and dividing out by $\det(I-K)$ lead to a straightforward identity. One then uses an approximation argument for a general trace class operator $K$.
% \end{proof}

%\newpage

\section{Representations of the kernel  $\widetilde\BK^{\rm ext}_{n,m}$ for the domino outlier paths
 }
 
 The purpose of this section is to deduce Theorem \ref{main1'} from Proposition \ref{prop}, using the Fredholm determinant representation, {\em either} of the Darboux sum, {\em or} of the bi-orthogonal polynomials themselves, as in (\ref{31}). They will lead to the two very different expressions for the kernel, announced in Theorem \ref{main1'}.%themselves on the circle in (\ref{14}). 

 The following formula, based on contour integration, will be useful in the sequel:
% use in the last integral:
\be \begin{aligned}
  \frac{1}{(2\pi i)^2}\oint_{\Gamma_{0,a}}dz&\oint_{\Gamma_{0,a,z}}\hspace{-0.5em} dw \frac{F(z,w)}{w-z} \\
&=
\frac{1}{(2\pi i)^2}\oint_{\Gamma_{0,a}}dw\oint_{\Gamma_{0,a,w}}\hspace{-0.5em}dz \frac{F(z,w)}{w-z}+\frac{1}{2\pi i}\oint_{\Gamma_{0,a}} F(z,z).
\end{aligned}\label{45}\ee

 \subsection{Representation %of the dual kernel 
 as a perturbation of the Krawtchouk kernel }
 
 In this section we establish formula (\ref{K1}) of Theorem \ref{main1'}. To do so, we take formula (\ref{14}) for the kernel $\BK_{n,m}$ and the third formula of Darboux type in (\ref{31}) as starting points. 
 Recall some of the definitions given in (\ref{Idef}); the definitions of $a_{x,s}$ and $b_{y,s}$ will turn out to be the same as the one in (\ref{Idef}). Here we have some additional ones:%Besides the integral $\psi_{2s}(x,y)$ defined in (\ref{7}), consider the following functions, involving the functions $\vp_a(n;z)$ and $h_k^{(i)}$ defined in (\ref{Idef}) and (\ref{25h}):%, with an arbitrary parameter $d$, which will eventually be specified,
 \be
\begin{aligned}
 F_{x,s}(z)& = \frac{ (-1)^x  }{2\pi i } z^{x+m+1}\vp_a(n;z)\left(\frac{1+az}{1-\frac az}\right)^{s-\tfrac n 2},~~~F_x(z)=F_{x,  n/2}(z),% \mbox{~~  and ~~ }
% a_x(k):=\oint_{\gamma_{r_1}}\frac{dz}{z} F_x(z) h^{(1)}_k(z^{-1})
%\\
\\ 
 G_{y,r}(w) & =
 \frac{ (-1)^y }{2\pi i  } \frac{1}{w^{y+m+1}\vp_a(n;w)}\left(\frac{1+aw}{1-\frac aw}\right)^{\tfrac  n2-r },~~~G_y(w)=G_{y,n/2}(w)
%\mbox{ ~~ and ~~ }
%  b_y(k):=\oint_{\gamma_{r_2}} \frac{dw}{w} G _y(w) h^{(2)}_k(w).
,\end{aligned} \label{FG}\ee
\be 
\begin{aligned}a_{x,s}(k) & =\oint_{\gamma_{r_1}}\frac{dz}{z} F_{x,s}(z) h^{(1)}_k(z^{-1}),~~~~a_x(k) =a_{x,n/2}(k)
 ,~~~
 \\
  b _{y,r}(\ell) &=\oint_{\gamma_{r_2}} \frac{dw}{w} G _{y,r}(w) h^{(2)}_\ell(w),~~~~
  b _{y}(\ell)=b _{y,n/2}(\ell).
 \end{aligned} \label{ab}\ee
We now extend the functions $g_\ell^{(i)}(k)$ of (\ref{34}) as follows:
\begin{equation}\begin{aligned}
g_{\ell}^{(1)}(k;s)&=-\int_{\Gamma_{0,a}}\frac{dz}{2\pi i}(-z)^{\ell}\vp_a(2k;z)\left(\frac{1+az}{1-\frac{a}{z}}\right)^{s} ,~~~g_{\ell}^{(1)}(k;0)=g_{\ell}^{(1)}(k ),
\\
g_{\ell}^{(2)}(k;r)&=-\int_{\Gamma_{0,a}}\frac{dz}{2\pi i}\frac{(-z)^{-\ell -2}}{\vp_a(2k,z)}\left(\frac{1+az}{1-\frac{a}{z}}\right)^{r},~~~g_{\ell}^{(2)}(k;0)=g_{\ell}^{(2)}(k ),
\end{aligned}
\label{E4}\end{equation}
and $\tilde g^{(1)}_{\ell}(k,j)$, $\tilde g^{(2)}_{\ell}(k,j)$ are a small variation \footnote{\label{foot4'}Compared to $  g^{(2)}_k$, the expression $\widetilde g^{(2)}_k$ has an extra term $z-a$ in the numerator. Also the expression $\widetilde g^{(1)}_k$ has an extra term $w-a$ in the denominator; namely
 \be\begin{aligned}
\widetilde g^{(1)}_{\ell}\left(\tfrac n2\right)&= -\oint_{\Gamma_{0}} \frac{dz}{2\pi i}(-z)^\ell \frac{\vp_a( n;z)}{z-a}
~~\mbox{and}~~
\widetilde g^{(2)}_{\ell}\left(\tfrac n2\right) = 
 %}
  -\oint_{\Gamma_{0,a}} \frac{dz}{2\pi i}(-z)^{-\ell-2}\frac{ z-a }{
\vp_a( n;z)%(1-\frac au)^{n+1}\left(1+ {a}{u}\right)^n
 }.
\end{aligned}\label{g}\ee
} of $  g^{(1)}_{\ell}(k,j)$, $  g^{(2)}_{\ell}(k,j)$.

\begin{proposition} \label{Prop4.2} The dual kernel has the expression (\ref{dual}). It is a perturbation of the one-Aztec diamond kernel (\ref{OneAztec})  and involves the resolvent of the kernel (\ref{25})\footnote{$\la f(k), g(k)\ra_{\ell^2 (2m+1,\ldots)}=\sum_{2m+1}^{\infty} f(k)g(k).
$}:
\be
  \begin{aligned}
\lefteqn{(-1)^{x-y}   \widetilde\BK^{\rm ext}_{n,m}(2r,x;2s,y) }\\ %&=d(-1)^{x-y}(\Id -  \BK_{n,m}(-y,-x))
 %\\
%  &=
%    \oint_{ \gamma_{r_3}}\frac{ dz}{z} 
%  \oint_{ \gamma_{r_2}} \frac{dw}w 
%  \frac{F_{-y}(z)G _{-x}(w)w}{w-z}  
%  \\
% &~~~~+1 -\frac{\det \left(\Id -K^{(1)}(0) -a_x%\int_{ \gamma_{r_1}} F_x(z)a(z)\frac{dz}{z}
%  \otimes  b_y%\int_{ \gamma_{r_2}} G'_y(w)b'(w)\frac{dw}w
% \right)_{\ell^2 (2m+1,\ldots)}}
% {\det \left(\Id -K^{(1)}(0)
% \right)_{\ell^2 (2m+1,\ldots)}} 
 %\\
 &=-\Id_{s<r} (-1)^{x-y}\psi_{2(s-r)}(x,y)  +
  S(2r,x;2s,y) % \oint_{ \gamma_{r_3}}\frac{ dz}{z} 
  %\oint_{ \gamma_{r_2}} \frac{dw}w 
 % \frac{F_{-y,s}(z)G _{-x,r}(w)w}{w-z}  
  \\
 &~~~~+\left\la(\Id-K_{2m+1}^{(1)}(0))^{-1}a_{-y,s}(k),b_{-x,r}(k) \right\ra%@@@1 -\frac{\det \left(\Id -K^{(1)}(0) -a_x%\int_{ \gamma_{r_1}} F_x(z)a(z)\frac{dz}{z}
 % \otimes  b_y%\int_{ \gamma_{r_2}} G'_y(w)b'(w)\frac{dw}w
 %\right)
%  _{\ell^2 (2m+1,\ldots)}}
% {\det \left(\Id -K^{(1)}(0)
% \right)
 _{\ell^2 (2m+1,\ldots)}    ,\end{aligned}  %
 \label{dual}\ee
 with \footnote{\label{foot4}Note that the definition of $S(2r,x;2s,y)$ is compatible with the one in (\ref{Idef}). Also it appears as a part of the kernel (\ref{43a}).}
  \be
\begin{aligned}
  \lefteqn{S(2r,x;2s,y)}\\
  &:={\oint_{ \gamma_{r_3}}\frac{ dz}{z} 
  \oint_{ \gamma_{r_2}} \frac{dw}w 
  \frac{F_{-y,s}(z)G _{-x,r}(w)w}{w-z}  }%\\
%  &  =\frac{(-1)^{x-y}}{(2\pi i)^2}\oint_{\gamma_{r_3}}dz
%\oint_{\gamma_{r_2}}\frac{dw}{w-z}\frac{z^{-y}}{w^{-x+1}}e^{\tfrac n2 (\Phi(z)-\Phi(w))}\frac{1-\tfrac az}{1-\tfrac aw}
%  \left(\frac{1+az}{1-\frac{a}{z}}\right)^{s-n/2}\left(\frac{1+aw}{1-\frac{a}{w}}\right)^{-r+n/2}
\\
 &  =\frac{(-1)^{x-y}} {(2\pi i)^2}\!\oint_{\gamma_{r_3}}dz
 \oint_{\gamma_{r_2}}\frac{dw}{w\!-\!z}\frac{ z ^{m-y }}{ w ^{m-x+1 }} \frac{\vp_a(n;z)}{\vp_a(n;w)}
   \left(\tfrac{1+az}{1-\frac{a}{z}}\right)^{s-\tfrac n2}\!\!\left(\tfrac{1+aw}{1-\frac{a}{w}}\right)^{ \tfrac n 2 -r}
%@@\frac{(1+az)^{\tfrac n 2}(1-\tfrac az)^{\tfrac n 2 +1}} {(1+aw)^{\tfrac n 2}(1-\tfrac aw)^{\tfrac n 2 +1}}
\\
&=\sum^{\iy}_{b=0}g^{(1)}_{ -y+m+b }\left(\tfrac{n}{2},s- \tfrac{n}{2}\right)g^{(2)}_{ -x+m+b } \left(\tfrac{n}{2},  \tfrac{n}{2}-r \right)\\
\end{aligned}
\label{FG1}\ee
and with
$$\begin{aligned}
a_{x,s}(k)&=\frac{(-1)^{k+x}}{(2\pi i)^2}\oint_{\gamma_{s_1}}   dv \oint_{\gamma_{r_1}}\frac{ dz}{v-z}\frac{\vp_a(n;z)z^{m+x}}{\vp_a(2n;v)v^{k+1}}   \left(\tfrac{1+az}{1-\frac{a}{z}}\right)^{s-\tfrac n2}
\\
&=(-1)^m\left(\sum_{b\geq 0}g^{(1)}_{ x+m+b }\left(\tfrac{n}{2},s\!-\! \tfrac{n}{2}\right)g^{(2)}_{k+b}(n)+\tilde g^{(2)}_{ -x-m+k }\left(\tfrac{n}{2},s\!-\! \tfrac{n}{2}\right)\right)%
 \end{aligned}$$
% a_x(k)&:=\oint_{\gamma_{r_1}}\frac{dz}{z} F_x(z) h^{(1)}_k(z^{-1})
%
%\\&  = \sqrt{d} (-1)^{ m }\left(\sum_{b\geq 0} g^{(1)}_{x+m+b} (\tfrac n2) 
% g^{(2)}_{k+b } (n) +\widetilde g^{(2)}_{-x-m+k}(\tfrac n2)\right)
\\
%b_{-x,r}(\ell)&=\oint_{\gamma_{r_2}}\frac{dw}{w}  G_{-x,r}(w)   h_{\ell}^{(2)}(w) \\
\be\begin{aligned}
b_{y,r}(\ell)&= \frac{(-1)^{\ell +y+1}}{(2\pi i)^2}\oint_{\gamma_{r_2}} {dw}    \oint_{\gamma_{s_2}}\frac {du} {u-w}\frac{\vp_a(2n;u)u^{\ell}}{\vp_a(n;w)w^{y+m+1}} \left(\tfrac{1+aw}{1-\frac{a}{w}}\right)^{ \tfrac n2 -r}  
\\
&=(-1)^m\left(\sum_{b\geq 0}g^{(1)}_{\ell +b}(n )g^{(2)}_{ m+y+b }\left(\tfrac{n}{2},\tfrac{n}{2}-r  \right) +\tilde g^{(1)}_{-y-m+\ell }
 \left(\tfrac{n}{2},\tfrac{n}{2}-r\right)\right).\\
%
%   b _y(\ell)&:=\oint_{\gamma_{r_2}} \frac{dw}{w} G _y(w) h^{(2)}_k(w)\\
%   & =    (-1)^m \left( \sum_{b\geq 0} g^{(1)}_{\ell+b}(n) g^{(2)}_{y+m+b}(\tfrac n2)+\widetilde g^{(1)}_{-y-m+\ell }(\tfrac n2)\right).
 \end{aligned}\label{ab0}\ee

\end{proposition}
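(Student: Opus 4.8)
The strategy is to start from the explicit form \eqref{14} of the half-way kernel $\BK_{n,m}$, plug in the third (Christoffel--Darboux) representation of the partial sum $\sum_{j=0}^{2m} \widehat P_j(z^{-1}) P_j(w)$ given in \eqref{31}, and then expand the Fredholm determinant $H^{(3)}_{2m+1}(z^{-1},w)$ using the rank-two perturbation identity \eqref{25'} together with Lemma~\ref{LA1}. This produces, after the resulting $z$- and $w$-contour integrations are carried out and the resolvent of $K^{(1)}(0)$ assembled, an inner product of the form $\la (\Id-K^{(1)}_{2m+1}(0))^{-1} a_{-y,s}, b_{-x,r}\ra_{\ell^2(2m+1,\ldots)}$, where the functions $a$ and $b$ are the contour integrals in \eqref{ab}, \eqref{ab0}. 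Finally the extension from the half-way axis to a general even time $2r,2s$ is inserted via the semigroup relation \eqref{E1} of Proposition~\ref{prop}, which simply convolves the half-way kernel with $\psi$-functions; these convolutions reproduce the $\psi_{2(s-r)}$ term and convert the half-way data into the $r,s$-dependent exponential factors $\left(\tfrac{1+az}{1-a/z}\right)^{s-n/2}$, $\left(\tfrac{1+aw}{1-a/w}\right)^{n/2-r}$ appearing in $F_{x,s}$, $G_{y,r}$, $S$, $a_{x,s}$, $b_{y,r}$.

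\textbf{Key steps, in order.} First, write $\widetilde\BK_{n,m} = \Id - \BK_{n,m}$ and substitute the third line of \eqref{31} into \eqref{14}; the factor $H^{(3)}_{2m+1}(z^{-1},w)/H_{2m+2}(0)$ is recognized via \eqref{25'} as $\det(\Id - K^{(1)}(0) + (\tfrac zw - 1) h^{(1)}(z^{-1}) \otimes h^{(2)}(w))_{\ell^2(2m+1,\ldots)} / H_{2m+1}(0)$, i.e. a rank-one perturbation of $\Id - K^{(1)}_{2m+1}(0)$ (whose determinant is $H_{2m+1}(0)/Z$). Second, apply Lemma~\ref{LA1} with $c = \tfrac zw - 1$, $A = K^{(1)}_{2m+1}(0)$, $a = h^{(1)}(z^{-1})$, $b = h^{(2)}(w)$, to split the determinant into $(1-c)\det(\Id-A) + c\det(\Id-A+a\otimes b)$; the second determinant, divided by $\det(\Id-A)$, equals $1 + \la(\Id-A)^{-1} h^{(1)}(z^{-1}), h^{(2)}(w)\ra$ by the standard Fredholm rank-one formula. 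Third, feed this back into the double contour integral \eqref{14}: the leading "$1$" term reassembles into the one-Aztec diamond kernel $S(2r,x;2s,y)$ (after time extension), and the resolvent term becomes $\oint\oint F_{-y,s}(z) G_{-x,r}(w) \la(\Id-A)^{-1}h^{(1)}(z^{-1}), h^{(2)}(w)\ra \tfrac{dz}{z}\tfrac{dw}{w}$, which by linearity of the inner product and the definitions \eqref{ab} is exactly $\la(\Id-K^{(1)}_{2m+1}(0))^{-1} a_{-y,s}(k), b_{-x,r}(k)\ra_{\ell^2(2m+1,\ldots)}$. Fourth, carry out the time extension using \eqref{E1}: convolution with $\psi_{n-2r}$ on the left and $\psi_{2s-n}$ on the right, together with the semigroup property \eqref{ES} and the $\ast$-identity in the footnote to \eqref{ast}, absorbs the factors $\left(\tfrac{1+az}{1-a/z}\right)^{s-n/2}$, $\left(\tfrac{1+aw}{1-a/w}\right)^{n/2-r}$ and produces the $-\Id_{s<r}\psi_{2(s-r)}$ diagonal term. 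Fifth, verify the three equivalent expressions for $S$, $a_{x,s}$, $b_{y,r}$ given in \eqref{FG1}, \eqref{ab0}: the double-integral form follows by residue bookkeeping (moving contours and using \eqref{45}), and the series forms follow by expanding $1/(z-w)$ or $1/(v-z)$ in geometric series and recognizing the coefficients as the extended $g^{(i)}$, $\tilde g^{(i)}$ of \eqref{E4}, \eqref{g}; the extra $z-a$ and $w-a$ factors in $\tilde g^{(i)}$ come precisely from the residue picked up when including $z$ (resp. $w^{-1}$) inside the contour in \eqref{35}.

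\textbf{Main obstacle.} The delicate point is the contour bookkeeping: in \eqref{14} the $w$-contour $\Gamma_{0,a,z}$ encircles $z$, while the kernels $h^{(1)}(z^{-1})$, $h^{(2)}(w)$ and the resolvent require specific orderings of the radii \eqref{rad'} (so that the geometric expansions of $1/(v-u)$, $1/(v-z)$, $1/(u-w)$ all converge on the chosen circles); keeping track of which poles are enclosed — in particular the pole at $z=w$, at $z=a$, at $u=w$, and at $v=z$ — and correctly accounting for the residue terms that split $h^{(i)}$ into $\bar h^{(i)} + (\text{residue})$ as in \eqref{35} is where all the real work lies. One must check that after the time extension the radii can still be chosen consistently (this is the reason for the long chain $0<a<r_3<r_2<s_2<s_1<r_1<s_3<a^{-1}$), and that the rank-one perturbation term does not pick up spurious contributions from the pole at $z=w$ when the $w$-contour is deformed to exclude $z$ — the identity \eqref{45} is the tool that handles this exchange, at the cost of the single-integral correction, which must be matched against the $\psi$-term. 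Apart from this, the algebra is a routine, if lengthy, application of the Fredholm determinant identities of Lemmas~\ref{LA1} and \ref{LA2}.
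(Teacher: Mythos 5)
Your proposal is correct and follows essentially the same route as the paper: formula (\ref{14}) plus the Darboux-sum line of (\ref{31}), the rank-one decomposition (\ref{25'}) split via Lemma \ref{LA1}, the contour exchange (\ref{45}), and the time extension through (\ref{E1}). The only cosmetic difference is that you apply the scalar rank-one Fredholm identity pointwise in $(z,w)$ and then integrate by linearity, whereas the paper integrates the determinants directly using Lemma \ref{LA2}; these are equivalent (and note the residue term from (\ref{45}) is absorbed by the $\delta_{x,y}$ of the dual kernel rather than by the $\psi$-term).
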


% @@@@@@OMIT$$
% b'_\ell(w):=h^{(2)}_{\ell}(w)=-wb_\ell(w)
% $$
% \be
%\begin{aligned}
%F_x(z)&:= \frac{\sqrt{d}(-1)^x  }{2\pi i } z^{x+m+1}\psi_a(n;z) \mbox{~~  and ~~ }
%\widehat a_k(x):=\int \frac{dz}{z} F_x(z) a_k(z)
%\\
%G_y(w)&:=
% \frac{\sqrt{d}(-1)^y }{2\pi i  } \frac{1}{w^{y+m}\psi_a(n;w)}
%\mbox{ ~~ and ~~ }
%\widehat b_k(y):=\int \frac{dw}{w} G_y(w) b_k(w)\end{aligned} \label{FG}\ee
%$$
%G'_y(w)=\frac 1w G_y(w),~~~\widehat b_k(y)= 
%\int \frac{dw}{w} G'_y(w) b'_k(w)
%$$

 %$$K^{(3)}_{k-1,\ell-1}(z^{-1},w) = K^{(1)}_{k,\ell}(0)+(\frac zw-1)a_{k}(z)b'_{\ell}(w)$$

%@@@@@@@@@@@@@@@@@@
%\newpage

\begin{proof}  Throughout the proof the following radii, in between $a$ and $a^{-1}$, will be used:
\be 
a<r_3<r_2<s_2<s_1<r_1<s_3<a^{-1}.\label{rad}\ee
  The contours $\Gamma_{0,a}$ and $\Gamma_{0,a,z}$ in the kernel (\ref{14}) will be deformed to circles $\gamma_{r_2}$ and $\gamma_{r_1}$ respectively, with $w\leftrightarrow z$ interchanged,
% $$\BK_{n,m}(x,y)= \frac{1}{(2\pi i)^2}\oint_{\Gamma_{0,a}}\frac{dz}{z}\rho^R_a(z)\oint_{\Gamma_{0,a,z}}\frac{dw}{w}\rho^L_a(w)\frac{w^{-y-m}}{z^{-x-m}}\sum_{k=0}^{2m}P_k(w)\hat P_k(z^{-1}) 
%$$
%
\be 
\begin{aligned}
\lefteqn{ (-1)^{x-y} \BK_{n,m}(-y,-x)} 
\\
&=  \frac{ (-1)^{x-y}}{(2\pi i)^2}\oint_{\gamma_{r_2}}\frac{dw}{w}\rho^R_a(w)\oint_{\gamma_{r_1}}\frac{dz}{z}\rho^L_a(z)\frac{z^{x-m}}{w^{y-m}}\sum_{k=0}^{2m}P_k(z)\widehat P_k(w^{-1}).
\end{aligned}
\label{Kyx}\ee
%
% $$
%\rho^L_a(z)=\left(\frac{1+az}{1-\frac{a}{z}}\right)^{n/2}\mbox{~and~}\rho^R_a(z)=\frac{(1+az)^{n/2}}{\left(1-\frac{a}{z}\right)^{n/2+1}},
%$$
%and
%$$
%K^{(3)}_{k-1,\ell-1}(z^{-1},w) =K^{(1)}_{k ,\ell }(0)+(\frac zw-1)h^{(1)}_{k }(z^{-1}) h^{(2)}_{\ell }(w)
%$$
%\newpage
%
Using (\ref{31}), (\ref{29}) and (\ref{25'}) combined for the Darboux sum, the formulas (\ref{rho}) for $\rho^R_a(w)$ and $\rho^L_a(z)$, and the formula (\ref{psi}) for $\vp_a(n;z)$, and Lemma \ref{LA1} for the rank $2$ perturbation, one finds for the integrand of kernel (\ref{Kyx}),
$$\begin{aligned}
 \lefteqn{\frac{ (-1)^{x-y}}{(2\pi i)^2}\frac{\rho^R_a(w)}{w}\frac{\rho^L_a(z)}{z}
 \frac{z^{x-m}}{w^{y-m}}\sum_{k=0}^{2m} P_k(z)\widehat P_k(w^{-1}) }
 \\&=\frac{ (-1)^{x-y}}{(2\pi i)^2}\frac{\vp_a(n;z)}{\vp_a(n;w)}\frac{z^{x+m}}{w^{y+m+1}}\frac{1}{z-w}
  \frac{\det \left(\Id -K^{(3)}_{k  ,\ell }(z^{-1},w)%+(z-w)a(z)\otimes b(w)
 \right)_{\ell^2 (2m ,\ldots)}}{
 \det \left(\Id -K^{(1)}_{k ,\ell }(0)
 \right)_{\ell^2 (2m+1 ,\ldots)}} \\
&= 
\frac{F_x(z)G_y(w)}{z}
%\frac{\psi_a(n;z)}{\psi_a(n;w)}\frac{z^{2m+1}}{w^{2m}}  
 %\frac{1}{z-w}
\frac{\det \left(\Id -K^{(1)}_{k ,\ell }(0)+(z-w) h^{(1)}_{k }(z^{-1}) \bigl(-\tfrac{1}{w}h^{(2)}_{\ell }(w))%@@-(\frac zw-1)h^{(1)}_{k }(z^{-1}) h^{(2)}_{\ell }(w)%+(z-w)a(z)\otimes b(w)
 \right)_{\ell^2 (2m+1,\ldots)}}{
 (z-w) \det \left(\Id -K^{(1)}_{k,\ell}(0)
 \right)_{\ell^2 (2m+1,\ldots)}}\\
 &=
\frac{F_x(z)G_y(w)}{z} 
\\
&~~~\times \left(\frac{1}{z-w}-1+ \frac{\det \left(\Id -K^{(1)}_{k ,\ell }(0)+  h^{(1)}_{k }(z^{-1}) \bigl( -\tfrac{ 1}{w}h^{(2)}_{\ell }(w))%@@-(\frac zw-1)h^{(1)}_{k }(z^{-1}) h^{(2)}_{\ell }(w)%+(z-w)a(z)\otimes b(w)
 \right)_{\ell^2 (2m+1,\ldots)}}
   {\det \left(\Id -K^{(1)}_{k,\ell}(0)
 \right)_{\ell^2 (2m+1,\ldots)}}\right).
 \end{aligned}$$
 Then one performs the $w$-contour integration over $\gamma_{r_2}$ and the $z$-integration over $\gamma_{r_1}$; from (\ref{45}), one has that for any $a<r_3<r_2<r_1,$
 $$\begin{aligned}
\lefteqn{  
 \int_{ \gamma_{r_2}} {dw}\int_{ \gamma_{r_1}}\frac{ dz}{z}   \frac{F_x(z)G _y(w) }{z-w}
 }\\
&=
  \int_{ \gamma_{r_3}}\frac{ dz}{z} 
  \int_{ \gamma_{r_2}}  {dw}  
  \frac{F_x(z)G _y(w) }{z-w}
  +
   {2\pi i} \oint_{ \gamma_{r_2}}\frac{dw}{w } F_x(w)G _y(w) ,
 \end{aligned}  %
 $$
 with
 $$
 {2\pi i} \oint_{ \gamma_{r_2}}\frac{dw}{w } F_x(w)G _y(w) =
  \frac { (-1)^{x-y}}{2\pi i} \oint_{ \gamma_{r_2}}
 \frac{dw}{w } 
   w^{x-y}=  { (-1)^{x-y}} \delta_{x,y},%F_x(w)G'_y(w)w
 $$
 which is used in $\stackrel{** }{=} $ in (\ref{46a}) below. Remembering the definitions (\ref{ab}) of $a_x(k)$ and $b_y(\ell)$, this yields (using the radii (\ref{rad})), thanks to Lemma \ref{LA2} used in equality $\stackrel{* }{=} $ in (\ref{46a}), %   is one finds,
  \be  
\begin{aligned}
\lefteqn{ (-1)^{x-y} \BK_{n,m}(-y,-x)} 
\\  
%&=  \frac{d(-1)^{x-y}}{(2\pi i)^2}\oint_{\gamma_{r_2}}\frac{dw}{w}\rho^R_a(w)\oint_{\gamma_{r_1}}\frac{dz}{z}\rho^L_a(z)\frac{z^{x-m}}{w^{y-m}}\sum_{k=0}^{2m}P_k(z)\widehat P_k(w^{-1})\\
%&=
% \int_{\gamma_{r_2}} \frac{dw}w 
% \int_{\gamma_{r_1}}\frac{ dz}{z} F_x(z)G _y(w)w
%
% \\& ~~~~~~~~\frac{\det \left(\Id -K^{(1)}_{k ,\ell }(0)+( z-w)h^{(1)}_{k }(z^{-1}) (-\tfrac{1}{w}h^{(2)}_{\ell }(w) )
% \right)_{\ell^2 (2m+1,\ldots)}}{
% \det \left(\Id -K^{(1)}(0)
% \right)_{\ell^2 (2m+1,\ldots)}}
% \\
% &=   \int_{\gamma_{r_2}} \frac{dw}w 
% \int_{\gamma_{r_1}}\frac{ dz}{z} F_x(z)G _y(w)w%
%\\
%&
%~~~\left( 
%\frac{1}{z-w}-1+ \frac{\det \left(\Id -K^{(1)}_{k ,\ell }(0)+h^{(1)}_{k }(z^{-1}) (-\tfrac{1}{w}h^{(2)}_{\ell }(w) 
% \right)_{\ell^2 (2m+1,\ldots)}}{\det \left(\Id -K^{(1)}(0)
% \right)_{\ell^2 (2m+1,\ldots)}}\right)
 %\\
 &=   \int_{\gamma_{r_2}} \frac{dw}w 
 \int_{\gamma_{r_1}}\frac{ dz}{z} \frac{F_x(z)G _y(w)w} {z-w}
 -  \int_{\gamma_{r_2}} \frac{dw}w G _y(w)w
 \int_{\gamma_{r_1}}\frac{ dz}{z} F_x(z)
 \\
 &+   \int_{\gamma_{r_2}} \frac{dw}w 
 \int_{\gamma_{r_1}}\frac{ dz}{z} F_x(z)G _y(w)w
  \frac{\det \left(\Id -K^{(1)}_{k ,\ell }(0)+ h^{(1)}_{k }(z^{-1}) (-\tfrac{1}{w}h^{(2)}_{\ell }(w) )
 \right)_{\ell^2 (2m+1,\ldots)}}{\det \left(\Id -K^{(1)}(0)
 \right)_{\ell^2 (2m+1,\ldots)}} 
 \\
 &\stackrel{* }{=}  \int_{ \gamma_{r_2}} \frac{dw}w 
 \int_{\gamma_{r_1}}\frac{ dz}{z} \frac{F_x(z)G _y(w)w}{z-w}
 -1 +\frac{\det \left(\Id -K^{(1)}(0) -a_x\otimes b_y%\int_{  \gamma_{r_1}} F_x(z)h^{(1)}_{  }(z^{-1})\frac{dz}{z}
 %\otimes \int_{ b_y%\gamma_{r_2}} G _y(w)h^{(2)}_{  }(w) \frac{dw}w
 \right)_{\ell^2 (2m+1,\ldots)}}
 {\det \left(\Id -K^{(1)}(0)
 \right)_{\ell^2 (2m+1,\ldots)}} 
 \\
 &\stackrel{** }=\! \int_{ \gamma_{r_3}}\!\!\frac{ dz}{z} 
  \int_{ \gamma_{r_2}} \!\!\frac{dw}w 
  \frac{F_x(z)G _y(w)w}{z-w}
  + { (-1)^{x-y}} \delta_{x,y}  
 \\
 &~~~~ -1 +\frac{\det \left(\Id -K^{(1)}(0) - a_x %\int_{ \gamma_{r_1}} F_x(z)a(z)\frac{dz}{z}
  \otimes   b _y%\int_{ \gamma_{r_2}} G'_y(w)b'(w)\frac{dw}w
 \right)_{\ell^2 (2m+1,\ldots)}}
 {\det \left(\Id -K^{(1)}(0)
 \right)_{\ell^2 (2m+1,\ldots)}} .
  \end{aligned} \label{46a}\ee
   Then the dual kernel $\widetilde\BK_{n,m}(-y,-x)$ is given by
 \be
  \begin{aligned}
 (-1)^{x-y}   \widetilde\BK_{n,m}(-y,-x)  &= (-1)^{x-y}(\Id -  \BK_{n,m}(-y,-x))
 \\
  &=
    \int_{ \gamma_{r_3}}\frac{ dz}{z} 
  \int_{ \gamma_{r_2}} \frac{dw}w 
  \frac{F_x(z)G _y(w)w}{w-z}  
  \\
 &~~~~+1 -\frac{\det \left(\Id -K^{(1)}(0) -a_x%\int_{ \gamma_{r_1}} F_x(z)a(z)\frac{dz}{z}
  \otimes  b_y%\int_{ \gamma_{r_2}} G'_y(w)b'(w)\frac{dw}w
 \right)_{\ell^2 (2m+1,\ldots)}}
 {\det \left(\Id -K^{(1)}(0)
 \right)_{\ell^2 (2m+1,\ldots)}} 
  ,\end{aligned}  %
 \label{dual'}\ee 
 thus establishing formula (\ref{dual}) for $r=s=n/2$. 
 
 The second formula (\ref{FG1}) for $r=s=n/2$ follows immediately from the definitions (\ref{FG}) of $F_x(z)$ and $G_y(w)$, while the third formula is proved exactly as in (\ref{36}). It remains to prove the formulas (\ref{ab0}) for $a_x(k)$ and $b_y(\ell)$. Indeed,
 formula (\ref{45}) permits us to interchange the integration variables in the expression below, so that $|z/v|<1$, enabling us to expand $\frac 1{   1-  z/v  }=  \sum_{b\geq 0}(\tfrac zv)^b$. Thus, using the radii (\ref{rad}), and using the explicit expressions (\ref{FG}) and (\ref{25h}) for $F_x$ and $h^{(1)}_k$, one finds for $a_x(k)$ as in (\ref{ab0}), but for $s=n/2$, 
 $$\begin{aligned}
 a_x(k)%&%:=\int_{ \gamma_{r_1}} F_x(z)h^{(1)}_k(z^{-1})\frac{dz}{z}\\
 &=\frac{ (-1)^{x+k}}{(2\pi i)^2}\oint_{\gamma_{s_1}} \frac {dv}{v}
 \oint_{\gamma_{r_1}} \frac {dz}{z}
 \frac{\vp_a(n;z) z^{x+m+1}}{\vp_a(2n;v)v^{k}}\frac{1}{v-z}\\
&=\frac{ (-1)^{x+k}}{(2\pi i)^2}\oint_{\gamma_{s_1}}  {dz} \oint_{\gamma_{s_3}}  {dv} 
 \frac{\vp_a(n;z) z^{x+m }}{\vp_a(2n;v)v^{k+2}}\frac{1}{1-\frac zv}
 \\
 &~~~~  - { (-1)^{x+k}} \oint_{\gamma_{r_1}}  \frac{dz}{2\pi i   } z^{x+m-k-2 }\frac{ z-a  }{\vp_a(n;z) }\\
 &=
    (-1)^{ m }\left(\sum_{b\geq 0} g^{(1)}_{x+m+b} (\tfrac n2) 
 g^{(2)}_{k+b } (n) +\widetilde g^{(2)}_{-x-m+k}(\tfrac n2)\right)
\end{aligned}$$
and, similarly for $b _y(\ell)$, for $r=n/2$,
\be\begin{aligned} 
   b _y(\ell)%&:= \int_{ \gamma_{r_2}} G _y(w)h^{(2)}_\ell(w)\frac{dw}w\\
  &=-\frac{ (-1)^{y+\ell }}{(2\pi i)^2}\oint_{\gamma_{r_2}} \frac {dw}{w}\oint_{\gamma_{s_2}} \frac {du}{u}
 \frac{\vp_a(2n;u) u^{  \ell+1}}{\vp_a( n;w)w^{y+m}}\frac{1}{u-w}
 \\
   &= \frac{ (-1)^{y+\ell }}{(2\pi i)^2}\oint_{\gamma_{r_3}}   {du} \oint_{\gamma_{r_2}}  {dw}
 \frac{\vp_a(2n;u) u^{  \ell }}{\vp_a( n;w)w^{y+m+2}}\frac{1}{1-\tfrac uw}
\\
 &~~~~- { (-1)^{y+\ell }}{ }\oint_{\gamma_{r_2}}  \frac{dw}{2\pi i  } w^{-y -m+\ell}\frac{\vp_a( n;w)}{w-a}
 \\
 &= (-1)^m \left( \sum_{b\geq 0} g^{(1)}_{\ell+b}(n) g^{(2)}_{y+m+b}(\tfrac n2)+\widetilde g^{(1)}_{-y-m+\ell }(\tfrac n2)\right). \end{aligned}\label{ab1}\ee
  This proves Proposition \ref{Prop4.2} for $r=s=n/2$.
  %\end{document}
%
 
 %Upon interchanging the integration variables $z\leftrightarrow w$, using Lemma \ref{L1}  in $\stackrel{*}{=}$ and Lemma \ref{L2} in $\stackrel{**}{=}$ and identity () in $\stackrel{***}{=}$, and considering the radii (\ref{rad}), one checks that the kernel (\ref{Kyx}) equals
 %
 %($a<r_3<r_2<s_2<s_1<r_1<s_3<a^{-1}$)

  We now proceed to compute the extended kernel for general $r$ and $s$, using the recipe (\ref{E1}) in Proposition \ref{prop} applied to the non-extended kernel (\ref{dual'}). Thus,
$$\begin{aligned}
\lefteqn{ (-1)^{x-y}\tilde {\mathbbm K} ^{{\rm ext}}_{n,m}(2r,x;2s,y)}\\
&= -\Id_{s<r} (-1)^{x-y}\psi_{2 s-2r}(x,y)    
\\
&\quad +(-1)^{x-y}\psi_{n-2r}(x,\cdot)\ast  \tilde {\mathbbm K} ^{{\rm ext}}_{n,m}(n,\cdot ;n,\tc) \ast\psi_{ 2s-n }(\tc,y) 
\\
 & =-\Id_{s<r} (-1)^{x-y}\psi_{2 s-2r}(x,y) + (-1)^{x-y} \int_{\gamma_{r_3}}\frac{dz}{z} \int_{\gamma_{r_2}}\frac{dw}{w-z}
 \\
 &\quad   { \bigl(\psi_{n-2r}(x,u)\ast G_{-u}(w)(-1)^u \bigr) \bigl( F_{ -v   }(z)(-1)^v\ast \psi _{2s-n}(v,y)\bigr)} 
 \\
 &\quad +(-1)^{x-y}\left\la   (\Id-K_{2m+1}^{(1)}(0))^{-1} a_{ -v   }(k) (-1)^v\ast_{_{\!\!v}} \psi _{2s-n}(v,y), \right.
 \\
 &~~~\left. \psi_{n-2r}(x,u)\ast_{_{\!\!u}} b_{-u}(\ell)(-1)^u\right\ra_{\ell^2(2m+1,\ldots )}
 \end{aligned}
 $$
 \be\begin{aligned}
  &=-\Id_{s<r} \psi_{2s-2r)}(x,y)(-1)^{x-y} +S(2r,x;2s,y)\\&~~+\left\la   (\Id-K_{2m+1}^{(1)}(0))^{-1} a_{ -y,s   }(k)  ,b_{-x,r}(\ell) \right\ra_{\ell^2(2m+1,\ldots )} 
\end{aligned}
\label{E2}\ee
We now show the last identity in (\ref{E2}). The useful tool here will be
the following formula for the $\ast$-product (with respect to $x$) holds:\be z^{\pm x}\ast_{_{\!\!x}} \oint \frac{dw}{2\pi iw} w^{\mp x}F(w)=\oint \frac {dw}{2\pi i w}\sum_{u\in \BZ} \left(\frac zw\right)^u F(w)=F(z),\label{astpower}\ee
from which it follows that (see (\ref{Idef}))
\be
z^{-v} \ast_{_{\!\!v}}\psi_{2s-n}(v,y)=z^{-y}
\left(\tfrac{1+az}{1-\frac{a}{z}}\right)^{s-\tfrac n 2},~~
\psi_{n-2r}(x,u)\ast_{_{\!\!u}}w^u=\left(\tfrac{1+aw}{1-\frac{a}{w}}\right)^{\frac n 2-r}  w^x .
\label{astpower'}\ee
We thus need to compute  the following $\ast$-product, with regard to $v$, also using formula (\ref{astpower'}), 
\be
\begin{aligned}
(-1)^yF_{-y,s}(z) &={F_{-v}(z)(-1)^v\ast_{_{\!\!v}}\psi_{2s-n}(v,y)}   
  \\
  &
=\frac{\vp_a(n;z)}{2\pi i}z^{-v+m+1}\ast_{_{\!\!v}}\psi_{2s-n}(v,y)%\oint_{\Gamma_{0,a}}\frac{dw}{2\pi iw}w^{v-y}\left(\frac{1+aw}{1-\frac{a}{w}}\right)^{s-n/2} 
\\
%&=\frac{1}{2\pi i}\vp_a(n;z)z^{m+1}\oint_{\Gamma_{0,a}}\frac{dw}{2\pi iw}\sum_{v\in\BZ}\left(\frac{w}{z}\right)^v
%w^{-y}\left(\frac{1+aw}{1-\frac{a}{w}}\right)^{s-n/2} 
%\\
&=\frac{\vp_a(n;z)}{2\pi i}z^{-y+m +1} \left(\tfrac{1+az}{1-\frac{a}{z}}\right)^{s-n/2} 
 =(-1)^yF_{-y}(z)\left(\tfrac{1+az}{1-\frac{a}{z}}\right)^{s-n/2},
\end{aligned}
\label{Fext}\ee
from which one deduces the following $\ast$-product, also in $v$, remembering the definitions (\ref{25h}) of the $h_k^{(i)}$, and using the definitions (\ref{FG}) and (\ref{ab}), %
\be \begin{aligned}
(-1)^y a_{-y,s}(k)&:=a_{-v}(k)(-1)^v\ast_{_{\!\!v}}\psi_{2s-n}(v,y)  
\\
&=\oint_{\gamma_{r_1}}\frac{dz}{z}  h^{(1)}_k(z^{-1})F_{-v}(z)(-1)^v\ast\psi_{2s-n}(v,y)  
\\
%&= \oint_{\gamma_{r_1}}\frac{dz}{2\pi iz}  h^{(1)}_k(z^{-1})\vp_a(n;z)z^{m-y+1}  \left(\frac{1+az}{1-\frac{a}{z}}\right)^{s-n/2}  
%\\
%&=@\oint_{\gamma_{r_1}}\frac{dz}{z}  h^{(1)}_k(z^{-1})F_{-y}(z)(-1)^y  \left(\frac{1+az}{1-\frac{a}{z}}\right)^{s-n/2}  
%\\
&=\oint_{\gamma_{r_1}}\frac{dz}{z}  F_{-y,s}(z)(-1)^y  h_{k}^{(1)}(z^{-1})%\\
%&=\frac{(-1)^k}{(2\pi i)^2}\oint_{\gamma_{s_1}}   dv \oint_{\gamma_{r_1}}\frac{ dz}{v-z}\frac{\vp_a(n;z)z^{m-y}}{\vp_a(2n;v)v^{k+1}}   \left(\frac{1+az}{1-\frac{a}{z}}\right)^{s-\tfrac n 2} .
\end{aligned}\label{aext}\ee
Similarly, one checks, using (\ref{FG}) and formula (\ref{astpower'}), %We also compute the $\ast$-product with regard to $u$,
\be
\begin{aligned}
(-1)^x G_{-x,r}(w)&:={\psi_{n-2r}(x,u)\ast_{_{\!\!u}} G_{-u}(w)(-1)^u}  
\\
&= \psi_{n-2r}(x,u)\ast_{_{\!\!u}}\frac{1}{2\pi i}\frac{w^{u-m-1}}{\vp_a(n;w)}  
\\
%&=\frac{1}{2\pi i}\frac{w^{-m-1}}{\vp_a(n,w)} \oint_{\Gamma_{0,a}}\frac{dz}{2\pi iz}   \left(\frac{1+az}{1-\frac{a}{z}}\right)^{-r+n/2} z^x\sum_{u\in\BZ}\left(\frac{w}{z}\right)^u  
%\\
&=\frac{1}{2\pi i}\frac{w^{x-m-1}}{\vp_a(n,w)} \left(\frac{1+aw}{1-\frac{a}{w}}\right)^{\tfrac n2 -r}
\!=(-1)^xG_{-x}(w)\!\left(\frac{1\!+\!aw}{1-\frac{a}{w}}\right)^{\tfrac n2 -r} ,
\end{aligned}
\label{Gext}\ee
from which one deduces, using (\ref{FG}) and  (\ref{ab}) %and the definition (\ref{25h}) of the $h_k^{(i)}$,
\be  \begin{aligned}
 {(-1)^xb_{-x,r}(\ell)} 
&:=\psi_{n-2r}(x,u)\ast_{_{\!\!u}} b_{-u}(\ell)(-1)^u  
\\
&=\oint_{\gamma_{r_2}}\frac{dw}{w}  
 \psi_{n-2r}(x,u)\ast G_{-u}(w)(-1)^u h^{(2)}_k(w) 
%\\
%&=\oint_{\gamma_{r_2}}\frac{dw}{2\pi iw}  \frac{w^{x-m-1}}{\psi_a(n,w)}h_{\ell}^{(2)}(w)  \left(\frac{1+aw}{1-\frac{a}{w}}\right)^{-r+n/2}  
\\
&=\oint_{\gamma_{r_2}}\frac{dw}{w}  G_{-x,r}(w)(-1)^x  h_{\ell}^{(2)}(w)  %\left(\frac{1+aw}{1-\frac{a}{w}}\right)^{-r+n/2}  \\
%\\
%&= (-1)^{\ell +1}\oint_{\gamma_{r_2}}\frac{dw}{2\pi i}    \oint_{\gamma_{s_2}}\frac{du}{2\pi i}\frac{\vp_a(2n,u)u^{\ell}}{\vp_a(n,w)w^{-x+m+1}} \left(\frac{1+aw}{1-\frac{a}{w}}\right)^{-r+n/2}  \frac{1}{u-w}.
.\end{aligned}
\label{bext}\ee
The last identity in (\ref{E2}) now follows from (\ref{Fext}), (\ref{bext}) and (\ref{FG1}). The formulas (\ref{FG1}) and (\ref{ab0}), involving the $g_k^{(i)}(\tfrac n2,.)$ are obtained exactly as in the computation for $r=s=n/2$. 

The identification of the first line of the equality  (\ref{dual}) with the one-Aztec diamond kernel (\ref{OneAztec}) is immediate, after setting $z\mapsto -z^{-1}$ in the integral $\psi_{2(s-r)}$. This concludes the proof of Proposition \ref{Prop4.2}.
\end{proof}

%\newpage

\subsection{Representation of $\widetilde\BK^{\rm ext}_{n,m}$ as a double integral}

Here we show formula (\ref{K2}) for $\widetilde\BK^{\rm ext}_{n,m}$ in Theorem \ref{main1'} and also, along the way, an alternative representation (\ref{43a}), which will be useful in taking saddle point limits. The starting point here is again formula (\ref{14}) for the kernel $\BK_{n,m}$ as in previous section; but instead we will now use the Christoffel-Darboux formula and the two first formulas of (\ref{31}). The methods and the notation in this section are very close to those used in \cite{AFvM12}. 
Remembering the definition (\ref{44}) of $\Phi(z)$, define the functions
\be\begin{aligned}
E_1(z,w)&:=e^{\frac n2 (\Phi(z)-\Phi(w))}%\left(\frac zw\right)^m 
\left(1-S_{ }^{(1)}(z^{-1})\right)\left(1-S_{ }^{(2)}(w)\right)
\\
E_2(z,w)&:=
e^{\frac n2 (\Phi(z)+\Phi(w))} (w-a)\left(1-S_{ }^{(1)}(z^{-1})\right) T_{ }^{(2)}(w) 
\\
E_3(z,w)&:=e^{-\frac n2 (\Phi(z)+\Phi(w))} \frac{1}{z-a}T_{ }^{(1)}(z^{-1})   \left(1-S_{ }^{(2)}(w)\right) 
\\
E_4(z,w)&:=-e^{ \frac n2 (\Phi(z)-\Phi(w))}T_{ }^{(1)}(w^{-1})  T_{ }^{(2)}(z)
\end{aligned}\label{Ei}\ee
and
\be
\begin{aligned}
C(u;x) :=&~ C_1(u;x)+2C_2(u;x)\\
=& ~~  \frac{\dt_{x\neq 0}}{2\pi i}
 \oint_{\Gamma_{0,a}}\frac{dz}{(-z)^{x +1}}
   (1-R^{(1)}_{ }(z^{-1}))(1-R^{(2)}_{}(z))
   \left(\tfrac{1+az}{1-\frac az}\right)^{u}
\\
&~~-\frac 2{2\pi i} \oint _{\Gamma_{0,a}} \frac{dz}{ (-z)^{ x+1}}T^{(1)}(z^{-1})T^{(2)}(z)
\left(\tfrac{1+az}{1-\frac az}\right)^{u}.\label{55}
\end{aligned}
\ee
Remembering the form of the kernel $\BK_{n,m}(-x,-y)$, obtained in (\ref{14}), one has the following Proposition:

\begin{proposition}\label{prop4.1}
The dual kernel $\widetilde\BK^{\rm ext}_{n,m}(2r,-x;2s,-y)$ takes on two different forms
\newline (i) the form (\ref{K2}), as stated in Theorem \ref{main1'}. 
\newline (ii) or another form, useful for taking saddle point limits (see section \ref{sect7.2}):
\be
\begin{aligned}
\lefteqn{(-1)^{x-y} \tfrac{H_{2m+2}(0)}{H_{2m+1}(0)}
\widetilde\BK^{\rm ext}_{n,m}(2r,-x;2s,-y) 
= C(s-r;x-y) }\\
& ~~-\Id_{s<r} (-1)^{x-y}\tfrac{H_{2m+2}(0)}{H_{2m+1}(0)}\psi_{2(s-r)}(-x,-y)   + 
 \oint_{\Gamma_{0,a}}\frac{dz}{(2\pi i)^2}\oint_{\Gamma_{0,a,z}}dw 
 \frac{\sum_1^4 E_i(z,w)}{z-w}
\\& 
 ~~~~~~~~\times \left(\frac{(-w)^{y-1}}{(-z)^{x}}\frac{\left(\tfrac{1+az}{1-\frac az}\right)^{\tfrac n2-r}}{\left(\tfrac{1+aw}{1-\frac aw}\right)^{ \tfrac n2 -s}}+
\frac{(-z)^y}{(-w)^{x+1} }
\frac{\left(\tfrac{1+aw}{1-\frac aw}\right)^{\tfrac n2-r}}{\left(\tfrac{1+az}{1-\frac az}\right)^{ \tfrac n2 -s}}
\frac{1-\!\tfrac az}{1-\!\tfrac aw}\right).
\end{aligned} \label{43a}\ee
\end{proposition}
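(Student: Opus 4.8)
The plan is to start from the Darboux‐sum‐free representation of $\BK_{n,m}$ obtained by applying the Christoffel--Darboux formula to the bi-orthonormal polynomials, which expresses $\sum_{k=0}^{2m}\hat P_k(z^{-1})P_k(w)$ in terms of $P_{2m+1},\hat P_{2m+1}$ alone, and then to substitute the explicit Fredholm-determinant formulas (\ref{41}) for $P_{2m+1}(z)$ and $\hat P_{2m+1}(z^{-1})$, together with the decomposition (\ref{39}) of $R^{(1)},R^{(2)}$ into their ``$S$-parts'' and ``$T$-parts''. First I would write the Christoffel--Darboux kernel as a ratio $\dfrac{P_{2m+1}(z)\hat P_{2m+1}(w^{-1})-\dots}{1-w/z}$ (the precise CD identity on the circle, as in \cite{AFvM12}), insert (\ref{41}), and observe that the factors $(1-\tfrac az)^{n+1}$, $(1+aw)^{-n}$, together with $\rho^L_a,\rho^R_a$ and the prefactor $(-z)^m$, $(-w)^m$ coming from $\Phi$ in (\ref{44}), reassemble into $e^{\frac n2(\Phi(z)\pm\Phi(w))}$; the constant $\tfrac{H_{2m+1}(0)}{H_{2m+2}(0)}$ from the two square roots in (\ref{41}) is exactly the prefactor $\tfrac{H_{2m+2}(0)}{H_{2m+1}(0)}$ appearing on the left of (\ref{43a}) and (\ref{K2}). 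Expanding $(1-R^{(1)}(z^{-1}))(1-R^{(2)}(w))$ using $R^{(i)}=S^{(i)}+T^{(i)}\cdot(\text{exp factor})$, the four cross terms produce precisely the four functions $E_1,\dots,E_4$ of (\ref{Ei}), with the exponential factors $e^{\pm\frac n2(\Phi(z)\pm\Phi(w))}$ matching the four sign combinations.

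The next step is to carry out the $z$- and $w$-contour integrations. Deforming $\Gamma_{0,a},\Gamma_{0,a,z}$ appropriately and using the kernel form $\BK_{n,m}(-x,-y)$ from (\ref{14}), the diagonal $z=w$ contribution of the CD denominator $1/(z-w)$ must be extracted via the contour-swap identity (\ref{45}); this is where the term $C(s-r;x-y)$ and the $\delta_{x\ne y}$ / Kronecker pieces are generated. Concretely, the ``diagonal residue'' of $E_1(z,z)$ gives (after including $R^{(i)}$ rather than $S^{(i)}$, by adding back the $T$-contributions) the first line of $C_1$ in (\ref{55}), while the diagonal residue of $E_4(z,z)$ — note $E_4$ has a relative minus sign and swaps $z\leftrightarrow w$ in its $T$-arguments — produces the $T^{(1)}T^{(2)}$ term $C_2$, doubled because a symmetric second copy of the whole double integral (the second summand in the large parenthesis of (\ref{43a}), obtained via the $u\mapsto -u^{-1}$ symmetry of $\psi$ and of the weight) contributes identically on the diagonal. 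The $-\Id_{s<r}\psi_{2(s-r)}$ term is carried along unchanged from (\ref{E1})/(\ref{dual}), up to the $\tfrac{H_{2m+2}(0)}{H_{2m+1}(0)}$ normalization. This establishes form (ii), i.e. (\ref{43a}).

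To pass from (ii) to (i), i.e. to (\ref{K2}), I would re-expand the $E_i$ back in terms of the full $R^{(1)}(z^{-1})$, $R^{(2)}(w)$ rather than the $S^{(i)},T^{(i)}$ pieces, absorbing the $(z-a),(w-a)$ and $e^{\mp n\Phi}$ factors of $E_2,E_3,E_4$ into the surrounding integrand. The key algebraic identity is that the two terms in the parenthesis of (\ref{43a}), once the $E_i$ and the $z-a,w-a$ factors are distributed, recombine into the single pair of terms in (\ref{K2}) of the form $\dfrac{(-w)^{-y-m-1}}{(-z)^{-x-m}}\bigl(\tfrac{1+az}{1+aw}\bigr)^{n-s-r}$ and $\dfrac{(-z)^{-y+m}}{(-w)^{-x+m+1}}\bigl(\tfrac{1-a/z}{1-a/w}\bigr)^{n-s-r+1}$, with overall factor $(1-R^{(1)}(z^{-1}))(1-R^{(2)}(w))\bigl(\tfrac{1+az}{1-a/w}\bigr)^s\bigl(\tfrac{1-a/z}{1+aw}\bigr)^{-r}$; one also uses the shift identity (\ref{42}) (the fact that $\|P_{2m+1}\|=1$) to rewrite $\oint (1-R^{(1)})(1-R^{(2)})\tfrac{dz}{2\pi i z}=\tfrac{H_{2m+2}(0)}{H_{2m+1}(0)}$, which simplifies the diagonal $\delta_{x\ne y}$ term to the stated one in (\ref{K2}). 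Finally the general $(2r,2s)$ dependence is reinstated exactly as in the previous subsection, by inserting the half-way kernel into the extension formula (\ref{E1}) of Proposition \ref{prop} and using the $\ast$-product identities (\ref{astpower})--(\ref{astpower'}); since the $R^{(i)}$ are independent of $r,s$, this simply inserts the factors $\bigl(\tfrac{1+az}{1-a/z}\bigr)^{s-n/2}$ and $\bigl(\tfrac{1+aw}{1-a/w}\bigr)^{n/2-r}$ exactly where they appear in (\ref{K2}) and (\ref{43a}).

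The main obstacle I anticipate is the bookkeeping of the diagonal contributions: correctly tracking which of the four $E_i$ contribute on the $z=w$ locus, with which sign, and with which power of $(-z)$, so that the residue terms assemble precisely into $C_1+2C_2$ with the stated $\delta_{x\ne y}$ versus $\delta_{x,y}$ structure — in particular the factor of $2$ in front of $C_2$, which comes from the symmetrization that produced the two-term parenthesis, is easy to lose. A secondary but genuine difficulty is justifying all the contour deformations (the radii ordering (\ref{rad})) so that the expansions $1/(v-z)=\sum (z/v)^b$ etc. used implicitly in reducing to the $g^{(i)}$-type expressions are legitimate, and that no spurious poles are crossed when moving between $\Gamma_{0,a}$ and $\gamma_{\rho},\gamma_{\rho^{-1}}$.
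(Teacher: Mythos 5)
Your proposal follows essentially the same route as the paper's proof: the Christoffel--Darboux formula combined with the Fredholm representations (\ref{41}) of $P_{2m+1},\hat P_{2m+1}$, the $S^{(i)}/T^{(i)}$ splitting (\ref{39}) producing $E_1,\dots,E_4$, the contour-swap identity (\ref{45}) generating the residue terms that assemble into $C_1+2C_2=C$, the norm identity (\ref{42}) to handle the diagonal, and the extension recipe (\ref{E1}) with the $\ast$-product identities for general $r,s$. The only (harmless) difference is the order of operations: the paper first obtains form (i), extracting $C_1$ when rearranging the second Christoffel--Darboux term in (\ref{43}), and only then passes to form (ii) by interchanging contours in the $E_4$ piece (which is where the $2C_2$ and its factor of $2$ arise), whereas you derive (ii) first and recover (i) by re-expanding the $E_i$.
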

\begin{proof} As a first step, we prove the case $r=s=n/2$. 
The kernel $\BK_{n,m}(x,y)$ in (\ref{14}) can be expressed as 
\begin{equation}
\begin{aligned}
  \BK_{n,m}&( x,y)  
%&= \sum^M_{k=1}\tilde g_k(x)\tilde f_k(y) 
%\\
%&= \frac{1}{(2\pi i)^2}\oint_{\Gamma_{0,a}}\frac{dz}{z}\rho^R_a(z)\oint_{\Gamma_{0,a,z}}\frac{dw}{w}\rho^L_a(w)\frac{w^{-y-m}}{z^{-x-m}}\sum_{k=0}^{2m}\hat P_k(z^{-1})P_k(w)\\
%\\
 = \frac{1}{(2\pi i)^2}\oint_{\Gamma_{0,a}}\frac{dz}{z}\rho^R_a(z)\oint_{\Gamma_{0,a,z}}\frac{dw}{w}\rho^L_a(w)\frac{w^{-y-m}}{z^{-x-m}}\frac{1}{1-\tfrac w z}\\
\\
&  \hspace*{0cm}\times\left( \left(\tfrac{w}{z}\right)^{2m+1}P_{2m+1}(z) \hat P_{2m+1}(w^{-1})-\hat P_{2m+1}(z^{-1})P_{2m+1}(w)\right) ,
\end{aligned}
\label{16}\end{equation}
 using the Christoffel-Darboux formula\footnote{This can be shown by generalizing an argument of B. Simon in \cite{Sim04}, first proof of Theorem 2.2.7.} for bi-orthonormal polynomials on the circle, (for $M=2m+1$)
\be
\sum^{M-1}_{k=0}\hat P_k(z^{-1})P_k(w)=\frac{z^{-M}P_M(z)w^M\hat P_M(w^{-1})-\hat P_M(z^{-1})P_M(w)}{1-\frac{w}{z}}.
\label{15}\ee
 One then uses the representation (\ref{31}) of the polynomials in terms of the $H_k^{(i)}$'s, yielding: %and (\ref{41}):%can now be expressed as follows, using 
\begin{equation}
\begin{aligned}
\lefteqn{\BK_{n,m}(x,y)} 
%&= \sum^M_{k=1}\tilde g_k(x)\tilde f_k(y) 
%\\
%&= \frac{1}{(2\pi i)^2}\oint_{\Gamma_{0,a}}\frac{dz}{z}\rho^R_a(z)\oint_{\Gamma_{0,a,z}}\frac{dw}{w}\rho^L_a(w)\frac{w^{-y-m}}{z^{-x-m}}\sum_{k=0}^{2m}\hat P_k(z^{-1})P_k(w)\\
%\\
%&= \frac{1}{(2\pi i)^2}\oint_{\Gamma_{0,a}}\frac{dz}{z}\rho^R_a(z)\oint_{\Gamma_{0,a,z}}\frac{dw}{w}\rho^L_a(w)\frac{w^{-y-m}}{z^{-x-m}}\sum_{k=0}^{2m}P_k(w)\hat P_k(z^{-1})\\
%\\
%&= \frac{1}{(2\pi i)^2}\oint_{\Gamma_{0,a}}\frac{dz}{z}\rho^R_a(z)\oint_{\Gamma_{0,a,z}}\frac{dw}{w}\rho^L_a(w)\frac{w^{-y-m}}{z^{-x-m}}\frac{1}{1-w/z} 
%\\
%&  \hspace*{3cm}\left( \left(\frac{w}{z}\right)^{2m+1}P_{2m+1}(z) \hat P_{2m+1}(w^{-1})-\hat P_{2m+1}(z^{-1})P_{2m+1}(w)\right) \\
\\
&= \oint_{\Gamma_{0,a}}\frac{dz}{2\pi i} \oint_{\Gamma_{0,a,z}}\frac{dw}{2\pi i}\left( 
\frac{(1+az)\left(1-\frac{a}{z}\right)}{(1+aw)\left(1-\frac{a}{w}\right)} \right)^{n/2}\frac{w^{-y-m-1}}{z^{-x-m}}\frac{1}{z-w}\\
&~~~~\hspace*{7cm}\frac{H^{(1)}_{2m+1}(z^{-1})
H^{(2)}_{2m+1}(w)}{H_{2m+1}(0)H_{2m+2}(0)}\\
\\
&  -\oint_{\Gamma_{0,a}}\frac{dz}{2\pi i} \oint_{\Gamma_{0,a,z}}\frac{dw}{2\pi i}\left( 
\frac{(1+aw)\left(1-\frac{a}{w}\right)}{(1+az)\left(1-\frac{a}{z}\right)} \right)^{n/2}\left(\frac{1-\frac{a}{w}}{1-\frac{a}{z}}\right) \frac{w^{-y+m}}{z^{-x+m+1}}\frac{1}{z-w}\\
&~~~~\hspace*{7cm}\frac{H^{(1)}_{2m+1}(w^{-1})H^{(2)}_{2m+1}(z)}{H_{2m+1}(0)H_{2m+2}(0)}.
 \end{aligned} \label{43}\end{equation}
Using (\ref{45}), 
 one finds, using (\ref{38}) and (\ref{39}) for the Fredholm determinants and the function $\Phi(z)$ as in (\ref{44}), and setting $x\mapsto -x,~y\mapsto -y$, 
\be
\begin{aligned}
\lefteqn{%\frac{1}{V_m}
\frac{H_{2m+2}(0)}{H_{2m+1}(0)}\BK_{n,m}(-x,-y)}
\\&=
 \Bigl ( \frac{1}{2\pi i} \Bigr)^2
\oint_{\Gamma_{0,a}}dz 
\oint_{\Gamma_{0,a,z}}  \frac{dw}{z-w} ~\frac{w^{y -1}}{z^{x  }}~  e^{\tfrac n2 (\Phi(z)-\Phi(w))}
%\left(  \frac{(1+az)(1-\frac az)}{(1+aw)(1-\frac aw)} \right)^{n/2}
\\  &~~~~~~~~~~~~~~~~~~~~~~~~~~~~~~~~~~~~~~~~~~~~\times 
 (1-R^{(1)} (z^{-1}))(1-R^{(2)}_{ }(w))%\frac{H_{2m+1}^{(1)}(z^{-1})
 %H_{2m+1}^{(2)}(w)}{H_{2m+1}(0)H_{2m+2}(0)}
 \\
  &+
 \Bigl ( \frac{1}{2\pi i} \Bigr)^2
\oint_{\Gamma_{0,a}}dw 
\oint_{\Gamma_{0,a,w}}  \frac{dz}{w-z} ~\frac{z^{-x -1}}{w^{-y}}~
%\left(  \frac{(1+aw)(1-\frac aw)}{(1+az)(1-\frac az)} \right)^{n/2} 
 \left(\frac{1-\frac aw}{1-\frac az}\right)e^{\tfrac n2 (\Phi(w)-\Phi(z))}
 \\   &~~~~~~~~~~~~~~~~~~~~~~~~~~~~~~~~~~~~~~~~~~~~~~\times 
 (1-R^{(1)}_{ }(w^{-1}))(1-R^{(2)}_{ }(z))
% \frac{H_{2m+1}^{(1)}(w^{-1})
 %H_{2m+1}^{(2)}(z)}{H_{2m+1}(0)H_{2m+2}(0)}
 \\  
 &+ \frac{1}{2\pi i}
 \oint_{\Gamma_{0,a}}\frac{dz}{z^{x-y+1}}
   (1-R^{(1)}_{ }(z^{-1}))(1-R^{(2)}_{ }(z)).
  % \frac{H_{2m+1}^{(1)}(z^{-1})
 %H_{2m+1}^{(2)}(z)  }{H_{2m+1}(0)H_{2m+2}(0)}
 \end{aligned}
 \label{46}\ee
 %
% and, renaming $w\mapsto z$ and $z\mapsto w$ in (\ref{43}), one finds (????)
 %
 Using the identity (\ref{42}) applied to the last term of $\BK_{n,m}(-x,-y)$, the dual kernel is then given by:%(\ref{46}) gives a statistical description of the non-intersecting inliers lattice-paths. The statistics of the non-intersecting outliers lattice-paths will now be given by taking the dual kernel $\widetilde\BK_{n,m}(-x,-y)=\delta_{x,y}-\BK_{n,m}(-x,-y)$. 
%Then, , one checks that the kernel of the dual process has the form:
\be
\begin{aligned}
\lefteqn{\frac{H_{2m+2}(0)}{H_{2m+1}(0)}
\widetilde\BK_{n,m}(-x,-y)=\frac{H_{2m+2}(0)}{H_{2m+1}(0)}\left(\delta_{x,y}-\BK_{n,m}(-x,-y)\right)}%
\\&=
  \Bigl ( \frac{1}{2\pi i} \Bigr)^2
\!\oint_{\Gamma_{0,a}}\!dz 
\oint_{\Gamma_{0,a,z}} \! \frac{dw}{w\!-\!z} ~\frac{w^{y -1}}{z^{x }}~
%\left(  \frac{(1+az)(1-\frac az)}{(1+aw)(1-\frac aw)}
% \right)^{n/2}
e^{\tfrac n2 (\Phi(z)-\Phi(w))}
(1-R^{(1)}_{ }(z^{-1}))(1-R^{(2)}_{ }(w))    %&~~~~~~~~~~~~~~~~~~~~~~~~~~~~~~~~~~~~~~~~~~~~\times 
% (1-R^{(1)}_{ }(z^{-1}))(1-R^{(2)}_{ }(w))%\frac{H_{2m+1}^{(1)}(z^{-1})
 %H_{2m+1}^{(2)}(w)}{H_{2m+1}(0)H_{2m+2}(0)}
 \\
  & 
+ \Bigl ( \frac{1}{2\pi i} \Bigr)^2
\oint_{\Gamma_{0,a}}dw 
\oint_{\Gamma_{0,a,w}}  \frac{dz}{z-w} ~\frac{z^{-x -1}}{w^{-y }}~
e^{\tfrac n2 (\Phi(w)-\Phi(z))}%\left(  \frac{(1+aw)(1-\frac aw)}{(1+az)(1-\frac az)}\right)^{n/2}
  \left(\frac{1-\frac aw}{1-\frac az}\right)
 \\ 
&~~~~~~~~~~~~~~~~~~~~~~~~~~~~~~~~~~~~~~~~~~~~~~\times 
 (1-R^{(1)}_{ }(w^{-1}))(1-R^{(2)}_{ }(z))
% \frac{H_{2m+1}^{(1)}(w^{-1})
 %H_{2m+1}^{(2)}(z)}{H_{2m+1}(0)H_{2m+2}(0)}
 \\  
 &%+\frac{H_{2m+2}(0)}{H_{2m+1}(0)}\delta_{x,y}
 - \frac{\dt_{x\neq y}}{2\pi i}
 \oint_{\Gamma_{0,a}}\frac{dz}{z^{x-y+1}}
   (1-R^{(1)}_{ }(z^{-1}))(1-R^{(2)}(z))
.\end{aligned}
 \label{47}\ee
% Hence
% $$
%\begin{aligned}\lefteqn{(-1)^{x-y}\frac{H_{2m+2}(0)}{H_{2m+1}(0)}
%\widetilde\BK_{n,m}(-x,-y)} 
%&=\frac{H_{2m+2}(0)}{H_{2m+1}(0)}\left(\delta_{x,y}-\BK_{n,m}(-x,-y)\right)
%\\ \\&=  \Bigl ( \frac{1}{2\pi i} \Bigr)^2
%\oint_{\Gamma_{0,a}}dz 
%\oint_{\Gamma_{0,a,z}}  \frac{dw}{z-w} ~\frac{(-w)^{y-1}}{(-z)^{x}}~
%e^{\frac n2 (\Phi(z)-\Phi(w))}%\left(  \frac{(1+az)(1-\frac az)}{(1+aw)(1-\frac aw)}\right)^{n/2}
%  \\ &~~~~~~~~~~~~~~~~~~~~~~~~~~~~~~~~~~~~~~~~~~~~\times 
% (1-R^{(1)}_{2m+1}(z^{-1}))(1-R^{(2)}_{2m+1}(w))%\frac{H_{2m+1}^{(1)}(z^{-1})
 %H_{2m+1}^{(2)}(w)}{H_{2m+1}(0)H_{2m+2}(0)}
 %\\ \\& + \Bigl ( \frac{1}{2\pi i} \Bigr)^2 \oint_{\Gamma_{0,a}}dw 
%\oint_{\Gamma_{0,a,w}}  \frac{dz}{w-z} ~\frac{(-z)^{-x-1}}{(-w)^{-y}}~
% \left(\frac{1-\frac aw}{1-\frac az}\right)e^{\frac n2 (\Phi(w)-\Phi(z))}%\left(  \frac{(1+aw)(1-\frac aw)}{(1+az)(1-\frac az)} \right)^{n/2} 
% \\ 
% &~~~~~~~~~~~~~~~~~~~~~~~~~~~~~~~~~~~~~~~~~~~~~~\times 
 %(1-R^{(1)}_{2m+1}(w^{-1}))(1-R^{(2)}_{2m+1}(z))
% \frac{H_{2m+1}^{(1)}(w^{-1})
 %H_{2m+1}^{(2)}(z)}{H_{2m+1}(0)H_{2m+2}(0)}\\ \\
 %&%+\frac{H_{2m+2}(0)}{H_{2m+1}(0)}\delta_{x,y}
% + \frac{\dt_{x\neq y}}{2\pi i}
% \oint_{\Gamma_{0,a}}\frac{dz}{(-z)^{x-y+1}}   (1-R^{(1)}_{2m+1}(z^{-1}))(1-R^{(2)}_{2m+1}(z)),
%\end{aligned} $$
 %The single integral in the expression above is precisely $C_1(0;x-y)$ defined in (\ref{55}). 
 Multiplying the kernel (\ref{47}) above by $(-1)^{x-y}$, interchanging $z\leftrightarrow w$ in the second double integration and then combining the two first integrals, one finds that the kernel $\widetilde\BK_{n,m}(-x,-y) $ consists of two parts, a double integral ${\cal K} (x,y)$and a single integral $C_1(0;x-y)$, defined in (\ref{55}),
 \be
\begin{aligned}
{(-1)^{x-y}\frac{H_{2m+2}(0)}{H_{2m+1}(0)}
\widetilde\BK_{n,m}(-x,-y)={\cal K} (x,y)+ C_1(0;x-y)}  
%&=\frac{H_{2m+2}(0)}{H_{2m+1}(0)}\left(\delta_{x,y}-\BK_{n,m}(-x,-y)\right)
%
%\\&=
%  \Bigl ( \frac{1}{2\pi i} \Bigr)^2
%\oint_{\Gamma_{0,a}}dz \oint_{\Gamma_{0,a,z}}  \frac{dw}{z-w} ~\left(\frac{(-w)^{y-1}}{(-z)^{x}}+\frac{(-z)^y}{(-w)^{x+1} }\frac{1-\tfrac az}{1-\tfrac aw}\right)
%e^{\frac n2 (\Phi(z)-\Phi(w))}%\left(  \frac{(1+az)(1-\frac az)}{(1+aw)(1-\frac aw)}\right)^{n/2}
%  \\&~~~~~~~~~~~~~~~~~~~~~~~~~~~~~~~~~~~~~~~~~~~~\times  (1-R^{(1)}_{ }(z^{-1}))(1-R^{(2)}_{ }(w))% \\  
%\\ &%+\frac{H_{2m+2}(0)}{H_{2m+1}(0)}\delta_{x,y}
% ~~~+ \frac{\dt_{x\neq y}}{2\pi i}
% \oint_{\Gamma_{0,a}}\frac{dz}{(-z)^{x-y+1}}
 %  (1-R^{(1)}_{ }(z^{-1}))(1-R^{(2)}_{}(z))
 %  \\
 %  &=:{\cal K}^{(0)}(x,y)+ C_1(0;x-y)
   ,
\end{aligned}
 \label{48}\ee
 where (recall $R^{(i)}$ is given by (\ref{39}))
 \be\begin{aligned}
 {\cal K} (x,y)&:=\Bigl ( \frac{1}{2\pi i} \Bigr)^2
\oint_{\Gamma_{0,a}}dz 
\oint_{\Gamma_{0,a,z}}  \frac{dw}{z-w} ~\left(\frac{(-w)^{y-1}}{(-z)^{x}}+
\frac{(-z)^y}{(-w)^{x+1} }\frac{1-\tfrac az}{1-\tfrac aw}\right)
%\left(  \frac{(1+az)(1-\frac az)}{(1+aw)(1-\frac aw)}\right)^{n/2}
  \\ \\&~~~~~~~~~~~~~~~~~~~~~~\times ~
  e^{\frac n2 (\Phi(z)-\Phi(w))}
 (1-R^{(1)}_{ }(z^{-1}))(1-R^{(2)}_{ }(w))
 \label{calK}\end{aligned}
 \ee
 and, confirming (\ref{55}), 
  $$\begin{aligned}
  C_1(0,x-y)=\frac{\dt_{x\neq y}}{2\pi i}
 \oint_{\Gamma_{0,a}}\frac{dz}{(-z)^{x-y+1}}
   (1-R^{(1)}_{ }(z^{-1}))(1-R^{(2)}_{}(z))
 . \end{aligned}
 $$
  Notice this establishes formula (\ref{K2}) of Theorem \ref{main1'} for $r=s=n/2$. 
   
 Using (\ref{Ei}), the double integral-part ${\cal K} $ of the kernel (\ref{48}) reads, upon multiplying out $(1-R^{(1)}_{ }(z^{-1}))(1-R^{(2)}_{ }(w))$, %with $R^{(i)}$ given by (\ref{49}),
  $$
 \begin{aligned}
%& \Bigl ( \frac{1}{2\pi i} \Bigr)^2
%\oint_{\Gamma_{0,a}}dz 
%\oint_{\Gamma_{0,a,z}}  \frac{dw}{z-w} ~ \left(\frac{(-w)^{y-1}}{(-z)^{x}}+
%\frac{(-z)^y}{(-w)^{x+1} }\frac{1-\tfrac az}{1-\tfrac aw}\right) %\frac{(-w)^{y-1}}{(-z)^{x}}~
%e^{\frac n2 (\Phi(z)-\Phi(w))}
%\left(  \frac{(1+az)(1-\frac az)}{(1+aw)(1-\frac aw)}
% \right)^{n/2}
\lefteqn{{\cal K} (x,y)}    %&~~~~~~~~~~~~~~~~~~~~~~~~~~~~~~~~~~~~~~~~~~~~\times 
% (1-R^{(1)}_{2m+1}(z^{-1}))(1-R^{(2)}_{2m+1}(w))
\\
&  = \frac{1}{(2\pi i)^2}
 \oint_{\Gamma_{0,a}}dz\oint_{\Gamma_{0,a,z}}dw 
 \frac{1}{z-w}
\left(\frac{(-w)^{y-1}}{(-z)^{x}}+
\frac{(-z)^y}{(-w)^{x+1} }\frac{1-\tfrac az}{1-\tfrac aw}\right) e^{\frac n2 (\Phi(z)-\Phi(w))}%\left(\frac zw\right)^m
\\& ~~~~
\left(1-S_{ }^{(1)}(z^{-1})- \frac{e^{-n\Phi(z)}}{a-z}T_{ }^{(1)}(z^{-1})\right)
 % \\&
  \left( 1- S_{ }^{(2)}(w^{})-\frac {a-w}{   e^{-n\Phi(w)} }  T_{ }^{(2)}(w^{})\right)
\end{aligned}
$$
\be \begin{aligned}
&= \frac{1}{(2\pi i)^2}
 \oint_{\Gamma_{0,a}}dz\oint_{\Gamma_{0,a,z}}dw 
 \frac{\sum_1^3 E_i(z,w)-\frac wz\frac{1-\frac a w}{1- \frac a z}E_4(w,z)}{z-w}
 \\
 &\hspace*{6cm}\times\left(\frac{(-w)^{y-1}}{(-z)^{x}}+
\frac{(-z)^y}{(-w)^{x+1} }\frac{1-\tfrac az}{1-\tfrac aw}\right).
 \\
% &
% \times\left\{ \begin{aligned}
%&E_1(z,w)%e^{\frac n2 (\Phi(z)-\Phi(w))} 
%\left(1-S_{ }^{(1)}(z^{-1})\right)\left(1-S_{ }^{(2)}(w)\right)
%\\&+E_2(z,w)%e^{\frac n2 (\Phi(z)+\Phi(w))} (a-w)\left(1-S_{ }^{(1)}(z^{-1})\right) T_{ }^{(2)}(w) 
%\\&+E_3(z,w)%-e^{-\frac n2 (\Phi(z)+\Phi(w))} \frac{1}{a-z}T_{ }^{(1)}(z^{-1})   \left(1-S_{ }^{(2)}(w)\right) 
%\\&+\frac{a-w}{a-z}E_4(w,z)% +e^{-\frac n2 (\Phi(z)-\Phi(w))} \frac{a-w}{a-z}T_{ }^{(1)}(z^{-1})  T_{ }^{(2)}(w) 
%
%  \end{aligned} \right\}
  \end{aligned}\label{52}\ee %$$
 %
% Setting 
% $$
% e^{\frac n2 \Phi(z)}:= 
%\left ((1+az)\Bigl(1-\frac az\Bigr)\right)^{n/2}
% \newpage
%
%
 The part of the double integral, involving $E_4(w,z)$, is not in a usable form, in view of the saddle point method and the topology of the contours; the integrations have to be interchanged, at the expense of a new residue term. So, using (\ref{45}), the double integral ${\cal K} (x,y)$ involving $E_4(w,z)$ becomes, after a further interchange $z\leftrightarrow w$:
 \be
  \begin{aligned}
 =&   \frac{1}{(2\pi i)^2}
 \oint_{\Gamma_{0,a}}\!dz\oint_{\Gamma_{0,a,z}}\!   
 \frac{dw}{z-w}
 \left(\frac{(-z)^{y-1}}{(-w)^{x}}+
\frac{(-w)^y}{(-z)^{x+1} }\frac{1-\tfrac aw}{1-\tfrac az}\right)
 \frac zw\frac{1-\frac a z}{1- \frac a w}E_4(z,w)
%@@@@
%e^{-\frac n2 (\Phi(w)-\Phi(z))} \frac{a-z}{a-w}T_{ }^{(1)}(w^{-1})  T_{ }^{(2)}(z)
\\
& -2\frac 1{2\pi i} \oint _{\Gamma_{0,a}} \frac{dz}{ (-z)^{ x-y+1}}T^{(1)}(z^{-1})T^{(2)}(z)
.\label{53} \end{aligned}
 \ee
 Defining ${\cal L}(x,y)$ as the double integral part of ${\cal K} (x,y)$, namely
 \be\begin{aligned}
{\cal L}(x,y):=&
\frac{1}{(2\pi i)^2}
 \oint_{\Gamma_{0,a}}\!dz\oint_{\Gamma_{0,a,z}}\!\!\!dw 
 \frac{\sum_1^4 E_i(z,w)}{z-w}
 %\\
 %&\hspace*{1cm}\times
 \left(\frac{(-w)^{y-1}}{(-z)^{x}}+
\frac{(-z)^y}{(-w)^{x+1} }\frac{1\!-\!\tfrac az}{1\!-\!\tfrac aw}\right),
\end{aligned}\label{calL}\ee
and remembering the definition (\ref{55}) of $C_2(0,x)$, formulas (\ref{52}) and  (\ref{53}) show that 
$$ {\cal K} (x,y)={\cal L} (x,y)+2C_2(0;x-y)
 .$$
 Combining this formula with (\ref{48}), together with (\ref{55}), yields
  \be \begin{aligned}
 (-1)^{x-y}\frac{H_{2m+2}(0)}{H_{2m+1}(0)}
\widetilde\BK_{n,m}(-x,-y)  &={\cal K} (x,y)+C_1(0;x-y) \\
&={\cal L} (x,y)+2C_2(0;x-y)+C_1(0;x-y)
\\
&={\cal L} (x,y)+C(0;x-y)
,
\label{54}\end{aligned}\ee
thus establishing formula (\ref{43a}) of Proposition \ref{prop4.1}, with the $E_i$ and $C(0;x)$ as in (\ref{Ei}) and (\ref{55}), in the case $r=s=n/2$. 

The proof of the extended case follows the recipe (\ref{E1}) in Proposition \ref{prop} applied to the non-extended kernel $\widetilde\BK_{n,m}(x,y)$, namely:
$$\begin{aligned}
\lefteqn{\hspace*{-1cm}(-1)^{x-y}\frac{H_{2m+2}(0)}{H_{2m+1}(0) }\widetilde {\mathbbm K}_{n,m}^{{\rm ext}}(2r, x;2s, y)   } 
\\
=&-\Id_{s<r} (-1)^{x-y}\psi_{2(s-r)}( x, y) \frac{H_{2m+2}(0)}{H_{2m+1}(0)}
\\&+\psi_{n-2r}( x,\cdot)\ast(-1)^{x-y}\frac{H_{2m+2}(0)}{H_{2m+1}(0)}
\widetilde\BK_{n,m}(\cdot ~;\tc) %\widetilde  {\mathbbm K}_{n,m}^{{\rm ext}}(\cdot ~;\tc)
 \ast\psi_{ 2s-n }(\tc~, y).
\end{aligned}
 $$
 Notice that the $\ast$-product only acts on the exponents of $w$ and $z$ in the expressions (\ref{calK}), (\ref{calL}) and (\ref{55}) for ${\cal K}$, ${\cal L}$ and $C_i(0;x)$. To do so, one first removes the $-$signs in $-z$ and $-w$ and one changes $x\mapsto -x$ and $y\mapsto -y$ and ones use the identities (\ref{astpower'}) below:
$$\begin{aligned}
\lefteqn{\psi_{n-2r}(x,u) \ast_{_{\!\tiny{u}}} \left(\frac{w^{-v-1}}{z^{-u}}+
\frac{z^{-v}}{w^{-u+1} }
\frac{1-\tfrac az}{1-\tfrac aw}\right)
\ast_{_{\!\tiny{v}}}
\psi_{2s-n}(v,y) 
}
\\
&\hspace*{3cm}= \left(\frac{w^{-y-1}}{z^{-x}}\frac{\left(\tfrac{1+az}{1-\frac az}\right)^{\tfrac n2-r}}{\left(\tfrac{1+aw}{1-\frac aw}\right)^{ \tfrac n2 -s}}+
\frac{z^{-y}}{w^{-x+1} }
\frac{\left(\tfrac{1+aw}{1-\frac aw}\right)^{\tfrac n2-r}}{\left(\tfrac{1+az}{1-\frac az}\right)^{ \tfrac n2 -s}}
\frac{1-\tfrac az}{1-\tfrac aw}\right)
\end{aligned}$$
and
\be\begin{aligned}
 {\psi_{n-2r}(x,u) \ast_{_{\!\tiny{u}}} \left(\frac{z^u}{z^v}\right)
\ast_{_{\!\tiny{v}}}
\psi_{2s-n}(v,y) 
}
 =z^{x-y}\left(\frac{1+ az}{1-\tfrac az}\right)^{s-r}.
\end{aligned}\label{109'}\ee
One now applies these formulas to the two expressions (\ref{48}) and (\ref{54}) for the non-extended kernel $\widetilde\BK_{n,m}( x, y)$:
\be \begin{aligned}\lefteqn{\hspace*{-.5cm}(- 1)^{x-y}\frac{H_{2m+2}(0)}{H_{2m+1}(0)}
\widetilde\BK_{n,m}( x, y)}\\
&   ={\cal K} (-x,-y)+C_1(0;-x+y) ={\cal L} (-x,-y)+C(0;-x+y)
,\end{aligned}
\label{110} 
\ee
where ${\cal K}$ and ${\cal L}$ are defined in (\ref{calK}) and (\ref{calL}). %(\ref{43a}) for the extended kernel, .
 %
%where $\widetilde\BK_{n,m}$ has the two representations (\ref{110}). 
 From (\ref{109'}), the $\ast$-multiplication amounts to inserting some elementary fractions. Thus the first representation (\ref{110}) leads to   \be
\begin{aligned}
\lefteqn{(-1)^{x-y}\frac{H_{2m+2}(0)}{H_{2m+1}(0)}
\widetilde\BK^{\rm ext}_{n,m}(2r, x;2s, y)} 
%&=\frac{H_{2m+2}(0)}{H_{2m+1}(0)}\left(\delta_{x,y}-\BK_{n,m}(-x,-y)\right)
%
\\&=
-\Id_{s<r} (-1)^{x-y}\psi_{2(s-r)}( x, y) \frac{H_{2m+2}(0)}{H_{2m+1}(0)}
 \\
 & + \frac{\dt_{x\neq y}}{2\pi i}
 \oint_{\Gamma_{0,a}}\frac{dz}{(-z)^{-x+y+1}}
   (1-R^{(1)}_{ }(z^{-1}))(1-R^{(2)}_{}(z))
   \left(\frac{1+az}{1-\frac az}\right)^{s-r}
   \\
&+  \Bigl ( \frac{1}{2\pi i} \Bigr)^2
\oint_{\Gamma_{0,a}}dz 
\oint_{\Gamma_{0,a,z}}  \frac{dw}{z-w} ~(1-R^{(1)}_{ }(z^{-1}))(1-R^{(2)}_{ }(w))
e^{\frac n2 (\Phi(z)-\Phi(w))}%\left(\frac zw\right)^m
\\
&%\left(\frac{(1+az)(1-\tfrac az)}{(1+aw)(1-\tfrac aw)}\right)^{\frac n2}
~~~~~~\times\left(\frac{(-w)^{-y-1}}{(-z)^{-x}}\frac{\left(\tfrac{1+az}{1-\frac az}\right)^{\tfrac n2-r}}{\left(\tfrac{1+aw}{1-\frac aw}\right)^{ \tfrac n2 -s}}+
\frac{(-z)^{-y}}{(-w)^{-x+1} }
\frac{\left(\tfrac{1+aw}{1-\frac aw}\right)^{\tfrac n2-r}}{\left(\tfrac{1+az}{1-\frac az}\right)^{ \tfrac n2 -s}}
\frac{1-\tfrac az}{1-\tfrac aw}\right),
%e^{\frac n2 (\Phi(z)-\Phi(w))}%\left(  \frac{(1+az)(1-\frac az)}{(1+aw)(1-\frac aw)}\right)^{n/2}
  \end{aligned}
 \label{48'}\ee
 which is expression {\em (i)}  for the extended kernel (formula (\ref{K2})), upon using the explicit expression (\ref{44}) for $e^{n \Phi(z)/2}$. Expression {\em (ii)} (formula (\ref{43a})) is an immediate consequence of the second representation (\ref{110}) for $\BK_{n,m}(-x,-y)$, with ${\cal L}$ as in (\ref{calL}). 
This ends the proof of Proposition \ref{prop4.1}, and also the proof of Theorem \ref{main1'}.\end{proof}

%\end{document}

%\newpage
%OMIT THIS PAGE

%with
%\be
%\begin{aligned}
%\\
%{\cal K}(x,y)&= \frac{1}{(2\pi i)^2}
% \oint_{\Gamma_{0,a}}dz\oint_{\Gamma_{0,a,z}}dw  \frac{1}{z-w} \left(\frac{(-w)^{y-1}}{(-z)^{x}}+\frac{(-z)^y}{(-w)^{x+1} }\frac{1-\tfrac az}{1-\tfrac aw}\right) \\
% &
%~~~~~~ \times\left\{ \begin{aligned}
%&e^{\frac n2 (\Phi(z)-\Phi(w))}%\left(\frac zw\right)^m 
%\left(1-S_{ }^{(1)}(z^{-1})\right)\left(1-S_{ }^{(2)}(w)\right)
%\\&+e^{\frac n2 (\Phi(z)+\Phi(w))} (w-a)\left(1-S_{ }^{(1)}(z^{-1})\right) T_{ }^{(2)}(w) 
%\\&+e^{-\frac n2 (\Phi(z)+\Phi(w))} \frac{1}{z-a}T_{ }^{(1)}(z^{-1})   \left(1-S_{ }^{(2)}(w)\right) 
%
%
 % \end{aligned}  \right\}
%  \\& -\frac{1}{(2\pi i)^2} \oint_{\Gamma_{0,a}}dz\oint_{\Gamma_{0,a,z}}dw  \frac{1}{z-w} \left(\frac{(-z)^{y-1}}{(-w)^{x}}+\frac{(-w)^y}{(-z)^{x+1} }\frac{1-\tfrac aw}{1-\tfrac az}\right)%@\frac{(-z)^{y-1}}{(-w)^x} @
%\\
%&~~~~~~~~~~~~~~~~~~~~\times   e^{-\frac n2 (\Phi(w)-\Phi(z))} \frac{a-z}{a-w}T_{ }^{(1)}(w^{-1})  T_{ }^{(2)}(z) 
 % \\
%&  +C (x-y),%-\frac 2{2\pi i} \oint _{\Gamma_{0,a}} \frac{dz}{ (-z)^{ x-y+1}}T^{(1)}(z^{-1})T^{(2)}(z)
%\end{aligned}\label{54a}\ee

%\newpage

%\medbreak

 %\newpage
 
 \section{Scaling limits of various functions%$g_k^{(i)}$, $Q_k^{(i)}$, $h_k^{(i)}$, $T^{(i)}$ and $S^{(i)}$%double integral part of $\widetilde\BK_{n,m}(-x,-y)$}
}

\subsection{Limit of the functions $g_k^{(i)}$}

Remember the quantities $v_0,A,\rho,~\theta$ defined in (\ref{4}) in terms of the weight $0<a<1$ on vertical dominoes. Also the reader is reminded of the scaling (\ref{2}):
\be
\begin{aligned}
n&=2t , ~~~~~~~~~~~~~~~~~~~~~~m =%\frac{2t}{a+a^{-1}}+\frac 12 \tilde \sigma  \rho%(-Av_0)
 % (2t)^{1/3}=
  \frac{2t}{a+a^{-1}}+  \sigma  \rho
  t^{1/3}%\\
%n&=2t
\\
x&=2a^2\theta \tau  t^{2/3}+\xi  \rho t^{1/3},~~
s  =t+(1+a^2)\theta\tau  t^{2/3}
.
\end{aligned}
\label{2'}\ee
In the expression (\ref{I2}), the extended kernel $\widetilde\BK^{\rm ext}_{n,m}$ involves discrete variables $k$ and $\ell$ and involves integration variables $u$ and $v$ in $K^{(i)}(z^{-1})$, which we rescale as follows:
 \be
\begin{aligned}
 k&  % (- Av_0) (2t)^{1/3}+1
 = \left[\frac{4t}{a+a^{-1}}+\kappa \rho (2t)^{1/3}+1\right] 
 ,~~~\ell  =\left[\frac{4t}{a+a^{-1}}+\lambda \rho
  (2t)^{1/3}+1\right] 
\\
%x&%= {\xi t^{1/3}} \rho%(-Av_0)
% =\rho{\xi t^{1/3}},~~~
%y %= \eta t^{1/3} \rho%(-Av_0)
 %=\rho\eta t^{1/3}  \\
 u&= v_0+\frac{U}{A (2t)^{1/3}}%=v_0\Bigl ( 1-\frac{U}{ \rho%(-Av_0)
 %(2t)^{1/3}}\Bigr)=
% v_0\Bigl ( 1-\frac{U}{\rho(2t)^{1/3}}\Bigr)\\
 ,~~~~~~~~~~~~~~~~~~~~v =v_0+\frac{V}{A (2t)^{1/3}}
  \\
 z&=v_0+\frac{\zeta}{At^{1/3}},~~~~~~~~~~~~~~~~~~~~~~~~
 w =v_0+\frac{\omega}{At^{1/3}}
 .
\end{aligned}
\label{3'}\ee
The variables $z$ and $w$, with the rescaling above, will appear much later in Section \ref{Kintrepr}. Also note that $k,\ell\geq 2m+1$ implies $\kappa,\lambda \geq \tilde \sigma:=2^{2/3} \sigma$.

 Define the function $F(v)$ and $G(v)$, together with their Taylor series around the saddle point %\footnote{The definitons of $\rho$ and $A$ imply
%$$
%-\frac{A^2}{(1+a^2)\sqrt{\rho(a+a^{-1})}}= \frac{a(1+a)^3}{(a-1)(1+a^2)^2},\quad \rho =-Av_0=\frac{(1-a^2)^{2/3}}{(a+a^{-1})^{1/3}}.
%$$}
 $v=
v_0:= -\frac{1-a}{1+a},%~~~~A^3=\frac{a(1+a)^5}{(1-a)(1+a^2)},~~A_2>0
$ and the Taylor series of $\log(-v )$ about $v_0$,
 \be
\begin{aligned}F(v)&:=\log (1+av)+\log (1-\tfrac av)+\frac{2}{a+a^{-1}}\log (-v)
\\
& =F(v_0)+\frac {A^3}3(v-v_0)^3+{\cal O}(v-v_0)^4
 \\
 G(v)&:= (1 +a^2)\theta \log \frac{  1+av}{1-\tfrac av }-2a^2\theta\log (-v)  
 \\  
&=G(v_0)- A^2 (v-v_0)^2+{\cal O}  (v-v_0)^3 ,%~~\mbox{with}~~G(v_0)=(1-a^2)\theta \log (-v_0)\quad\mbox{near~}v_0,
\\
 \log(-v)&=\log(-v_0)+\tfrac 1 {v_0}(v-v_0)+{\cal O}(v-v_0)^2,
 \end{aligned}\label{58}\ee
with
\be
G(v_0)=(1-a^2)\theta \log (-v_0).
\ee
 Given parameters $\tau$ and $\lambda$, set for future use: 
 \be\begin{aligned}
 M^{\tau}_{\lb}(t)&:=2tF(v_0)+\tau G(v_0)(2t)^{2/3}+  \lambda    \rho\log(-v_0)(2t)^{1/3} \\
 &=2tF(v_0)+\left((1-a^2)\theta \tau (2t)^{2/3}+\rho\lambda (2t)^{1/3}\right)\log(-v_0)
 \\
 M^{ }_{\lb}(t)&:=M^{\tau}_{\lb}(t)\Bigr|_{\tau=0}
.\end{aligned} \label{59}\ee
Note that, upon introducing the scaling (\ref{3'}) for $u$ and $v$, the function $\Phi(z)$ as defined in (\ref{44}), 
% namely
% $$
% e^{\frac n2 \Phi(v)}:= 
%(-v)^m \left ((1+av)\Bigl(1-\frac av\Bigr)\right)^{n/2},
 %$$ 
 has the following Taylor series about $v_0$, which then can be scaled using the scaling (\ref{3'}) for $v-v_0=\frac{V}{A(2t)^{1/3}}$,
 \be \begin{aligned}
 n \Phi(v)&=2tF(v)+  {\tilde \sigma}  \rho (2t)^{1/3}\log (-v)
%\\&=
% tF(v_0)+\tfrac { \tilde \sigma} 2   (-Av_0)\log(-v_0)(2t)^{1/3} 
% \\&~~~+\tfrac 16 V^3-\tfrac { \tilde \sigma} 2   V +{\cal O}(t^{-1/3})
\\
&= M_{\tilde \sigma}(t) +\tfrac 13 V^3-  { \tilde \sigma}    V +{\cal O}(t^{-1/3}),\end{aligned}\label{60}\ee
where $F$ and $M_{\tilde \sigma}$ are defined in (\ref{58}) and (\ref{59}). Combining this with $\log (-v)$ and, using the scaling (\ref{3'}) and (\ref{2'}) for $\ell -2m = (\lambda -\tilde \sigma) \rho (2t)^{1/3}+1$, and $v-v_0$, yields:
  \be
\begin{aligned}
 n\Phi(v)+(\ell-2m)\log(-v)&=2tF(v)+\lambda   \rho(2t)^{1/3}\log (-v)+\log(-v)
 %\\
 %&=
% 2tF(v_0)-\lambda Av_0 \log(-v_0)(2t)^{1/3}+\log (-v_0)
% \\&~~~ +\tfrac 13 V^3-\lambda V +{\cal O}(t^{-1/3})
 \\&=M_\lambda (t)+\log (-v_0)+\tfrac 13 V^3-\lambda V +{\cal O}(t^{-1/3}).
  \end{aligned}\label{61}\ee
We shall need the following expressions in terms of the scaling (\ref{2'}) and (\ref{3'}), together with the summation variable $b=\beta \rho (2t)^{1/3}$~:  
 \be\begin{aligned}
 k+b&=\frac{4t}{a+a^{-1}}+(\kappa+\beta)\rho (2t)^{1/3}+1\\
 -x+m+b&= \frac{2t}{a+a^{-1}}
 -2a^2\theta \tau t^{2/3} 
 +(\sigma-\xi+2^{1/3}\beta)\rho t^{1/3}
 \\
 x-m+\ell & =\frac{2t}{a+a^{-1}}+ 2a^2\theta\tau t^{2/3}+(-\sigma+\xi+2^{1/3} \lambda) \rho t^{1/3}+1.%
  %\\
% \ell+b&=\frac{4t}{a+a^{-1}}+(\lambda+\beta)\rho (2t)^{1/3}+1\\
%\\
%y+m+b+1&=\frac{2t}{a+a^{-1}}+(\sigma+\eta+2^{1/3} \beta)(-Av_0) t^{1/3}+1\\
%.
%\\
%-y-m+\ell &=\frac{2t}{a+a^{-1}}+(-\sigma-\eta+2^{1/3} \lambda) \rho t^{1/3}+1.
\end{aligned}\label{62s}\ee
%Thus in order to do asymptotics, we need the following lemma from Borodin-Ferrari, upon defining
%
%Given the parameter $s$, set
%\begin{equation}\begin{aligned}
%\mbox{Ai}^{(s)}(x):=\frac{1}{2\pi i}\int _{\nearrow \atop \nwarrow }  dz~e^{ z^3/3+z^2s- zx }=e^{ sx+\tfrac 2 3s^3 }\mbox{Ai}(x+s^2).
%\end{aligned}\label{E6}\end{equation}
To do the asymptotics, the following Lemma from Borodin-Ferrari \cite{BF} comes in useful:

\begin{lemma}\label{BF}
Given a contour integral of the form 
\begin{equation}\begin{aligned}
I_t^{\pm}=\frac{1}{2\pi i}\oint_{\ga_{\{0,p_1,\ldots,p_n\}}}dz~e^{\pm [t f_0(z)+t^{2/3}f_1(z)+t^{1/3}f_2(z)+f_3(z)]},
\end{aligned}
\label{D85'}\end{equation}
where $\ga_{\{0,p_1,\ldots,p_n\}}$ is a closed contour, having as poles the points $0,p_1,\ldots,p_n$ only.
Assume that $\Re f_0(z)$ has a real saddle point $v_0$ and a steepest descent path $\ga '$ about $v_0$; assume that $\ga_{\{0,p_1,\ldots,p_n\}}$ can be deformed to $\ga '$, without passing through the points  $\{0,p_1,...,p_n\}$. Let $f_0(z)$ experience a double critical point at $v_0$, and $f_1(z)$ a single critical point:
$$
\begin{aligned}
f'_0(v_0)=f ''_0(v_0)=0, f '''_0(v_0)>0, f'_1(v_0)=0.
\end{aligned}
$$
Then, remembering the definition (\ref{E6}) of $\mbox{Ai}^{(s)}(x)$,
\begin{equation}\begin{aligned}
\lim_{t\rg +\iy}(t\kappa_0)^{1/3}e^{\mp[tf_0(v_0)+t^{2/3}f_1(v_0)+t^{1/3}f_2(v_0)+f_3(v_0)]}I_t^{\pm}
={ \rm sgn}(v_0) {\rm Ai}^{(\pm \kappa_1)}(\kappa_2),
\end{aligned}
\label{E8}\end{equation}
%\medbreak
%
where the convergence is uniform  for $\kappa_1$, $\kappa_2$ in a bounded set, and with
%
%uniformly for $\kappa_2\geq -\ell$, in a compact set and where $c_0,c_1$ are positive constants and where
%
\be
\begin{aligned}
 \kappa _0=\frac{f'''_0(v_0)}{2},\quad\quad 
 \kappa _1=\frac{f''_1(v_0)}{2\kappa_0^{2/3}},\quad\quad  \kappa _2=-\frac{f'_2(v_0)}{\kappa_0^{1/3}}.
\end{aligned}
\label{D85''}\ee
For any choice of $L>0$,
\begin{equation}\begin{aligned}
| t^{1/3}e^{\mp[tf_0(v_0)+t^{2/3}f_1(v_0)+t^{1/3}f_2(v_0)]}I_t^{\pm}|\leq c_0e^{-c_1\kappa_2}
\end{aligned}
\label{E9}\end{equation}
uniformly for $\kappa_2>-L$ and $\kappa_1$ in a bounded set; $c_0$ and $c_1$ are positive constants independent of $t$.
\end{lemma}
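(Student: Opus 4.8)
\textbf{Proof plan for Lemma~\ref{BF}.}

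The plan is to reduce the statement to the classical steepest-descent / Airy asymptotics. First I would make the change of variables $z=v_0+\zeta/(t\kappa_0)^{1/3}$, which is dictated by the scaling of the cubic term: near $v_0$ one has $f_0(z)=f_0(v_0)+\kappa_0(z-v_0)^3+\mathcal O((z-v_0)^4)$ by the hypotheses $f_0'(v_0)=f_0''(v_0)=0$, $f_0'''(v_0)=2\kappa_0>0$, so $tf_0(z)=tf_0(v_0)+\zeta^3/3+\mathcal O(t^{-1/3})$. Similarly, using $f_1'(v_0)=0$ and writing $f_1(z)=f_1(v_0)+\tfrac12 f_1''(v_0)(z-v_0)^2+\mathcal O((z-v_0)^3)$, the term $t^{2/3}f_1(z)$ becomes $t^{2/3}f_1(v_0)+\kappa_1\zeta^2+\mathcal O(t^{-1/3})$ with $\kappa_1=f_1''(v_0)/(2\kappa_0^{2/3})$; and $t^{1/3}f_2(z)=t^{1/3}f_2(v_0)-\kappa_2\zeta+\mathcal O(t^{-1/3})$ with $\kappa_2=-f_2'(v_0)/\kappa_0^{1/3}$, while $f_3(z)=f_3(v_0)+o(1)$. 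Thus, after multiplying by the prefactor $e^{\mp[tf_0(v_0)+t^{2/3}f_1(v_0)+t^{1/3}f_2(v_0)+f_3(v_0)]}$ and by $(t\kappa_0)^{1/3}$ (which is exactly the Jacobian of the substitution, $dz=(t\kappa_0)^{-1/3}d\zeta$), the integral formally converges to
$$
\mathrm{sgn}(v_0)\,\frac{1}{2\pi i}\int_{\Gamma}d\zeta\, e^{\pm(\zeta^3/3+\kappa_1\zeta^2-\kappa_2\zeta)}=\mathrm{sgn}(v_0)\,\mathrm{Ai}^{(\pm\kappa_1)}(\kappa_2),
$$
using the definition~(\ref{E6}) of $\mathrm{Ai}^{(s)}$; the factor $\mathrm{sgn}(v_0)$ accounts for the orientation of the local steepest-descent contour relative to the original one (the rescaled contour through $v_0<0$ is traversed in the sense making $\Re(\pm\zeta^3)\to-\infty$ along its arms, which matches $\Gamma=\nearrow\atop\nwarrow$ up to this sign).

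To turn this formal computation into a rigorous limit I would argue in the standard two-region way. Pick a small $\delta>0$ and split the contour $\gamma'$ into the piece $|z-v_0|\le\delta$ and its complement. On the complement, $\Re f_0(z)\le\Re f_0(v_0)-c\delta^3$ for some $c>0$ by the steepest-descent property of $\gamma'$, so that portion of the normalized integral is $O\big((t\kappa_0)^{1/3}e^{-ct\delta^3}\big)\to0$ (here one also uses that the subleading phases $f_1,f_2,f_3$ are bounded on the fixed contour, uniformly in the bounded parameters). On the piece $|z-v_0|\le\delta$, perform the substitution above; the Taylor remainders contribute $\mathcal O(t^{-1/3})$ in the exponent after rescaling (the cubic error $t(z-v_0)^4=\mathcal O(t^{-1/3}\zeta^4)$, etc.), the rescaled contour converges to $\Gamma$, and dominated convergence — justified by the Gaussian-cubic decay $e^{\mp\zeta^3/3}$ along $\Gamma$ together with the bound $e^{\mp\kappa_1\zeta^2}$ being harmless on a steepest-descent arc, uniformly for $\kappa_1$ in a compact set — gives the claimed limit. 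The uniformity in $\kappa_1,\kappa_2$ over bounded sets is automatic from this argument since all error estimates depend only on bounds for these parameters.

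For the exponential bound~(\ref{E9}) I would not extract the limit but instead estimate directly. After the same splitting and substitution, the contribution of $|z-v_0|\le\delta$ is controlled by $\int_\Gamma |d\zeta|\,e^{\mp\Re(\zeta^3/3+\kappa_1\zeta^2-\kappa_2\zeta)}$ up to bounded multiplicative constants; on the contour $\Gamma$ the cubic term dominates and one gets a bound of the form $c_0 e^{-c_1\kappa_2}$ valid for $\kappa_2\ge -L$ and $\kappa_1$ bounded, which is the familiar tail estimate for $\mathrm{Ai}^{(s)}$ (Airy functions decay like $e^{-\frac23\kappa_2^{3/2}}$ for large positive $\kappa_2$, in particular faster than $e^{-c_1\kappa_2}$, and are bounded for $\kappa_2\ge -L$). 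The far piece $|z-v_0|>\delta$ is, as above, exponentially small in $t$ and hence absorbed into the constants. I expect the main obstacle to be the bookkeeping of the contour orientation and of the subleading-phase contributions on the non-local part of $\gamma'$: one must verify that deforming $\gamma_{\{0,p_1,\dots,p_n\}}$ to $\gamma'$ is legitimate (no poles crossed, by hypothesis) and that along $\gamma'$ the real part of $f_0$ genuinely stays below its value at $v_0$ with a cubic gain, so that the crude bound on the far piece really is $o(1)$ after multiplication by $(t\kappa_0)^{1/3}$; everything else is a routine Laplace/Airy steepest-descent estimate.
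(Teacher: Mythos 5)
The paper does not actually prove this lemma: it is imported verbatim from Borodin--Ferrari \cite{BF} and used as a black box, so there is no in-paper argument to compare yours against line by line. Your plan is the standard steepest-descent proof and, for the limit (\ref{E8}), it is essentially sound: the rescaling $z=v_0+\zeta/(t\kappa_0)^{1/3}$, the identification of $\kappa_0,\kappa_1,\kappa_2$, the two-region splitting, and the orientation factor $\mathrm{sgn}(v_0)$ are all correct. One place to tighten: your claim that the Taylor remainder is ``$\mathcal O(t^{-1/3}\zeta^4)$, hence $\mathcal O(t^{-1/3})$'' only holds for $\zeta$ in a bounded set; on the full disc $|z-v_0|\le\delta$ the rescaled variable runs up to $|\zeta|\sim\delta(t\kappa_0)^{1/3}$ and the quartic remainder is then of size $t\delta^4$, which is not absolutely small. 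What saves the argument is \emph{relative} smallness: for $\delta$ small the remainder is at most a fixed fraction of the cubic term, so that $\Re(\pm t(f_0(z)-f_0(v_0)))\le -c|\zeta|^3$ on the descent arms uniformly in $t$, and this (not the $\mathcal O(t^{-1/3})$ bound) is what furnishes the dominating function.

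The genuine gap is in the tail bound (\ref{E9}). You dispose of the far piece $|z-v_0|>\delta$ by saying it is ``exponentially small in $t$ and hence absorbed into the constants,'' but the required bound is $c_0e^{-c_1\kappa_2}$ \emph{uniformly} in $\kappa_2>-L$ with $\kappa_2$ unbounded above, and $\kappa_2$ is not a free parameter: it enters through $f_2$ itself (in the applications $f_2(z)=\kappa_2\,\rho\log(-z)$ up to normalization). On the far piece one therefore has a factor $e^{\pm t^{1/3}(f_2(z)-f_2(v_0))}$ whose modulus can grow like $e^{Ct^{1/3}\kappa_2}$ if $\Re(f_2(z)-f_2(v_0))$ takes the wrong sign somewhere on the contour; for fixed large $t$ and $\kappa_2\to\infty$ this is not beaten by $e^{-ct\delta^3}$, and no bound of the form $c_0e^{-c_1\kappa_2}$ with $t$-independent constants follows. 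To close this you must either choose the contour so that $\pm\Re(f_2(z)-f_2(v_0))\le 0$ on the whole far piece when $\kappa_2>0$ (possible here because $f_2$ is a multiple of $\log(-z)$, so it suffices to keep the contour inside, respectively outside, the circle $|z|=|v_0|$), or deform through a $\kappa_2$-dependent saddle as in \cite{BF}. The same remark applies to your local estimate: the shift of the rescaled contour to $\Re\zeta\ge c_1>0$ that produces the factor $e^{-c_1\kappa_2}$ must be carried out at finite $t$, on the genuine integrand, not on the limiting Airy integral.
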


The next Lemma deals with the scaling limit of the $g_k^{(i)}$'s, given by (\ref{34}) and (\ref{E4}) and the $\tilde g_k^{(i)}$'s, given by (\ref{g}); also remember expression (\ref{59}) for $M_\lambda (t)$.

\begin{lemma}\label{L6.1}{\em (Limits of  $g_k^{(i)}$)}
  %Using the estimate (\ref{61}) on the expression  for the $g_\ell^{(i)}$, 
 Given the scaling (\ref{3'}) for $\ell$, together with $b=\beta\rho (2t)^{1/3}$, 
%and for $\ell=\bigl[\frac{4t}{a+a^{-1}}+\lambda\rho
 %   (2t)^{1/3}+1\bigr]$,
  the following limits
  hold\footnote{$\Ai(\lambda)=\frac 1{2\pi i}\int _{\nearrow \atop \nwarrow } dV e^{\tfrac 13 V^3-\lambda V}$} : 
\be\begin{aligned}
\lim_{t\to \infty} (2t)^{1/3}\frac A {v_0-a} e^{- M_\lambda (t)}g_{\ell}^{(1)}(2t)&=
-\Ai (\lambda )\\
\lim_{t\to \infty} (2t)^{1/3} A {(v_0-a)}v_0^2 e^{ M_\lambda (t)}g_{\ell}^{(2)}(2t)&=
-\Ai (\lambda ),\\
 \end{aligned}\label{62a}\ee
 where the limit is uniform for $\lambda $ in a bounded set. Also, for $\ell$ as above and for any choice of $L>0$,
 \be
 \left|  (2t)^{1/3}e^{\mp  M_\lambda(t)} g_\ell^{({1\atop 2})}(2t) \right|<c_0e^{-c_1 \lambda}
 ,\label{D87'}\ee
 uniformly for $\lambda>-L$, with positive constants $c_0$ and $c_1$ independent of $t$. 
 Also, using the scaling (\ref{2'}) and (\ref{3'}), one obtains
 \be\begin{aligned}
\lim_{t\to \infty} t^{1/3}\frac {Av_0} {v_0-a} e^{- M^{\tau}_{\sigma-\xi+2^{1/3} \beta} (t/2)}g_{-x+m+b}^{(1)}( t;s-t)&=
 \Ai ^{(-\tau)}(\sigma-\xi+2^{1/3} \beta )\\
\lim_{t\to \infty} t^{1/3} Av_0 {(v_0-a)} e^{ M^{\tau}_{\sigma-\xi+2^{1/3} \beta} (t/2)}g_{-x+m+b}^{(2)}( t;t-s)&=
 \Ai^{ (\tau)} (\sigma -\xi+2^{1/3} \beta ), \\
 \end{aligned}\label{62'}\ee
  where the limit is uniform for $\xi,\tau,~\beta$ in  bounded sets . Also, for any choice of $L>0$,
 \be
 \left|
 t^{1/3}      e^{\mp M^{\tau}_{ \sigma-\xi+2^{1/3} \beta} (t/2)}  g_{-x+m+b  }^{{(1)}\atop{(2)}}( t;\pm(s-t))
 \right|
 <c_0e^{-c_1 (2^{1/3}\beta-\xi)}
 \label{D88'}\ee
 uniformly for $-\xi +2^{1/3} \beta>-L$, with $c_0$ and $c_1$ positive $t$-independent constants. Furthermore, with the scaling (\ref{2'}) and (\ref{3'}),
% @@@
 \be\begin{aligned}
\lim_{t\to \infty} t^{1/3}  A   e^{- M^{-\tau}_{-\sigma+\xi+2^{1/3} \lambda} (t/2)}\widetilde g_{x-m+\ell  }^{(1)}( t;t-s)&=% \frac{1}{ v_0-a   }
-\Ai^{(\tau)} (-\sigma+\xi+2^{1/3} \lambda )\\
\lim_{t\to \infty} t^{1/3} Av_0 ^2  e^{ M^{-\tau}_{-\sigma+\xi+2^{1/3} \lambda} (t/2)} \widetilde g_{ x-m+\ell }^{(2)}( t;s-t)&= 
-\Ai^{(-\tau)} (-\sigma+\xi+2^{1/3} \lambda ),\\
 \end{aligned}\label{62''}\ee
 where the limit is uniform for $\lambda$ in a bounded set and for any choice of $L>0$,
 \be
 \left|
 t^{1/3}      e^{\mp M^{\tau}_{-\sigma+\xi+2^{1/3} \lambda} (t/2)}\widetilde g_{x-m+\ell  }^{{(1)}\atop{(2)}}( t;\pm(t-s))
 \right|
 <c_0e^{-c_1(\xi+2^{1/3}\lambda)}
\label{D89'} \ee
 uniformly for $\xi +2^{1/3} \lambda>-L$, for $c_0$ and $c_1$ positive $t$-independent constants.
\end{lemma}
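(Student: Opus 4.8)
The plan is to treat each of the three families of limits in Lemma~\ref{L6.1} as an application of the Borodin--Ferrari Lemma~\ref{BF} to the appropriate contour-integral representation of $g_k^{(i)}$ (resp. $\widetilde g_k^{(i)}$), after rewriting the integrand in the exponential form $e^{\pm[tf_0(z)+t^{2/3}f_1(z)+t^{1/3}f_2(z)+f_3(z)]}$ demanded by that Lemma. First I would take the definitions in (\ref{34}), (\ref{E4}) and (\ref{g}): e.g. for $g_\ell^{(1)}(2t)=-\oint_{\Gamma_0}\frac{du}{2\pi\I}(-u)^\ell\vp_a(2n;u)$ with $n=2t$, I write $\vp_a(2n;u)=(1+au)^{2t}(1-a/u)^{2t+1}$ and $(-u)^\ell$ with $\ell=\frac{4t}{a+a^{-1}}+\lambda\rho(2t)^{1/3}+1$, collecting everything into $\exp\!\big(2tF(u)+\lambda\rho(2t)^{1/3}\log(-u)+\text{(bounded terms)}\big)$ where $F$ is precisely the function defined in (\ref{58}). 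The key structural input is the Taylor expansions already recorded in (\ref{58}): $F(u)=F(v_0)+\frac{A^3}{3}(u-v_0)^3+O((u-v_0)^4)$ so $v_0=-\frac{1-a}{1+a}$ is a genuine double critical point of $f_0$ with $f_0'''(v_0)>0$, and the linear term in $\log(-u)$ supplies the single-critical-point function $f_1$ (here essentially absorbed because $\lambda\log(-u)$ is already $t^{1/3}$-order, so $f_1\equiv 0$ and $f_2(u)=\rho\log(-u)$). I would then read off $\kappa_0=\tfrac12 f_0'''(v_0)$, $\kappa_1$ and $\kappa_2$ from (\ref{D85''}); using $M_\lambda(t)$ as in (\ref{59}) to absorb $tf_0(v_0)+t^{1/3}f_2(v_0)+f_3(v_0)$, and $A^3=\frac{a(1+a)^5}{(1-a)(1+a^2)}$, $\rho=-Av_0$ from (\ref{4}), the normalization constants $(2t)^{1/3}\frac{A}{v_0-a}$ etc. come out exactly so that the limit is $-\Ai(\lambda)$, the $-$ sign being $\mathrm{sgn}(v_0)=-1$ (since $v_0<0$) combined with the overall $-$ in the definition of $g^{(1)}$. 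The $g^{(2)}$ case is the same computation after $u\mapsto 1/u$ (the paper already records both contour forms in (\ref{34})), which swaps $\vp_a$ to $1/\vp_a$ and flips the sign of the exponent, explaining the $e^{+M_\lambda(t)}$ prefactor and the extra $v_0^2$ from the Jacobian.

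Second, for the limits (\ref{62'}) involving $g^{(i)}_{-x+m+b}(t;s-t)$ I would repeat this with the extended functions (\ref{E4}): now the integrand carries an extra factor $\big(\frac{1+az}{1-a/z}\big)^{s}$ with $s-t=(1+a^2)\theta\tau t^{2/3}$, which is exactly the $t^{2/3}$-order term, so this time $f_1(z)=(1+a^2)\theta\log\frac{1+az}{1-a/z}\cdot(\text{coefficient})$; combined with the $t^{2/3}\log(-z)$ piece coming from $-x+m+b$ via (\ref{62s}) one gets precisely the function $G$ of (\ref{58}), which has $G(v)=G(v_0)-A^2(v-v_0)^2+\cdots$, i.e. $f_1'(v_0)=0$, $f_1''(v_0)=-2A^2\ne 0$ as required by Lemma~\ref{BF}. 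Then $\kappa_1=\frac{f_1''(v_0)}{2\kappa_0^{2/3}}$ works out (using $A^2=\theta\frac{a(1+a)^3}{(1-a)(1+a^2)}$ from (\ref{4})) to $\mp\tau$ and $\kappa_2=\sigma-\xi+2^{1/3}\beta$, giving $\Ai^{(\mp\tau)}(\sigma-\xi+2^{1/3}\beta)$; the $M^\tau_\lambda(t)$ of (\ref{59}) is defined precisely to absorb the subleading exponential. The $\widetilde g^{(i)}$ limits (\ref{62''}) are identical except for the extra factor $\frac{1}{z-a}$ (resp. $z-a$) noted in footnote to (\ref{g}), which is continuous and nonzero at $v_0$ (since $v_0\ne a$), hence contributes only to $f_3$ and to the normalization constant — this accounts for the absence of the $v_0-a$ factor in (\ref{62''}) compared to (\ref{62a}).

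Third, the tail estimates (\ref{D87'}), (\ref{D88'}), (\ref{D89'}) are the uniform-exponential-decay conclusion (\ref{E9}) of Lemma~\ref{BF}, applied along the same steepest-descent contour; one needs only to check that the contour $\Gamma_0$ (or $\Gamma_{0,a}$) can be deformed to the steepest-descent path $\gamma'$ through $v_0$ without crossing the poles at $0$ and $a$ — this is where the constraint $0<a<\rho<1$ and the sign $v_0<0$ matter, and it is essentially the deformation already used implicitly throughout the paper. The variable $b=\beta\rho(2t)^{1/3}$ enters the exponent only through shifting $\lambda\to\lambda+\beta$ (or $\xi\to\xi-2^{1/3}\beta$ etc.), so uniformity in bounded sets of $(\lambda)$ (resp. $(\xi,\tau,\beta)$) is immediate from the stated uniformity in $\kappa_1,\kappa_2$ in Lemma~\ref{BF}.

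The main obstacle I anticipate is purely bookkeeping: verifying that the specific algebraic constants $A,\rho,\theta,v_0$ from (\ref{4}) make the normalization prefactors in (\ref{62a})--(\ref{62''}) match the $\kappa_0,\kappa_1,\kappa_2$ produced by Lemma~\ref{BF} on the nose, including getting every sign right (there are competing signs from $\mathrm{sgn}(v_0)=-1$, from the $(-u)^\ell$ and $(-1)^{\ldots}$ conventions, from the $u\mapsto 1/u$ Jacobian $-u^{-2}$, and from the overall minus in the definitions of the $g$'s). A secondary subtlety is confirming that the ``bounded'' remainder terms $f_3(z)$ are genuinely $O(1)$ uniformly — in particular that the difference between $[\cdot]$ in the scaling (\ref{3'}) for $k,\ell$ and the exact real value contributes only an $O(1)$ correction to the exponent — and that the ${\cal O}(t^{-1/3})$ error in the expansions (\ref{60})--(\ref{61}) is uniform on the relevant part of the contour, so that it does not spoil the limit; both of these are routine given that $v_0$ is strictly interior to the annulus $(a,a^{-1})$ and the integrand is analytic there.
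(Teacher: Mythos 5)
Your proposal is correct and follows essentially the same route as the paper: rewrite each $g^{(i)}$ (resp. $\widetilde g^{(i)}$) as a contour integral of the form $e^{\pm[tf_0+t^{2/3}f_1+t^{1/3}f_2+f_3]}$ with $f_0=F$, $f_1$ a multiple of $G$, and $f_2$ the $\log(-z)$ term; verify the critical-point conditions from (\ref{58}); deform $\Gamma_0$ (resp. $\Gamma_{0,a}$) to a steepest descent (resp. ascent) path through $v_0$; and invoke Lemma \ref{BF} for both the limits and the uniform exponential bounds, with $M_\lambda(t)$ and $M^\tau_\cdot(t/2)$ absorbing the values at $v_0$. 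The only cosmetic difference is that you phrase the $g^{(2)}$ case as a substitution $u\mapsto 1/u$ with a Jacobian, whereas the paper works directly with the second contour form in (\ref{34}) and a steepest ascent contour around $0$ and $a$, but these are the same computation.
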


\begin{proof}Indeed, in view of a saddle point argument, consider the function $\Re F(x+iy)$, for $F(v)$ as in (\ref{58}); it tends to $-\infty$ near $(x,y)=(a,0)$ and $(x,y)=(-1/a,0)$ and tends to $\infty$ near $(x,y)=(0,0)$ and at $\infty$ on the Riemann sphere; near its saddle point $v=
v_0:= -\frac{1-a}{1+a}<0,%~~~~A^3=\frac{a(1+a)^5}{(1-a)(1+a^2)},~~A_2>0
$ the function $\Re F(x+iy)$ looks like $\Re (x+iy)^3$, with the three separatrices through $v=v_0$ in the upper-half plane $y>0$  corresponding to the three separatrices of the $\Re (x+iy)^3$ profile in $ y>0$; they are indicated by dotted lines in Figure 18. 
So, the levels profile consists of (topological) circles about $0,~a$, $-a^{-1}$ and $\infty$, separated by the separatrices. A path $\cal C$ of steep descent departing from the saddle point will be given by, say, extending a small straight line making an angle $ \pi/6<\theta<\pi/2$, as indicated in figure 18, 
 until it hits a level curve in that quadrangle, which one then follows until it hits the $x$-axis.  %\footnote{\label{foot7}To show the existence of such level curves, one connects $0$ to $a$ by means of a continuous family ${\cal F}$ of curves in the positive upper-half plane (hitting the straight line departing from $v_0$), along which $\Re F(x+iy)$ goes from $+\infty$ to $-\infty$; in particular the straight line connecting $0$ and $a$ has that property. Then it is possible to construct a continuous family of {\bf level curves} for $\Re F(x+iy)$, which are transversal to that family ${\cal F}$.},
 Then one takes the mirror-image with respect to the $x$-axis. The contour thus obtained winds around $0$, but not around $a$. So, for the function $g_\ell^{(1)}$, one is in the condition of Lemma \ref{BF}.

The function $g_\ell^{(1)}$, involving integration about a contour $\Gamma_0$ (about $0$ and not about $a$), takes on the following form, using the first equality of (\ref{61}),%and an integrand
$$%\be
g_\ell^{(1)}(2t)
=\oint_{\Gamma_0} \frac{-dv}{2\pi i}%(-u)^{\ell-2m}
(1-\tfrac av )e^{2t\Phi(v)+(\ell-2m)\log(-v)} 
= \oint_{\Gamma_0}\frac{-dv}{2\pi i}
e^{2tf_0(v)+  (2t)^{1/3}f_2(v)+f_3(v)}, %~~\mbox{  for } s:=\lambda-\tilde \sigma
$$% \ee
with %$s:=\lambda-\tilde \sigma$ and 
$$
f_0(v)=F(v),~~f_2(v)=\lambda \rho\log (-v),~~  f_3(v)=\log(-v)+\log(1-\tfrac av).
$$
with
$$
f_0'(v_0)=f_0''(v_0)=0,~~\tfrac 12 f_0'''(v_0)= A^3>0, ~~f_2'(v_0)=-A\lambda <0,
$$
satisfying the requirements of Lemma \ref{BF}.

Then, by Lemma \ref{BF}, one has that for any $L>0$, keeping in mind the definition (\ref{59}) of $M_\lambda(t)$,
\be\left|  (2t)^{1/3} g_\ell^{(1)}(2t) e^{-2tf_0(v_0)- (2t)^{1/3}f_2(v_0)}\right|
=\left|  (2t)^{1/3} g_\ell^{(1)}(2t) e^{- M_\lambda(t)}\right|<c_0 e^{-c_1\lambda},\label{62u}\ee
uniformly for $\lambda>-L$, with positive constants $c_0,~c_1$, independent of $t$. Then from (\ref{61}) and using the usual saddle argument, it follows that for $\lambda \in K$, a bounded set,
$$
(2t)^{ 1/3}e^{- M_\lambda(t)}g_\ell^{(1)}(2t)
= \frac{a-v_0}{A}
\int _{\nearrow \atop \nwarrow }  \frac{dV}{2\pi i}e^{\frac 13 V^3-\lambda V}(1+{\cal O}(t^{-1/3}))
,$$
with ${\cal O}(t^{-1/3})$ just depending on the set $K$. The uniformity follows from Lemma \ref{BF}. Indeed, in the limit $t\to \infty$, the only contribution of the integral comes from a small neighborhood of the path $\cal C$ of steepest descent, near the saddle point $v_0$ in $v$-coordinates; this yields two rays emanating from $0$ in $V$-coordinates 
 with an angle $ \pi/6<\theta<\pi/2$. This proves the uniform limit (\ref{62a}) for $g_\ell^{(1)}(2t)$.%, which (\ref{62u}) shows is uniform for $\lambda$ in bounded sets.

The limit  (\ref{62a}) for $$
g^{(2)}_{\ell}(2t)=   \oint_{\Gamma_{0,a}} \frac{-dv}{2\pi i}\frac{ e^{-n\Phi(v)-(\ell-2m)\log(-v)}}{v^2(1-\tfrac av)}
  $$ is similar, except that, since $n\Phi(v)+(\ell-2m)\log(-v)$ appears with a minus sign in the exponential, one looks for a path of steepest ascent. 
 Such a path consists of a %the following 
 loop about $0$ and $a$, passing through the saddle: take a line in the direction $ \pi/2<\theta<2\pi/3$, until it hits a level curve, then follow this level around $a$; then, take the mirror image with respect to the $x$-axis. %\footnote{A rigorous argument similar to the one in footnote \ref{foot7} can be made here as well.}. 
 So, this leads to a steepest ascent contour about $0$ and $a$, as depicted in Figure 18. Then, by Lemma \ref{BF}, or more directly upon taking the limit, when $t\to \infty$, the only contribution comes again from a very small neighborhood of $v_0$. One also has a similar uniform  estimate by the arguments of Lemma \ref{BF}, for any $L>0,~\lambda>-L$,
 \be\left|  (2t)^{1/3} g_\ell^{(2)}(2t) e^{ 2tf_0(v_0)+\lambda (2t)^{1/3}f_2(v_0)}\right|
=\left|  (2t)^{1/3} g_\ell^{(2)}(2t) e^{  M_\lambda(t)}\right|<c_0 e^{-c_1\lambda},\label{62v}\ee
with $t$-independent positive constants $c_i$.

 \newpage
 \vspace*{-2cm}
 
  \hspace*{-1cm} 
 {\includegraphics[width=150mm,height=150mm]{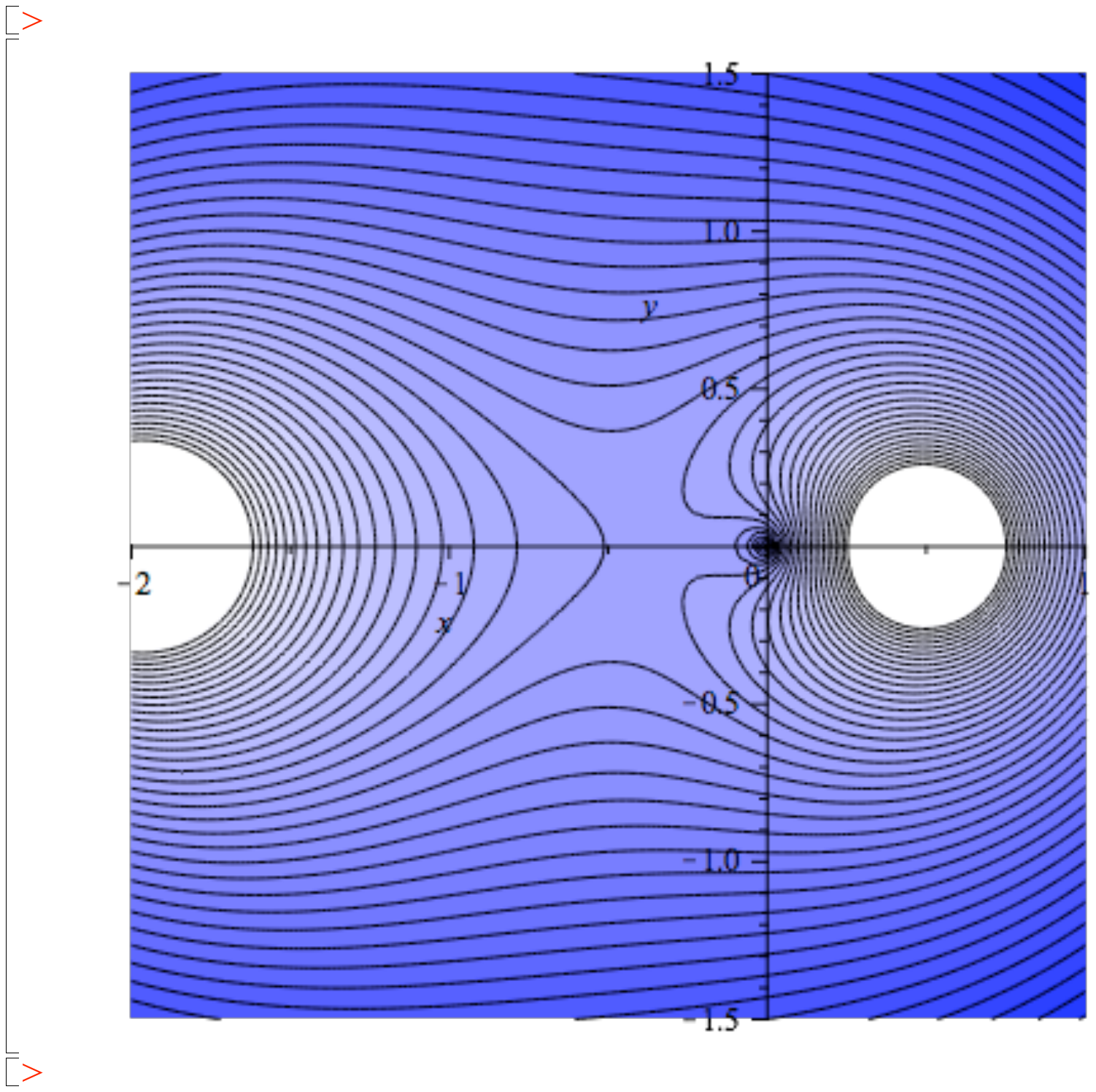}} 
 
  \vspace*{-10.55cm}

% \hspace{6.5cm} ${}_{ z_0} \bullet$

 \vspace{.26cm}
 
  \hspace*{5cm}
  \setlength{\unitlength}{0.015in}\begin{picture}(0,0)
%
 % \put(0,0){\circle{10}}
  \thicklines
    \put(45,3){\line(-1,3){8}}
    \put(45,3){\line(-1,-3){7.5}}
     %\put(45,3){\vector(0,-1){35}}
     %\put(45, 27){\vector(0,-1){2}}
     
          \put(65,38){\vector(-3,-1){1}}
        \put(69,-32){\vector( 3,-1){1}}
       
          \put(140,39){\vector(-4,3){1}}
        \put(140,-30){\vector( 4, 3){1}}
        
            \put(39,21){\vector(  1,-3){1}}%@@
              \put(39.5,-14){\vector(-1,-3){1}}
        %small contour
    
      \put(45,3){\line( 2, 1){41}}
    \put(45,3){\line(2,-1){41}}
     \put(70,15){\vector(-2,-1){1}}
      \put(67,-8){\vector(2,-1){1}}
       \put(80,10){\vector(1,5){1}}
       \put(80,-10){\vector(1,5){1}}

    \put(45,3){\makebox(0,0) {\footnotesize{$\bullet$}}}
    \multiput(45,3)( 0,1){20}{\line( 0,1){.1}}
 \multiput(45,3)( 0,-1){20}{\line( 0,-1){.1}}
 
    \multiput(45,3)( 6,1){4}{\line( 6,1){3}}
 \multiput(45,3)( 6,-1){4}{\line( 6,-1){3}}

    \multiput(45,3)( -6,4){7}{\line( -6,4){2}}
 \multiput(45,3)( -6,-4){7}{\line( -6,-4){2}}
 
 \put(113,3){\makebox(0,0) {\tiny{$\bullet$}}}
  \put(110,6){\makebox(0,0) {\footnotesize{$a$}}}
  \put(-88,3){\makebox(0,0) {\tiny{$\bullet$}}}
   \put(-83,9){\makebox(0,0) {\footnotesize{$ -1/a$}}}

  \put(67,9){\makebox(0,0) {\footnotesize{$ z$}}}
    \put(50,15){\makebox(0,0) {\footnotesize{$ w$}}}
   %  \put(5,-60){\circle*{5}}
  % \put(8,-60){\line(1,0){35}}
   
 %  \put(-98,-60){\line(1,0){39}}
   %\put(0,0){\makebox(0,0){ $z_0$}} 
%
\end{picture}
 
 \vspace*{ 3cm}
%\hspace{-3.2cm}  

%\vspace{-10cm}\hspace*{5.1cm}

 \hspace*{-1cm} {\includegraphics[width=150mm,height=150mm]{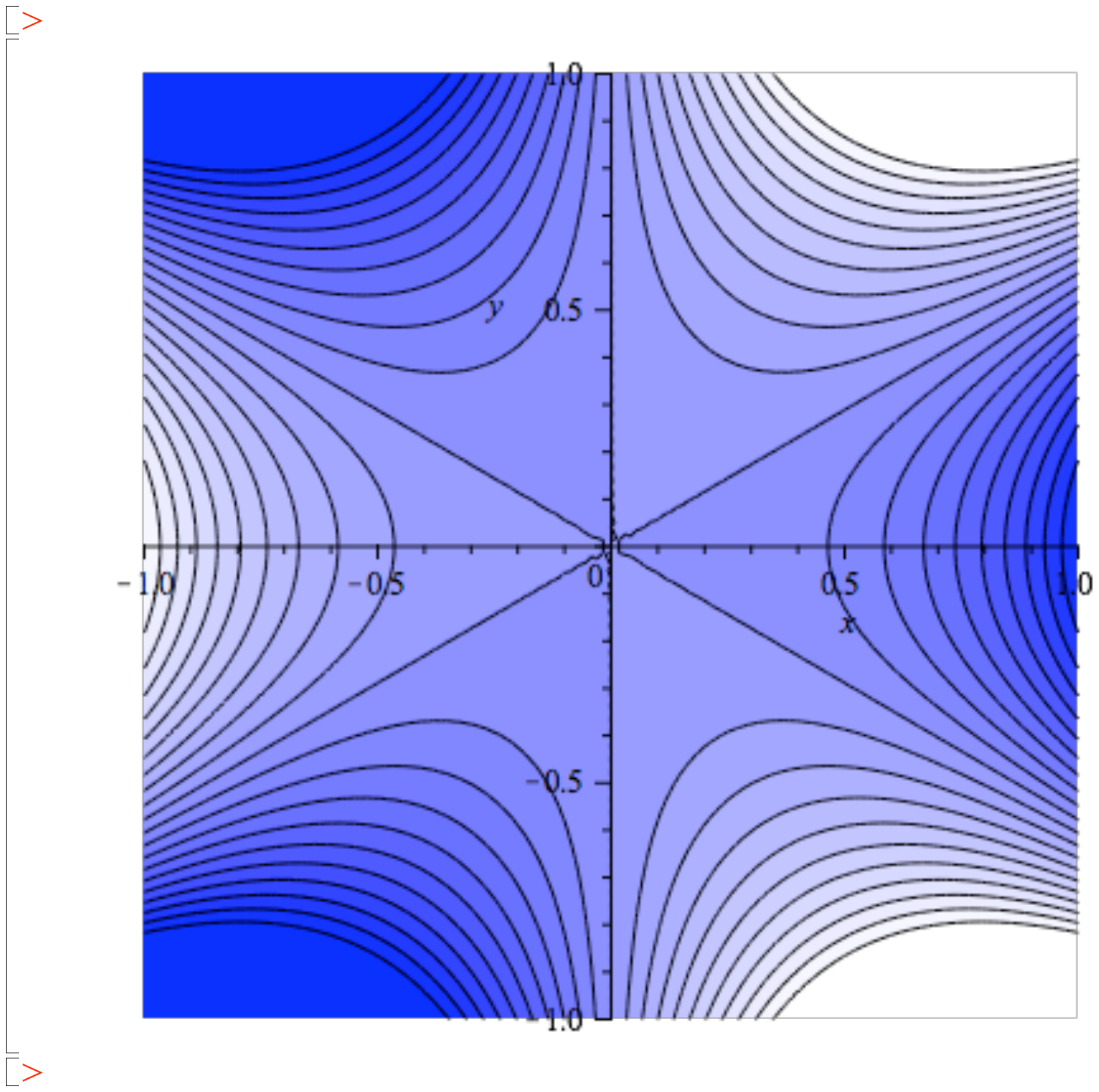}}

%\vspace*{-7.2cm}$\hspace*{-1.0cm}~~~%\searrow 
% \mbox{\tiny $- 1/a$} \hspace{3cm} v_0\!\!\nearrow\hspace{1.1cm} \mbox{\tiny $a$} \hspace{4.7cm} \nearrow$

 \vspace*{ -6cm}
 
Figure 18.~~Plot of level curves\footnote{Near very dark regions the function tends to $\infty$ and near clear regions to $-\infty$}  for $
\Re F(x+iy)$, for $F(v)$ as in (\ref{58}) with $a=1/2$ and saddle at $v_0=-\tfrac{1-a}{1+a}=-1/3.$ The six dotted curves (departing from $v_0$) are the separatrices of the level profile of $
\Re F(x+iy)$ and correspond in the lower picture to the six separatrix curves of $ \Re (x+iy)^3$ departing from the origin.%$ for $a=1/2$.

%\vspace{ 13cm}

\bigbreak

\newpage

% with an angle $ \pi/6<\theta<\pi/2$. This proves the limit (\ref{62}) for $g_\ell^{(1)}(2t)$, which (\ref{62u}) shows is uniform for $\lambda$ in bounded sets.

  %\newpage
%To be precise,
%\be\begin{aligned}
%\lim_{t\to \infty} (2t)^{1/3}\frac A {v_0-a} e^{-2M_\lambda (t)}g_{\frac{4t}{a+a^{-1}}+\lambda  (- Av_0) (2t)^{1/3}+1}^{(1)}(2t)&=
%\Ai (\lambda )\\
%\lim_{t\to \infty} (2t)^{1/3} A {(v_0-a)}v_0^2 e^{2M_\lambda (t)}g_{\frac{4t}{a+a^{-1}}+\lambda  (- Av_0) (2t)^{1/3}+1}^{(2)}(2t)&=
%\Ai (\lambda ).\\
% \end{aligned}\label{62a}\ee

 To prove (\ref{62'}), we invoke Lemma \ref{BF}, applied to the following functions, with the scaling (\ref{2'}) and (\ref{3'}), together with $b=\beta\rho (2t)^{1/3}$. So we must prove that
 $$
\begin{aligned}
 g^{(1)}_{ -x +m+b }(t;s-t) =-I^+(t) \mbox{  and  }
 g^{(2)}_{ -x +m+b }(t;t-s)=-I^-(t)
%\end{array}  \right\}= I^{\pm}_t.
\end{aligned}
$$
with $I^{\pm}_t$ of the form (\ref{D85'}). Using the explicit expression (\ref{E4}) of $g_\ell^{(i)}(t;s-t) $, using expression (\ref{60}) for $\Phi(z)$, formula (\ref{58}) for $G(z)$, and the scaling (\ref{62s}) for $-x+m+b$, one checks : %from (\ref{2'}) and (\ref{3}),
%$$
% b-x    =- 2a^2\theta\tau t^{2/3}+( -\xi+ \beta 2^{1/3})\rho t^{1/3} .
%$$
 %Thus we have
$$\begin{aligned}
 \lefteqn{g^{(1)}_{ -x +m+b }(t;s-t)}
 \\
 &=-\oint_{\Gamma_{0,a}}
 \frac{dz}{2\pi i}  
 e^{\tfrac n2\Phi(z)+(b-x )\log(-z)+(s-t)\log \tfrac{1+az}{1-\tfrac az}+\log (1-\tfrac az)} 
 \\
 &=-\oint_{\Gamma_{0,a}}
 \frac{dz}{2\pi i}  
 e^{tF(z)+ G(z)\tau t^{2/3}+(\sigma-\xi+\beta 2^{1/3})\log(-z)\rho t^{1/3}+\log (1-\tfrac az)},
\end{aligned} $$
and similarly for $g^{(2)}_{ -x +m+b }(t;s-t)$. So, for both cases: 
$$
\begin{aligned}
%g^{(1)}
  f_0(z)&=F(z), ~~f_1(z)= \tau G(z),~~f_2(z)=(\sg -\xi +2^{1/3}\beta)\rho\log (-z)
\\
f_3(z)&=\left\{\begin{aligned}&\log (1-\tfrac{a}{z}),\mbox{   for  } I_t^+\\
             &\log ((1-\tfrac{a}{z})z^2)
             ,\mbox{   for  } I_t^-
             \end{aligned}\right.
,\end{aligned}
$$
for which one checks from (\ref{58}) and (\ref{4})  that
$$\kappa_0 =A^3,~~~\kappa_1=-\tau,~~~\kappa_2=\sg -\xi +2^{1/3}\beta,
$$
and consequently, from (\ref{59}),
$$
tf_0(v_0)+t^{2/3}f_1(v_0)+t^{1/3}f_2(v_0)+f_3(v_0)=M^{\tau}_{ \sg-\xi +2^{1/3}\beta }(\tfrac{t}{2})
+\left\{\begin{aligned}
&\log \tfrac{v_0-a}{v_0}\\
&\log   (v_0\!-\!a)v_0
\end{aligned}
\right.  .%f_3(v_0)
$$
This and $\mbox{sgn} ~v_0=-1$ leads to the limit (\ref{62'}), together with the bound (\ref{D88'}).

\bigbreak

To see (\ref{62''}), one shows that%set in Lemma \ref{BF}
$$
\begin{aligned}  
 \tilde g^{(1)}_{ x-m+\ell }(t;s-t)=I^{+}_t
 \mbox{  and  }
 \tilde g^{(2)}_{ x-m+\ell }(t,s-t)=I^{-}_t
\end{aligned}
$$
with $I^{\pm}_t$ as in Lemma \ref{BF}. Using (\ref{62s}), one checks, for both cases,
$$
\begin{aligned}
 f_0(z)&=F(z), f_1(z)=- \tau G(z),~~f_2(z)=(-\sg +\xi +2^{1/3}\lb)\rho\log (-z)  
\\
f_3(z)&=\left\{\begin{aligned}&0,\mbox{   for  } I_t^+\\
             &\log z^2
             ,\mbox{   for  } I_t^-
             \end{aligned}\right.
, \end{aligned}
$$
for which one checks that
$$\kappa_0 =A^3,~~~\kappa_1= \tau,~~~\kappa_2=-\sg +\xi +2^{1/3}\lambda.
$$
and consequently
$$
tf_0(v_0)+t^{2/3}f_1(v_0)+t^{1/3}f_2(v_0)+f_3(v_0)=M^{-\tau}_{ -\sg+\xi +2^{1/3}\lambda }(\tfrac{t}{2})
+\left\{\begin{aligned}
&0\\
& \log ( v_0^2)
\end{aligned}
\right.
$$
This leads to the limit (\ref{62''}) and the bound (\ref{D89'}), ending the proof of Lemma \ref{L6.1}.\end{proof}

\subsection{Limit of  $a_{-y,s}(k)$, $b_{-x,r}(\ell)$, $K^{(i)}_{k\ell} $ and $S(2r,x;2s,y)$%)and the kernel $K_{k\ell}^{(i)}$the ingredients of the perturbation representation of $\BK^{\rm ext}_{n,m}$
}

In the usual scaling (\ref{2'}) and (\ref{3'}) one replaces $x$ by two variables $x$ and $y$, and also $s$ by $r$ and $s$, leading to new variables $\xi_1,~\xi_2$ and $\tau_1,~\tau_2$ ; so the scaling is now:
\be
\begin{aligned}
&\mbox{scaling  (\ref{2'}) and (\ref{3'}), together  with }\\
 &x=2a^2\theta \tau_1 t^{2/3}+\xi_1 \rho t^{1/3},~~~~~y=2a^2\theta \tau_2 t^{2/3}+\xi_2 \rho t^{1/3}
\\
&r =t+(1+a^2)\theta\tau_1 t^{2/3},~~~~~~~s =t+(1+a^2)\theta\tau_2 t^{2/3}
.\\
\end{aligned}
\label{2''}\ee
The reader is reminded of the definitions (\ref{E6}) for $ {\cal A}^{(\tau)}_\xi(\kappa) $ and $K_{\Ai}^{(\tau_1,-\tau_2)}(u_1,u_2)$.
%$${\cal A}^{(\tau)}_\xi(\kappa) := \Ai^{(\tau)} ( \xi+2^{1/3} \kappa)- \int_0^\infty \Ai^{(\tau)} (-\xi+2^{1/3} \beta )\Ai (\kappa+ \beta )d\beta   ;$$
%Also define
%\be K_{\Ai}^{(\tau_1,-\tau_2)}(u_1,u_2):=\int_0^{\iy}\Ai^{(\tau_1)}(u_1+\beta)\Ai^{(-\tau_2)}(u_2+\beta)d\beta    
%\ee

\vspace{.1cm}

\begin{lemma}\label{L6.3} {\em (Scaling limit of $a_{-y,s}(k)$, $b_{-x,r}(\ell)$ and $K^{(1)}_{k\ell}(0)$)} Given the scaling of $x,y$ and $k,\ell,r,s$ as in (\ref{2''}), one obtains for $a_{-y,s}(k)$ and $b_{-x,r}(\ell)$, %one obtains for the scaling of $x$ and $k$ as in         
 %(\ref{3}),
\be\begin{aligned}  \lim_{t\to \infty} 
 {(-1)^m} (\rho v_0t^{1/3}) ~e^{ M^{-\tau_2}_{\xi_2-\sigma+2^{1/3} \kappa}(  t /2)} a_{-y,s}(k)&={\cal A} ^{-\tau_2}_{\xi_2-\sg}(\kappa) 
 \\
 \lim_{t\to \infty} 
 {(-1)^m} (- At^{1/3})~ e^{ -M^{-\tau_1}_{\xi_1-\sigma+2^{1/3} \lambda}(t /2)} b_{-x,r}(\ell)&={\cal A} ^{ \tau_1}_{\xi_1-\sg}(\lambda) 
 \\
%
%&=  \int_0^\infty \Ai (\sigma+\xi+2^{1/3} \beta )
%\Ai (\kappa+ \beta )d\beta + 
%\Ai (-\sigma-\xi+2^{1/3} \kappa)= :{\cal A}_\xi(\kappa)
    %.\end{aligned}\ee
%\be\begin{aligned}
%\\
%\lefteqn{\lim_{t\to \infty} 
%\frac{(-1)^m}{-v_0} e^{- 2M_{-\sigma-\eta+2^{1/3} \lambda}(t/2)} b_{-x,r}(\ell)
%}\\
%&=  \int_0^\infty \Ai (\sigma+\eta+2^{1/3} \beta )
%\Ai (\kappa+ \beta )d\beta  + 
%\Ai (-\sigma-\eta+2^{1/3} \lambda) =  :{\cal A}_\eta(\lambda)
\end{aligned}\label{64c}\ee
with the limit uniform for $\xi_1,~\xi_2,\tau_1,~\tau_2$ in a bounded set. Also for any $L>0$,
\be
\begin{aligned}
\left| t^{1/3} e^{ M^{-\tau_2}_{-\sigma+\xi_2+2^{1/3} \kappa}(  t/ 2)} a_{-y,s}(k)\right|
&<c_0 e^{-c_1(\xi_2+2^{1/3}\kappa)}
\\
\left|\
t^{1/3} e^{ -M^{-\tau_1}_{-\sigma+\xi_1+2^{1/3} \lambda}( t /2)} b_{-x,r}(\ell) \right|
&<c_0 e^{-c_1(\xi_1+2^{1/3}\lambda)}
,\end{aligned}
\label{D96'}\ee
uniformly for $\xi_2+2^{1/3}\kappa$ and $\xi_1+2^{1/3}\lambda> -L$ for $c_0$, $c_1$ positive $t$-independent constants. 
Finally,
 \be \begin{aligned}
 \lim_{t\to \infty}(2t)^{1/3}e^{ M_{\kappa}(t)}K^{(1)}_{k\ell}(0)e^{- M_{\lb}(t)} 
&=
   \lim_{t\to \infty}  (2t)^{1/3}e^{ M_{\lb}(t)}K^{(2)}_{k\ell}(0)e^{- M_{\kappa}(t)} \\
  &=  \int_0^{\infty} d\beta ~\Ai(\kappa+\beta)\Ai(\lambda+\beta)= K_\Ai(\kappa,\lambda)  . \end{aligned}\label{62K}\ee 
  where the convergence is uniform for $\kappa,~\lambda $ in bounded sets. One has that
\be
  \left| (2t)^{1/3}e^{ M_{\lb}(t)}K^{(2)}_{k\ell}(0)e^{- M_{\kappa}(t)} \right|<c_0e^{-c_1(\kappa+\lambda)}
\label{D90'}\ee
uniformly for $\kappa,~\lambda>-L$, with $c_0$ and $c_1$ positive $t$-independent constants.
\end{lemma}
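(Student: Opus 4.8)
The three limits in Lemma \ref{L6.3} are all obtained by feeding the explicit double-integral representations of $a_{-y,s}$, $b_{-x,r}$ and $K^{(i)}_{k,\ell}(0)$ into the asymptotic machinery already set up, principally Lemma \ref{BF} and Lemma \ref{L6.1}. The key observation is that each of these quantities has been written in Proposition \ref{Prop4.2} and Lemma \ref{L31} as a \emph{convolution} (a $\sum_{b\geq 0}$ or $\sum_{\alpha\geq 0}$) of two $g^{(i)}$-type functions, plus in the $a,b$ cases one extra $\widetilde g^{(i)}$-term. So the natural route is: (i) rescale the discrete summation index $b$ (resp.\ $\alpha$) by $b=\beta\rho(2t)^{1/3}$, turning the sum into a Riemann sum; (ii) apply the pointwise limits (\ref{62a}), (\ref{62'}), (\ref{62''}) of Lemma \ref{L6.1} to the two factors inside the summand; (iii) use the exponential bounds (\ref{D87'}), (\ref{D88'}), (\ref{D89'}) of that same lemma to dominate the summand uniformly, so that dominated convergence lets the Riemann sum converge to the corresponding integral over $\beta\in(0,\infty)$. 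The exponential prefactors $e^{\pm M^{\tau}_{\lambda}(t)}$ are rigged exactly so that the products of normalizing factors from the two $g^{(i)}$ pieces telescope: one checks $M^{-\tau}_{\xi-\sigma+2^{1/3}\kappa}(t/2)$ splits additively as $M^{(1)}_{\cdot}+M^{(2)}_{\cdot}$ across the two factors up to the $\log(-v_0)$-bookkeeping, so the limit is clean.

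\textbf{Step-by-step.} First, for $K^{(1)}_{k,\ell}(0)=\sum_{\alpha\geq 0}g^{(1)}_{\ell+\alpha}(n)\,g^{(2)}_{k+\alpha}(n)$ from (\ref{35a}): set $\alpha=\beta\rho(2t)^{1/3}$, insert (\ref{62a}) for each factor (with $\ell+\alpha\leftrightarrow\lambda+\beta$ and $k+\alpha\leftrightarrow\kappa+\beta$), and note $M_{\lambda+\beta}(t)=M_\lambda(t)+\beta\rho(2t)^{1/3}\log(-v_0)$, so that $e^{M_\kappa(t)}K^{(1)}_{k,\ell}(0)e^{-M_\lambda(t)}$ has the two $\beta$-dependent exponentials cancel, leaving a clean sum $\sum_\beta (\text{const})\Ai(\lambda+\beta)\Ai(\kappa+\beta)$; the spacing $\Delta\beta=1/(\rho(2t)^{1/3})$ combines with the normalizing factor $(2t)^{1/3}$ to give $\int_0^\infty\Ai(\kappa+\beta)\Ai(\lambda+\beta)d\beta=K_{\Ai}(\kappa,\lambda)$. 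The bound (\ref{D90'}) follows from (\ref{D87'}) applied termwise, since $\sum_\beta e^{-c_1(\kappa+\beta)}e^{-c_1(\lambda+\beta)}$ is a convergent geometric-type sum bounded by $c_0e^{-c_1(\kappa+\lambda)}$. Second, for $a_{-y,s}(k)$: use the formula in (\ref{ab0}),
$$a_{-y,s}(k)=(-1)^m\Bigl(\sum_{b\geq 0}g^{(1)}_{-y+m+b}\bigl(\tfrac n2,s-\tfrac n2\bigr)g^{(2)}_{k+b}(n)+\widetilde g^{(2)}_{y-m+k}\bigl(\tfrac n2,s-\tfrac n2\bigr)\Bigr),$$
where one must be careful: $-y+m+b$ and $k+b$ are scaled as in (\ref{62s}), so the first summand limit comes from (\ref{62'}) (for $g^{(1)}$, giving $\Ai^{(-\tau_2)}(\sigma-\xi_2+2^{1/3}\beta)$) times (\ref{62a}) (for $g^{(2)}$, giving $\Ai$), producing the integral $\int_0^\infty \Ai^{(-\tau_2)}(\sigma-\xi_2+2^{1/3}\beta)\Ai(\kappa+\beta)d\beta$, while the extra $\widetilde g^{(2)}$-term, by (\ref{62''}), contributes $\Ai^{(-\tau_2)}(\xi_2-\sigma+2^{1/3}\kappa)$ (note the sign flip $\xi_2-\sigma$ vs.\ $\sigma-\xi_2$); together these assemble precisely into ${\cal A}^{-\tau_2}_{\xi_2-\sigma}(\kappa)$ by definition (\ref{E6}). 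The bound (\ref{D96'}) is again a termwise application of (\ref{D87'})–(\ref{D89'}). The computation for $b_{-x,r}(\ell)$ is entirely parallel, using the second formula of (\ref{ab0}) and the $+\tau_1$-versions of the limits.

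\textbf{Main obstacle.} The genuine work is not the two pointwise applications of Lemma \ref{L6.1} — those are bookkeeping — but rather justifying the interchange of limit and infinite sum uniformly in the bounded parameter region, and tracking the exponential normalizations so that the scalar prefactors $\rho v_0 t^{1/3}$, $-At^{1/3}$, $(2t)^{1/3}$ etc.\ come out exactly right rather than off by an $a$-dependent constant. The subtle point is that the domination in (\ref{D87'})–(\ref{D89'}) is stated \emph{uniformly for $\lambda>-L$} and for the scaled summation index, so one needs $\kappa+\beta>-L$ and $\xi+2^{1/3}\beta>-L$ to hold for all $\beta\geq 0$, which is automatic once $\kappa,\xi$ lie in a bounded set and $L$ is chosen large enough; then $\sum_{\beta} e^{-c_1\beta}\cdot(\text{spacing})$ is uniformly summable and dominated convergence applies. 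A secondary check is that the $(-1)^m$ prefactors and the $\log(v_0-a)$, $\log v_0$ residual constants appearing in the normalizations of $g^{(i)}$ and $\widetilde g^{(i)}$ (visible in the $f_3(v_0)$ evaluations in the proof of Lemma \ref{L6.1}) combine correctly — one verifies, e.g., that $\frac{1}{v_0-a}\cdot\frac{1}{v_0^2(v_0-a)}$ from the two $g$-factors matches the single $\frac{1}{v_0}$ (or $\frac{1}{v_0^2}$) appearing in the normalizer on the left of (\ref{64c}), using $A^{-1}$ for each integration variable and the residual $\log$-terms. Once this accounting is done, all three displays follow.
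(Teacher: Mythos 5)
Your proposal follows essentially the same route as the paper's proof: rewrite $a_{-y,s}$, $b_{-x,r}$ and $K^{(1)}_{k\ell}(0)$ via the series representations (\ref{ab0}) and (\ref{35a}), rescale the summation index $b=\beta\rho(2t)^{1/3}$ to form a Riemann sum, apply the termwise limits and exponential bounds of Lemma \ref{L6.1} with dominated convergence, and use the additivity of the $M$-exponents (the paper's identity (\ref{64m})) to make the prefactors telescope, with the residual $\widetilde g^{(i)}$-terms supplying the $\Ai^{(\mp\tau)}$ summand in the definition of ${\cal A}^{\tau}_{\xi}$. The decomposition, the key lemmas invoked, and the domination argument all coincide with the paper's, so the proposal is correct and not a genuinely different proof.
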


\begin{proof}%\noindent{\bf The scaling limit of $a_x(k)$:} 
In the limits below, the $M$'s defined in (\ref{59}) satisfy the following identity, useful later on:
\be\begin{aligned}
M^{\tau}_{\sigma-\xi+2^{1/3} \beta} (t/2) 
  &+M^{-\tau}_{-\sigma+\xi+2^{1/3} \kappa} (t/2)
=M_{\kappa+\beta}(t)%  \\
%  M_{\lambda+\beta}(t)-M_{\sigma+\eta+2^{1/3} \beta} (t/2) 
%  &=M_{-\sigma-\eta+2^{1/3} \lambda} (t/2).
.\end{aligned}\label{64m} \ee
 Setting $n=2t$, as usual, and using (\ref{62a}), (\ref{62'}) and (\ref{64m}), one checks, by substituting the scaling (\ref{62s}) for $-x+m+b$ and $k+b$, with $b=\rho\beta(2t)^{1/3}$, that
\be\begin{aligned}
& 
\lim_{t\to \infty} \rho v_0  t^{1/3}~e^{  M^{-\tau_2}_{ -\sigma+\xi_2+2^{1/3} \kappa} (t/2)}\sum_{b=0}^\infty %d\beta (-Av_0)(2t)^{1/3}
g^{(1)}_{-x_2+m+b}(t; s-t)
g_{k+b}^{(2)}(2t)  
\\
=&-\lim_{t\to \infty}%\frac{\sqrt{d}}{t^{1/3}  (-Av_0)  }
%e^{2M_{ \sigma-\xi+2^{1/3} \kappa} (t/2)}
 \sum_{\beta\in \rho^{-1}(2t)^{-1/3}{\mathbb N}}   \frac{(\Dt b)}{\rho (2t)^{1/3}}
  \left(t^{1/3}\frac {Av_0} {v_0\!-\!a} e^{- M^{\tau_2}_{\sigma-\xi_2+2^{1/3} \beta} (t/2)}
g^{(1)}_{-x_2+m+b}(t;s\!-\!t)\right)
\\
&%\left(t^{1/3}\frac {Av_0} {v_0-a} e^{- M^{\tau_2}_{\sigma-\xi_2+2^{1/3} \beta} (t/2)}
%g^{(1)}_{-x_2+m+b}(t;s-t)\right)%\\
%&
 ~~~~~~~~~~~~~~~~~~~~~~~~~~~~~~~~~~~~~~~
\left( (2t)^{1/3} A {(a-v_0)}v_0^2 e^{ M_{\kappa+\beta} (t)}g_{k+b}^{(2)}(2t)\right)%  
\\
&=- \int_0^\infty \Ai^{(-\tau_2)} (\sigma-\xi_2+2^{1/3} \beta )
\Ai (\kappa+ \beta )d\beta .\end{aligned}
\label{64'} \ee
%Very much in the same way as in the proof of (\ref{64}), one shows that t
The last limit holds by dominated convergence, using the estimates (\ref{D88'}) and (\ref{D87'}); the sum above does not exceed in absolute value the following sum
$$
(2t)^{-1/3} c_0\sum_{\beta\in \rho^{-1}(2t)^{-1/3}{\mathbb N}}  e^{-c_1(\sigma-\xi_2+2^{1/3}\beta)}e^{-c_1 (\kappa+\beta)}
 \leq c'_0 e^{-c_1( -\xi_2+\kappa)}.
 $$
%
%
%\be\begin{aligned}
%&\lim_{t\to \infty} -t^{1/3}(-Av_0){ (v_0-a)v_0 } e^{2M_{-\sigma-\xi+2^{1/3} \kappa}(t/2)}\widetilde g^{(2)}_{-x-m+k }(t)\\
%&~~~~~~~~~~~~~~~~~=( v_0-a) \Ai (-\sigma-\xi+2^{1/3} \kappa)
% \end{aligned}
%\label{64''} \ee
Also, from (\ref{62''}),  
\be\begin{aligned}
&\lim_{t\to \infty}(\rho v_0 t^{1/3})~
e^{ M^{-\tau_2}_{-\sigma+\xi_2+2^{1/3} \kappa}(t/2)}
\widetilde g^{(2)}_{ x_2-m+k }(t;s-t)  
\\
%&=\lim_{t\to \infty} t^{1/3}\frac{-1}{  v_0(v_0-a)}( A   (v_0-a)v_0^2 )  e^{2M_{-\sigma-\xi+2^{1/3} \kappa}(t/2)}
%\widetilde g^{(2)}_{-x-m+k }(t)
%\\
&= \Ai^{(-\tau_2)} (-\sigma+\xi_2+2^{1/3} \kappa)
. \label{64''}\end{aligned}\ee
Combining (\ref{64'}), (\ref{64''}) and (\ref{ab0}), one concludes the first formula of (\ref{64c}). 
 From (\ref{62a}), (\ref{62'}) and (\ref{64m}), it follows in a similar fashion that
\be\begin{aligned}
& 
\lim_{t\to \infty} (-At^{1/3})~e^{ - M^{-\tau_1}_{ -\sigma+\xi_1+2^{1/3} \lambda} (t/2)}\sum_{b=0}^\infty %d\beta (-Av_0)(2t)^{1/3}
g_{\ell+b}^{(1)}(2t)  
g^{(2)}_{-x_1+m+b }(t;t-r)
\\
&=-\lim_{t\to \infty}
% \frac{\sqrt{d}}{  -Av_0 t^{1/3}   }
%e^{2M_{ \sigma-\xi+2^{1/3} \kappa} (t/2)}
 \sum_{b=0}^\infty \frac{(\Dt b)}{\rho(2t)^{1/3}}
 \left((2t)^{1/3}  \frac A {a-v_0} e^{- M_{\lambda+ \beta} (t )}
g^{(1)}_{\ell+b }(2t)\right)
\\&\hspace*{4cm}\times \left( t^{1/3} A {(v_0-a)}v_0  e^{ M^{\tau_1} _{\sigma-\xi_1+2^{1/3}\beta} (t/2)}g_{-x_1+m+b }^{(2)}( t;t-r)\right)%  
\\
&=- \int_0^\infty \Ai^{(\tau_1)}  (\sigma-\xi_1+2^{1/3} \beta )
\Ai(  \beta+\lambda )d\beta .\end{aligned}
\label{64a} \ee
%We also have
%\be\begin{aligned}
%& t^{1/3}\frac{ A}{(v_0-a)}e^{-2M_{-\sigma-\eta+2^{1/3} \lambda}(t/2)}\widetilde g^{(1)}_{-y-m+\ell }(t)=(v_0-a)^{-1}\Ai (-\sigma-\eta+2^{1/3} \lambda).
% \end{aligned}
%\label{64b} \ee
From (\ref{62''}), one obtains 
\be\begin{aligned}
&\lim_{t\to \infty}  (-At^{1/3})~
e^{- M^{-\tau_1}_{-\sigma+\xi_1+2^{1/3}  \lambda}(t/2)}
\widetilde g^{(1)}_{x_1-m+\ell }(t;t-r)
 %
%&=\lim_{t\to \infty}
%t^{1/3}(-v_0(v_0-A))\frac{A}{v_0-a}e^{-2M_{-\sigma-\eta+2^{1/3}  \lambda}(t/2)}\widetilde g^{(1)}_{-y-m+\ell }(t)
%\\
 =   \Ai^{(\tau_1)}  (-\sigma+\xi_1+2^{1/3} \lambda)
.\end{aligned}
\label{64b} \ee
Assembling (\ref{64a}) and (\ref{64b}), one deduces the second formula (\ref{64c}) from (\ref{ab0}). Next one uses (\ref{D87'}), (\ref{D88'}), (\ref{D89'}) and an argument analogous to the previous argument showing the estimate for dominated convergence; one deduces (\ref{D96'}). Finally, $a_{-y,s}(k)$ and $b_{-x,r}(\ell)$ tend to their respective limits uniformly on bounded sets by the ``uniform convergence on bounded sets"-statement in Lemma \ref{BF}.

In order to take the scaling limit of the kernels $K_{k\ell}^{(i)}$, using the scaling (\ref{2''}), one sets $b=\rho\beta (2t)^{1/3}$, and uses the infinitesimal elements
\be\begin{aligned} \frac {\Dt \ell}{\rho(2t)^{1/3}}=d\lambda 
~~\mbox{and}~~ \frac {\Dt b}{\rho(2t)^{1/3}}=d\beta \end{aligned}
 ,\label{63}\ee
 where $\Dt\ell=1$ and $\Dt b=1$; using these, one obtains from (\ref{35a}) and (\ref{62a}),
$$\begin{aligned}
%\lim_{t\to \infty}  
\lefteqn{\lim_{t\to \infty}e^{ M_{\kappa}(t)}K^{(1)}_{k\ell}(0)e^{- M_{\lb}(t)}\Dt \ell}\\
&\stackrel{*}{ =}\lim_{t\to \infty}\frac {\Dt \ell}{(2t)^{1/3}\rho}
\left[\sum_{\beta\in\rho^{-1}(2t)^{-1/3}{\mathbb N}} 
\frac {\Dt b}{(2t)^{1/3}\rho}\right.
\\
& ~~~~~~~\times \Bigl(  
(2t)^{1/3}\frac A{v_0-a} e^{- M_{\lb+\beta} (t)} g^{(1)}_{\bigl[\frac{4t}{a+a^{-1}}+(\lambda+\beta)\rho (2t)^{1/3}+1\bigr]}(2t)
\Bigr) 
\\ &~~~~~~~\times \left.
\Bigl((2t)^{1/3}A{(v_0-a)}v_0^2 e^{ M_{\kappa+\beta} (t)} g^{(2)}_{\bigl[\frac{4t}{a+a^{-1}}+(\kappa+\beta)\rho (2t)^{1/3}+1\bigr]}(2t)\Bigr)
\begin{array}{c}  \\ \\ \\     \end{array} \right] 
\end{aligned}$$
\be \begin{aligned}&=d\lambda \int_0^{\infty} d\beta ~\Ai(\kappa+\beta)\Ai(\lambda+\beta)= K_\Ai(\kappa,\lambda) d\lambda.
 \end{aligned}
\label{64} \ee
Note that the sum in $\stackrel{*}{ =}$ can be viewed as an integral of a piecewise continuous function. Then $\lim_{t\to \infty}$ and integration can be exchanged, by using dominated convergence. Indeed, using (\ref{62u}) and (\ref{62v}), the sum in $\stackrel{*}{ =}$, before taking the limit, denoted (say) by  ${\cal K}^{(1)}_{t}(\kappa,\lambda)$, can be bounded in absolute value by
\be
|{\cal K}^{(1)}_{t}(\kappa,\lambda)|\leq c_0\sum_{\beta\in \rho^{-1}(2t)^{-1/3}{\mathbb N}}
\frac 1{(2t)^{1/3}} 
e^{-c_1(\lambda+\beta)}e^{-c_1(\kappa+\beta)}
\leq c_2 e^{-c_1 ( \lambda+\kappa)}
,\label{D94'}\ee
for $\lambda,\kappa>-L$ and with positive constants $c_i$ independent of $t$, yielding (\ref{D90'}). This justifies the existence of the uniform limit to the Airy kernel for $\kappa$ and $\lambda$ in a bounded set. 
Moreover, since 
$$
(2t)^{1/3}e^{ M_{\lb}(t)}K^{(2)}_{k\ell}(0)e^{- M_{\kappa}(t)} =(2t)^{1/3} e^{ M_{ \lb}(t)}K^{(1)}_{\ell k}(0)e^{- M_{ \kappa}(t)} ,
$$
one deduces from the above
\be
 \lim_{t\to \infty}  (2t)^{1/3}e^{ M_{\lb}(t)}K^{(2)}_{k\ell}(0)e^{- M_{\kappa}(t)}%\Dt \ell
= \lim_{t\to \infty}
(2t)^{1/3}e^{ M_{ \lb}(t)}K^{(1)}_{\ell k}(0)e^{- M_{ \kappa}(t)}%\Dt \ell
=
K_\Ai(\lb,\kappa)  .
\label{65}\ee
This concludes the proof of lemma \ref{L6.3}.
\end{proof} %@@@@@@@

%\newpage

\begin{lemma}\label{L6.4}
{\rm (Scaling limit of $S(2r,x;2s,y)$)}. One obtains for the scaling of $x,y,r,s$ as in (\ref{2''}),
\begin{equation}\begin{aligned}
 {\lim_{t\to \infty}\rho t^{1/3}
(-v_0)^{y-x+r-s}
%@e^{ (M^{-\tau_2}_{ \xi_2}( t/2)- M^{-\tau_1}_{ \xi_1} ( t/2)) }@ 
S(2r,x ;2s,y) } 
 %=\int_0^{\iy}\Ai^{(\tau_1)}(\sg -\xi_1+\beta)\Ai^{(-\tau_2)}(\sg -\xi_2+\beta)d\beta =:  
%
=K_{\Ai}^{(\tau_1,-\tau_2)}(\sg -\xi_1,\sg-\xi_2),
\end{aligned}
\label{E127'}\end{equation}
with uniform convergence for $\xi_1,\xi_2,\tau_1,\tau_2$ in bounded sets.
\end{lemma}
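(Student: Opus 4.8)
The starting point is the double-integral representation of $S(2r,x;2s,y)$ given in \eqref{FG1}, namely
\be\begin{aligned}
S(2r,x;2s,y)
&=\frac{(-1)^{x-y}}{(2\pi i)^2}\oint_{\gamma_{r_3}}dz\oint_{\gamma_{r_2}}\frac{dw}{w-z}
\frac{z^{m-y}}{w^{m-x+1}}\frac{\vp_a(n;z)}{\vp_a(n;w)}
\left(\tfrac{1+az}{1-\frac{a}{z}}\right)^{s-\frac n2}\left(\tfrac{1+aw}{1-\frac{a}{w}}\right)^{\frac n2-r},
\end{aligned}\no\ee
which is a two-variable contour integral of exactly the type handled by the saddle-point analysis already used in Lemma \ref{L6.1}. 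First I would substitute the scaling \eqref{2''}, together with the rescaled integration variables $z=v_0+\zeta/(At^{1/3})$, $w=v_0+\omega/(At^{1/3})$ from \eqref{3'}. Using the expansions \eqref{58}--\eqref{61} for $F$, $G$ and $\log(-v)$ one reads off that the exponent in the $z$-integral takes the form $tF(z)+\tau_1 G(z)t^{2/3}+(\sigma-\xi_2+\cdots)\rho t^{1/3}\log(-z)$ up to lower order (and analogously for $w$ with $\tau_2,\xi_1$ and the opposite sign, since $\vp_a(n;w)$ sits in the denominator and $\tfrac n2-r<0$). The factor $(-v_0)^{y-x+r-s}$ in front is precisely what is needed to cancel the residual powers of $(-v_0)$ produced when one passes from $z^{m-y}w^{x-m-1}$ etc.\ to the rescaled variables; this bookkeeping is routine but must be done carefully.

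The key analytic step is to show that, after dividing by the appropriate exponential normalizations $e^{M^{\tau_1}_{\cdots}(t/2)}$ and $e^{-M^{\tau_2}_{\cdots}(t/2)}$ (see \eqref{59}), the $z$- and $w$-integrals each localize near the saddle $v_0$ and converge to Airy-type integrals. Concretely, the $\frac{1}{w-z}$ kernel should be handled by writing $\frac1{w-z}=\int_0^\infty e^{-(w-z)u}\,du$ on a suitable arrangement of contours (with $\Re(w-z)>0$ guaranteed by the radius ordering $r_3<r_2$), which converts the double integral into a single $u$-integral of a product of two decoupled one-variable contour integrals. Each of those is then exactly of the form \eqref{D85'} in Lemma \ref{BF}, with $f_0=F$, $f_1=\tau_i G$, and a linear-in-$\log(-z)$ term $f_2$; the critical-point conditions $f_0'(v_0)=f_0''(v_0)=0$, $f_0'''(v_0)=2A^3>0$, $f_1'(v_0)=0$ hold by \eqref{58}. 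Applying \eqref{E8} gives the limits $\Ai^{(\tau_1)}(\sigma-\xi_1+2^{1/3}\cdot)$ and $\Ai^{(-\tau_2)}(\sigma-\xi_2+2^{1/3}\cdot)$ (up to the $\mathrm{sgn}(v_0)=-1$ factors and the $u$-dependent shift), and reassembling the $u$-integral
$$
\int_0^\infty \Ai^{(\tau_1)}(\sigma-\xi_1+u)\,\Ai^{(-\tau_2)}(\sigma-\xi_2+u)\,du = K_{\Ai}^{(\tau_1,-\tau_2)}(\sigma-\xi_1,\sigma-\xi_2)
$$
by the definition \eqref{E6}. The exponential bound \eqref{E9} of Lemma \ref{BF}, uniform in the scaled space variables, provides the domination needed to exchange the $u$-integral with the $t\to\infty$ limit and yields the claimed uniformity for $\xi_1,\xi_2,\tau_1,\tau_2$ in bounded sets.

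An alternative, and perhaps cleaner, route is to use the already-proved scaling limits: by \eqref{FG1}, $S(2r,x;2s,y)=\sum_{b\ge0}g^{(1)}_{-y+m+b}(\tfrac n2,s-\tfrac n2)\,g^{(2)}_{-x+m+b}(\tfrac n2,\tfrac n2-r)$, a discrete convolution of exactly the quantities whose limits were established in \eqref{62'} of Lemma \ref{L6.1}. Setting $b=\beta\rho(2t)^{1/3}$, converting the sum to an integral via $\Delta b/(\rho(2t)^{1/3})=d\beta$, and using the additivity identity for the $M$'s (the analogue of \eqref{64m}), the two exponential prefactors combine into the single normalization $(-v_0)^{y-x+r-s}\rho t^{1/3}$ up to the factor matching $K_{\Ai}^{(\tau_1,-\tau_2)}$; dominated convergence is then justified by the bounds \eqref{D88'}, exactly as in the proof of \eqref{64'} and \eqref{64a}. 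This reduces the proof to a short computation essentially identical to the one in Lemma \ref{L6.3}. The main obstacle in either approach is purely bookkeeping: getting all the powers of $(-v_0)$, the $(-1)^{x-y}$ sign, and the various $t^{1/3}$ versus $(2t)^{1/3}$ normalizations to line up so that the prefactor in \eqref{E127'} is exactly right — the analytic content (saddle localization, Airy limit, domination) is already supplied by Lemmas \ref{BF} and \ref{L6.1}. I would therefore present the second (convolution) route as the main proof, invoking \eqref{62'}, \eqref{D88'}, and the $M$-additivity identity, with the saddle-point route mentioned as the direct alternative.
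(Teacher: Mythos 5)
Your proposal is correct and, in the route you designate as the main proof, coincides with the paper's argument: the paper likewise starts from the convolution representation (\ref{FG1}) of $S$, applies the limits (\ref{62'}) of Lemma \ref{L6.1} termwise with the $M$-additivity identity absorbing the prefactor $(-v_0)^{y-x+r-s}$, and concludes by dominated convergence via (\ref{D88'}). The only cosmetic difference is that the paper rescales $b=\beta\rho t^{1/3}$ rather than $\beta\rho(2t)^{1/3}$ precisely to avoid the $2^{1/3}$ factor you flag in your bookkeeping remark.
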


\proof  
At first, one checks that the scaling (\ref{2''}) and the definition (\ref{59}) of $M^{-\tau}_{\xi}$ imply
\be\begin{aligned}
\left(y-x+r-s\right)\log(-v_0)&=
\left((1-a^2)\theta (\tau_1-\tau_2)t^{2/3}+\rho (\xi_2-\xi_1) t^{1/3}\right)\log(-v_0)
\\
&=
M_{\xi_2}^{-\tau_2}(\tfrac t2)
-M_{\xi_1}^{-\tau_1}(\tfrac t2)
\\
&=M^{\tau_1}_{\sg-\xi_1+\beta}(\tfrac{t}{2})-M^{\tau_2}_{\sg-\xi_2+\beta}(\tfrac{t}{2}).%&= \\
%&=\left(y-x+\theta (1+a^2)(\tau_1-\tau_2)t^{2/3}\right)\log(-v_0)
\end{aligned}\label{pref}\ee
Set, as usual, the scaling (\ref{2''}), with $n=2t$, in the third expression (\ref{FG1}) for $S(2r,x ;2s,y)$. However, we now set $b=\beta\rho t^{1/3}$ instead of $b=\beta\rho (2t)^{1/3}$; i.e., replace $2^{1/3}\beta$ by $\beta$. %
%$n=2t$ and use (75), (88) and of course the scaling (126); however, we now set $b=\beta\rho t^{1/3}$ instead of $b=\beta\rho (2t)^{1/3}$, so in effect in (88) and (88)', we need replace $2^{1/3}\beta$ by $\beta$, and then upon using
%$$ M^{-\tau_2}_{\xi_2}(\tfrac{t}{2})-M_{\xi_1}^{-\tau_1}(\tfrac{t}{2})=M^{\tau_1}_{\sg-\xi_1+\beta}(\tfrac{t}{2})-M^{\tau_2}_{\sg-\xi_2+\beta}(\tfrac{t}{2}),$$
Then, one checks, using the limits (\ref{62'}),
$$
\begin{aligned}
&\lim_{t\to \infty}\rho t^{1/3}e^{ (M^{-\tau_2}_{ \xi_2}(t/2)- M^{-\tau_1}_{ \xi_1} (t/2)) } S(2r,x ;2s,y)%\\
\\
&=\lim_{t\to \infty}\sum_{\beta\in\frac{{\mathbb N}}{\rho t^{1/3}}}\frac{\Dt b}{\rho t^{1/3}}\left( t^{1/3}\frac{Av_0}{v_0-a} e^{-M^{\tau_2}_{\sg -\xi_2+\beta}(t/2)}g^{(1)}_{-x_2+m+b}(t;s-t)\right)\\
%\\
&\hspace*{3cm}\left( t^{1/3}Av_0(v_0-a) e^{M^{\tau_1}_{\sg -\xi_1+\beta}(t/2)}g^{(2)}_{-x_1+m+b}(t,t-r)\right)%\\
\\
&=\int_0^{\iy}\Ai^{(-\tau_2)}(\sg -\xi_2 +\beta)\Ai^{(\tau_1)}(\sg-\xi_1+\beta)d\beta =K_{\Ai}^{(\tau_1,-\tau_2)}(\sg-\xi_1,\sg-\xi_2),
\end{aligned}
$$
where we have used the exponential estimates (\ref{D88'}) to get dominated convergence in the above limit to the integral in the style of (\ref{D94'}). The convergence is uniform on bounded sets by uniform convergence on bounded sets in Lemma \ref{BF} and dominated convergence in the integral, concluding the proof of Lemma \ref{L6.4}.

\begin{lemma}\label{L6.5}
Upon using  the scaling of (\ref{2''}), with $\tau_1-\tau_2 >0$, the following limit holds for the function $\psi_{2(s-r)}(x,y) $ in (\ref{Idef}), %
$$
\begin{aligned}
\lim_{t\to \infty}\rho t^{1/3}(-v_0)^{y-x+r-s} (-1)^{x-y}\psi_{2(s-r)}(x,y) 
 =\frac{e^{-\frac{(\xi_1-\xi_2)^2}{4(\tau_1-\tau_2)}}}{\sqrt{4\pi (\tau_1-\tau_2)}} ,
\end{aligned}
$$
with uniform convergence for $\xi_1,\xi_2,\tau_1,\tau_2$ in bounded sets.
\end{lemma}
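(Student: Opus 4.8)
The plan is to compute the limit of the single contour integral
$$
\psi_{2(s-r)}(x,y)= \oint_{\Gamma_{0,a}}\frac{dz}{2\pi i z}\, z^{x-y}\left(\frac{1+az}{1-\frac az}\right)^{s-r}
$$
by the saddle point method, using exactly the setup of Lemma \ref{BF} and the Taylor expansions collected in (\ref{58}). First I would substitute the scaling (\ref{2''}), writing $s-r=(1+a^2)\theta(\tau_2-\tau_1)t^{2/3}$ and $x-y=2a^2\theta(\tau_1-\tau_2)t^{2/3}+(\xi_1-\xi_2)\rho t^{1/3}$, so that the integrand exponent becomes
$$
(x-y)\log(-z) + (s-r)\log\frac{1+az}{1-\frac az} + \text{(bounded terms)},
$$
up to the sign factor $(-1)^{x-y}$ which is absorbed by the $\log(-z)$ versus $\log z$ bookkeeping already used throughout Section 6. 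Recognizing that $(s-r)\log\frac{1+az}{1-\frac az}-2a^2\theta(\tau_2-\tau_1)t^{2/3}\log(-z)$ is precisely $(\tau_2-\tau_1)t^{2/3}G(z)/((1+a^2)\theta)\cdot(1+a^2)\theta=(\tau_2-\tau_1)t^{2/3}\big(G(z)-G(v_0)\big)$ plus a constant, after matching constants one is left with an integral whose exponent, near the saddle $v_0=-(1-a)/(1+a)$, expands as $-(\tau_1-\tau_2)t^{2/3}A^2(z-v_0)^2-(\xi_1-\xi_2)\rho t^{1/3} A (z-v_0)\cdot(\text{const}) + \dots$ — i.e. a pure Gaussian in the scaled variable, with no cubic term surviving because the cubic coefficient comes only from $F$, which is absent here.

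The key point is that, unlike the $g^{(i)}_k$ integrals where $F(z)$ contributes a genuine cubic critical point and hence an Airy function, here only $G(z)$ and $\log(-z)$ appear, so the relevant function $f_0$ in the Borodin–Ferrari lemma is of order $t^{2/3}$, not $t$, and has a nondegenerate quadratic critical point: $G'(v_0)=0$, $G''(v_0)=-2A^2<0$. Thus the appropriate limiting object is a Gaussian integral
$$
\frac{1}{2\pi i}\int_{v_0-i\infty}^{v_0+i\infty} dZ\; e^{(\tau_1-\tau_2)Z^2 - (\xi_1-\xi_2)Z}= \frac{1}{\sqrt{4\pi(\tau_1-\tau_2)}}\,e^{-\frac{(\xi_1-\xi_2)^2}{4(\tau_1-\tau_2)}},
$$
valid precisely because $\tau_1-\tau_2>0$ makes the quadratic form contracting along the vertical contour through the saddle. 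So I would: (i) deform $\Gamma_{0,a}$ to a steepest-descent contour through $v_0$ for $\Re\big((\tau_2-\tau_1)G(z)\big)$ — this is legitimate since the integrand of $\psi$ has no pole between the circle and the saddle contour other than the ones at $0$ and $a$ which $\Gamma_{0,a}$ already encircles appropriately, and since the level structure of $G$ near $v_0$ is that of a quadratic (the relevant picture being a local version of Figure 18); (ii) rescale $z-v_0 = Z/(At^{1/3}\sqrt{?})$ — more precisely $z-v_0$ of order $t^{-1/3}$ as in (\ref{3'}) — and check that the subleading terms and the bounded factor $(1-\frac az)^{?}$, together with the $1/z$ from the measure, contribute only $1+\mathcal O(t^{-1/3})$; (iii) identify the resulting Gaussian integral and the prefactor $(-v_0)^{y-x+r-s}$ with the matching-constant computation in (\ref{pref}), which already shows that $(y-x+r-s)\log(-v_0)=M^{-\tau_2}_{\xi_2}(t/2)-M^{-\tau_1}_{\xi_1}(t/2)$, i.e. the prefactor exactly cancels the exponential prefactor $e^{M^{-\tau_2}_{\xi_2}(t/2)-M^{-\tau_1}_{\xi_1}(t/2)}$ left over from the saddle evaluation.

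The main obstacle, as usual in these saddle arguments, is justifying the contour deformation and the exchange of limit and integration (dominated convergence / tail control on the deformed contour) rather than the formal computation, which is a one-line Gaussian; but here it is genuinely easier than in Lemma \ref{L6.1}, because there is no cubic degeneracy and the tails decay like $e^{-c\,t^{2/3}|Z|^2}$ along the steepest-descent path. One subtlety worth flagging is the hypothesis $\tau_1-\tau_2>0$: it is exactly what guarantees the quadratic form in the exponent has the correct sign so that the Gaussian converges and the steepest-descent contour is the vertical line through $v_0$; for $\tau_1-\tau_2\le 0$ the integral $\psi_{2(s-r)}$ vanishes (the power $s-r$ of $\frac{1+az}{1-a/z}$ is then a nonpositive integer and the residue computation gives $0$), consistent with the indicator $\Id_{s>r}$ appearing in (\ref{OneAztec}) and throughout. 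I would close by remarking that uniformity on bounded sets in $\xi_1,\xi_2,\tau_1,\tau_2$ follows from the uniform-on-bounded-sets statement in Lemma \ref{BF} applied with $f_0=(\tau_2-\tau_1)G$ (scaled down by $t^{1/3}$ so as to fit the $t^{2/3}$ order), or more directly from uniformity of convergence of Gaussian integrals with parameters in a compact set.
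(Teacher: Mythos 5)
Your proposal follows essentially the same route as the paper's proof: substitute the scaling, observe that only $G$ (with its nondegenerate quadratic critical point at $v_0$, $G''(v_0)=-2A^2$) and $\log(-z)$ survive at orders $t^{2/3}$ and $t^{1/3}$, replace $\Gamma_{0,a}$ by $\Gamma_0$ (legitimate since $s-r<0$ removes the singularity at $a$), run a steepest-descent/Gaussian saddle analysis with $Z=\sqrt{2(\tau_1-\tau_2)}\,A(z-v_0)t^{1/3}$, and cancel the prefactor against $(-v_0)^{y-x+r-s}$ via the identity (\ref{pref}). One caveat: your parenthetical claim that $\psi_{2(s-r)}$ \emph{vanishes} when $\tau_1-\tau_2\le 0$ is not correct — for $s\ge r$ the integrand $z^{x-y-1+s-r}(1+az)^{s-r}(z-a)^{r-s}$ generically has nonzero residues at $0$ and $a$; the hypothesis $\tau_1>\tau_2$ is needed here only because the term enters the kernel through the indicator $\Id_{s<r}$ and because it fixes the sign of the quadratic form at the saddle, not because the integral is otherwise zero.
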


\begin{proof} Using the scaling (\ref{2''}) in (\ref{Idef}) and (\ref{58}),
$$
\begin{aligned}
I_t :=(-1)^{x-y}\psi_{2(s-r)}(x,y)&=\int_{\Gamma_{0,a}}\frac{dz}{2\pi iz}(-z)^{x-y} \left(\frac{1+az}{1-\frac{a}{z}}\right)^{s-r}  
\\
&=-\int_{\Gamma_{0 }}\frac{dz}{2\pi i}e^{(t^{2/3}f_0(z)+t^{1/3}f_1(z)+f_2(z))},
\end{aligned}
$$
where we replaced $\Gamma_{0,a}$ by $\Gamma_0$, since $s-r<0$, and with
$$
\begin{aligned}
f_0(z)=(\tau_2-\tau_1)G(z),\quad f_1(z)=(\xi_1-\xi_2)\rho\ln (-z),\quad f_2(z)=-\ln(-z).
\end{aligned}
$$
By (\ref{58}), $G(z)$ experiences a simple critical point at $v_0$, which suggests a saddle point analysis about $v_0$; therefore, set
$$
\begin{aligned}
Z=\sqrt{2(\tau_1-\tau_2)}~A(z-v_0)t^{1/3},
\end{aligned}
$$
and picking a path of steepest descent, %(which we shall exhibit), 
one finds that
$$
\begin{aligned}
I_t&= \frac{e^{ t^{2/3}(\tau_2-\tau_1)  G(v_0)+t^{1/3}\rho(\xi_1-\xi_2)\ln (-v_0))}}{-Av_0\sqrt{2(\tau_1-\tau_2)}~t^{1/3}}
 \left(\int_{-i\infty}^{i\infty}\frac{dZ}{2\pi i}e^{\frac{1}{2}(Z^2-\frac{2(\xi_1-\xi_2)}{\sqrt{2(\tau_1-\tau_2)}}Z)}+{\cal O}(t^{-1/3})\right)
 \\
%\\
%&\left(\int_{-i\infty}^{i\infty}\frac{dZ}{2\pi i}e^{\frac{1}{2}(Z^2-\frac{2(\xi_1-\xi_2)}{\sqrt{2(\tau_1-\tau_2)}}Z)}+{\cal O}(t^{-1/3})\right) \\
%\\
&=\frac{e^{   (M^{-\tau_1}_{ \xi_1}(\tfrac{t}{2})-M^{-\tau_2}_{ \xi_2}(\tfrac{t}{2}) )}}{\rho t^{1/3} } 
 \left(\frac{e^{-\frac{1}{2}(\frac{\xi_1-\xi_2}{\sqrt{2(\tau_1-\tau_2)}})^2}}{\sqrt{4\pi (\tau_1-\tau_2)}}+{\cal O} (t^{-1/3} )\right),
\end{aligned}
$$
with ${\cal O}(t^{-1/3})$ uniform for $\xi_1,\xi_2,\tau_1,\tau_2$ in bounded sets. The proof is completed upon exhibiting a path of steep descent through $v_0$ for the function $G(z)$, or equivalently $\hat G(z)$. 
 $$ \begin{aligned} \hat G(z)&:=\frac{G(z)}{\theta(1+a^2)}-\log(-a)
 \\&=\log(z+a^{-1})-\log (z-a)+\left(\frac{a^{-1}-a}{a^{-1}+a}\right)\log(-z)
 \end{aligned}
 $$
 and note that $\Re \hat G(z)$ at $(-a^{-1},0,a,\iy)$ is $(-\iy,-\iy,+\iy,+\iy)$. To construct a steepest ascent path, through $v_0$, start with a small line segment through $v_0$ making a right angle with the real axis and join it to a level curve in the upper half plane which it meets and follow the level curve until it hits the $x$-axis to the right of $a$; then reflect this entire path in the $x$-axis to complete the path. The construction of such level curves is described in the proof of Lemma \ref{L6.1}. This concludes the proof of Lemma \ref{L6.5}.
\end{proof}

%\newpage

\subsection{Limit of the ingredients of the integral representation of $\widetilde\BK^{\rm ext}_{n,m}$}
\label{Kintrepr}

Recall the scaling (\ref{2'}) and (\ref{3'}), in particular the one of $z$ and $w$. Also remember the following expressions from (\ref{57}), (\ref{Qell}) and also define the kernel ${\cal K}^{(2)}_{t}(\kappa,\lambda)$ :
$$ {\cal Q}(\kappa)  =
 \left[(\Id - \chi_{\tilde \sigma}K_{\Ai}\chi_{\tilde \sigma})^{-1}
\chi_{\tilde \sigma}  \Ai\right]
(\kappa)
$$
 $$
Q^{(i)}_{ \ell}=[(\un -\raisebox{1mm}{$\chi$}{}_{2m+1} K^{(i)}(0)^{\top}\raisebox{1mm}{$\chi$}{}_{2m+1} )^{-1}  \raisebox{1mm}{$\chi$}{}_{2m+1} g^{(i)}](\ell),
$$
$$
{\cal K}^{(2)}_{t}(\kappa,\lambda):=(2t)^{1/3}e^{-M_{\kappa}(t)}K_{k\ell}^{(2)}(0)e^{M_{\lambda}(t)}
\Bigr|_{\tiny scaling ~(\ref{3'})}.$$
One shows the following Lemma:
\begin{lemma}\label{L6.6'} The following limits hold:
\be\lim_{t\to \infty} (2t)^{1/3} \frac{A}{v_0-a}e^{- M_{\kappa }(t)}
Q^{(1)}_{ k} 
=\lim_{t\to \infty} (2t)^{1/3}  A  (v_0-a) v_0^2e^{ M_{\kappa }(t)}
Q^{(2)}_{ k} 
=-{\cal Q}(\kappa)
\label{Q}\ee
\be\begin{aligned}
\lim_{t\to \infty}(v_0-a)v_0e^{ M_{\kappa}(t)}\bar h^{(1)}_k (z^{-1})&=
-\int_0^{\infty} e^{-2^{1/3}\zeta \beta} \Ai(\kappa+\beta)d\beta
,
%\label{68}\ee
%\be
\\
\lim_{t\to \infty} {((v_0-a)v_0)^{-1}}{e^{- M_{\kappa}(t)}}\bar h^{(2)}_k (z^{})
&=
-\int_0^{\infty} e^{2^{1/3}\zeta \beta} \Ai(\kappa+\beta)d\beta
,\end{aligned}\label{69}\ee
\be \begin{aligned}\lim_{t\to \infty}{(v_0-a)^{-1}}e^{- M_{\tilde \sigma}(t)} T^{(1)} (z^{-1})  
  &=
 -e^{ -\tilde \sigma 2^{1/3}\zeta}
 \int_{\tilde \sigma}^{\infty}{\cal Q}(\kappa)e^{\kappa 2^{1/3}\zeta} d\kappa= -e^{ -2 \sigma  \zeta}\hat {\cal Q}(\zeta).%\label{70}
 %\end{aligned}
%\ee
%Similarly
%\be\begin{aligned}
\\
 {\lim_{t\to \infty} {(v_0-a)}e^{  M_{\tilde \sigma}(t)} T^{(2)} (z^{ })}
&= -
 e^{ - \tilde \sigma 2^{1/3}\zeta}
 \int_{\tilde \sigma}^{\infty}{\cal Q}(\kappa)e^{-\kappa 2^{1/3}\zeta} d\kappa= -e^{ 2 \sigma  \zeta}\hat {\cal Q}(-\zeta).
\end{aligned}
\label{71}\ee
\be
\lim_{t\to \infty}  S^{(1)}(z^{-1}) =\hat {\cal P}(\zeta)
,~~~\lim_{t\to \infty}  S^{(2)}(z^{}) =
\hat {\cal P}(-\zeta)
\label{73}\ee
where
$$
\hat {\cal P}(\zeta)=- \int_{\tilde \sigma}^{\infty}{\cal Q}(\kappa)%e^{-\kappa 2^{1/3}\zeta} 
 d\kappa
\int_0^{\infty} e^{-2^{1/3}\zeta \beta} \Ai(\kappa+\beta)d\beta ,
$$
with uniform convergence for $\kappa$ and $\zeta$ in bounded sets.
\end{lemma}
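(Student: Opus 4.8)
The plan is to reduce every limit in Lemma \ref{L6.6'} to the scaling results already established in Lemmas \ref{L6.1} and \ref{L6.3}, combined with the Fredholm-determinant algebra of Section 3. First I would treat \eqref{Q}: writing out the definition \eqref{Qell} of $Q^{(i)}_\ell$ as a geometric-type series $(\Id - \raisebox{1mm}{$\chi$}{}_{2m+1}K^{(i)}(0)^\top\raisebox{1mm}{$\chi$}{}_{2m+1})^{-1}\raisebox{1mm}{$\chi$}{}_{2m+1}g^{(i)}$ and conjugating by the exponential prefactors $(2t)^{1/3}\tfrac{A}{v_0-a}e^{-M_\kappa(t)}$ (resp. the $g^{(2)}$-version), the conjugated kernel $(2t)^{1/3}e^{M_\kappa(t)}K^{(i)}(0)e^{-M_\lambda(t)}$ converges uniformly on bounded sets to $K_{\Ai}(\kappa,\lambda)$ by \eqref{62K}, the conjugated vector $g^{(i)}$ converges to $-\Ai(\kappa)$ by \eqref{62a}, and the discrete index set $\{k\geq 2m+1\}$ becomes $\{\kappa\geq\tilde\sigma\}$ since $2m+1 \mapsto \tilde\sigma$ under the scaling \eqref{3'}. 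The Riemann-sum $\Delta\ell/(\rho(2t)^{1/3})\to d\lambda$ converts the matrix product into the operator integral, and the exponential bounds \eqref{D87'}, \eqref{D90'} supply the dominated-convergence control needed to pass the limit through the (convergent) Neumann series — this is exactly the argument used in \eqref{64}, \eqref{D94'}. The outcome is $-{\cal Q}(\kappa) = -[(\Id-\chi_{\tilde\sigma}K_{\Ai}\chi_{\tilde\sigma})^{-1}\chi_{\tilde\sigma}\Ai](\kappa)$.

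Next, for \eqref{69}, I would use the expansion \eqref{36'} of $\bar h^{(1)}_k(z^{-1}) = -\sum_{\alpha\geq0}(-z)^\alpha g^{(2)}_{k+\alpha}(n)$, scale $z-v_0 = \zeta/(At^{1/3})$ so that $(-z)^\alpha$ becomes $(-v_0)^\alpha(1 - \zeta/(Av_0 t^{1/3}))^\alpha$, set $\alpha = \beta\rho(2t)^{1/3}$ and check that $(-z)^\alpha \to e^{-2^{1/3}\zeta\beta}$ in the limit (using $-1/(Av_0) = 1/\rho$ up to the $2^{1/3}$ bookkeeping), while $(2t)^{1/3}A(v_0-a)v_0^2 e^{M_{\kappa+\beta}(t)}g^{(2)}_{k+\alpha}(n)\to -\Ai(\kappa+\beta)$ by \eqref{62a}; the factor $e^{M_{\kappa+\beta}(t)}/e^{M_\kappa(t)}$ carries the $(-v_0)^{\alpha}$ part of the prefactor, and the estimate \eqref{D87'} again gives dominated convergence for the $\beta$-integral. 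The $\bar h^{(2)}$ case is identical with $\zeta\mapsto-\zeta$. For \eqref{71}, recall from \eqref{40} that $T^{(1)}(z^{-1}) = \sum_{\ell\geq1}Q^{(1)}_{2m+\ell}/(-z)^\ell$; scaling $\ell = \kappa\rho(2t)^{1/3}$ (so the index runs $\ell\geq\tilde\sigma$ after shifting by $2m$), each $1/(-z)^\ell \to e^{-2^{1/3}\kappa\zeta}$ in a manner dual to the previous step, and the prefactor $(2t)^{1/3}A(v_0-a)^{-1}$ times $Q^{(1)}_{2m+\ell}$ converges to $-{\cal Q}(\kappa)$ by \eqref{Q} just proved, so the sum tends to $-e^{-\tilde\sigma 2^{1/3}\zeta}\int_{\tilde\sigma}^\infty {\cal Q}(\kappa)e^{\kappa 2^{1/3}\zeta}\,d\kappa = -e^{-2\sigma\zeta}\hat{\cal Q}(\zeta)$ by definition \eqref{57}; the splitting $\tilde\sigma 2^{1/3} = 2\sigma$ follows from $\tilde\sigma = 2^{2/3}\sigma$. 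Finally \eqref{73}: from \eqref{40}, $S^{(1)}(z^{-1}) = \sum_{\ell\geq 2m+1}Q^{(1)}_\ell \bar h^{(1)}_\ell(z^{-1})$; inserting the limits \eqref{Q} and \eqref{69} with index $\ell=\kappa\rho(2t)^{1/3}$ and Riemann sum $\to d\kappa$ gives $\int_{\tilde\sigma}^\infty(-{\cal Q}(\kappa))\big(-\int_0^\infty e^{-2^{1/3}\zeta\beta}\Ai(\kappa+\beta)d\beta\big)d\kappa = \hat{\cal P}(\zeta)$, and $S^{(2)}$ is the mirror statement.

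The main obstacle, I expect, is not any single limit but the \emph{bookkeeping of the exponential conjugating factors} and the interlocking of two different scaling regimes (the $(2t)^{1/3}$-scaling for the discrete indices $k,\ell$ versus the $t^{1/3}$-scaling used elsewhere for $x,s$), together with the repeated verification that the $2^{1/3}$ and $\rho, A, v_0$ constants recombine correctly — in particular that $-1/(Av_0)$ produces the right coefficient in the exponents $e^{\pm 2^{1/3}\zeta\beta}$ and that the identity $M_{\kappa+\beta}(t) = M^{\tau}_{\cdots}(t/2) + M^{-\tau}_{\cdots}(t/2)$ of \eqref{64m} (at $\tau=0$) is what makes the products of $g$-limits telescope. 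The analytic content — uniform convergence on bounded sets and dominated convergence for the $\beta$- and $\kappa$-sums — is fully supplied by Lemma \ref{BF} and the exponential bounds \eqref{D87'}, \eqref{D88'}, \eqref{D89'}, \eqref{D90'}, \eqref{D96'}, so no new estimate is needed; one simply has to assemble them in the style already displayed in \eqref{64}--\eqref{D94'}. I would present the four displays \eqref{Q}, \eqref{69}, \eqref{71}, \eqref{73} in that logical order, since each later one uses the earlier, and remark at the start that all convergences are uniform for $\kappa,\zeta$ in bounded sets because each building block has that property.
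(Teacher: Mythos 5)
Your proposal is correct and follows essentially the same route as the paper: the paper likewise proves \eqref{Q} first by conjugating the resolvent in \eqref{Qell} with $e^{\pm M_\kappa(t)}$ and invoking \eqref{62K}, \eqref{62a} with dominated convergence, then obtains \eqref{69}, \eqref{71}, \eqref{73} from the series expansions \eqref{36'} and \eqref{40} by the same Riemann-sum and exponential-bound arguments, with $(1-\tfrac{2^{1/3}\zeta}{\rho(2t)^{1/3}})^{\rho\beta(2t)^{1/3}}\to e^{-2^{1/3}\zeta\beta}$ exactly as you describe. The only step you leave implicit is why the Neumann series may be summed uniformly in $t$; the paper supplies this by noting $\|\chi_{\tilde\sigma}K_{\Ai}\chi_{\tilde\sigma}\|<1$, hence $\|\chi_{\tilde\sigma}{\cal K}^{(2)}_t\chi_{\tilde\sigma}\|<1$ for large $t$ and the resolvents are uniformly bounded.
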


\begin{proof}
\noindent{\bf Limits of  $Q_{k}^{(i)}$ }:~
The proof of (\ref{Q}) proceeds along similar lines as in \cite{AFvM12}, namely first one notices that the operator norm of the Airy kernel $|\! | \chi_{\tilde \sigma} K_\Ai(\lb,\kappa) \chi_{\tilde \sigma} |\! |< 1$, and since $\chi_{\tilde \sigma}{\cal K}^{(2)}_{t}(\kappa,\lambda)\chi_{\tilde \sigma}$ tends pointwise to $\chi_{\tilde \sigma} K_\Ai(\lb,\kappa) \chi_{\tilde \sigma}$ (with exponential domination as in (\ref{D94'})), one also has $|\! | \chi_{\tilde \sigma}{\cal K}^{(2)}_{t}(\kappa,\lambda)\chi_{\tilde \sigma} |\! |<1$ for large enough $t$ from which it follows that $|\! | (\Id -\chi_{\tilde \sigma}{\cal K}^{(2)}_{t}(\kappa,\lambda)\chi_{\tilde \sigma})^{-1}|\! | <C$ uniformly for $t$ large enough. Because of the estimate (\ref{62u}) involving $g_\ell({1})(2t)$,  one checks that  %${\frak g}^{(1)}_t $
$${\cal Q}_t(\kappa):=\left(\Id -\chi_{\tilde \sigma}{\cal K}^{(2)}_{t}(\kappa,\lambda)\chi_{\tilde \sigma} \right)^{-1}   \frac{A (2t)^{1/3}}{a-v_0}
  e^{- M_{\lb }(t)}g_{\frac{4t}{a+a^{-1}}+\lambda  \rho (2t)^{1/3}+1}^{(1)}(2t)
$$
satisfies 
$$
|{\cal Q}_t(\kappa)|\leq c e^{-\theta \kappa}
$$ for any $\theta>0$ and some constant $c>0$.

%\newpage

%For the $\chi_{\tilde \sigma} Q^{(i)}_{ k}$, as in (\ref{56}), we have, using the previous estimates (\ref{65}) and (\ref{61}) and remembering the definition (\ref{57}) of ${\cal Q}(\kappa)$ ,
We have, using (\ref{62K}), (\ref{62a}) and the above discussion,%
\be\begin{aligned}
\lefteqn{\lim_{t\to \infty} (2t)^{1/3} \frac{A}{v_0-a}e^{- M_{\kappa }(t)}
Q^{(1)}_{ k}=-\lim_{t\to \infty} {\cal Q}_t(\kappa)}\\
&=\lim_{t\to \infty} 
\Bigl(\Id -  e^{- M_{\kappa }(t)}\chi_{_{2m+1}}K^{(2)}(0)_{k,\ell}
\chi_{_{2m+1}}
 e^{ M_{\lb }(t)}\Bigr)^{-1}   \frac{A (2t)^{1/3}}{v_0-a}
  e^{- M_{\lb }(t)}g_\ell^{(1)}(2t)
\\
&=
\left[(\Id - \chi_{\tilde \sigma}K_{\Ai}\chi_{\tilde \sigma})^{-1}
\chi_{\tilde \sigma}  \Ai\right]
(\kappa)=-{\cal Q}(\kappa),
\end{aligned}
 \label{66}\ee
uniformly $\kappa$ in a bounded set. Similarly
\be\begin{aligned}
 {\lim_{t\to \infty} (2t)^{1/3}  {A}{(v_0-a)}v_0^2e^{ M_{\kappa }(t)}
Q^{(2)}_{ k}} 
%&=\lim_{t\to \infty} 
%\Bigl(\Id -  e^{-2M_{\kappa }(t)}\chi_{_{2m+1}}K^{(2)}(0)_{k,\ell}
%\chi_{_{2m+1}} e^{2M_{\lb }(t)}\Bigr)^{-1}   (2t)^{1/3} \frac{A}{v_0-a}
%  e^{-2M_{\lb }(t)}g_\ell^{(1)}
%\\
=-{\cal Q}(\kappa).
%\left[(\Id - \chi_{\tilde \sigma}K_{\Ai}\chi_{\tilde \sigma})^{-1} \chi_{\tilde \sigma}  \Ai\right](\kappa)
\end{aligned}
\label{67}\ee

%\newpage

\noindent{\bf Limits of  $h_k^{(i)}$ }:~
With the scaling $b=\rho\beta (2t)^{1/3}$, using the infinitesimal element $\Dt b=1$ as in (\ref{63}), and using the scaling (\ref{3'}) for $z$ and $k$, one checks from (\ref{25h}), (\ref{62a}) and (\ref{D87'}) that 
\be\begin{aligned}\lefteqn{\lim_{t\to \infty}(v_0-a)v_0e^{ M_{\kappa}(t)}\bar h^{(1)}_k (z^{-1})}\\
&=
-\lim_{t\to \infty}(v_0-a)v_0e^{ M_{\kappa}(t)}\sum_{b=0}^{\infty} (-z)^{b} g_{k+b}^{(2)}\Dt b\\
&=-\lim_{t\to \infty} (v_0-a)v_0\sum_{b=0}^{\infty} \Bigl(1-\frac {2^{1/3}\zeta }{ \rho(2t)^{1/3}}\Bigr)^{\rho\beta (2t)^{1/3}}  e^{ \beta \rho\log(-v_0) (2t)^{1/3}}e^{ M_\kappa(t)}\\
&\hspace*{9cm}
g_{k+b}^{(2)}  (2t)^{1/3}\rho d\beta
\\&
= \lim_{t\to \infty}\int_0^{\infty} e^{\rho\beta (2t)^{1/3}\log (1-\frac {2^{1/3}\zeta }{ \rho(2t)^{1/3}})}
[e^{ M_{\kappa+\beta}(t)} g^{(2)}_{k+b}
 (2t)^{1/3}v_0^2(v_0-a)A]d\beta 
\\
&=-\int_0^{\infty} e^{-2^{1/3}\zeta \beta} \Ai(\kappa+\beta)d\beta
,\end{aligned}
\label{68}\ee
using dominated convergence from (\ref{D87'}). Similarly,
\be
\lim_{t\to \infty}\frac{e^{- M_{\kappa}(t)}}{(v_0-a)v_0}\bar h^{(2)}_k (z^{})
=-
\int_0^{\infty} e^{2^{1/3}\zeta \beta} \Ai(\kappa+\beta)d\beta
.\label{69}\ee

%\newpage

%\bigbreak

\noindent{\bf Limits of  $T^{(i)}$ and $S^{(i)}$}:~
Setting $\Dt k=d\kappa ~ (2t)^{1/3}\rho $, with $\Dt k=1$, one finds, using the scaling (\ref{2'}) and (\ref{3'}) for $z,k,m$, with $k-2m-1=(\kappa-\tilde \sigma)\rho (2t)^{1/3}$,
\be\begin{aligned}
\lefteqn{\lim_{t\to \infty}\frac 1{v_0-a}e^{- M_{\tilde \sigma}(t)} T^{(1)} (z^{-1})}
\\&=\lim_{t\to \infty}\frac 1{v_0-a}
e^{- M_{\tilde \sigma}(t)}\sum_{k\geq 2m+1}\frac {Q^{(1)}_{k}}{(-z)^{k-2m}}   \Dt k
\\&=\lim_{t\to \infty}\frac 1{v_0-a}
e^{- M_{\tilde \sigma}(t)}
\sum_{k\geq 2m+1} {Q^{(1)}_{ k}} 
\left(1-\frac{\zeta}{ \rho t^{1/3}}\right)^{-k+2m}(-v_0)^{-k+2m}\\
&~~~~~~~~~~~~~~~~~~~~~~~~~~~~~~~~~~~~~~[\rho(2t)^{1/3}  d\kappa ]
\\&=\lim_{t\to \infty}\frac A{v_0-a}
\sum_{k\geq 2m+1} {Q^{(1)}_{ k}} 
\left(1-\frac{\zeta}{ \rho t^{1/3} }\right)^{-k+2m}
e^{- M_{\tilde \sigma}(t)}(-v_0)^{-k+2m+1}\\
&~~~~~~~~~~~~~~~~~~~~~~~~~~~~~~~~~~~~~~~~~~~~~~~~~~~~~~~~~~~~~~~~~~~~[(2t)^{1/3}  d\kappa ]
\\&=\lim_{t\to \infty}
\sum_{k\geq 2m+1}
\frac {A(2t)^{1/3}}{v_0-a} e^{- M_{\kappa}(t)}{Q^{(1)}_{ k}} 
 d\kappa  e^{(\kappa-\tilde \sigma)2^{1/3}\zeta}%\left(1+\frac{\zeta}{At^{1/3}(-v_0)}\right)^{-k+2m}
%(-v_0)^{-k+2m+1}
 \\&=-
 e^{ -\tilde \sigma 2^{1/3}\zeta}
 \int_{\tilde \sigma}^{\infty}{\cal Q}(\kappa)e^{\kappa 2^{1/3}\zeta} d\kappa= -e^{ -2 \sigma  \zeta}\hat {\cal Q}(\zeta),\label{70}
 \end{aligned}
\ee
using (\ref{Q}), the estimate for ${\cal Q}_t(\kappa)$ and dominated convergence. In the same way
\be\begin{aligned}
 {\lim_{t\to \infty} {(v_0-a)}e^{  M_{\tilde \sigma}(t)} T^{(2)} (z^{ })}
=
 -e^{  \tilde \sigma 2^{1/3}\zeta}
 \int_{\tilde \sigma}^{\infty}{\cal Q}(\kappa)e^{-\kappa 2^{1/3}\zeta} d\kappa=- e^{ 2 \sigma  \zeta}\hat {\cal Q}(-\zeta).
\end{aligned}
\label{71}\ee
Then for the $S^{(i)}$'s, one checks, using  (\ref{66}) and (\ref{68}),  the following limit:
\be\begin{aligned}
{\lim_{t\to \infty}  S^{(1)}(z^{-1}) }
 %\\
 &=-\!\!\sum_{k\geq 2m+1}\!\!\left(\frac{A (2t)^{1/3}}{v_0-a}e^{- M_{\kappa}(t)}Q^{(1)}_{ k}\right)
\left( (v_0\!-\!a)v_0 e^{ M_{\kappa}(t)}\bar h^{(1)}_{k}(z^{-1})\right)
 d\kappa   
 \\
&=- \int_{\tilde \sigma}^{\infty}{\cal Q}(\kappa)%e^{-\kappa 2^{1/3}\zeta} 
 d\kappa
\int_0^{\infty} e^{-2^{1/3}\zeta \beta} \Ai(\kappa+\beta)d\beta=
\hat {\cal P}(\zeta)
\end{aligned}
\label{72}\ee
and
\be\begin{aligned}
 {\lim_{t\to \infty}  S^{(2)}(z^{}) }
% \\
% &=\sum_{\ell\geq 2m+1}\left(\frac{A}{v_0-a}Q^{(1)}_{2m+1,k}\right)
%\left( (v_0-a)v_0 e^{2M_{\kappa}(t)}\bar h^{(1)}_{\ell}(z^{-1})\right)
 %[d\kappa ~ (2t)^{1/3}(-v_0)A]
% \\
&=- \int_{\tilde \sigma}^{\infty}{\cal Q}(\kappa)%e^{-\kappa 2^{1/3}\zeta} 
 d\kappa
\int_0^{\infty} e^{ 2^{1/3}\zeta \beta} \Ai(\kappa+\beta)d\beta=\hat {\cal P}(-\zeta).
\end{aligned}
\label{73}\ee
To justify the limit, one uses the exponential estimate for ${\cal Q}_t(\kappa)$; also one uses an exponential estimate obtained from (\ref{68}), (\ref{69}) and (\ref{D87'}) in order to apply dominated convergence in the above integral. This ends the proof of Lemma \ref{L6.6'}.
\end{proof}

%\newpage

\section{Scaling limits of the kernel $\widetilde\BK^{\rm ext}_{n,m}$}

\subsection{Scaling limit as a perturbation of the Airy kernel}

In this section we prove the first part of the main Theorem \ref{th:main}, namely formula (\ref{E127}).

\begin{theorem}\label{Th7.1}  The following scaling limit holds under (\ref{2''}):
\begin{equation}\begin{aligned}
\lefteqn{\hspace*{-1cm}\lim_{t\to \infty} 
(-v_0)^{y-x+r-s} (-1)^{x-y} ~\widetilde \BK^{\rm ext}_{n,m}(2r,x ;2s,y) \rho t^{1/3} }  
\\
  %=\int_0^{\iy}\Ai^{(\tau_1)}(\sg -\xi_1+\beta)\Ai^{(-\tau_2)}(\sg -\xi_2+\beta)d\beta =:  
= & -\Id_{\tau_1>\tau_2}
\frac{e^{-\frac{(\xi_1-\xi_2)^2}{4(\tau_1-\tau_2)}}}{\sqrt{4\pi (\tau_1-\tau_2)}}+ K_{\Ai}^{(\tau_1,-\tau_2)}(\sg -\xi_1,\sg-\xi_2)
\\
&+
 2^{1/3} \int_{\tilde\sg}^{\iy}\left((\Id- \raisebox{1mm}{$\chi$}{}_{\tilde\sg}K_{\Ai}\raisebox{1mm}{$\chi$}{}_{\tilde\sg})^{-1}
 \AR_{\xi_1-\sg}^{ \tau_1}\right)(\lb)
\AR_{\xi_2-\sg}^{ -\tau_2 }(\lb)d\lb 
    ,
\end{aligned}
\label{E127'}\end{equation}
where the first line on the right hand is the extended Airy kernel, as in (\ref{AiryP}), except for some conjugation; see (\ref{AiryP1}).
\end{theorem}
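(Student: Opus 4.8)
The strategy is to take the limit term-by-term in the finite-$n$ formula (\ref{K1}) of Theorem~\ref{main1'}, using the scaling limits collected in Section~6. Recall that
\[
(-1)^{x-y}\widetilde\BK^{\rm ext}_{n,m}(2r,x;2s,y)
= {\mathbb K}^{\rm OneAztec}_{n+1}\bigl(2(n\!-\!r\!+\!1),m\!-\!x\!+\!1;2(n\!-\!s\!+\!1),m\!-\!y\!+\!1\bigr)
+\bigl\langle (\Id-K)^{-1}_{2m+1}a_{-y,s},\,b_{-x,r}\bigr\rangle_{\ell^2(2m+1,\ldots)},
\]
and by (\ref{OneAztec1}) the first piece is $-\Id_{s<r}(-1)^{x-y}\psi_{2(s-r)}(x,y)+S(2r,x;2s,y)$. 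Thus after multiplying by $(-v_0)^{y-x+r-s}\rho t^{1/3}$ there are exactly three contributions to pass to the limit: the $\psi$-term, the $S$-term, and the inner-product term.

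\textbf{Step 1: the easy two terms.} Lemma~\ref{L6.5} gives directly
\[
\lim_{t\to\infty}\rho t^{1/3}(-v_0)^{y-x+r-s}(-1)^{x-y}\psi_{2(s-r)}(x,y)=\frac{e^{-(\xi_1-\xi_2)^2/4(\tau_1-\tau_2)}}{\sqrt{4\pi(\tau_1-\tau_2)}}=p(\tau_1-\tau_2;\xi_1,\xi_2),
\]
valid when $\tau_1>\tau_2$, which is precisely the regime in which $s<r$ holds for large $t$ under the scaling (\ref{2''}); so $-\Id_{s<r}\psi_{2(s-r)}$ produces $-\Id_{\tau_1>\tau_2}p(\tau_1-\tau_2;\xi_1,\xi_2)$. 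Lemma~\ref{L6.4} gives
\[
\lim_{t\to\infty}\rho t^{1/3}(-v_0)^{y-x+r-s}S(2r,x;2s,y)=K_{\Ai}^{(\tau_1,-\tau_2)}(\sigma-\xi_1,\sigma-\xi_2).
\]
Both limits are uniform on bounded sets, so they may be added.

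\textbf{Step 2: the inner-product term — the main work.} This is where the argument really happens. Write the bilinear form as $\sum_{k\ge 2m+1}b_{-x,r}(k)\,\bigl[(\Id-K)^{-1}_{2m+1}a_{-y,s}\bigr](k)$. I would conjugate by the diagonal factors $e^{\pm M^{-\tau_i}_{\xi_i-\sigma+2^{1/3}\kappa}(t/2)}$ so that, by Lemma~\ref{L6.3}, $a_{-y,s}(k)$ (suitably normalized, with the $(-1)^m$, $\rho v_0 t^{1/3}$ factors) converges to $\mathcal A^{-\tau_2}_{\xi_2-\sigma}(\kappa)$ and $b_{-x,r}(\ell)$ to $\mathcal A^{\tau_1}_{\xi_1-\sigma}(\lambda)$, while the conjugated discrete kernel $K^{(1)}_{k\ell}(0)$ converges (Lemma~\ref{L6.3}, eq.~(\ref{62K})) to the Airy kernel $K_{\Ai}(\kappa,\lambda)$ on $\ell^2(2m+1,\ldots)\to L^2(\tilde\sigma,\infty)$, after identifying the Riemann sum $\Delta\ell/(\rho(2t)^{1/3})=d\lambda$. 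The key point, to be argued exactly as in \cite{AFvM12}: because $\|\chi_{\tilde\sigma}K_{\Ai}\chi_{\tilde\sigma}\|<1$ and the rescaled discrete kernels converge with the exponential domination of (\ref{D90'}), (\ref{D96'}), one gets a uniform bound $\|(\Id-\chi_{\tilde\sigma}K^{(1)}_t(0)\chi_{\tilde\sigma})^{-1}\|\le C$ for $t$ large, hence $(\Id-K)^{-1}_{2m+1}a_{-y,s}$ converges in the appropriate weighted $\ell^2$ to $(\Id-K_{\Ai})^{-1}_{\tilde\sigma}\mathcal A^{-\tau_2}_{\xi_2-\sigma}$. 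Pairing with the convergent, exponentially dominated $b_{-x,r}$, the Riemann sum converges to the integral $2^{1/3}\int_{\tilde\sigma}^\infty\bigl((\Id-K_{\Ai})^{-1}_{\tilde\sigma}\mathcal A^{\tau_1}_{\xi_1-\sigma}\bigr)(\lambda)\,\mathcal A^{-\tau_2}_{\xi_2-\sigma}(\lambda)\,d\lambda$; the factor $2^{1/3}$ comes from the mismatch between $d\lambda$ (attached to the $(2t)^{1/3}$-scale of the discrete lattice, cf.~(\ref{3'}), (\ref{63})) and the $t^{1/3}$-scale of the overall prefactor $\rho t^{1/3}$, and the self-adjointness $K^{(1)\top}(0)=K^{(2)}(0)$ of (\ref{35a}) lets me move the resolvent onto whichever factor is convenient. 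Tracking all the exponential conjugation factors and checking they cancel is bookkeeping, using identity (\ref{64m}) of Lemma~\ref{L6.3}; the convergence of the bilinear form follows by dominated convergence on $\ell^2$ exactly as in the estimate (\ref{D94'}).

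\textbf{Step 3: assemble.} Adding the three limits gives (\ref{E127'}), which is (\ref{E127}). The identity (\ref{AiryP1}) relating $-\Id_{\tau_1>\tau_2}p+K_{\Ai}^{(\tau_1,-\tau_2)}$ to the conjugated extended Airy kernel is a direct computation from the definitions (\ref{E6}), (\ref{AiryP}) of $K_{\Ai}^{(\alpha,-\beta)}$ and ${\mathbb K}^{\rm AiryProcess}$ — substitute the integral formula $\Ai^{(s)}(x)=e^{sx+\frac23 s^3}\Ai(x+s^2)$ and match exponents — so it needs no limiting argument. The expected main obstacle is Step~2: making rigorous the exchange of the $t\to\infty$ limit with the infinite summation and with the operator inversion, i.e.\ establishing the uniform invertibility of $\Id-\chi_{\tilde\sigma}K^{(1)}_t(0)\chi_{\tilde\sigma}$ and the $\ell^2$-domination needed for dominated convergence; all the required pointwise limits and exponential bounds are already supplied by Lemmas~\ref{L6.1}, \ref{L6.3}, \ref{L6.6'}, so the remaining task is the soft-analysis packaging, carried out verbatim along the lines of the corresponding argument in \cite{AFvM12}.
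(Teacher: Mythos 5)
Your proposal is correct and follows essentially the same route as the paper: the same three-term decomposition of (\ref{K1}) via (\ref{OneAztec1}), with Lemmas \ref{L6.5} and \ref{L6.4} disposing of the $\psi$- and $S$-terms, and the inner-product term handled by conjugating with the $M$-factors (identity (\ref{64m})), invoking Lemma \ref{L6.3} for the pointwise limits, the Riemann-sum identification $\Delta\ell/(\rho(2t)^{1/3})=d\lambda$ for the $2^{1/3}$ factor, and the exponential bounds (\ref{D96'}), (\ref{D90'}) plus the uniform resolvent bound from the argument of Lemma \ref{L6.6'} to justify dominated convergence — exactly the content of the paper's (\ref{E128})–(\ref{E129}). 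No gaps.
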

%\bigbreak
\begin{proof}  At first note that the $y$-scaling implies $dy=\rho t^{1/3}d\xi_2$. The kernel $(-1)^{x-y} \widetilde \BK^{\rm ext}_{n,m}(2r,x ;2s,y)  $, as in (\ref{K1}), is a sum of three parts. Multiplying each one with $(-v_0)^{y-x+r-s}\rho t^{1/3}$, the limit of the first part is taken care of by Lemma \ref{L6.5} and the limit of the second part by Lemma \ref{L6.4}. So, it remains to show the following uniform limit (uniform on $\xi_1,\xi_2,\tau_1,\tau_2$ in bounded sets), taking into account identity (\ref{pref}) and the symmetry of the Airy kernel:
\begin{equation}
\begin{aligned}
  \lim_{t\to \infty}\rho t^{1/3}&e^{ (M^{-\tau_2}_{ \xi_2}(\tfrac{t}{2})- M^{-\tau_1}_{ \xi_1} (\tfrac{t}{2})) } \Big\la (\Id-K_{2m+1}^{(1)}(0))^{-1}a_{-y,s},b_{-x,r}\Big\ra _{\ell^2 {(2m+1,...)}}  \\
&=2^{1/3}\int_{\tilde\sg}^{\iy}\left(\Id- \raisebox{1mm}{$\chi$}{}_{\tilde\sg}K_{\Ai}\raisebox{1mm}{$\chi$}{}_{\tilde\sg})^{-1}\AR_{\xi_2-\sg}^{ -\tau_2 }\right)(\lb)
\AR_{\xi_1-\sg}^{  \tau_1 }(\lb)d\lb,
\end{aligned}
\label{E128}\end{equation}
remembering the notation (\ref{three}) for the restriction of the kernel $K(0)$ to $[2m+1,\infty)$. 
%
%In view of Lemmas 6.4 and 6.5, and $dy=\rho t^{1/3}d\xi_2$, the theorem will be proven, upon showing that under scaling (126), one has
%
 %Indeed, observe using\footnote{Note 
%$$
%\la Ma,b\ra =\sum_{\ell,k}M_{\ell k}a_kb_{\ell}=\sum_{\ell,k}(d_{\ell}M_{\ell k}d_k^{-1})(d_ka_k)(d^{-1}_{\ell}b_{\ell})
%$$
%} 
%
Using the identity
$$
\begin{aligned}
%\lefteqn
 \left(M^{-\tau_2}_{\xi_2} (\tfrac{t}{2})-\!  M^{-\tau_1}_{\xi_1} (\tfrac{t}{2})\right)+\left(M_{\kappa}(t) \!-M_{\lb}(t)\right) =M^{-\tau_2}_{ \xi_2-\sg+2^{1/3}\kappa} (\tfrac{t}{2})-M^{-\tau_1}_{   \xi_1-\sg +2^{1/3}\lb} (\tfrac{t}{2})  ,
%\\ \\ &\hspace*{2cm}=M^{-\tau_2}_{-\sg +\xi_2+2^{1/3}\kappa} (\tfrac{t}{2})-M^{-\tau_1}_{-\sg +\xi_1+2^{1/3}\lb} (\tfrac{t}{2}),
\end{aligned}
$$
together with Lemma \ref{L6.3}, one checks the following limit:% then implies,%
%$$(\Id-e^{M_{\lb}(t)}K^{(1)}_{\ell k}(0)e^{-M_{\kappa}(t)}\Dt k)^{-1}((-1)^m( t^{1/3}\rho v_0 e^{M^{-\tau_2}_{-\sg+\xi_2+2^{1/3}\kappa} (\tfrac{t}{2}) }a_{-y,s}(k))$$
%
\begin{equation}
\begin{aligned}
\lefteqn{\lim_{t\to \infty}\rho t^{1/3}e^{ (M^{-\tau_2}_{ \xi_2}(\tfrac{t}{2})- M^{-\tau_1}_{ \xi_1} (\tfrac{t}{2})) } \Big\la (\Id -K_{2m+1}^{(1)}(0))^{-1}a_{-y,s},b_{-x,r}\Big\ra _{\ell^2 {(2m+1,...)}}}
\\
&=\lim_{t\rg\iy} \frac{2^{1/3}}{\rho (2t)^{1/3}} \sum_{\ell\geq 2m+1} \Delta \ell %\left\{ \begin{array}{l}
%(I-e^{M_{\lb}(t)}K^{(1)}_{\ell k}(0)e^{-M_{\kappa}(t)}dk)^{-1}\\
%((-1)^m(-t^{1/3}A v^2_0 e^{M^{-\tau_2}_{-\sg+\xi_2+2^{1/3}\kappa} (\tfrac{t}{2}) }a_{-y,s}(k))
%\end{array} \right\}\\
\\
&\left[(\Id-e^{M_{\lb}(t)}K_{2m+1}^{(1)}(0)_{\ell k}e^{-M_{\kappa}(t)} )^{-1}
((-1)^m( t^{1/3}\rho v_0 e^{M^{-\tau_2}_{ \xi_2-\sg +2^{1/3}\kappa} (\tfrac{t}{2}) }a_{-y,s}(k))\right]
\\
&\hspace*{4cm}\times \left[(-1)^m(-t^{1/3}A e^{-M^{-\tau_1}_{\xi_1-\sg +2^{1/3}\lb} (\tfrac{t}{2}) }b_{-x,r}(\ell))\right] 
\\
&=2^{1/3}\int_{\tilde\sg}d\lb \left( (\Id-\raisebox{1mm}{$\chi$}{}_{\tilde\sg}K_{\Ai}\raisebox{1mm}{$\chi$}{}_{\tilde\sg})^{-1}\AR_{\xi_2-\sg}^{ -\tau_2 }\right)(\lb)
\AR_{\xi_1-\sg}^{ \tau_1 }(\lb),
\end{aligned}
\label{E129}\end{equation}
where in the last equality, one uses the fact that $\tfrac{\Dt\ell}{ \rho (2t)^{1/3} }=d\lambda $, with $\Dt \ell=1$, and the fact, already pointed out just after (\ref{3'}), that $\ell \geq 2m+1$ implies $\lambda \geq \tilde \sigma=2^{2/3}\sigma$.

  To justify the above, one needs uniformity of convergence. From the estimates %(\ref{D94'}) and 
  (\ref{D96'}), and using arguments, as in the derivation of the exponential bound for  $|{\cal Q}_t(\kappa)|$ in the proof of Lemma \ref{L6.6'}, we have that
  $$
  \left\{\mbox{product of the expressions in the first bracket in (\ref{E129})}\right\}\leq  c_0e^{-c_1\lb}
  $$
  for $\xi_i,\tau_i$ in a bounded set, and for $\lb\geq\tilde\sg$, with $c_0,c_1$ positive time-independent constants, while from (\ref{D96'}) one has a similar estimate for the expression in the second bracket in (\ref{E129}). Thus by dominated convergence we have the convergence to the integral in (\ref{E129}), with uniform convergence for the parameters $\xi_i,\tau_i$ in bounded sets, concluding the proof of Theorem \ref{Th7.1}, except for the identification with the Airy process kernel, which follows from (\ref{E6}).
\end{proof}%\end{document}

 % \newpage
  
 % \end{document}
%    that the first term in the summation in (129): $\{\quad\}$, satisfies
%$$\begin{aligned}
%\lim_{t\to \infty} \{\quad\}&=\left((I- \raisebox{1mm}{$\chi$}{}_{\tilde\sg}K_{\Ai}\raisebox{1mm}{$\chi$}{}_{\tilde\sg})^{-1}\AR_{\xi_2-\sg}^{(-\tau_2)}\right)(\lb)\\ \\|\{\quad\} |&< c_0e^{-c_1\lb}
%\end{aligned}
%$$
% while from (96) and (96)', the second term in the summation in (129): [\quad ], satisfies
%$$
%\begin{aligned}\lim_{t\to \infty} [\quad ]&=\AR_{\xi_1-\sg}^{(\tau_1)}(\lb)\\ \\ [\quad ] |&< c_0e^{-c_1\lb},\end{aligned}
%$$
%for $\xi_1,\tau_1$ in a bounded set, $\lb\geq\tilde\sg$, $c_0>c_1$ positive time independent constants. For both limits we have uniform convergence of $\xi_1,\xi_2,\tau_1,\tau_2,\lb$ in bounded sets. 

\subsection{Scaling limit as a sum of double integrals}\label{sect7.2}
Here we indicate the proof of the second half of Theorem \ref{th:main}, namely formula (\ref{main1}) for $r=s=n/2$. To do so, we take the scaling limit of $\widetilde \BK_{n,m}(x,y)$, given by formula (\ref{43a}) in Proposition \ref{prop4.1} for $r=s=n/2$; or what is the same, formula (\ref{54}) containing $\LR$ given by (\ref{calL}). It proceeds along similar lines as in \cite{AFvM12}. We then merely sketch how to pass to the limit for the extended kernel $\widetilde \BK^{\rm ext}_{n,m}(2r,x;2s,y)$. 

From (\ref{60}), one has, using the scaling (\ref{2'}), (\ref{3'}) and (\ref{2''}) for  for $n,z$,
\be \begin{aligned}
 \tfrac n2 \Phi(z)&=tF(z)+\tfrac {\tilde \sigma}2 \rho (2t)^{1/3}\log (-z)
%\\&=
% tF(v_0)+\tfrac { \tilde \sigma} 2  \rho\log(-v_0)(2t)^{1/3} 
% \\&~~~+\tfrac 13 \zeta^3-  {  \sigma} \zeta +{\cal O}(t^{-1/3})
\\
&= \tfrac 12 M_{\tilde \sigma}(t) +\tfrac 13 \zeta^3-  {   \sigma} \zeta+{\cal O}(t^{-1/3}).\end{aligned}
\label{74}\ee
With the scaling (\ref{3'}) and (\ref{2''}) for $w,z,x,y$, with $\tau_i=0$, one finds the following limits: 
$$
\lim_{t\to \infty}  (-v_0)^{x-y }\left(\frac{(-w)^{y-1}}{(-z)^x}
+\frac{(-z)^y}{(-w)^{x+1}}\frac{1-\tfrac az}{1-\tfrac aw}\right)=\frac{1}{-v_0}\left(\frac{e^{ \zeta \xi}}{e^{ \omega \eta}} +\frac{e^{- \zeta \eta}}{e^{ -\omega \xi}}\right)
$$
\be
\lim_{t\to \infty}  (-v_0)^{x-y }\left(\frac{(-z)^{y-1}}{(-w)^x}
+\frac{(-w)^y}{(-z)^{x+1}}\frac{1-\tfrac aw}{1-\tfrac az}\right)=\frac{1}{-v_0}\left(\frac{e^{ \omega \xi}}{e^{ \zeta \eta}} +\frac{e^{- \omega \eta}}{e^{ -\zeta \xi}}\right)
\label{75}\ee
and
\be
\lim_{t\to \infty} 
\frac{dz dw}{z-w} dy
=-v_0 \frac {d\zeta d\omega}{\zeta-\omega} d\eta.
\label{76}\ee
{\bf The ${\cal L}(x,y)$-part of the kernel} (\ref{54}),
\be (-1)^{x-y}\frac{H_{2m+2}(0)}{H_{2m+1}(0)}
\widetilde\BK_{n,m}(-x,-y)={\cal L}(x,y)+C(0;x-y),
\label{167}\ee
consists of the four double integrals appearing in (\ref{calL}), with the $E_i$ given in (\ref{Ei}). Each of them will be examined separately, using the same saddle point arguments as given around Figure 18 in the proof of Lemma \ref{L6.1}. %{\bf One needs some further estimates to control the $S^{(i)}$ and $T^{(i)}$-functions along the contours, still to be worked out in, order to justify the steepest descent method.}
\medbreak

\noindent{\bf 1st double integral:} From (\ref{74}) and (\ref{73}), it follows that
$$
\lim_{t\to \infty} 
 {e^{\tfrac n2( \Phi(z)-\Phi(w))}} =
\frac{e^{\tfrac 13 \zeta^3-\sigma \zeta}}
{e^{\tfrac 13 \omega^3-\sigma \omega}}
$$
$$\lim_{t\to \infty} S^{(1)}(z^{-1})=\hat {\cal P}(\zeta)\mbox{   and   }
\lim_{t\to \infty} S^{(2)}(w)=\hat {\cal P}(-\omega),
$$
and thus
 \be
\lim_{t\to \infty} 
 {e^{\tfrac n2( \Phi(z)-\Phi(w))}}
 (1-S^{(1)}(z^{-1}))(1-S^{(2)}(w))=\frac{e^{\tfrac 13 \zeta^3-\sigma \zeta}}
{e^{\tfrac 13 \omega^3-\sigma \omega}}
(1-\hat {\cal P}(\zeta))(1-\hat {\cal P}(-\omega)). 
\label{77}\ee

\noindent {\bf 2nd double integral:} Since from (\ref{74}), (\ref{72}) and (\ref{73}), 
$$
\lim_{t\to \infty} 
e^{- M_{\tilde \sigma}(t)} e^{\tfrac n2 (\Phi(z)+\Phi(w))}  =
\frac{e^{\tfrac 13 \zeta^3-\sigma \zeta}}
{e^{-\tfrac 13 \omega^3+\sigma \omega}}
%$$
%$$\begin{aligned}
,~~ {\lim_{t\to \infty} {(w-a)}e^{ M_{\tilde \sigma}(t)} T^{(2)} (w^{ })}
= -e^{ 2 \sigma  \omega}\hat {\cal Q}(-\omega),
%\end{aligned}
$$
one has
\be
\lim_{t\to \infty} {(w-a)}e^{\tfrac n2 (\Phi(z)+\Phi(w))} (1-S^{(1)}(z^{-1}))T^{(2)} (w^{ })
= -  \frac{e^{\tfrac 13 \zeta^3-\sigma \zeta}}
 {e^{-\tfrac 13 \omega^3-\sigma \omega}}
(1-\hat {\cal P}(\zeta)) \hat {\cal Q}(-\omega).
\label{78}\ee

\noindent{\bf 3rd double integral:} From (\ref{74}), (\ref{71}) and (\ref{73})), one obtains:
$$
\lim_{t\to \infty} 
e^{ M_{\tilde \sigma}(t)} e^{-\tfrac n2 (\Phi(z)+\Phi(w))}  =
\frac{e^{-\tfrac 13 \zeta^3+\sigma \zeta}}
{e^{\tfrac 13 \omega^3-\sigma \omega}}
$$
$$\begin{aligned}
 {\lim_{t\to \infty}\frac 1{z-a}e^{-M_{\tilde \sigma}(t)}} T^{(1)} (z^{-1}) =- e^{ -2 \sigma  \zeta}\hat {\cal Q}(\zeta)\end{aligned}
.$$
Therefore
\be
\lim_{t\to \infty} \frac{1}{z-a}
e^{-\tfrac n2 (\Phi(z)+\Phi(w))}  T^{(1)} (z^{-1})(1-S^{(2)}(w)) 
 =
-\frac{e^{-\tfrac 13 \zeta^3-\sigma \zeta}}
{e^{\tfrac 13 \omega^3-\sigma \omega}}
\hat {\cal Q}(\zeta)(1-\hat {\cal P}(-\omega))
.\label{79}\ee
{\bf 4th double integral:} Since
$$
\lim_{t\to \infty} 
 {e^{-\tfrac n2( \Phi(w)-\Phi(z))}} =
\frac{e^{ \tfrac 13 \zeta^3-\sigma \zeta}}
{e^{\tfrac 13 \omega^3-\sigma \omega}}
,$$
one has from (\ref{71})
\be \begin{aligned}
 {\lim_{t\to \infty}\frac {a-z}{a-w} {e^{-\tfrac n2( \Phi(w)-\Phi(z))}} T^{(1)} (w^{-1})} 
 T^{(2)} (z^{ })
 =  %e^{ 2 \sigma ( \zeta-\omega)}
  \frac{e^{ \tfrac 13 \zeta^3+\sigma \zeta}}
{e^{\tfrac 13 \omega^3+\sigma \omega}}\hat {\cal Q}(-\zeta)
 \hat {\cal Q}(\omega).
\end{aligned}
\label{80}\ee

\bigbreak

%\newpage

\noindent
{\bf Assembling the four double integrals,} one finds the following limit, upon using the contours in \cite{AFvM12}, in particular Figures 4 and 5:
%Now we take the scaling limit of the first half of the double integral in equation (\ref{eq7.5}), namely
%
{\footnotesize\be \begin{aligned}\label{81}
\lefteqn{\lim_{t\to \infty}(-v_0)^{x-y}\rho t^{1/3}~{\cal L}(x,y)}\\
&=\frac{1}{(2\pi \I)^2}\left\{\begin{aligned}
 %\lefteqn{\hspace*{-1cm}\lim_{t\to \infty}
% \frac{t^{1/3}}{(2\pi i)^2}\oint_{\Gamma_0}dz \oint_{\Gamma_{0,z}} dw \frac{ {\cal E}_1(z,w)- {\cal E}_2(z,w)  }{z-w}
%&~~~~  
% \frac{(-w)^{x_2-1}}{(-z)^{x_1}}\frac{e^{-t_1(z+z^{-1}+2)}}{e^{-t_2(w+w^{-1}+2)}}
 %}\\
 &\int_{\delta +\I\BR}\hspace{-1em}d\zeta\int_{-\delta +\I\BR}\hspace{-1em}d\omega\,
\frac{e^{\frac{\zeta^3}3-\sigma \zeta}}{e^{\frac{\omega^3}3-\sigma  \omega}}%\frac{e^{s_1 \zeta^2}}{e^{s_2 \omega^2}}
 \left(\frac{e^{ \zeta \xi}}{e^{ \omega \eta}} +\frac{e^{- \zeta \eta}}{e^{ -\omega \xi}}\right)
  \frac{(1-\hat{\cal P}(\zeta))(1-\hat{\cal P}(-\omega))}{\zeta-\omega}\\
&- \int_{2\delta +\I\BR} \hspace{-1em}d\zeta \int_{\delta +\I\BR}\hspace{-1em}d\omega\,
\frac{e^{\frac{\zeta^3}3-\sigma  \zeta}}{e^{-\frac{\omega^3}3-\sigma  \omega}}%\frac{e^{s_1 \zeta^2}}{e^{s_2 \omega^2}}
 \left(\frac{e^{ \zeta \xi}}{e^{ \omega \eta}} +\frac{e^{- \zeta \eta}}{e^{ -\omega \xi}}\right)
\frac{(1-\hat{\cal P}(\zeta)) \hat{\cal Q}(-\omega)}{\zeta-\omega} %\\
%& \hspace{7cm}\searrow \swarrow
\\&- \int_{-\delta +\I\BR}\hspace{-1em}d\zeta\int_{-2\delta +\I\BR}\hspace{-1em}d\omega\,
\frac{e^{-\frac{\zeta^3}3-\sigma \zeta}}{e^{\frac{\omega^3}3-\sigma \omega}} %\frac{e^{s_1 \zeta^2}}{e^{s_2 \omega^2}}
 \left(\frac{e^{ \zeta \xi}}{e^{ \omega \eta}} +\frac{e^{- \zeta \eta}}{e^{ -\omega \xi}}\right)
%+\frac{e^{-\xi_1 u}}{e^{-\xi_2 v}}
 %}%@@@
\frac{(1-\hat{\cal P}(-\omega)) \hat{\cal Q}(\zeta)}{\zeta-\omega}\\
& -\int_{ \delta +\I\BR}\hspace{-1em}d\zeta \int_{-\delta +\I\BR}\hspace{-1em}d\omega\,
\frac{e^{ \frac{\zeta^3}3+\sigma \zeta}}{e^{ \frac{\omega^3}3+\sigma \omega}}
%\frac{e^{s_1 \zeta^2}}{e^{s_2 \omega^2}}
\left(\frac{e^{ \omega \xi}}{e^{ \zeta \eta}} +\frac{e^{- \omega \eta}}{e^{ -\zeta \xi}}\right)
\frac{ \hat{\cal Q}(-\zeta)  \hat{\cal Q}( \omega)}{\zeta-\omega}.
\end{aligned}\right\}\end{aligned}
\ee}
This leads to the double integral representation (\ref{main1}) in Theorem \ref{th:main}, for the non-extended kernel $r=s=n/2$, 
upon interchanging $\omega$ and $\zeta$ in the last integral.   

 \bigbreak

 %\rotatebox{ 90}{{\includegraphics[width=100mm,height=100mm]{contours4.pdf}}}

 In order to take the scaling limit of the {\bf $C(0;x-y)$-part of the kernel} (\ref{167}), one needs to express $C(0;x)$ in a more convenient form, as is given by formula (\ref{84}) in Proposition \ref{prop8.1} of the Appendix. Using this form, one now proves the following:

 \begin{proposition}\label{Prop7.3}
 Under the scaling (\ref{2''}) for $x,y$ with $\tau_1=\tau_2=0$ and $\xi_1\mapsto \xi$ and $\eta_1\mapsto \eta$, one finds
 $$\begin{aligned}
 \lefteqn{\lim_{t\to \infty}(-v_0)^{x-y}\rho t^{1/3}~ C(0;x-y)} \\
 &=2^{-1/3} 
 \\
 &\left\{
 \begin{aligned}
 &\int^{\iy}_{\tilde\sigma}d\kappa~{\cal Q}(\kappa)\left(\Ai (\kappa+(\xi -\eta)2^{-1/3})+\Ai (\kappa-(\xi -\eta)2^{-1/3})\right) \\
&+ \int^{\iy}_{\tilde\sigma}d\kappa     \int^{\iy}_{\tilde\sigma}d\lambda~{\cal Q}(\kappa){\cal Q}(\lambda)
\\
&~~~~\times ~\left(
K_{\mbox{\tiny \rm Ai}}(\kappa,\lambda -(\xi -\eta)2^{-1/3}) 
 +K_{\mbox{\tiny  \rm Ai}}(\kappa +(\xi -\eta)2^{-1/3},\lambda)\right)      
 \end{aligned}\right\}
 \end{aligned}$$
 \end{proposition}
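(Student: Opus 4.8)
The plan is to compute the scaling limit of the single-integral term $C(0;x-y)$ from its convenient representation (formula (\ref{84}) in the Appendix), following exactly the saddle-point strategy developed in Lemma \ref{L6.1} and reused in Lemmas \ref{L6.3}--\ref{L6.6'}. Recall that $C(0;x) = C_1(0;x) + 2C_2(0;x)$, where $C_1$ is built from the product $(1-R^{(1)}(z^{-1}))(1-R^{(2)}(z))$ and $C_2$ from $T^{(1)}(z^{-1})T^{(2)}(z)$; both are contour integrals of the form $\oint_{\Gamma_{0,a}} \frac{dz}{(-z)^{x+1}}(\cdots)\left(\tfrac{1+az}{1-\frac az}\right)^{u}$ with $u = s-r = 0$ in the present case. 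After expanding the $R^{(i)}$ via (\ref{39}) into $S^{(i)} + T^{(i)}\cdot(\text{exponential})$ and using the decomposition of Proposition \ref{prop8.1}, the integrand $C(0;x-y)$ gets reorganized into pieces each governed by a single factor $e^{\pm n\Phi(z)/2}$ near the saddle $v_0 = -\tfrac{1-a}{1+a}$, where $\tfrac n2\Phi(z) = \tfrac12 M_{\tilde\sigma}(t) + \tfrac13\zeta^3 - \sigma\zeta + {\cal O}(t^{-1/3})$ under the scaling $z = v_0 + \zeta/(At^{1/3})$ (equation (\ref{74})).

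The key steps, in order: (1) Insert the scaling (\ref{2''}) with $\tau_1=\tau_2=0$, $\xi_1\mapsto\xi$, $\eta_1\mapsto\eta$ for $x,y$, and $z = v_0 + \zeta/(At^{1/3})$ for the integration variable; track the prefactor $(-v_0)^{x-y}\rho t^{1/3}$ against the explicit $M$-factors, using that $(-v_0)^{x-y}$ combines with $z^{-x-1+y}$ to leave $e^{\zeta(\xi-\eta)}$-type factors and the measure rescaling $dz\, dy \to -v_0\, d\zeta\, d\eta$ as in (\ref{76}). (2) Use the limits already established in Lemma \ref{L6.6'}: $\lim S^{(1)}(z^{-1}) = \hat{\cal P}(\zeta)$, $\lim S^{(2)}(z) = \hat{\cal P}(-\zeta)$, $\lim (v_0-a)^{-1}e^{-M_{\tilde\sigma}(t)}T^{(1)}(z^{-1}) = -e^{-2\sigma\zeta}\hat{\cal Q}(\zeta)$, and the analogous $T^{(2)}$ limit. (3) Recognize that the saddle-point integral $\tfrac{1}{2\pi i}\int e^{\zeta^3/3 - \sigma\zeta + \zeta\cdot(\text{shift})}d\zeta$ collapses to an Airy function $\Ai^{(\cdot)}$ via the definition (\ref{E6}), so that the $C_1$-contribution (involving $(1-\hat{\cal P}(\zeta))(1-\hat{\cal P}(-\zeta))$) and the $C_2$-contribution (involving $\hat{\cal Q}(\zeta)\hat{\cal Q}(-\zeta)$) together, once $\hat{\cal P}$ and $\hat{\cal Q}$ are written back in terms of ${\cal Q}(\kappa) = [(\Id - \chi_{\tilde\sigma}K_{\Ai}\chi_{\tilde\sigma})^{-1}\chi_{\tilde\sigma}\Ai](\kappa)$ via (\ref{57}), reassemble into the claimed expression $2^{-1/3}\{\int{\cal Q}(\kappa)(\Ai(\cdots) + \Ai(\cdots)) + \int\int{\cal Q}(\kappa){\cal Q}(\lambda)(K_{\Ai}(\cdots) + K_{\Ai}(\cdots))\}$ — the two terms corresponding respectively to the "zeroth order" and "first order" (in $\hat{\cal Q}$, or equivalently in the resolvent expansion) pieces, with the symmetric $\xi-\eta \leftrightarrow \eta-\xi$ structure arising from the two summands in $C_1$'s integrand (the $e^{\pm\zeta(\xi-\eta)}$ pair). (4) Justify passage to the limit by dominated convergence, using the exponential bounds on ${\cal Q}_t(\kappa)$, on $\bar h_k^{(i)}$, and the decay estimate (\ref{D87'}) for $g_\ell^{(i)}$, exactly as done for the other limits in Section 6.

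I expect the main obstacle to be the bookkeeping of step (3): the representation (\ref{84}) of $C(0;x)$ in the Appendix will itself be a sum of several single and double sums over the index $\kappa$ (after expanding the resolvents implicit in $\hat{\cal P},\hat{\cal Q}$), and one must carefully match each piece of that finite sum against the target formula, tracking the powers of $2^{1/3}$ (which enter through $\kappa - \tilde\sigma = (\kappa - \tilde\sigma)$ discretizations with step $\rho(2t)^{1/3}$ versus the $t^{1/3}$-scaling of $x,y$, forcing the substitution $2^{1/3}\beta \mapsto \beta$ seen in Lemma \ref{L6.4}) and the various shifts by $\tilde\sigma = 2^{2/3}\sigma$. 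The argument reduces to the Fredholm-determinant and contour manipulations of \cite{AFvM12}, so once the appendix identity (\ref{84}) is in hand the limit is a routine — if lengthy — application of the machinery already assembled; the symmetrization in $\xi\leftrightarrow-\xi$ (equivalently $(\xi-\eta)\leftrightarrow-(\xi-\eta)$) is the one structural feature that must be handled with care, and it comes directly from the two-term structure of the integrands in (\ref{55}).
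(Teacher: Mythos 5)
Your overall skeleton — start from the appendix identity (\ref{84}), insert the already-established asymptotics, and pass to the limit by dominated convergence — is the paper's route, but the mechanism you describe in step (3) is not how this term works and would break down. The quantity $C(0;x-y)$ is \emph{not} amenable to a steepest-descent collapse of the $z$-contour: with $\tau_1=\tau_2=0$ the only $t$-dependence in the exponent of $(-z)^{-(x-y)-1}$ is of order $t^{1/3}$ and the phase $\log(-z)$ has no critical point, so there is no localization at $v_0$ coming from that factor. Moreover, after expanding $(1-R^{(1)}(z^{-1}))(1-R^{(2)}(z))$ via (\ref{39}), the pieces $(1-S^{(1)})(1-S^{(2)})$ and $T^{(1)}T^{(2)}$ (and hence all of $C_2$) carry \emph{no} net cubic exponential $e^{\pm(\zeta^3/3-\sigma\zeta)}$ whatsoever; for these the contour integral is a pure residue/Fourier-coefficient extraction ($\oint \frac{dz}{2\pi \I}(-z)^{-x-1}=\delta_{x,0}$ applied to the power series $\bar h^{(i)}$ and $T^{(i)}$), producing the Kronecker deltas and discrete convolutions of (\ref{86})--(\ref{87}) and (\ref{90}). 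No "$\int e^{\zeta^3/3-\sigma\zeta+\cdots}d\zeta$ collapsing to an Airy function" occurs anywhere in the treatment of $C$; the Airy functions and Airy kernels in the answer enter only through the previously proven discrete limits (\ref{62a}) of $g^{(i)}$, (\ref{Q}) of $Q^{(i)}$, and (\ref{62K}) of $K^{(1)}(0)$. (Your attribution of the $\xi-\eta\leftrightarrow\eta-\xi$ symmetry to an "$e^{\pm\zeta(\xi-\eta)}$ pair in $C_1$'s integrand" is likewise off: that pair belongs to the double-integral part ${\cal L}$; for $C$ the symmetry comes from the pairing $R^{(1)}+R^{(2)}$ and $K^{(1)}_{k,\ell-x}+K^{(1)}_{k+x,\ell}$ in (\ref{84}).)

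The second gap is that you treat (\ref{84}) as a black box, but getting $C(0;x)$ into that usable form is itself a substantive step: it requires the cancellation $C_1^*(0)=0$ and the identity (\ref{93}) of Lemma \ref{L7.2}, which use $K^{(1)\top}(0)=K^{(2)}(0)$ and the resolvent relations for $Q^{(i)}$ to reassemble the quadruple sums $\sum_{\alpha,\beta}g^{(2)}_{k+\alpha}g^{(1)}_{\ell+\beta}\delta_{x,\alpha-\beta}$ into $K^{(1)}_{k,\ell\mp x}(0)$ plus the single sums. Once (\ref{84}) is in hand, the remaining work is exactly what the paper does: set $\Delta y=\rho t^{1/3}d\eta$, $\Delta k=\rho(2t)^{1/3}d\kappa$, note that the index shift $x-y$ equals $(\xi-\eta)2^{-1/3}$ in units of $\rho(2t)^{1/3}$ (whence the $2^{-1/3}$ prefactor, since one factor $\rho(2t)^{1/3}$ is consumed by each free sum and $\rho t^{1/3}=2^{-1/3}\rho(2t)^{1/3}$), check that the conjugating factors $e^{\pm M_\kappa(t)}$ exactly absorb $(-v_0)^{x-y}$, and invoke the exponential bounds for dominated convergence. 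Your step (4) covers the last point, but steps (1)--(3) need to be replaced by this discrete Riemann-sum argument.
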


\proof  Setting $1=\Delta y =\rho t^{1/3}d\eta$ and  $1=\Delta k =\rho t^{1/3}d\kappa$, one checks in the first expression (\ref{84}), using (\ref{62a}) and (\ref{Q}), that:
\be \begin{aligned} 
\lefteqn{\lim_{t\rg\iy}(-v_0)^{x-y}\Dt y\sum_{k\geq 2m+1}Q_k^{(1)}g^{(2)}_{k+x-y}  \Dt k} %\\
\\
&=\lim_{t\rg\iy}2^{-1/3}e^{\rho\log(-v_0)(\xi -\eta)t^{1/3}}\rho (2t)^{1/3}d\eta\sum_{k\geq 2m+1}Q_k^{(1)}g^{(2)}_{k+x-y}\rho (2t)^{1/3}d\kappa\\
%\\
&=\lim_{t\rg\iy}2^{-1/3}d\eta~e^{\rho\log(-v_0)(\xi -\eta)t^{1/3}} e^{ M_k(t)}e^{- M_{\kappa+(\xi-\eta)2^{-1/3}}(t)}%\\
\\
& \sum_{k\geq 2m+1}\left((2t)^{1/3}\frac{A}{v_0-a}e^{- M_{\kappa}(t)}Q_k^{(1)}\right)\left((2t)^{1/3}A(v_0-a)v^2_0 e^{M_{\kappa+(\xi-\eta)2^{-1/3}}}g^{(2)}_{k+x-y}\right)d\kappa%\\
\\
&=2^{-1/3}d\eta\int^{\iy}_{\tilde\sigma}d\kappa~{\cal Q}(\kappa)\mbox{Ai}(\kappa+(\xi -\eta)2^{-1/3}),
\end{aligned} \ee
using dominated convergence in the integral. Similarly
$$
\lim_{t\rg\iy}(-v_0)^{x-y}  \Delta y\sum_{k\geq 2m+1}{  Q}_k^{(2)}g^{(1)}_{k-x+y}=2^{-1/3}d\eta  \int^{\iy}_{\tilde\sigma}d\kappa~{\cal Q}(\kappa)\mbox{Ai}(\kappa-(\xi -\eta)2^{-1/3}) 
$$
The limit of the last two expressions in (\ref{84}) are the following:
$$ \begin{aligned} 
\lefteqn{\lim_{t\rg\iy}(-v_0)^{x-y}\Delta y   \sum_{k,\ell\geq 2m+1}Q_k^{(1)}Q^{(2)}_{\ell}K^{(1)}_{k,\ell -x+y}(0)}%\\
\\
&=\lim_{t\rg\iy}(-v_0)^{x-y}\Delta y  \sum_{k\geq 2m+1} \sum_{\ell\geq 2m+1}Q^{(1)}_kQ^{(2)}_{\ell}K^{(1)}_{k,\ell -x+y}(0)\Delta k~\Delta\ell %\\
\\
& =2^{-1/3}d  \eta   \int^{\iy}_{\tilde\sigma}d\kappa     \int^{\iy}_{\tilde\sigma}d\lambda~{\cal Q}(\kappa){\cal Q}(\lambda)K_{\mbox{\tiny Ai}}(\kappa,\lambda -(\xi -\eta)2^{-1/3})d\kappa~d\lambda
\end{aligned} $$
and
$$ \begin{aligned} 
\lefteqn{\lim_{t\rg\iy}(-v_0)^{x-y}\Delta y   \sum_{k,\ell\geq 2m+1}Q_k^{(1)}Q^{(2)}_{\ell}K^{(1)}_{k+x-y,\ell}(0)} 
\\
& =2^{-1/3}d  \eta   \int^{\iy}_{\tilde\sigma}d\kappa     \int^{\iy}_{\tilde\sigma}d\lambda~{\cal Q}(\kappa){\cal Q}(\lambda)K_{\mbox{\tiny Ai}}(\kappa +(\xi -\eta)2^{-1/3},\lambda),
\end{aligned} $$
thus establishing Proposition \ref{Prop7.3}.\qed

\section{The tacnode kernel and non-colliding Brownian motions}

In this section, we prove expression (\ref{E127K}) for the kernel ${\mathbb K}^{\rm tac}$, as given in (\ref{E127L}), and we also show that ${\mathbb K}^{\rm tac}={\mathbb K}^{\rm tac}_{\rm br}$. To do so, introduce the function
$$
S^\tau_{\xi}(\kappa)= \Ai^{(\tau)} ( \xi-\sigma+2^{1/3} \kappa)
$$
and the operator $T$ on $L^2(\tilde\sg,\iy)$, with kernel
$$
T(\kappa,\beta)=\Ai (\kappa+\beta-\tilde\sigma).
$$
  Note that  
$K_{\Ai}$, as an operator on $L^2(\tilde\sg,\iy)$, is given by $$
\begin{aligned}
\int^{\iy}_{\tilde\sg}\Ai(\kappa+\lb-\tilde\sg)\Ai(\beta+\lb-\tilde\sg)d\lb =K_{\Ai}(\kappa,\beta),
\end{aligned}
$$
and that
\be
\begin{aligned}
K_{\Ai}=T^2.\end{aligned}
\ee
  It therefore follows that the resolvent of the Airy kernel can be expressed as
 \be
\begin{aligned}
 \Id+R=(\Id -K_{\Ai})^{-1}=(\Id-T^2)^{-1}=\sum^{\iy}_{r=0}T^{2r } .
\end{aligned}
\label{K3}\ee
With this notation, 
 expression (\ref{E6}) for ${\cal A}^{\tau}_\xi(\kappa) $ can be written 
 \be
 {\cal A}^{\tau}_{\xi-\sg}(\kappa)=
 S_\xi^{\tau} (\kappa) -TS_{-\xi}^{\tau} (\kappa)
. \label{K4}\ee
 Setting 
 $$\la f(\kappa),g(\kappa)\ra=\int_{\tilde \sigma}^{\infty}
 f(\kappa) g(\kappa) d\kappa,$$
 one checks that
 \be
  K_{\Ai}^{(\tau_1,-\tau_2)}(\sg -\xi_1,\sg-\xi_2)=
  2^{1/3}\la S_{-\xi_1}^{\tau_1}, S_{-\xi_2}^{-\tau_2}\ra
  \label{K7}\ee
  and, using (\ref{K3}),
 \be\begin{aligned}
\lefteqn{ \int_{\tilde\sg}^{\iy}\left((\Id- \raisebox{1mm}{$\chi$}{}_{\tilde\sg}K_{\Ai}\raisebox{1mm}{$\chi$}{}_{\tilde\sg})^{-1}
 \AR_{\xi_1-\sg}^{ \tau_1}\right)(\lb)
\AR_{\xi_2-\sg}^{ -\tau_2 }(\lb)d\lb}
\\&=
\left\la (\Id+R)(S_{\xi_1}^{\tau_1} %(\kappa)
  -TS_{ -\xi_1}^{\tau_1} %(\kappa)
~,  
 ~S_{\xi_2}^{-\tau_2} %(\kappa)
  -TS_{-\xi_2}^{-\tau_2}
  \right\ra
  \\
  &=
  \bigl\la  \sum_0^\infty T^{2r}
   S_{\xi_1}^{\tau_1} %(\kappa)
  ,  
 ~S_{\xi_2}^{-\tau_2} %(\kappa)
  \bigr\ra
  + \bigl\la  \sum_0^\infty T^{2r+2}
   S_{-\xi_1}^{\tau_1} %(\kappa)
  ,  
 ~S_{-\xi_2}^{-\tau_2} %(\kappa)
  \bigr\ra
 \\
 & -
   \bigl\la  \sum_0^\infty T^{2r+1}
   S_{-\xi_1}^{\tau_1} %(\kappa)
  ,  
 ~S_{ \xi_2}^{-\tau_2} %(\kappa)
  \bigr\ra
   -
   \bigl\la  \sum_0^\infty T^{2r+1}
   S_{ \xi_1}^{\tau_1} %(\kappa)
  ,  
 ~S_{ -\xi_2}^{-\tau_2} %(\kappa)
  \bigr\ra
  \end{aligned}
  \label{K8}\ee
To conclude, by adding the two contributions (\ref{K7}) and (\ref{K8}), the kernel, as in (\ref{E127}) takes on the following form:
$$\begin{aligned}
\lefteqn{{\mathbb K}^{\rm tac}(\tau_1,\xi_2;\tau_2,\xi_2)
}\\
&=-\Id_{\tau_1>\tau_2}
p(\tau_1-\tau_2;\xi_1,\xi_2)
 \\
 &+
  2^{1/3}\bigl\la  \sum_0^\infty T^{2r}
   S_{\xi_1}^{\tau_1} %(\kappa)
  ,  
 ~S_{\xi_2}^{-\tau_2} %(\kappa)
  \bigr\ra
  +  2^{1/3}\bigl\la  \sum_0^\infty T^{2r}
   S_{-\xi_1}^{\tau_1} %(\kappa)
  ,  
 ~S_{-\xi_2}^{-\tau_2} %(\kappa)
  \bigr\ra
 \\
 & -
   2^{1/3} \bigl\la  \sum_0^\infty T^{2r+1}
   S_{-\xi_1}^{\tau_1} %(\kappa)
  ,  
 ~S_{ \xi_2}^{-\tau_2} %(\kappa)
  \bigr\ra
   -
   2^{1/3} \bigl\la  \sum_0^\infty T^{2r+1}
   S_{ \xi_1}^{\tau_1} %(\kappa)
  ,  
 ~S_{ -\xi_2}^{-\tau_2} %(\kappa)
  \bigr\ra
 \end{aligned}$$
 \be \begin{aligned} 
  =&-\Id_{\tau_1>\tau_2}
p(\tau_1-\tau_2;\xi_1,\xi_2) 
 \\
 &+
  2^{1/3}  \int_{\tilde \sg}^{\infty}  d\lambda
  \left[\begin{aligned}&\left((\Id - K_{\Ai})_{\tilde\sg}^{-1}
   S_{\xi_1}^{\tau_1}\right)(\lb) %(\kappa) 
 ~S_{\xi_2}^{-\tau_2} (\lb)%(\kappa)
  \\&-\left((\Id - K_{\Ai})_{\tilde\sg}^{-1}TS_{-\xi_1}^{\tau_1} %(\kappa)  
 \right)(\lb)~S_{ \xi_2}^{-\tau_2} (\lb)\end{aligned}+\{\xi_i\leftrightarrow -\xi_i\}\right],%(\kappa)
  %(\kappa)
  \end{aligned}
  \label{K6}\ee
  which yields formula (\ref{E127K}), upon using formula (\ref{K4}) for $ {\cal A}^{\tau_1}_{\xi_1-\sg}(\lb)$.

 \bigbreak

 \noindent {\em Proof of Theorem \ref{th:main3}: }  The kernel ${\mathbb K}_{br}^{\rm tac}$ as in (\ref{E127L}) can be written, using the notations (\ref{K3}) and (\ref{K4}) as follows
 %
 %$$
%{\mathbb K}^{\rm tac}=K_{\Ai}^{(\tau_1,-\tau_2)}(\sg -\xi_1,\sg-\xi_2)+K_{\Ai}^{(\tau_1,-\tau_2)}(\sg +\xi_1,\sg+\xi_2)
%$$ $$+2^{1/3} \int_{\tilde\sg}^{\iy}\left((\Id- \raisebox{1mm}{$\chi$}{}_{\tilde\sg}K_{\Ai}\raisebox{1mm}{$\chi$}{}_{\tilde\sg})^{-1} \AR_{\xi_2-\sg}^{- \tau_2}\right)(\kappa )\left(\AR_{\xi_1-\sg}^{( \tau_1)}(\kappa)-\Ai^{\tau_1}(\xi_1-\sg+2^{1/3}\kappa)\right)d\kappa
%$$ $$+2^{1/3} \int_{\tilde\sg}^{\iy}\left((\Id- \raisebox{1mm}{$\chi$}{}_{\tilde\sg}K_{\Ai}\raisebox{1mm}{$\chi$}{}_{\tilde\sg})^{-1}\AR_{-\xi_2-\sg}^{- \tau_2}\right)(\kappa)\left(\AR_{-\xi_1-\sg}^{( \tau_1)}(\kappa)-\Ai^{\tau_1}(-\xi_1-\sg+2^{1/3}\kappa\right)d\kappa
%$$

$$\begin{aligned}
{\mathbb K}_{\rm br}^{\rm tac}
=&-\Id_{\tau_1>\tau_2}
p(\tau_1-\tau_2;\xi_1,\xi_2)\\
&+2^{1/3}\left\{\begin{aligned}
 &\la S_{-\xi_1}^{\tau_1}, S_{-\xi_2}^{-\tau_2}\ra
+\la S_{ \xi_1}^{\tau_1}, S_{ \xi_2}^{-\tau_2}\ra
\\
&
-\left\la (\Id+R)(S_{\xi_1}^{-\tau_2} %(\kappa)
  -TS_{ -\xi_2}^{-\tau_2} %(\kappa)
~,  
 ~TS_{-\xi_1}^{ \tau_1}
  \right\ra
\\&-\left\la (\Id+R)(S_{-\xi_2}^{-\tau_2} %(\kappa)
  -TS_{  \xi_2}^{-\tau_2} %(\kappa)
~,  
 ~TS_{ \xi_1}^{ \tau_1}
  \right\ra
\end{aligned}\right\}
\\
&=-\Id_{\tau_1>\tau_2}
p(\tau_1-\tau_2;\xi_1,\xi_2)\\
 &+ 2^{1/3}\bigl\la  \sum_0^\infty T^{2r}
   S_{\xi_1}^{\tau_1} %(\kappa)
  ,  
 ~S_{\xi_2}^{-\tau_2} %(\kappa)
  \bigr\ra
  +  2^{1/3}\bigl\la  \sum_0^\infty T^{2r}
   S_{-\xi_1}^{\tau_1} %(\kappa)
  ,  
 ~S_{-\xi_2}^{-\tau_2} %(\kappa)
  \bigr\ra
 \\
 & -
   2^{1/3} \bigl\la  \sum_0^\infty T^{2r+1}
   S_{-\xi_1}^{\tau_1} %(\kappa)
  ,  
 ~S_{ \xi_2}^{-\tau_2} %(\kappa)
  \bigr\ra
   -
   2^{1/3} \bigl\la  \sum_0^\infty T^{2r+1}
   S_{ \xi_1}^{\tau_1} %(\kappa)
  ,  
 ~S_{ -\xi_2}^{-\tau_2} %(\kappa)
  \bigr\ra ={\mathbb K}^{\rm tac},
  \end{aligned}
  $$
which yields the formula in (\ref{K6}), ending the proof of Theorem \ref{th:main3}.\qed
%\newpage

\section{Appendix: An expression for the  integral $C(0;x)$ in the kernel
  $\widetilde\BK_{n,m} $}

%We only do the calculation for $C(x):=C(0,x)$ in (\ref{55}), although it can easily be extended to the case $C(u;x)$ for $u\neq 0$. %Remember from (\ref{55}) the expression $C(x-y)$, the sum of the two single integrals,
%\be\begin{aligned}
% C(x)&:=C_1(x)+2C_2(x)
%\\&:=%+\frac{H_{2m+2}(0)}{H_{2m+1}(0)}\delta_{x,y}
 %~ \frac{\dt_{x\neq 0}}{2\pi i}
 %\oint_{\Gamma_{0,a}}\frac{dz}{(-z)^{x +1}}
%   (1-R^{(1)}_{ }(z^{-1}))(1-R^{(2)}_{ }(z))
 %  \\&  ~~~-\frac 2{2\pi i} \oint _{\Gamma_{0,a}} \frac{dz}{ (-z)^{ x %+1}}T^{(1)}(z^{-1})T^{(2)}(z)
%.
% \end{aligned} \label{83}\ee
 As was pointed out in Proposition \ref{Prop7.3},   in order to take scaling limits, one needs to express $C(0;x)$, defined in (\ref{55}), in a more convenient form. This is done below. In this section, let $C(x):=C(0;x)$ and $C_i(x):=C_i(0;x)$.
\begin{proposition}: \label{prop8.1} The following holds:
 \be
 \begin{aligned}
C(0;x) =&
 \sum_{k\geq 2m+1} \left(Q^{(1)}_k g_{k+x}^{(2)}
 +  Q^{(2)}_k g_{k-x}^{(1)}\right)
 \\
 &+  \sum_{k\geq 2m+1} Q^{(1)}_kQ^{(2)}_\ell \left(K^{(1)}_{k,\ell-x}(0)+
 K^{(1)}_{k+x,\ell}(0)\right).
 \end{aligned}
 \label{84}\ee

 \end{proposition}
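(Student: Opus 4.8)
The plan is a direct residue computation, carried out along the lines of the analogous derivation in \cite{AFvM12}. Setting $u=0$ in the definition (\ref{55}),
$$
C(0;x)=\delta_{x\neq 0}\,\frac{1}{2\pi i}\oint_{\Gamma_{0,a}}\frac{dz}{(-z)^{x+1}}\bigl(1-R^{(1)}(z^{-1})\bigr)\bigl(1-R^{(2)}(z)\bigr)-\frac{2}{2\pi i}\oint_{\Gamma_{0,a}}\frac{dz}{(-z)^{x+1}}\,T^{(1)}(z^{-1})\,T^{(2)}(z).
$$
First I would use the splitting (\ref{39}) of $R^{(1)},R^{(2)}$ together with the definition (\ref{44}) of $\Phi$ to rewrite the exponentials as $e^{-n\Phi(z)}/(a-z)=(-z)^{-2m-1}/\vp_a(2n;z)$ and $(a-z)/e^{-n\Phi(z)}=(-z)^{2m+1}\vp_a(2n;z)$, so that $R^{(1)}(z^{-1})=S^{(1)}(z^{-1})+(-z)^{-2m-1}\vp_a(2n;z)^{-1}T^{(1)}(z^{-1})$ and $R^{(2)}(z)=S^{(2)}(z)+(-z)^{2m+1}\vp_a(2n;z)\,T^{(2)}(z)$. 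Expanding the product $(1-R^{(1)})(1-R^{(2)})$, the two ``boundary'' factors multiply to $T^{(1)}(z^{-1})T^{(2)}(z)$, which combines with the second integral above so that this monomial enters with net weight $-1$ (for $x\neq0$).

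Next I would evaluate the resulting terms by picking up residues inside $\Gamma_{0,a}$, recalling that $Q^{(i)}_\ell$ is supported on $\ell\geq 2m+1$. Inserting the series $T^{(1)}(z^{-1})=\sum_{\ell\geq1}Q^{(1)}_{2m+\ell}(-z)^{-\ell}$ and $T^{(2)}(z)=\sum_{\ell\geq1}Q^{(2)}_{2m+\ell}(-z)^{\ell}$, the two terms linear in a single $T$-factor reduce, via the integral identities $\tfrac1{2\pi i}\oint_{\Gamma_{0,a}}(-z)^{-\ell-2}\vp_a(2n;z)^{-1}dz=-g^{(2)}_\ell$ and $\tfrac1{2\pi i}\oint_{\Gamma_0}(-z)^{\ell}\vp_a(2n;z)dz=-g^{(1)}_\ell$ of (\ref{34}) (the latter integrand being holomorphic at $z=a$, so that $\Gamma_{0,a}$ may be contracted to $\Gamma_0$), to the linear part $\sum_{k\geq2m+1}\bigl(Q^{(1)}_k g^{(2)}_{k+x}+Q^{(2)}_k g^{(1)}_{k-x}\bigr)$ of (\ref{84}). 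For the terms $S^{(1)}S^{(2)}$, $S^{(1)}T^{(2)}$, $T^{(1)}S^{(2)}$ and $T^{(1)}T^{(2)}$ I would substitute the power series $\bar h^{(1)}_k(z^{-1})=-\sum_{\alpha\geq0}(-z)^\alpha g^{(2)}_{k+\alpha}$ and $\bar h^{(2)}_k(z)=-\sum_{\alpha\geq0}(-z)^{-\alpha}g^{(1)}_{k+\alpha}$ of (\ref{36}) into $S^{(1)},S^{(2)}$; the double sums over the nonnegative exponents that survive the residue extraction then telescope, via the identity (\ref{35a}), $K^{(1)}_{k,\ell}(0)=\sum_{\alpha\geq0}g^{(1)}_{\ell+\alpha}g^{(2)}_{k+\alpha}$, into exactly $K^{(1)}_{k,\ell-x}(0)$ and $K^{(1)}_{k+x,\ell}(0)$, producing the quadratic part of (\ref{84}).

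The main obstacle is the residue bookkeeping. The functions $S^{(i)},T^{(i)}$ — hence $R^{(i)}$ — have singularities inside $\Gamma_{0,a}$ both at $z=0$ and at $z=a$ (through the factors $\vp_a(2n;z)^{\pm1}$ and through the analytic continuation of the defining series in (\ref{40}), (\ref{36})), so each contour integral must be split into its residue-at-$0$ and residue-at-$a$ parts, and one must check that these recombine to give precisely the listed terms with no spurious pieces. The other delicate point is the prefactor $\delta_{x\neq0}$ in $C_1$: the discrepancy between $\delta_{x\neq0}\tfrac1{2\pi i}\oint$ and an unrestricted contour integral is a pure $x=0$ effect, controlled by the normalization (\ref{42}), $\tfrac1{2\pi i}\oint_{\Gamma_{0,a}}\tfrac{dz}{z}(1-R^{(1)})(1-R^{(2)})=H_{2m+2}(0)/H_{2m+1}(0)$, and this has to be carried through to confirm the stated identity (for $x\neq0$, which is all that is needed in the scaling limit of Proposition \ref{Prop7.3}). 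Keeping track of all the signs through this step is the one place where genuine care is needed; everything else is a routine, if lengthy, expansion.
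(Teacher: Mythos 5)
Your overall strategy (expand $(1-R^{(1)})(1-R^{(2)})$ using the splitting (\ref{39}), extract residues, and resum via the series (\ref{36}) and the identity (\ref{35a})) is the same as the paper's, but the bookkeeping you describe does not close, and the step that makes it close is missing. Two concrete problems. First, you only process the ``boundary'' (i.e.\ $T^{(i)}$-weighted) pieces of the linear terms $-R^{(1)}-R^{(2)}$; the $-S^{(1)}-S^{(2)}$ pieces also contribute, namely $-\delta_{x\geq 0}\sum_k Q^{(1)}_k g^{(2)}_{k+x}-\delta_{x\leq 0}\sum_k Q^{(2)}_k g^{(1)}_{k-x}$, so that the total linear contribution is only $\delta_{x<0}\sum_k Q^{(1)}_k g^{(2)}_{k+x}+\delta_{x>0}\sum_k Q^{(2)}_k g^{(1)}_{k-x}$ (this is (\ref{86})), i.e.\ half of the linear part of (\ref{84}) for each sign of $x$. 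Second, of the four quadratic products only the cross terms $S^{(1)}T^{(2)}$ and $T^{(1)}S^{(2)}$ telescope via (\ref{35a}) into $K^{(1)}_{k,\ell-x}(0)$ and $K^{(1)}_{k+x,\ell}(0)$; the terms $S^{(1)}S^{(2)}$ and $T^{(1)}T^{(2)}$ instead produce the diagonally constrained sums $-\sum_{\alpha,\beta\geq 0}Q^{(1)}_kQ^{(2)}_\ell g^{(2)}_{k+\alpha}g^{(1)}_{\ell+\beta}\delta_{x,\alpha-\beta}$ and $\sum_{k,\ell}Q^{(1)}_kQ^{(2)}_\ell\delta_{x,\ell-k}$ (after combining with $2C_2$ as in (\ref{87})--(\ref{90})), which are not of the form (\ref{35a}).

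The missing idea is the resolvent identity $Q^{(i)}_k=g^{(i)}_k+\sum_{\ell\geq 2m+1}K^{(j)}_{k\ell}(0)Q^{(i)}_\ell$ together with the transpose symmetry $K^{(1)\top}(0)=K^{(2)}(0)$: this is exactly what Lemma \ref{L7.2} uses to show that the two leftover constrained sums equal $(\delta_{x>0}+\tfrac12\delta_{x=0})\sum_kQ^{(1)}_kg^{(2)}_{k+x}+(\delta_{x<0}+\tfrac12\delta_{x=0})\sum_kQ^{(2)}_kg^{(1)}_{k-x}$ (identity (\ref{93})), i.e.\ precisely the missing halves of the linear part, and also that $C_1^*(0)=0$ (identity (\ref{92})), which disposes of the $\delta_{x\neq 0}$ prefactor without invoking (\ref{42}). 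By contrast, the difficulty you flag about separating residues at $z=0$ and $z=a$ is not where the work lies: the functions $S^{(i)},T^{(i)}$ are handled purely as power series in $(-z)^{\pm 1}$, and only residues at the origin are ever taken. Without Lemma \ref{L7.2} (or an equivalent use of the resolvent equation), your computation stops at the intermediate expression (\ref{89}) and does not reach (\ref{84}).
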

 
 \proof 
 In this proof one uses over and over the expressions (\ref{39}) for $R^{(1)}(z^{-1})$ and $R^{(2)}(w)$, expressions (\ref{40}) for $S^{(i)} $ and $T^{(i)}$, the series expansions (\ref{36}) for $\bar h_k^{(i)}$ and the integral representation (\ref{34}) for $g_\ell^{(i)}=g_\ell^{(i)}(n)$. 
%  $$
%\begin{aligned}
%R^{(1)}(z^{-1})=R_{2m+1}^{(1)}(z^{-1})%&=%S_{2m+1}^{(1)}(z^{-1})+\left((-z)^k (1+az)^n\Bigl(1-\frac az\Bigr)^{n+1} \right)^{-1}T_k^{(1)}(z^{-1})\\
%&=S_{ }^{(1)}(z^{-1})+ \frac{e^{-n\Phi(z)}}{a-z}T_{ }^{(1)}(z^{-1})
%\\R^{(2)}(w)=R_{2m+1}^{(2)}(w^{})
%&=S_k^{(2)}(z^{})+ (-z)^k (1+az)^n\Bigl(1-\frac az\Bigr)^{n+1} 
%T_k^{(2)}(z^{})\\
%&= S_{ }^{(2)}(w^{})+ \frac{a-w}{ e^{-n\Phi(w)}}   T_{ }^{(2)}(w^{}),
%\end{aligned}
% $$ with $$
%\begin{aligned} T_{}^{(1)}(z^{-1}):=T_{2m+1}^{(1)}(z^{-1})&:=\sum_{\ell\geq 1} \frac {Q^{(1)}_{2m+1,2m+\ell}}{(-z)^\ell}
%\\ T_{}^{(2)}(w^{}):=T_{2m+1}^{(2)(w^{})&:=
 %\sum_{\ell\geq 1}   {Q^{(2)}_{2m+1,2m+\ell}}{(-w)^\ell}
%
%\\S^{(1)}_{}(z^{-1}):=S^{(1)}_{2m+1}(z^{-1})&=\sum_{\ell\geq 2m+1}Q^{(1)}_{2m+1,\ell}\bar h^{(1)}_{\ell}(z^{-1})\\
%S^{(2)}_{}(w):=S^{(2)}_{2m+1}(w)&=\sum_{\ell\geq 2m+1}Q^{(2)}_{2m+1,\ell}\bar h^{(2)}_{\ell}(w^{}).
%\end{aligned}$$
%
%and with
%\be \begin{aligned}
%\bar h_k^{(1)}(z^{-1})&=-\sum_{\alpha=0}^\infty (-z)^\alpha g^{(2)}_{k+\alpha}
%\\
%\bar h_k^{(2)}(z)&=-\sum_{\alpha=0}^\infty (-z)^{-\alpha} g^{(1)}_{k+\alpha}
%\end{aligned}\ee
%with
Using
$$
\oint_{\Gamma_{0}} \frac{dz}{2\pi i}\frac 1{(-z)^{x+1}}=\dt_{x,0}
$$
 one checks by substituting the expressions above, and recognizing the integrals for $g^{(i)}_{\ell}$,
 \be\begin{aligned}
\lefteqn{- \oint_{\Gamma_{0,a}} \frac{dz}{2\pi i}\frac 1{(-z)^{x+1}} 
 (R^{(1)}(z^{-1})+R^{(2)}(z))}\\
 &~~~~~~~=
 \dt_{x<0}\sum_{k\geq 2m+1} Q^{(1)}_k g_{k+x}^{(2)}
 +\dt_{x>0}\sum_{k\geq 2m+1} Q^{(2)}_k g_{k-x}^{(1
 )}
 \end{aligned}\label{86}\ee%$$
 and
  \be\begin{aligned}
 \lefteqn{ \oint_{\Gamma_{0,a}} \frac{dz}{2\pi i}\frac 1{(-z)^{x+1}} 
  R^{(1)}(z^{-1}) R^{(2)}(z) }
\\
 &=-\sum_{{k,\ell\geq 2m+1}\atop {\alpha,\beta \geq 0}}
 Q^{(1)}_kQ^{(2)}_\ell
 g_{k+\alpha}^{(2)}g_{\ell+\beta}^{(1)}
\dt_{x,\alpha-\beta}-
\sum_{{k,\ell\geq 2m+1}}
 Q^{(1)}_kQ^{(2)}_\ell
\dt_{x,\ell-k}
 \\&
 ~~~~~+\sum_{{k,\ell\geq 2m+1}\atop {\alpha  \geq 0}}
 Q^{(1)}_kQ^{(2)}_\ell
 \left(g_{k+\alpha}^{(2)}g_{\ell+\alpha-x}^{(1)}
 +
 g_{\ell+\alpha}^{(1)}g_{k+\alpha+x}^{(2)}\right).
  \end{aligned}\label{87}\ee
 So, $C_1(x)$ has the following form:
 \be
 C_1(x)=\dt_{x\neq 0}C^{\ast}_1(x)\label{88}\ee
 with (adding $\dt_{x=0}$ in the expression below is harmless, since $C^{\ast}_1(x)$ gets multiplied with $\dt_{x\neq 0}$),
  \be\begin{aligned}
  C^{\ast}_1(x)&:=
   (\dt_{x<0}+\tfrac 12 \dt_{x=0})\sum_{k\geq 2m+1} Q^{(1)}_k g_{k+x}^{(2)}
 +(\dt_{x>0}+\tfrac 12 \dt_{x=0})\sum_{k\geq 2m+1} Q^{(2)}_k g_{k-x}^{(1
 )}
\\
&-\sum_{{k,\ell\geq 2m+1}\atop {\alpha,\beta \geq 0}}
 Q^{(1)}_kQ^{(2)}_\ell
 g_{k+\alpha}^{(2)}g_{\ell+\beta}^{(1)}
\dt_{x,\alpha-\beta}-
\sum_{{k,\ell\geq 2m+1}}
 Q^{(1)}_kQ^{(2)}_\ell
\dt_{x,\ell-k}
 \\&
 ~~~~~+\sum_{{k,\ell\geq 2m+1}\atop {\alpha  \geq 0}}
 Q^{(1)}_kQ^{(2)}_\ell
 \left(g_{k+\alpha}^{(2)}g_{\ell+\alpha-x}^{(1)}
 +
 g_{\ell+\alpha}^{(1)}g_{k+\alpha+x}^{(2)}\right).
  \end{aligned}\label{89}\ee
Also, by straightforward substitution, one finds:
\be
%\frac 1	2 
C_2(x)=  \sum_{{k,\ell\geq 2m+1}}
 Q^{(1)}_kQ^{(2)}_\ell
\dt_{x,\ell-k}.
\label{90}\ee
In the Lemma below, it will be shown that $C_1^*(0)=0$ and thus $C_1 (x)=C_1^*(x)$. Therefore, adding (\ref{89}) and (\ref{90}), one checks, using (\ref{35a}) and the identity (\ref{93}) of the Lemma below:
 \be\begin{aligned}
C(x)&=C_1(x)+2C_2(x)=C_1^*(x)+2C_2(x)%\\
\\
&=\left(\dt_{x<0}+\frac{1}{2}\dt_{x=0}\right)\sum_{k\geq 2m+1}Q_k^{(1)}g^{(2)}_{k+x}+\left(\dt_{x>0}+\frac{1}{2}\dt_{x=0}\right)\sum_{k\geq 2m+1}Q_k^{(2)}g^{(1)}_{k-x} 
\\
&~ +\sum_{k,\ell\geq 2m+1}Q_k^{(1)}Q_{\ell}^{(2)}\left(K^{(1)}_{k,\ell -x}(0)+K^{(1)}_{k+x,\ell}(0)\right) 
\\
&~ -\sum_{{k,\ell\geq 2m+1}\atop{\al,\beta\geq 0}}Q^{(1)}_kQ^{(2)}_{\ell}g^{(2)}_{k+\al}g^{(1)}_{\ell +\beta}\dt_{x,\al -\beta} 
  +\sum_{k,\ell\geq 2m+1}Q_k^{(1)}Q_{\ell}^{(2)}\dt_{x,\ell -k}
\\
 &=
 \sum_{k\geq 2m+1} \left(Q^{(1)}_k g_{k+x}^{(2)}
 +  Q^{(2)}_k g_{k-x}^{(1)}\right)
 \\&~+  \sum_{k\geq 2m+1} Q^{(1)}_kQ^{(2)}_\ell \left(K^{(1)}_{k,\ell-x}(0)+
 K^{(1)}_{k+x,\ell}(0)\right).
\label{91}\end{aligned}
\ee
This establishes Proposition \ref{prop8.1}.\qed

\begin{lemma} \label{L7.2} The following holds
\be\begin{aligned}
  C^{\ast}_1(0)&=0\\
  \end{aligned}\label{92}\ee
  and
  \be \begin{aligned}
& -\sum_{{k,\ell\geq 2m+1}\atop {\alpha,\beta \geq 0}}
 Q^{(1)}_kQ^{(2)}_\ell
 g_{k+\alpha}^{(2)}g_{\ell+\beta}^{(1)}
\dt_{x,\alpha-\beta}+
\sum_{{k,\ell\geq 2m+1}}
 Q^{(1)}_kQ^{(2)}_\ell
\dt_{x,\ell-k}\\
&=
 (\dt_{x>0}+\frac 12 \dt_{x=0})\sum_{k\geq 2m+1} Q^{(1)}_k g_{k+x}^{(2)}
 +(\dt_{x<0}+\frac 12 \dt_{x=0})\sum_{k\geq 2m+1} Q^{(2)}_k g_{k-x}^{(1
 )}
  \end{aligned}\label{93}\ee
  
  \end{lemma}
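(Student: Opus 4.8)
The statement to prove is Lemma \ref{L7.2}, consisting of the two identities \eqref{92} and \eqref{93}, which are precisely the two computational facts invoked to derive the clean form \eqref{84} of $C(0;x)$ from the messy expressions \eqref{89} and \eqref{90}. The plan is to treat \eqref{93} as the main identity and then deduce \eqref{92} from it by setting $x=0$ and reorganizing. The core object in both identities is the resolvent vector $Q^{(i)}_\ell = [(\Id - \chi_{2m+1}K^{(i)}(0)^\top\chi_{2m+1})^{-1}\chi_{2m+1}g^{(i)}](\ell)$ from \eqref{Qell}, which by construction satisfies the fixed-point relation
\be
Q^{(i)}_\ell = g^{(i)}_\ell + \sum_{k\geq 2m+1} K^{(i)}_{k,\ell}(0)\, Q^{(i)}_k, \qquad \ell\geq 2m+1,
\ee
(using $K^{(i)\top}(0) = K^{(i)}(0)$ with indices swapped appropriately — more precisely, using $K^{(1)\top}(0) = K^{(2)}(0)$ from \eqref{35a}). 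Everything should fall out of combining this relation with the bilinear representation $K^{(1)}_{k,\ell}(0) = \sum_{\alpha\geq 0} g^{(1)}_{\ell+\alpha}g^{(2)}_{k+\alpha}$ of \eqref{35a}.

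\textbf{Step 1: unwind the left side of \eqref{93} using the bilinear form of $K^{(1)}(0)$.} In the double sum $\sum_{k,\ell,\alpha,\beta} Q^{(1)}_k Q^{(2)}_\ell g^{(2)}_{k+\alpha}g^{(1)}_{\ell+\beta}\delta_{x,\alpha-\beta}$, fix $\alpha,\beta$ with $\alpha-\beta = x$; the sum over $k$ of $Q^{(1)}_k g^{(2)}_{k+\alpha}$ and the sum over $\ell$ of $Q^{(2)}_\ell g^{(1)}_{\ell+\beta}$ factor. The idea is to split the range of $(\alpha,\beta)$ according to the sign of $x$: when $x>0$ we write $\alpha = \beta + x$ and sum over $\beta\geq 0$; the inner $\ell$-sum $\sum_\ell Q^{(2)}_\ell g^{(1)}_{\ell+\beta}$ together with $\sum_{\beta\geq 0}(\ \cdot\ ) g^{(2)}_{k+\beta+x}$ can be recognized, after using the fixed-point relation for $Q^{(2)}$ and the expansion $K^{(1)}_{k,\ell}(0)=\sum_\alpha g^{(1)}_{\ell+\alpha}g^{(2)}_{k+\alpha}$, as producing exactly $\sum_k Q^{(1)}_k g^{(2)}_{k+x}$ minus the "diagonal" term $\sum_{k,\ell}Q^{(1)}_kQ^{(2)}_\ell\delta_{x,\ell-k}$ that appears on the left of \eqref{93}. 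The case $x<0$ is symmetric under $(1)\leftrightarrow(2)$, $x\mapsto -x$, and $x=0$ gives the half-weights $\tfrac12\delta_{x=0}$ because the "$\alpha=\beta$" term is counted once but should be split evenly between the $x>0$ and $x<0$ conventions. Carrying this bookkeeping out carefully yields the right side of \eqref{93}. The one subtlety is making sure the term $\sum_{k,\ell}Q^{(1)}_kQ^{(2)}_\ell\delta_{x,\ell-k}$ lands with the correct sign — this is why it is already isolated on the left side of \eqref{93}.

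\textbf{Step 2: deduce \eqref{92}.} Set $x=0$ in \eqref{89}. The terms $(\delta_{x<0}+\tfrac12\delta_{x=0})\sum Q^{(1)}_k g^{(2)}_{k+x}$ and $(\delta_{x>0}+\tfrac12\delta_{x=0})\sum Q^{(2)}_k g^{(1)}_{k-x}$ each contribute $\tfrac12$ of the corresponding sum at $x=0$. The remaining five-fold and triple sums in \eqref{89} at $x=0$ can be paired off: the term $\sum_{k,\ell,\alpha}Q^{(1)}_kQ^{(2)}_\ell(g^{(2)}_{k+\alpha}g^{(1)}_{\ell+\alpha}+g^{(1)}_{\ell+\alpha}g^{(2)}_{k+\alpha})$ at $x=0$ is $2\sum_{k,\ell}Q^{(1)}_kQ^{(2)}_\ell K^{(1)}_{k,\ell}(0)$ (using \eqref{35a}), while the $\delta_{x,\alpha-\beta}$ term at $x=0$ forces $\alpha=\beta$ and also equals $\sum_{k,\ell}Q^{(1)}_kQ^{(2)}_\ell K^{(1)}_{k,\ell}(0)$; combined with $-\sum_{k,\ell}Q^{(1)}_kQ^{(2)}_\ell\delta_{k,\ell}$ and then applying the fixed-point relation $Q^{(1)}_\ell - g^{(1)}_\ell = \sum_k K^{(1)}_{k,\ell}(0)Q^{(1)}_k$ (equivalently $\sum_{k,\ell}Q^{(1)}_kQ^{(2)}_\ell K^{(1)}_{k,\ell}(0) = \sum_\ell Q^{(2)}_\ell(Q^{(1)}_\ell - g^{(1)}_\ell)$), everything collapses to $0$. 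Alternatively and more cleanly, \eqref{92} follows directly from \eqref{93} evaluated at $x=0$: the left side of \eqref{93} at $x=0$ equals the right side, which equals $\sum_k Q^{(1)}_k g^{(2)}_k$ (combining the two half-weighted sums via \eqref{35a} symmetry $g^{(2)}_k\leftrightarrow g^{(1)}_k$ — care needed here since these are distinct, so one actually gets $\tfrac12(\sum Q^{(1)}_k g^{(2)}_k + \sum Q^{(2)}_k g^{(1)}_k)$), and substituting this back into \eqref{89} at $x=0$ cancels the corresponding terms, leaving $C_1^*(0)=0$.

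\textbf{Main obstacle.} The only real difficulty is the combinatorial bookkeeping of Step 1 — keeping track of which index shifts go where, which sums factor, and getting all the signs and the $\tfrac12\delta_{x=0}$ split-weights right when reindexing $(\alpha,\beta)\mapsto(\beta+x,\beta)$ versus $(\alpha,\alpha-x)$. The resolvent identity for $Q^{(i)}$ and the rank-bilinear form \eqref{35a} of $K^{(1)}(0)$ do all the actual work; there is no analytic subtlety (all sums converge by the exponential decay established for $g^{(i)}_\ell$ and $Q^{(i)}_\ell$ earlier). I would organize the computation by first proving \eqref{93} in the regime $x>0$, then invoking the $(1)\leftrightarrow(2)$ symmetry for $x<0$, then handling $x=0$ as the boundary case, and finally reading off \eqref{92} as the $x=0$ specialization fed back into \eqref{89}.
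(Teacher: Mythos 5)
Your route to (\ref{93}) is essentially the paper's: both arguments rest on the split $1=(\dt_{x>0}+\tfrac 12 \dt_{x=0})+(\dt_{x<0}+\tfrac 12 \dt_{x=0})$, the bilinear representation $K^{(1)}_{k,\ell}(0)=\sum_{\alpha\geq 0}g^{(1)}_{\ell+\alpha}g^{(2)}_{k+\alpha}$ of (\ref{35a}), and the fixed-point relations $Q^{(1)}_k=g^{(1)}_k+\sum_{\ell}K^{(2)}_{k\ell}(0)Q^{(1)}_{\ell}$, $Q^{(2)}_k=g^{(2)}_k+\sum_{\ell}K^{(1)}_{k\ell}(0)Q^{(2)}_{\ell}$. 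The paper expands the $\dt_{x,\ell-k}$ sum and identifies the leftover $K$-terms with the quadruple $g$-sum, whereas you reindex the quadruple sum by the sign of $x$ first; the bookkeeping is identical and your version goes through.

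The one genuine gap is in Step 2. Whichever way you run the $x=0$ computation (directly from (\ref{89}), or by feeding (\ref{93}) at $x=0$ back in), after using the fixed-point relation to write $\sum_k Q^{(1)}_kQ^{(2)}_k=\sum_k Q^{(1)}_kg^{(2)}_k+\sum_{k,\ell}Q^{(1)}_kK^{(1)}_{k\ell}(0)Q^{(2)}_{\ell}$, what remains is
\be
C_1^{\ast}(0)=\tfrac 12 \Bigl(\sum_{k\geq 2m+1}Q^{(2)}_kg^{(1)}_k-\sum_{k\geq 2m+1}Q^{(1)}_kg^{(2)}_k\Bigr),
\ee
and this vanishes only because of the symmetry $\sum_k Q^{(1)}_kg^{(2)}_k=\sum_k Q^{(2)}_kg^{(1)}_k$, which you flag (``care needed here'') but never actually establish. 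It is not automatic, since $g^{(1)}$ and $g^{(2)}$ are genuinely different functions; but it follows in one line from $K^{(1)\top}(0)=K^{(2)}(0)$ (formulas (\ref{35a}), (\ref{37})): setting $M=2m+1$ and $P^{(i)}:=(\Id-\chi_{M}K^{(i)}(0)\chi_{M})^{-1}$, one has $P^{(1)\top}=P^{(2)}$, hence
$\sum_k Q^{(2)}_kg^{(1)}_k=\la P^{(1)}\chi_{M}g^{(2)},\chi_{M}g^{(1)}\ra=\la \chi_{M}g^{(2)},P^{(1)\top}\chi_{M}g^{(1)}\ra=\la \chi_{M}g^{(2)},P^{(2)}\chi_{M}g^{(1)}\ra=\sum_k Q^{(1)}_kg^{(2)}_k$. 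The paper isolates exactly this identity as the first step of its proof of the Lemma; you need to add it to close yours.
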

  
\proof
({\em i}) First we prove that
\be
\sum_{k\geq M}Q^{(2)}_kg_k^{(1)}=\sum_{k\geq M}Q_k^{(1)}g_k^{(2)}
\ee
Since, by definition, upon setting $P^{(i)}:=(\Id-\raisebox{1mm}{$\chi$}{}_{M}K^{(i)}(0)\raisebox{1mm}{$\chi$}{}_{M})^{-1}$, (set  $2m+1=M$)
$$\begin{aligned}
Q_k^{(1)}&=  \left((\Id- \raisebox{1mm}{$\chi$}{}_{M}K^{(2)}(0)\raisebox{1mm}{$\chi$}{}_{M})^{-1}\raisebox{1mm}{$\chi$}{}_{M}g^{(1)}\right)(k)=:P^{(2)}(\raisebox{1mm}{$\chi$}{}_{M}g^{(1)})(k)\\
\\
Q_k^{(2)}&= \left((\Id- \raisebox{1mm}{$\chi$}{}_{M}K^{(1)}(0)\raisebox{1mm}{$\chi$}{}_{M})^{-1}\raisebox{1mm}{$\chi$}{}_{M}g^{(2)}\right)(k)=:P^{(1)}(\raisebox{1mm}{$\chi$}{}_{M}g^{(2)})(k)
,\end{aligned}
$$
one has, using $P^{(1)\top}=P^{(2)}$,
$$\begin{aligned}
\sum_{k\geq M}Q_k^{(2)}g_k^{(1)}&= \la P^{(1)}( \raisebox{1mm}{$\chi$}{}_{M}g^{(2)}),~\raisebox{1mm}{$\chi$}{}_{M}g^{(1)}\ra  
\\
&= \la \raisebox{1mm}{$\chi$}{}_{M}g^{(2)},P^{(2)} (\raisebox{1mm}{$\chi$}{}_{M}g^{(1)})\ra = \sum_{k\geq M}Q_k^{(1)}g_k^{(2)}
.\end{aligned}
$$
({\em ii}) Therefore, by (\ref{89}),
\be\begin{aligned}
C_1^*(0)&= \sum_{k\geq M}Q_k^{(1)}g_k^{(2)}+\sum_{k,\ell\geq M}Q_k^{(1)}Q_{\ell}^{(2)}K^{(1)}_{k ,\ell}(0)-\sum_{k\geq M}Q_k^{(1)}Q_k^{(2)} 
\\
&= \la P^{(2)}(\raisebox{1mm}{$\chi$}{}_{M}g^{(1)}), \raisebox{1mm}{$\chi$}{}_{M}g^{(2)}\ra  
\\
& ~~ +\la P^{(2)}(\raisebox{1mm}{$\chi$}{}_{M}g^{(1)}), \raisebox{1mm}{$\chi$}{}_{M}K^{(1)}(0)  \raisebox{1mm}{$\chi$}{}_{M}P^{(1)} 
(\raisebox{1mm}{$\chi$}{}_{M}g^{(2)}) \ra  
\\
& ~~ -\la P^{(2)}(\raisebox{1mm}{$\chi$}{}_{M}g^{(1)}),P^{(1)}(\raisebox{1mm}{$\chi$}{}_{M}g^{(2)})\ra 
\\
&= \la P^{(2)}(\raisebox{1mm}{$\chi$}{}_{M}g^{(1)}),\raisebox{1mm}{$\chi$}{}_{M}g^{(2)}\ra  
\\
& ~~ -\la P^{(2)}(\raisebox{1mm}{$\chi$}{}_{M}g^{(1)}), (\Id-\raisebox{1mm}{$\chi$}{}_{M}K^{(1)}(0)\raisebox{1mm}{$\chi$}{}_{M})P^{(1)}
\raisebox{1mm}{$\chi$}{}_{M}g^{(2)}\ra  
= 0.
\end{aligned}\ee
Then $C_1^*(0)=0$ implies 
$ 
C_1(x)=C_1^*(x), 
$
establishing identity (\ref{92}).
 \bigbreak

({\em iii})  Using\footnote{for $P=(\Id-K)^{-1}$, one has $P=\Id+KP$}, for $k\geq M$,
\be\begin{aligned}
Q_k^{(1)}&=&  g_k^{(1)}+\sum_{\ell\geq M}K_{k\ell}^{(2)}(0)Q_{\ell}^{(1)} 
\\
Q_k^{(2)}&=&  g_k^{(2)}+\sum_{\ell\geq M}K_{k\ell}^{(1)}(0)Q_{\ell}^{(2)}
\end{aligned}
\ee
and writing $1= \left(\dt_{x>0}+\tfrac{1}{2}\dt_{x=0}\right)+\left(\dt_{x<0}+\tfrac{1}{2}\dt_{x=0}\right)$, 
$$\begin{aligned}
\lefteqn{\sum_{k,\ell\geq M}Q_k^{(1)}Q_{\ell}^{(2)}\dt_{x,\ell -k}} 
\\
&= \sum_{k,\ell\geq M}Q_k^{(1)}Q_{\ell}^{(2)}\dt_{x,\ell -k}\left(  \left(\dt_{x>0}+\tfrac{1}{2}\dt_{x=0}\right)+\left(\dt_{x<0}+\tfrac{1}{2}\dt_{x=0}\right)\right) 
\\
&= \left(\dt_{x>0}+\tfrac{1}{2}\dt_{x=0}\right)\sum_{k\geq M}Q_k^{(1)}Q^{(2)}_{k+x}  %\qquad 
 + \left(\dt_{x<0}+\tfrac{1}{2}\dt_{x=0}\right)\sum_{k\geq M}Q_{k -x}^{(1)}Q^{(2)}_{k} 
\end{aligned}$$
\be \begin{aligned}
&= \left(\dt_{x>0}+\tfrac{1}{2}\dt_{x=0}\right)\left(\sum_{k\geq M}Q_k^{(1)}g^{(2)}_{k+x}+\sum_{k,\ell\geq M}Q_k^{(1)}K^{(1)}_{k+x,\ell}(0)Q_{\ell}^{(2)}\right)
\\
&  +\left(\dt_{x<0}+\tfrac{1}{2}\dt_{x=0}\right)\left(\sum_{k\geq M}Q_k^{(2)}g^{(1)}_{k-x}+\sum_{k,\ell\geq M}Q_k^{(2)}K^{(2)}_{k-x,\ell}(0)Q_{\ell}^{(1)}\right).
\end{aligned}\ee
Also, by (\ref{35a}),
\be\begin{aligned}
%\lefteqn
{\sum_{k,\ell\geq 2m+1\atop{\al,\beta\geq 0}}Q_k^{(1)}Q^{(2)}_{\ell}g^{(2)}_{k+\al}g^{(1)}_{\ell +\beta}\dt_{x,\al -\beta}} 
%\\
&= \left(\dt_{x<0}+\frac{1}{2}\dt_{x=0}\right)\sum_{k,\ell\geq M}Q_{\ell}^{(1)}Q_k^{(2)}K^{(2)}_{k-x,\ell}(0) 
\\
&  ~~~+\left(\dt_{x>0}+\frac{1}{2}\dt_{x=0}\right)\sum_{k,\ell\geq M}Q_{k}^{(1)}Q^{(2)}_{\ell}K^{(1)}_{k+x,\ell}(0).
\end{aligned}\ee
Hence, subtracting these two formulas, one finds
\be \begin{aligned}
\lefteqn{\sum_{k,\ell\geq M}Q_k^{(1)}Q_{\ell}^{(2)}\dt_{x,\ell -k}-\sum_{k,\ell\geq M\atop{\al,\beta\geq 0}}Q_k^{(1)}Q_{\ell}^{(2)}g^{(2)}_{k+\al}g^{(1)}_{\ell +\beta}\dt_{x,\al -\beta}} 
\\
&= \left(\dt_{x>0}+\tfrac{1}{2}\dt_{x=0}\right)\sum_{k\geq M}Q_k^{(1)}g_{k+x}^{(2)}+\left(\dt_{x<0}+\tfrac{1}{2}\dt_{x=0}\right)Q_k^{(2)}g^{(1)}_{k-x},
\end{aligned}
\ee
thus ending the proof of identity (\ref{93}) and Lemma \ref{L7.2}.  \qed


\begin{thebibliography}{10}

%\bibitem{AS84}
%M.~Abramowitz and I.A. Stegun, \emph{Pocketbook of mathematical functions},
%  Verlag Harri Deutsch, Thun-Frankfurt am Main, 1984.

 \bibitem{ACvM10}
  M.~Adler, M.~Cafasso, and P.~van Moerbeke, \emph{{From the Pearcey to the Airy  process}}, Electronic J. of Probability, {\bf 16} (2011), 1048-1064.

 %}}, arXiv:1009.0683 (2010).

\bibitem{AvMD08}
M.~Adler, J.~Del{\'e}pine, and P.~van Moerbeke, \emph{Dyson's nonintersecting
  Brownian motions with a few outliers}, Comm. Pure Appl. Math. \textbf{62}
  (2010), 334--395.

\bibitem{AFvM08}
M.~Adler, P.L. Ferrari, and P.~van Moerbeke, \emph{Airy processes with
  wanderers and new universality classes}, Ann. Probab. \textbf{38} (2008),
  714--769.
  
  \bibitem{AFvM12}
M.~Adler, P.L. Ferrari, and P.~van Moerbeke, \emph{Non-intersecting random walks in the neighborhood of a symmetric Tacnode}, Ann. of Prob. 2012 (arXiv: 1007.1163).

%\bibitem{AJvM11}
%M.~Adler, K.~Johansson, and P.~van Moerbeke, \emph{Double Aztec Diamonds and the Tacnode Process}, to appear.

 \bibitem{AOvM10}
 M.~Adler, N.~Orantin, and P.~van Moerbeke, \emph{{Universality for the Pearcey
   process}}, Physica D \textbf{239} (2010), 924–--941.

 \bibitem{AvM97} M.~Adler and P.~van Moerbeke, \emph{{String-orthogonal polynomials, string equations and 2-Toda symmetries}}, Comm. Pure Appl. Math. \textbf{50} (1997),
   241--290.

%\bibitem{AvM00}
%M.~Adler and P.~van Moerbeke, \emph{{Integrals over classical groups, random permutations, Toda and Toeplitz lattices}}, Comm. Pure Appl. Math. \textbf{54} (2000), 153--205.

\bibitem{AOvM10}
M.~Adler, P.~van Moerbeke and D. Vanderstichelen, \emph{{Non-intersecting Brownian motions leaving from and going to several points}}, Physica D  (2011). (arXiv: 1005.1303)  
  
 \bibitem{AvM05} M. ~Adler and P. van Moerbeke, {\em PDEs for the joint distributions of the Dyson, Airy and sine processes}. Ann. Probab. {\bf 33} (2005), no. 4, 1326 --1361.

%\bibitem{BBP06}
%J.~Baik, G.~{Ben Arous}, and S.~P\'ech\'e, \emph{Phase transition of the largest eigenvalue for non-null complex sample covariance matrices}, Ann. Probab. \textbf{33} (2006), 1643--1697.

%\bibitem{BDJ99}
%J.~Baik, P.A. Deift, and K.~Johansson, \emph{On the distribution of the length
%  of the longest increasing subsequence of random permutations}, J. Amer. Math.
%  Soc. \textbf{12} (1999), 1119--1178.

%\bibitem{BR00}
%J.~Baik and E.M. Rains, \emph{Limiting distributions for a polynuclear growth model with external sources}, J. Stat. Phys. \textbf{100} (2000), 523--542.

%\bibitem{BKPV05} J.~{Ben Hough}, M.~Krishnapur, Y.~Peres, and B.~Virag, \emph{Determinantal processes and independence}, Probability Surveys \textbf{3} (2006), 206--229.

%\bibitem{Bor98}
%A.~Borodin, \emph{Biorthogonal ensembles}, Nucl. Phys. B \textbf{536} (1999),
 % 704--732.
  
  

\bibitem{BD10}
A.~Borodin and M.~Duits, \emph{Limits of determinantal processes near a
  tacnode},  Ann. Inst. H. Poincar\'e \textbf{47} (2011),
  243–--258.
  
  
\bibitem{BF}  A. Borodin and P.L. Ferrari, {\em Anisotropic growth of random surfaces in $2+1$ dimensions} (arXiv 0804.3035 (2008)



%\bibitem{BK09}
%A.~Borodin and J.~Kuan, \emph{{Random surface growth with a wall and Plancherel
%  measures for $O(\infty)$}}, Comm. Pure Appl. Math. \textbf{63} (2010),
%  831--894.

\bibitem{BO99}
A.~Borodin and A.~Okounkov, \emph{{A Fredholm determinant formula for Toeplitz
  determinants}}, Integr. Equ. Oper. Theory \textbf{37} (2000), 386--396.

\bibitem{BOO00}
A.~Borodin, A.~Okounkov, and G.~Olshanski, \emph{On asymptotics of {P}lancherel
  measures for symmetric groups}, J. Amer. Math. Soc. \textbf{13} (2000),
  481--515.

%\bibitem{BP07}
%A.~Borodin and S.~P\'ech\'e, \emph{Airy kernel with two sets of parameters in
%  directed percolation and random matrix theory}, J. Stat. Phys. \textbf{132}
%  (2008), 275--290.

%\bibitem{RB04}
%A.~Borodin and E.M. Rains, \emph{{Eynard-Mehta theorem, Schur process, and
%  their Pfaffian analogs}}, J. Stat. Phys. \textbf{121} (2006), 291--317.

% \bibitem{DK08}
%S.~Delvaux and A.~Kuijlaars, \emph{{A phase transition for non-intersecting
%  Brownian motions, and the Painlev\'e II equation}}, Int. Math. Res. Notices
%  \textbf{19} (2009), 3639–--3725.

\bibitem{DKZ10}
S.~Delvaux, A.~Kuijlaars, and L.~Zhang, \emph{{Critical behavior of
  non-intersecting Brownian motions at a tacnode}}, Comm. Pure Appl. Math.
  \textbf{64} (2011), 1305–--1383.

\bibitem{EM97}
B.~Eynard and M.L. Mehta, \emph{Matrices coupled in a chain. {I}. {E}igenvalue
  correlations}, J. Phys. A \textbf{31} (1998), 4449--4456.
  
\bibitem{EKLP} N. Elkies, G. Kuperberg, M. Larsen and J. Propp, \emph{Alternating sign Matrices and Domino Tilings, part I,}  J. Algebraic Combin. 1 (1992), no. 2, 111--132.

\bibitem{EKLP2} N. Elkies, G. Kuperberg, M. Larsen and J. Propp, \emph{Alternating sign Matrices and Domino Tilings, part II,}  J. Algebraic Combin. 1 (1992), no. 2, 219--234.


%\bibitem{Fer04}
%P.L. Ferrari, \emph{{Polynuclear growth on a flat substrate and edge scaling of
%  GOE eigenvalues}}, Comm. Math. Phys. \textbf{252} (2004), 77--109.

\bibitem{FS03}
P.L. Ferrari and H.~Spohn, \emph{Step fluctations for a faceted crystal}, J.
  Stat. Phys. \textbf{113} (2003), 1--46.

%\bibitem{SI04}
%T.~Imamura and T.~Sasamoto, \emph{Fluctuations of the one-dimensional
%  polynuclear growth model with external sources}, Nucl. Phys. B \textbf{699}
%  (2004), 503--544.
  
  \bibitem{JPS}
W.~Jockush J. Propp and P. Shor, \emph{Random domino tilings and the arctic circle Theorem}. Preprint. Available at arXiv.org/abs/math.CO/9801068.  

\bibitem{Jo02b}
K.~Johansson, \emph{Non-intersecting paths, random tilings and random
  matrices}, Probab. Theory Related Fields \textbf{123} (2002), 225--280.

\bibitem{Jo03b}
K.~Johansson, \emph{Discrete polynuclear growth and determinantal processes},
  Comm. Math. Phys. \textbf{242} (2003), 277--329.

%\bibitem{Jo04}
%K.~Johansson, \emph{{Non-intersecting, simple, symmetric random walks and the
%  extended Hahn kernel}}, Ann. Inst. Fourier \textbf{55} (2005), 2129--2145.

%\bibitem{Jo05}
%K.~Johansson, \emph{Random matrices and determinantal processes}, Mathematical
%  Statistical Physics, Session LXXXIII: Lecture Notes of the Les Houches Summer
%  School 2005 (A.~Bovier, F.~Dunlop, A.~van Enter, F.~den Hollander, and
%  J.~Dalibard, eds.), Elsevier Science, 2006, pp.~1--56.
  
  
%  \bibitem{Johansson-1}
% K. Johansson: {\em On fluctuations of eigenvalues of
%random Hermitian matrices},  Duke Math. J.  {\bf 91},
%151--204 (1998)

% \bibitem{Johansson0}%Johansson0
%  K. Johansson: {\em Shape fluctuations and random matrices},
% Comm. Math. Phys., {\bf 209}, 437--476 (2000)


%\bibitem{Johansson-Ulam}%Johansson-Ulam
%K. Johansson: {\em The longest increasing subsequence in
%a random permutation and a Unitary random matrix model
%}, Math. Research letters, {\bf 5}, 63-82 (1998)




\bibitem{Johansson1}%Johansson1
 K. Johansson: {\em Discrete orthogonal
polynomial ensembles and the Plancherel measure}, Annals
of Math, {\bf 153}, 259-296 (2001)
arXiv:math.CO/9906120.





\bibitem{Johansson3}%Johansson3
 K. Johansson:
{\em   The Arctic circle boundary and the Airy process}, Ann. Probab. 33 (2005), no. 1, 1Ð30. 
(ArXiv. Math. PR/0306216 (2003))



\bibitem{Joh10}
K.~Johansson, \emph{{Non-colliding Brownian Motions and the extended tacnode
  process}}, arXiv:1105.4027 (2011).

%\bibitem{KM59}
%S.~Karlin and L.~McGregor, \emph{Coincidence probabilities}, Pacific J.
%  \textbf{9} (1959), 1141--1164.

%\bibitem{KT04}
%M.~Katori and H.~Tanemura, \emph{{Symmetry of matrix-valued stochastic processes and noncolliding diffusion particle systems}}, J. Math. Phys. \textbf{45} (2004), 3058--3085.

%\bibitem{KT07b}
%M.~Katori and H.~Tanemura, \emph{{Noncolliding Brownian Motion and Determinantal Processes}}, J. Stat. Phys. \textbf{129} (2007), 1233–--1277.

%\bibitem{Lan00} L.J. Landau, \emph{Bessel functions: monotonicity and bounds}, J. London Math. Soc. \textbf{61} (2000), 197--215.

%\bibitem{Lyo03} R.~Lyons, \emph{Determinantal probability measures}, Publ. Math. Inst. Hautes  Etudes Sci. \textbf{98} (2003), 167--212.

%\bibitem{Nag03}
%T.~Nagao, \emph{{Dynamical Correlations for Vicious Random Walk with a Wall}}, Nucl. Phys. B \textbf{658} (2003), 373--396.

%\bibitem{FN98} T.~Nagao and P.J. Forrester, \emph{{Multilevel dynamical correlation functions for Dyson's Brownian motion model of random matrices}}, Phys. Lett. A  \textbf{247} (1998), 42--46.

%\bibitem{NKT02}
%T.~Nagao, M.~Katori, and H.~Tanemura, \emph{Dynamical correlations among vicious random walkers}, Phys. Lett. A \textbf{307} (2003), 29--33.

\bibitem{OR01}
A.~Okounkov and N.~Reshetikhin, \emph{Correlation function of {S}chur process
  with application to local geometry of a random 3-dimensional {Y}oung
  diagram}, J. Amer. Math. Soc. \textbf{16} (2003), 581--603.

\bibitem{OR07}
A.~Okounkov and N.~Reshetikhin, \emph{{Random skew plane partitions and the
  Pearcey process}}, Comm. Math. Phys. \textbf{269} (2007), 571--609.

\bibitem{PS02}
M.~Pr{\"a}hofer and H.~Spohn, \emph{Scale invariance of the {PNG} droplet and
  the {A}iry process}, J. Stat. Phys. \textbf{108} (2002), 1071--1106.

%\bibitem{SI03} T.~Sasamoto and T.~Imamura, \emph{Fluctuations of a one-dimensional polynuclear  growth model in a half space}, J. Stat. Phys. \textbf{115} (2004), 749--803.

\bibitem{Sim04}
B.~Simon, \emph{{Orthogonal Polynomials on the unit circle. Part 1: Classical
  Theory}}, Coll. Publ., vol.~54, American Math. Society, 2004.

% \bibitem{Sos06} A.B. Soshnikov, \emph{Determinantal random fields}, Encyclopedia of Mathematical Physics (J.-P. Francoise, G.~Naber, and T.~S. Tsun, eds.),   Elsevier, Oxford, 2006, pp.~47--53.

%\bibitem{Spo05} H.~Spohn, \emph{{Exact solutions for KPZ-type growth processes, random
%  matrices, and equilibrium shapes of crystals}}, Physica A \textbf{369}
%  (2006), 71--99.

%\bibitem{Sze67}
%G.~Szeg\H{o}, \emph{Orthogonal polynomials}, 3th ed., American Mathematical
%  Society Providence, Rhode Island, 1967.

\bibitem{TW06}
C.A.~Tracy and H.~Widom, \emph{{The Pearcey Process}}, Comm. Math. Phys.
  \textbf{263} (2006), 381--400.

%\bibitem{TW94}
%C.A. Tracy and H.~Widom, \emph{{Level-spacing distributions and the Airy
%  kernel}}, Comm. Math. Phys. \textbf{159} (1994), 151--174.

\bibitem{TW03}
C.A. Tracy and H.~Widom, \emph{{A system of differential equations for the Airy process}}. Electron. Comm. Probab. {\bf 8} (2003), 93Ð98

%\bibitem{TW98}
%C.A. Tracy and H.~Widom, \emph{Correlation functions, cluster functions, and  spacing distributions for random matrices}, J. Stat. Phys. \textbf{92}  (1998), 809--835.

%\bibitem{TW07b}
%C.A. Tracy and H.~Widom, \emph{{Nonintersecting Brownian Excursions}}, Ann.
%  Appl. Prob. \textbf{17} (2007), 953--979.

\bibitem{vM10}
P.~van Moerbeke, \emph{{Random and Integrable Models in Mathematics and
  Physics}}, in ``CRM series in Mathematics and Physics", Springer, Ed. John Harnad,  2011, 1--130.

\end{thebibliography}
\end{document}